\documentclass[11pt,a4paper, twoside]{article}
\usepackage[utf8]{inputenc}

\usepackage[all]{xy}
\usepackage{latexsym}
\usepackage[margin=3cm]{geometry}
\usepackage[dvipdfmx]{graphicx}
\usepackage{hyperref}
\usepackage{xurl}
\usepackage{mymacro}
\usepackage{tikz-cd}
\usepackage{mathtools}

\usepackage[
backend=biber,
style=ext-alphabetic,
maxnames = 5,
maxalphanames = 5,
maxcitenames  = 5,
minalphanames = 5,
minnames      = 5,
sorting=nyt,
giveninits,
articlein=false,
url=false,  
doi=false,
eprint=true,
isbn=false
]{biblatex}
\usepackage{fancyhdr}
\newcommand\shorttitle{\footnotesize{Topological constraints on clean Lagrangian intersections via microlocal sheaf theory}}
\newcommand\authors{\footnotesize{Tomohiro Asano and Yukihiro Okamoto}}

\title{Topological constraints on clean Lagrangian intersections via microlocal sheaf theory}
\author{Tomohiro Asano \and Yukihiro Okamoto}
\date{}

\begin{document}

\maketitle

\begin{abstract}
\noindent
Fix a knot $K_0$ in $\mathbb{R}^3$ and consider a Lagrangian submanifold $L$ of $T^*\mathbb{R}^3$ that is isotopic to the conormal bundle of $K_0$ by a compactly supported Hamiltonian isotopy and intersects the zero section $\mathbb{R}^3$ cleanly along a knot.
In this paper, using microlocal sheaf theory and some results in $3$-manifold theory, we prove that the knot type of $K_1\coloneqq L\cap \mathbb{R}^3$ in $\mathbb{R}^3$ is strictly constrained from the knot type of $K_0$.
Specifically, we deduce the existence of a surjective group homomorphism $\pi_1(\mathbb{R}^3\setminus K_0) \to \pi_1(\mathbb{R}^3\setminus K_1)$ preserving the longitude and meridian with respect to the Seifert framing.
Moreover, combining with a previous work by the second author, we obtain a rigidity result which was only known for the unknot: If $K_0$ is the $(2,q)$-torus knot or the figure-eight knot, $K_1$ must have the same knot type as $K_0$.
\end{abstract}

\section{Introduction}

\subsection{Background}

Let $L_1$ and $L_2$ be Lagrangian submanifolds of a symplectic manifold $(X,\omega)$ and consider their intersection $L_1\cap L_2$.
Here, we suppose that $L_1$ and $L_2$ have a \textit{clean intersection} (for the definition, see Definition \ref{def-clean-int}).
In this case, the intersection $L_1 \cap L_2$ is a submanifold of $L_1$ (and $L_2$) and may have a positive dimension.
When $\dim (L_1 \cap L_2)>0$, unlike the case of transversal intersections, it is a non-trivial question how $L_1 \cap L_2$ is embedded in $L_1$ (and $L_2$) as a submanifold.

With a perspective of low dimensional topology,
an interesting case would be when $\dim X =6$ and $L_1 \cap L_2$ is an embedded circle, that is, a knot in the $3$-manifold $L_i$ for $i=1,2$.
For a pair of Lagrangian $3$-spheres, Smith made a question concerning the
`persistence (or
rigidity) of unknottedness of Lagrangian intersections'
under Hamiltonian diffeomorphisms \cite[Question 1.5]{GP}.
When $X$ is a plumbing of two copies of $T^*S^3$ along the unknots in $S^3$, there are results related to this question by Ganatra--Pomerleano \cite[Proposition 6.29]{GP}, Smith--Wemyss \cite[Proposition 4.10]{SmW} and Asplund--Li \cite[Theorem 1.3]{AL}.
In particular, \cite[Theorem 1.1]{AL} shows the persistence of unknottedness of clean Lagrangian intersections under a `nearby Hamiltonian diffeomorphism'.

In the present paper, we consider the case where $X=T^*\bR^3$ with a standard symplectic structure.
For any smooth knot $K$ in $\bR^3$, let $T^*_K\bR^3$ denote its conormal bundle.
Note that $T^*_K\bR^3$ and the zero section $0_{\bR^3}$ intersect cleanly along $K \subset 0_{\bR^3} = \bR^3$.
Let $\Ham_c(T^*\bR^3)$ denote the group of compactly supported Hamiltonian diffeomorphisms on $T^*\bR^3$. 
Given a knot $K$ in $\bR^3$,  we ask the following:
\begin{question}\label{Q-1}
Is there $\varphi \in \Ham_c(T^*\bR^3)$ such that $\varphi(T^*_{K}\bR^3)$ and $0_{\bR^3}$ have a clean intersection along a knot whose knot type is different from $K$?
\end{question}
Here, the knot type means the isotopy class of a knot in $\bR^3$.
\begin{remark}
Let us remark three points about Question \ref{Q-1}:
\begin{enumerate}
\item The condition that $\varphi$ is a Hamiltonian diffeomorphism is essential.
Indeed, given any pair of knots $K_0$ and $K_1$, there always exists a $C^{\infty}$ isotopy $(\varphi^s)_{s\in [0,1]}$ on $T^*\bR^3$ with a compact support such that $\varphi^1(T^*_{K_0}\bR^3)$ and $0_{\bR^3}$ have a clean intersection along $K_1$.
\item As far as the authors know, no example has been found.
\item The connectedness of the clean intersection $\varphi(T^*_{K_0}\bR^3)\cap 0_{\bR^3}$ is also an important condition, and it does not seem obvious how to generalize the main results below to clean intersections along links with multiple components.
See Remark \ref{rem-connected-intersection} for a technical reason.
\end{enumerate}
\end{remark}

Some results related to Question \ref{Q-1} were obtained by \cite{O23} and \cite{O25}.
For example, \cite[Theorem 1.3]{O23} gives a negative answer to Question \ref{Q-1} when $K$ is the unknot in $\bR^3$.
(In fact, this answer can also be deduced from  \cite[Proposition 6.29]{GP} mentioned above. See \cite[Section 1]{O23}.)
In several cases where $K$ has a non-trivial knot type, \cite[Theorem 1.4 and 1.5]{O23} and \cite[Theorem 1.1]{O25} give some mild constraints on the knot types of $\varphi(T^*_K\bR^3) \cap 0_{\bR^3}$.

\subsection{Main results}

\begin{figure}[htbp]
\centering

\begin{tikzpicture}[scale=0.8]

\draw (0,0) -- (5,0) ; 
\draw (3.5,2) -- (3.5,-2);
\fill (1.5,0) circle (3pt) ;
\fill (3.5,0) circle (3pt) ;

\draw (3.4,0) node[below right]{$K_0$} ;
\draw (1.4,0) node[below right]{$K_1$} ;
\draw (0.2,-0.1) node[above]{$0_{\bR^3}$} ;
\draw (0,2) node[]{$T^*\bR^3$} ;
\draw (3.4,1.8) node[right]{$T^*_{K_0}\bR^3$} ;

\draw (7,0) -- (12,0);
\draw[densely dotted] (10.5,2) -- (10.5,-2);
\draw (10.5,2) to [out=-90, in=90] (10.5, 1.7) to [out=-90, in=45] (8.7,0.4) to [out=225, in=45] (8.3, -0.4) to [out=225, in=90] (8, -0.7)
to [out=-90, in=90] (10.5,-1.7) to [out= -90, in =90] (10.5,-2) ;
\fill (8.5,0) circle (3pt) ;

\draw (8.4,0) node[below right]{$K_1$} ;
\draw (7.2,-0.1) node[above]{$0_{\bR^3}$} ;
\draw (9,1.3) node[]{$\varphi(T^*_{K_0}\bR^3)$} ;

\end{tikzpicture}
\caption{In the left-hand side, the bullets represent the knots $K_0$ and $K_1$ in $0_{\bR^3}=\bR^3$.
We suppose that $\varphi (T^*_{K_0}\bR^3)$ intersects $0_{\bR^3}$ cleanly along $K_1$ as in the right-hand side.}\label{figure-clean}
\end{figure}
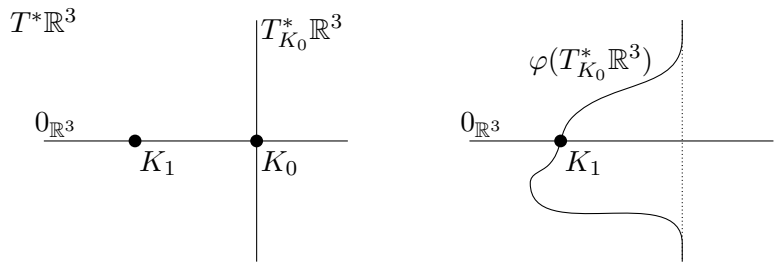

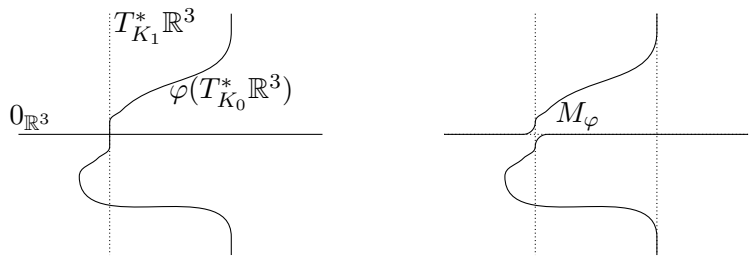
\begin{figure}[htbp]
\centering

\begin{tikzpicture}[scale=0.8]

\draw (0,0) -- (5,0);
\draw[densely dotted] (1.5,2) -- (1.5,-2);
\draw (3.5,2) to [out=-90, in=90] (3.5, 1.7) to [out=-90, in=45] 
(1.7,0.4) to [out=225, in=90] (1.5, 0.2) to [out=-90, in=90] (1.5,-0.2) to [out=-90, in=45] (1.3,-0.4) to [out=225, in=90] (1, -0.7)
to [out=-90, in=90] (3.5,-1.7) to [out= -90, in =90] (3.5,-2) ;

\draw (0.2,-0.1) node[above]{$0_{\bR^3}$} ;
\draw (1.4,1.8) node[right]{$T^*_{K_1}\bR^3$} ;
\draw (3.5, 0.7) node[]{$\varphi (T^*_{K_0}\bR^3)$} ;

\draw[densely dotted] (7,0) -- (12,0);
\draw[densely dotted] (10.5,2) -- (10.5,-2);
\draw[densely dotted] (8.5,2) -- (8.5,-2);
\draw (10.5,2) to [out=-90, in=90] (10.5, 1.7) to [out=-90, in=45] 
(8.7,0.4) to [out=225, in=90] (8.5, 0.2)  to [out=-90, in=0] (8.3,0)  to [out=180, in=0] (7,0);
\draw (12,0) to [out=180, in=0] (8.7,0) to [out=180, in=90] (8.5,-0.2) to [out=-90, in=45] (8.3,-0.4) to [out=225, in=90] (8, -0.7)
to [out=-90, in=90] (10.5,-1.7) to [out= -90, in =90] (10.5,-2) ;

\draw (9.2,-0.1) node[above]{$M_{\varphi}$} ;

\end{tikzpicture}
\caption{
We may assume that $\varphi(T^*_{K_0}\bR^3)$ agrees with $T^*_{K_1}\bR^3$ in a neighborhood of $0_{\bR^3}$ (the left-hand side).
The $3$-manifold $M_{\varphi}$ is diffeomorphic to the Lagrangian submanifold obtained by a Polterovich surgery on $\varphi(T^*_{K_0}\bR^3)\cup 0_{\bR^3}$ along $K_1$ (the right-hand side).
}\label{figure-surgery}
\end{figure}

Let $K_0$ and $K_1$ be knots in $\bR^3$.
Let $\varphi \in \Ham_c(T^*\bR^3)$ and suppose that $\varphi (T^*_{K_0}\bR^3)$ and $0_{\bR^3}$ intersect cleanly along $K_1\subset 0_{\bR^3} \cong \bR^3$.
See Figure \ref{figure-clean}.

In order to explain the key result, let us prepare some notation.
In Section \ref{subsec-clean}, we will introduce
a $3$-manifold $M_{\varphi}$ which is diffeomorphic to a Lagrangian submanifold of $T^*\bR^3$ obtained by a Polterovich surgery of $\varphi(T^*_{K_0}\bR^3) \cup 0_{\bR^3}$ along $K_1$.
See the right-hand side of Figure \ref{figure-surgery}.
(In this paper, we just think of $M_{\varphi}$ as a $3$-manifold, and do not use the property that it is an exact Lagrangian submanifold of $T^*\bR^3$.)
Let $\Lambda_{K_0}$ be the unit conormal bundle of $K_0$.
There are two natural embeddings
\[ \xymatrix{ M_{\varphi} & \ar[l]_-{\sigma_0} \Lambda_{K_0} \ar[r]^-{e_0} & \bR^3 \setminus K_0 ,
}\]
where $\sigma_0$ is an embedding onto
$\{(x,\xi)\in T^*_{K_0}\bR^3 \mid |\xi|=r\}$ for sufficiently large $r>0$, and
$e_0$ is an embedding onto the boundary of a tubular neighborhood of $K_0$ in $\bR^3$.
For their explicit descriptions, see Section \ref{subsec-clean}.
They induce group homomorphisms between the fundamental groups
\[ \xymatrix{ \pi_1(M_{\varphi}) & \ar[l]_-{(\sigma_0)_*} \pi_1(\Lambda_{K_0}) \ar[r]^-{(e_0)_*} & \pi_1(\bR^3 \setminus K_0) .
} \]
Here, we fix $y_0\in \Lambda_{K_0}$ and choose $\sigma_0(y_0)$, $y_0$ and $e_0(y_0)$ respectively as the base points.

\begin{theorem}[Theorem \ref{thm-ml-preserve}]\label{thm-1}
There exists a surjective group homomorphism
\[\tilde{h} \colon \pi_1 (\bR^3 \setminus K_0) \to \pi_1 (M_{\varphi})\]
such that $\tilde{h}\circ (e_0)_* = (\sigma_0)_*$.
\end{theorem}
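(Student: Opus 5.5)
The plan is to obtain $\tilde{h}$ by comparing local systems, which microlocal sheaf theory lets us do through the sheaf quantization of Guillermou--Kashiwara--Schapira (GKS). Take a field $\mathbf{k}$ and a representation $\rho\colon \pi_1(M_\varphi)\to GL_n(\mathbf{k})$, i.e. a local system $\mathcal{L}$ on $M_\varphi$. We will build from it a local system on $\bR^3\setminus K_0$ whose monodromy factors as $\rho\circ\tilde h$. Taking $\rho$ to be the regular representation (or a family of representations separating elements) then yields $\tilde h$ as a group map.

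The construction runs as follows. Consider the cone $\Lambda\subset T^*\bR^3\times$ (or the doubled/conic version in $T^*(\bR^3\times\bR)$) associated with the exact Lagrangian $\varphi(T^*_{K_0}\bR^3)$, and look at sheaves on $\bR^3$ microsupported on $T^*_{K_1}$-type conormals. The key input is sheaf quantization.

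1. Sheaves on $\bR^3$ with microsupport in $0_{\bR^3}\cup T^*_{K_0}\bR^3$ that are microlocally of rank one along $T^*_{K_0}$ correspond to local systems on $\bR^3\setminus K_0$. More precisely, a local system $F$ on $\bR^3\setminus K_0$ extended by $j_*$ or $j_!$ gives such a sheaf. Its microlocalization along $\Lambda_{K_0}$ is a local system on $\Lambda_{K_0}$ that recovers $e_0^*F$ up to shift.
2. The GKS kernel $K_\varphi$ of the Hamiltonian isotopy transports these sheaves to sheaves with microsupport in $\varphi(T^*_{K_0}\bR^3)\cup 0_{\bR^3}$ near the zero section, which is an equivalence of categories. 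Near $0_{\bR^3}$, $\varphi(T^*_{K_0})$ agrees with $T^*_{K_1}$.
3. A sheaf on $\bR^3$ with microsupport in the union, whose restriction away from the intersection is a local system, is glued from a local system on $\bR^3\setminus K_1$ and microlocal data along $\varphi(T^*_{K_0})$. We expect that this gluing data is exactly a local system on the surgery $M_\varphi$, since $M_\varphi$ is the union of $\bR^3\setminus\nu K_1$ and $\varphi(T^*_{K_0})$ minus a neighbourhood of $K_1$, glued along the torus. Conversely, a local system on $M_\varphi$ gives such a sheaf; this is the step we should check carefully.

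Running the equivalence backwards, any local system on $M_\varphi$ yields a local system on $\bR^3\setminus K_0$. Its microlocal restriction to $\Lambda_{K_0}$ agrees with $\sigma_0^*\mathcal{L}$, because the GKS kernel is the identity outside a compact set and $\sigma_0$ sits at large $|\xi|$. The construction is functorial and monoidal, so it respects tensor products and duals. Using a Tannakian-type argument (or simply the regular representation of $\pi_1(M_\varphi)$, with its action by right multiplication giving the homomorphism), we obtain a homomorphism $\tilde h$ with $\tilde h\circ (e_0)_*=(\sigma_0)_*$.

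For surjectivity, use that the functor from local systems on $M_\varphi$ to local systems on $\bR^3\setminus K_0$ is fully faithful, since GKS is an equivalence. The pullback $\tilde h^*$ on representations is then fully faithful, and in particular it preserves invariants of the regular representation. Since $\mathbf{k}[\pi_1(M_\varphi)/\mathrm{im}\,\tilde h]$ has invariants of dimension one under the image but larger ones otherwise, this forces $\tilde h$ to be surjective.

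The main obstacle is step 3. One must identify sheaves with microsupport in $\varphi(T^*_{K_0})\cup 0_{\bR^3}$, near the clean intersection, with local systems on the surgered manifold. This requires a local model computation along $K_1$ using the clean intersection (microlocal rank one and the Maslov/grading conditions). This is also where connectedness of $K_1$ matters. One must also check that the base points and the identification of the gluing torus are compatible with $\sigma_0$ and $e_0$.
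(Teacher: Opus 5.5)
Your outline has roughly the right shape: sheaves microsupported on $0_{\bR^3}\cup\varphi(T^*_{K_0}\bR^3)$, a gluing description involving $M_\varphi$, and a Tannakian step. However, several load-bearing steps fail.

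\textbf{Steps 2 and 3 (the categories).} The GKS kernel of $\varphi$ moves the \emph{whole} Lagrangian, zero section included. It sends sheaves microsupported on (the conification of) $0_{\bR^3}\cup T^*_{K_0}\bR^3$ to sheaves microsupported on $\varphi(0_{\bR^3})\cup\varphi(T^*_{K_0}\bR^3)$, not on $0_{\bR^3}\cup\varphi(T^*_{K_0}\bR^3)$. The functor that moves only the cobordism $L_\varphi$ and keeps the skeleton fixed is Li's cobordism functor $\Phi_{L_\varphi}$. It is only fully faithful, not an equivalence. Your expectation in step 3 is also false. The source category is equivalent to $\Sh_{L_{K_1}}(\bR^3)\times_{\Loc(\Lambda_{K_1})}\Loc(L_\varphi)$, after showing that the rank-one twisting local system on $\Lambda_{K_1}$ is trivial. $\Loc(M_\varphi)$ merely embeds into it via $j_{1!}$; objects like $(\bfk_{\bR^3},0)$ and $(\bfk_{K_1},\bfk_{L_\varphi}[1])$ are not in the image.

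\textbf{Surjectivity.} There is therefore no a priori reason for $\Psi=j_0^*\circ\Phi_{L_\varphi}\circ J$ to be fully faithful, because $j_0^*$ kills the $i_{0*}\Loc(K_0)$ part of the semiorthogonal decomposition of $\Sh_{L_{K_0}}(\bR^3)$. Your surjectivity argument thus has no input. The paper only obtains $\Hom(\rho_1,\rho_2)\cong\Hom(h^*\rho_1,h^*\rho_2)$ when $(h^*\rho_1)(m'_0)$ has no eigenvalue $1$. In that case $\Phi J(\ell_{\rho_1})\simeq j_{0!}j_0^*\Phi J(\ell_{\rho_1})$ and the adjunction $j_{0!}\dashv j_0^*$ applies. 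Producing such a $\rho_1$ first requires proving $p_1(h(m'_0))\neq 0$, via a cone argument with the infinite-dimensional representation $\bfk[t,t^{-1}]$.

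\textbf{Landing in $G_1$.} $\Psi$ is only known to commute with the stalk functor at $x_0$, and nothing provides a monoidal structure. So Tannaka reconstruction gives only a homomorphism $G_0\to\bfk[G_1]^{\times}$. Your regular-representation argument gives the same: elements of $G_0$ act on $\bfk[G_1]$ commuting with right multiplications, hence as left multiplication by units of $\bfk[G_1]$. To land in $G_1$, the paper proves that $\pi_1(M_\varphi)$ is left-orderable, using irreducibility via Gabai's theorem and then Boyer--Rolfsen--Wiest. Hence $\bfk[G_1]$ has only trivial units, and a separate argument shows the leftover character $\kappa$ is trivial.

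\textbf{The peripheral condition.} Most importantly for this statement, $\tilde h\circ(e_0)_*=(\sigma_0)_*$ does not follow from ``the microlocal restriction agrees with $\sigma_0^*\mathcal{L}$''. For $F=\Phi_{L_\varphi}(J(\mathcal{L}))$ one does have $\mu_{K_0}(F)|_{\Lambda_{K_0}}[1]\simeq\sigma_0^*\mathcal{L}$. But $F$ is an extension of $i_{0*}i_0^*F$ by $j_{0!}j_0^*F$, so you only get a triangle $e_0^*(j_0^*F)\to\sigma_0^*\mathcal{L}\to\pi_0^*i_0^*F[1]$. Killing the third term needs the eigenvalue condition above, plus a twisting-by-characters and determinant trick over $\bC$ to reach all finite-dimensional representations.

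Even then, the output is only this: for each $\rho\colon G_1\to\GL_n(\bC)$, the representations $\rho\circ h\circ(e_0)_*$ and $\rho\circ(\sigma_0)_*$ of $\pi_1(\Lambda_{K_0})\cong\bZ^2$ are conjugate by a matrix depending on $\rho$. Upgrading this to a single $g\in G_1$ with $g\,h((e_0)_*(x))\,g^{-1}=(\sigma_0)_*(x)$ for all $x$ is a separate step. The paper does it with $3$-manifold topology:
\begin{itemize}
\item Conjugacy separability of $\pi_1(M'_\varphi)$ (Hamilton--Wilton--Zalesskii), together with character theory of finite groups, gives conjugacy element by element.
\item If $(\sigma_0)_*$ is not injective, $M'_\varphi$ is a solid torus and $G_1$ is abelian.
\item Otherwise, in the hyperbolic JSJ case, malnormality of the boundary subgroup turns conjugacy of one well-chosen peripheral element into simultaneous conjugacy of the whole peripheral subgroup.
\item In the Seifert-fibered case, Cerocchi's description of $\pi_1(T)\cap g\,\pi_1(T)\,g^{-1}$ does the same. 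It is applied to $m_0l_0^{d}$ and $m_0l_0^{d+1}$, with $d$ chosen so that their images avoid the fiber subgroup.
\end{itemize}
Your proposal has no substitute for this step.
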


In this paper, we say that a surjective homomorphism $\tilde{h}\colon \pi_1 (\bR^3 \setminus K_0) \to \pi_1 (M_{\varphi})$ is \emph{peripheral-pair-preserving} if $\tilde{h} \circ (e_0)_* = (\sigma_0)_*$. 
A surjective homomorphism $h \colon \pi_1(\bR^3\setminus K_0) \to \pi_1(M_{\varphi})$ is constructed through the sheaf theoretic argument outlined in Section \ref{subsec-idea-proof}.
We also find its relation with $e_0$ and $\sigma_0$ in Proposition \ref{prop-GL_n-rep}.
This surjective homomorphism $h$ is upgraded to a peripheral-pair-preserving one $\tilde{h}$ in Section \ref{subsec-peripheral} through discussion on the fundamental groups of certain $3$-manifolds and the JSJ decomposition of a compact $3$-manifold related to $M_{\varphi}$.

From Theorem \ref{thm-1}, we can extract topological constraints on the pair of knots $(K_0,K_1)$.
One of the consequences is the following.
\begin{theorem}[Theorem \ref{thm-trefoil} and \ref{thm-figure-8}]\label{thm-2}
If $K_0$ is the $(2,q)$-torus knot or the figure-eight knot, then $K_1$ is isotopic to $K_0$ in $\bR^3$.
\end{theorem}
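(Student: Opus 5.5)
Starting from Theorem \ref{thm-1}, the plan is to translate the peripheral-pair-preserving surjection into a surjection between the knot groups of $K_0$ and $K_1$ that respects Seifert longitudes and meridians, and then to use rigidity results for epimorphisms out of the groups of $(2,q)$-torus knots and of the figure-eight knot.

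The first step is to relate $M_{\varphi}$ to $\bR^3\setminus K_1$. Since $M_{\varphi}$ comes from a Polterovich surgery of $\varphi(T^*_{K_0}\bR^3)\cup 0_{\bR^3}$ along $K_1$, topologically it is obtained by removing a tubular neighborhood $N(K_1)$ from $\bR^3$ and gluing in $\Lambda_{K_1}\times[0,\infty)$ (the piece of $T^*_{K_1}\bR^3$ outside the zero section) along the boundary torus. This gives a homotopy equivalence, indeed a diffeomorphism up to a collar, $M_{\varphi}\simeq \bR^3\setminus K_1$. I need to check what the large sphere-bundle $\sigma_0(\Lambda_{K_0})$ corresponds to. Because $\varphi$ is compactly supported, $\sigma_0(\Lambda_{K_0})$ is isotopic inside the end $\Lambda_{K_1}\times[0,\infty)$ to a boundary-parallel torus, which is identified with $\partial N(K_1)$.

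Under this identification the fiber circle of $\Lambda_{K_0}$, which $e_0$ sends to the meridian of $K_0$, goes to the fiber circle of $\Lambda_{K_1}$. That fiber circle is the meridian $\mu_1$ of $K_1$. The section direction of $\Lambda_{K_0}$, namely a longitude of $K_0$ with a framing fixed by the conormal structure, goes to some longitude of $K_1$. Composing with Theorem \ref{thm-1}, I obtain a surjection $\pi_1(\bR^3\setminus K_0)\to\pi_1(\bR^3\setminus K_1)$ that sends $\mu_0\mapsto\mu_1$ and sends the peripheral subgroup into the peripheral subgroup. Since $\mu_0\mapsto\mu_1$ generates the abelianizations, the map is an isomorphism on $H_1=\bZ$.

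A Seifert longitude is characterised as the peripheral element that is null-homologous in the knot complement, equivalently as lying in the commutator subgroup. Hence the image of the Seifert longitude $\lambda_0$ is a peripheral class in the commutator subgroup of $\pi_1(\bR^3\setminus K_1)$, so it equals $\lambda_1^{d}$. This gives the meridian-and-longitude-preserving statement announced in the abstract, with the degree $d$ determined by the longitude. A point I must handle carefully is whether the framing of $e_0$ agrees with that of $\sigma_0$. I expect the relevant longitudes to correspond, since the conormal-bundle framing is canonical, but this should be checked explicitly in the paper's description of $e_0$ and $\sigma_0$ in Section \ref{subsec-clean}. If $d=0$, then the Seifert longitude maps trivially. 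Dehn-filling along $\lambda_0$ would then yield a surjection from a nontrivial closed manifold group, and I would rule this out using nontriviality of the longitude for nontrivial knots.

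The main obstacle is the final rigidity step. For a $(2,q)$-torus knot or the figure-eight knot, I want to show that any epimorphism of knot groups preserving meridian and longitude forces $K_1$ to be isotopic to $K_0$ or to be the unknot. Here I plan to invoke known results. Knot groups are Hopfian, and results of Simon and Boileau--Rubinstein--Wang type classify epimorphisms from the groups of $2$-bridge knots and torus knots. Concretely, by the Ohtsuki--Riley--Sakuma and Agol--Liu results, a knot group admitting an epimorphism from a $2$-bridge knot group is itself $2$-bridge. Moreover, degree-one or longitude-preserving epimorphisms are very restricted: the genus is nonincreasing, and Alexander polynomials divide. For these small knots the Alexander polynomial $\Delta_{K_0}$ is irreducible, as for the figure-eight knot and $T(2,q)$ with $q$ prime; for non-prime $q$ I would use cyclotomic factors together with the torus-knot classification. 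This leaves only $K_1\simeq K_0$ or the unknot.

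To exclude the unknot I would appeal to the second author's earlier results \cite{O23}. There the Legendrian/sheaf invariants of $\Lambda_{K_0}$ are shown to be incompatible with $K_1$ unknotted, so that $K_1$ cannot be the unknot when $K_0$ is knotted. Equivalently, I could run the argument in reverse using the symmetric statement, applying $\varphi^{-1}$ to $T^*_{K_1}\bR^3$, which gives surjections in both directions and hence, by Hopficity, an isomorphism preserving the peripheral structure. Then Gordon--Luecke together with the peripheral data identifies the knots up to mirror image, and the Seifert framing and orientation data fix the chirality. I expect this two-sided, Hopfian argument to be the cleanest route, if the symmetry is valid.
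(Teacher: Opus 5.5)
\textbf{Main gap: excluding the unknot.} The decisive gap is ruling out that $K_1$ is the unknot, and no argument built only from an epimorphism of knot groups can do it. For every knot $K_0$, abelianization $\pi_1(\bR^3\setminus K_0)\to\bZ$ is an epimorphism onto the unknot group. It sends $m_0$ to the meridian and $l_0$ to $1$, which is the Seifert longitude of the unknot. So the unknot satisfies every constraint of the form ``there is an ($m,l$-preserving) epimorphism'', and likewise $\tilde A=1$ of the unknot divides everything.

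The paper excludes this case only by citing \cite[Theorem 1.1]{O25}, which rests on Legendrian contact homology of $\Lambda_{K_i}$ with coefficients in $\bZ[H_2(U^*\bR^3,\Lambda_{K_i})]$. This result is not in \cite{O23}. The unknot result there, \cite[Theorem 1.3]{O23}, is the reverse implication ($K_0$ unknotted $\Rightarrow$ $K_1$ unknotted), reproved here as Theorem \ref{thm-unknot}.

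Your fallback of running the argument backwards with $\varphi^{-1}$ also fails. Applying $\varphi^{-1}$ gives $T^*_{K_0}\bR^3\cap\varphi^{-1}(0_{\bR^3})=\varphi^{-1}(K_1)$, i.e.\ a displaced \emph{zero section} meeting the conormal of $K_0$. It does not give a displaced conormal of $K_1$ meeting $0_{\bR^3}$ cleanly along $K_0$, and $\varphi^{-1}(T^*_{K_1}\bR^3)$ has no reason to meet $0_{\bR^3}$ cleanly along $K_0$. The setup is genuinely asymmetric. If it were not, Hopficity would give rigidity for \emph{every} knot and answer Question \ref{Q-1} outright.

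\textbf{Two further steps need repair.}

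(i) $M_\varphi$ is not $\bR^3\setminus K_1$ up to a collar. What is glued to $\bR^3\setminus N_{K_1}$ is a truncation of $L_\varphi\cong T^*_{K_0}\bR^3\setminus\varphi^{-1}(K_1)$. This is the complement of a possibly knotted curve in a solid torus, and compact support of $\varphi$ does not make it a trivial cobordism; this is why the paper needs Gabai's theorem and the JSJ analysis. The passage to $\pi_1(\bR^3\setminus K_1)$ is instead the van Kampen map $\bar r_1$, induced by the homological retraction $r_1\colon\pi_1(L_\varphi)\to\pi_1(\Lambda_{K_1})$. The epimorphism the paper actually uses for this theorem is $\bar r_1\circ h$; the composite $\bar r_1\circ\tilde h$ gives Corollary \ref{cor-peripheral-preserve}.

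(ii) Your rigidity step does not exclude $K_1=T_{(2,q')}$ with $q'\mid q$, $q'\neq q$, when $q$ is composite. Genus monotonicity and Alexander divisibility are both compatible with such a $K_1$. Also, Ohtsuki--Riley--Sakuma and Agol--Liu do not prove that images of $2$-bridge knot groups are $2$-bridge.

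The paper uses Silver--Whitten (Proposition \ref{prop-epi-torus}) to show $K_1$ is either the unknot or $T_{(2,q')}$ with $q'\mid q$. It then forces $q'=q$ by \cite[Theorem 1.5]{O23}, or alternatively by Corollary \ref{cor-torus}. With the $m,l$-preserving epimorphism of Corollary \ref{cor-peripheral-preserve} you could argue directly instead. The central fiber class $m_0^{2q}l_0$ must map to $m_1^{2q}l_1$, and this lies in the center $\langle m_1^{2q'}l_1\rangle$ only if $q'=q$. Your $l_0\mapsto l_1^d$ is not enough for this, because $d$ then absorbs the discrepancy.

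Similarly, for $4_1$, irreducibility of $\Delta_{4_1}$ only gives $\Delta_{K_1}\in\{1,\Delta_{4_1}\}$, which does not identify $K_1$. The paper instead uses the minimality of $4_1$ \cite[Theorem 2.1]{KitanoSuzukiII}.
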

Notably, this gives a negative answer to Question \ref{Q-1} when $K$ is the $(2,q)$-torus knot or the figure-eight knot.

\begin{remark}
When $K_0$ has a knot type as in Theorem \ref{thm-2}, one can see that $K_1$ is either isotopic to $K_0$ or the unknot
by using Theorem \ref{thm-1} and some known results about the fundamental groups of knot complements.
(We remark that Proposition \ref{prop-GL_n-rep} before enhanced to Theorem \ref{thm-1} is sufficient for this purpose.) 
The case where $K_1$ is the unknot (when $K_0$ is not the unknot) is excluded by \cite[Theorem 1.1]{O25}.
The proof of \cite[Theorem 1.1]{O25} relies on computations of the Legendrian contact homology of $\Lambda_{K_i}$ with coefficients in $\bZ[H_2(U^*\bR^3,\Lambda_{K_i})]$ for $i=0,1$, where $U^*\bR^3$ is the unit cotangent bundle of $\bR^3$, and at present the authors do not have its alternative proof using microlocal sheaf theory. 
\end{remark}

We also obtain a constraint on the isotopy class of $\varphi^{-1}(K_1)= T^*_{K_0}\bR^3\cap \varphi^{-1}(0_{\bR^3})$ in the $3$-manifold $T^*_{K_0}\bR^3\cong S^1\times \bR^2$.
\begin{theorem}
Suppose that $K_0$ is isotopic to $K_1$ in $\bR^3$.
Then, $\varphi^{-1}(K_1)$ is isotopic to $K_0=T^*_{K_0}\bR^3 \cap 0_{\bR^3}$ in $T^*_{K_0}\bR^3$.
\end{theorem}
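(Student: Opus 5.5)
The plan is to realize $M_{\varphi}$ as a satellite-type gluing, and then use Theorem \ref{thm-1} to show that the pattern piece is trivial. Set $K'\coloneqq \varphi^{-1}(K_1)\subset T^*_{K_0}\bR^3\cong S^1\times\bR^2$. By construction (Figure \ref{figure-surgery}), $M_{\varphi}$ is obtained by gluing two pieces along a torus:
\[
E(K_1)\coloneqq \bR^3\setminus \nu(K_1)
\quad\text{and}\quad
W\coloneqq \varphi(T^*_{K_0}\bR^3)\setminus \nu(K_1)\cong (S^1\times\bR^2)\setminus \nu(K').
\]
The end of $W$ at infinity is $\sigma_0(\Lambda_{K_0})$. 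Compactifying this end, $M_{\varphi}$ becomes a compact $3$-manifold $\overline{M}$ with one torus boundary, and
\[
\overline{M}=E(K_1)\cup_{T} W',
\]
where $W'$ is the solid torus $V$ with a neighbourhood of $K'$ removed. I will first compute homology via Mayer--Vietoris and use the clean intersection condition (the surgery is along the whole of $K_1$). The aim is to check that $H_1(\overline{M})\cong\bZ$ and that the winding number $w$ of $K'$ in $V$ is $\pm1$. The statement to prove is exactly that $W'\cong T^2\times[0,1]$, i.e.\ that $K'$ is isotopic to the core of $V$.

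If $K_0$, and hence $K_1$, is the unknot, the statement should follow from \cite[Theorem 1.3]{O23}, so assume $K_0$ is nontrivial. Then $\partial E(K_1)$ is incompressible in $E(K_1)$.

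Next, I turn $\tilde h$ from Theorem \ref{thm-1} into a map. Since $\tilde h$ is surjective and peripheral-pair-preserving, and $\overline{M}$ is aspherical, I would realize $\tilde h$ by a map $f\colon (E(K_0),\partial)\to(\overline{M},\partial)$ that restricts to $\sigma_0\circ e_0^{-1}$ on the boundary. This $f$ has degree $\pm1$, because it sends the longitude and meridian to the longitude and meridian. A degree-one map does not increase Thurston norm, so
\[
g(\overline{M})\le g(K_0)=g(K_1),
\]
where $g(\overline{M})$ is the minimal genus of a surface in $\overline{M}$ bounded by the longitude of $\sigma_0(\Lambda_{K_0})$.

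On the other hand, I will prove a Schubert-type formula $g(\overline{M})=g(W')+|w|\,g(K_1)$ for the satellite decomposition. Here $g(W')$ is the minimal genus of a surface in $W'$ bounded by the outer longitude and meridian-multiples of the inner torus. The proof cuts a minimal surface along the torus $T$ and uses incompressibility of $T$ on the $E(K_1)$ side. It requires $T$ to be incompressible in $W'$ as well. If $T$ compresses in $W'$, then $K'$ lies in a ball in $V$, so $w=0$, contradicting $w=\pm1$.

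Combining the two estimates with $|w|=1$ gives $g(W')=0$. A pattern in a solid torus with winding number $\pm1$ whose meridian-to-longitude surface is planar is the core: the planar surface is a meridian disk of $V$ punctured once by $K'$, and irreducibility shows that $K'$ is isotopic into a neighbourhood of the core. This is Schubert's classical lemma, so this step closes the argument.

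I expect the main obstacle to be the second step. One issue is making sure that $\tilde h$ really is induced by a degree $\pm1$ map compatible with the Seifert framings, in particular that the longitude of $\Lambda_{K_0}$ goes to the null-homologous longitude of $\overline{M}$. The other issue is justifying the genus additivity in $\overline{M}$, which needs incompressibility on both sides and a careful treatment of the framing of $\nu(K_1)$ in $W'$. If the Thurston norm route gives trouble, my fallback is to compare the JSJ decompositions: $\overline{M}$ contains $E(K_1)\cong E(K_0)$ across an incompressible torus, while $\tilde h$ gives a peripheral-pair-preserving surjection from $\pi_1(E(K_0))$. Combining Gromov norm monotonicity with Hopficity of knot groups, I would try to force $W'$ to be a product.
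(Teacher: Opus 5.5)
Your route differs from the paper's, and its overall architecture is sound: realize $\tilde h$ by a degree-one map $E(K_0)\to\overline M$, bound the genus of $(\sigma_0)_*(l_0)$ in $\overline M$ by $g(K_0)$ via Gabai's singular-norm theorem, then cut a minimal surface along $T=\partial\nu(K')$. The last step, however, does not work as written.

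The gluing here is not the satellite one. $E(K_1)$ sits on the \emph{inner} torus, and $\sigma_1\circ e_1^{-1}$ sends $m_1$ to the meridian of $K'$. So the curves of $S\cap T$ are longitudes of $K'$, not meridians; in fact a surface in $W'$ bounded by $(\sigma_0)_*(l_0)$ and meridians of $K'$ cannot exist for homological reasons. Likewise the outer boundary of $S\cap W'$ is the longitude $(\sigma_0)_*(l_0)$ of $V$, not a meridian. You therefore never obtain a once-punctured meridian disk, and even if you did, irreducibility would not rule out a local knot. More seriously, counting genus is too weak: $S\cap W'$ could be planar with many boundary circles on $T$, so $g(W')=0$ does not force triviality.

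The fix is to count Euler characteristic instead.
\begin{itemize}
\item Take $S$ of minimal genus with $\partial S=(\sigma_0)_*(l_0)$, in minimal position with respect to $T$. This uses Lemma \ref{lem-LO} and the incompressibility of $T$ on both sides, which you verified.
\item Homology, together with $w=\pm1$ and $b=0$ (Proposition \ref{prop-b=0}), shows that $S\cap T$ consists of parallel copies of $l_1$ with algebraic sum $\pm1$.
\item Hence $-\chi(S\cap E(K_1))\ge 2g(K_1)-1$, while $-\chi(S)=2g(S)-1\le 2g(K_0)-1=2g(K_1)-1$. So $\chi(S\cap W')\ge 0$.
\item No component of $S\cap W'$ is a disk, so it is a union of annuli. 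The annulus containing $(\sigma_0)_*(l_0)$ ends on a longitude of $K'$, and sweeping across it isotopes $K'$ onto a longitude of $V$, i.e.\ onto the core.
\end{itemize}
Separately, \cite[Theorem 1.3]{O23} only gives that $K_1$ is unknotted. For the unknot case you need Theorem \ref{thm-unknot}: surjectivity of $h$ gives $\pi_1(M'_{\varphi})\cong\bZ$, and then Theorem \ref{thm-Gabai} applies.

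For comparison, the paper uses much less. It needs only the surjection $h$ of Proposition \ref{prop-h-surj}, not Theorem \ref{thm-1} (whose proof requires conjugacy separability and JSJ theory), and no Thurston norm.
\begin{itemize}
\item Knot groups are Hopfian, so $k=\bar r_1\circ h$ is an isomorphism, and hence so is $\bar r_1$.
\item In the amalgam $\pi_1(M_\varphi)$, $(e_1)_*$ is injective, and $(\sigma_1)_*$ is injective via the retraction $r_1$. This forces $(\sigma_1)_*\colon\pi_1(\Lambda_{K_1})\to\pi_1(L_\varphi)$ to be an isomorphism.
\item Consequently a nontrivial surgery on $K'$ in the solid torus has $\pi_1\cong\bZ$, and Theorem \ref{thm-Gabai} finishes the argument.
\end{itemize}
Your route is heavier. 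In return, once repaired, it needs only $g(K_0)=g(K_1)$ rather than an isomorphism of knot groups, and it yields $g(K_1)\le g(K_0)$ in general.
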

Note that for any pair of isotopic knots $(K_0,K_1)$ in $\bR^3$, there always exists $\varphi \in \Ham_c(T^*\bR^3)$ such that $\varphi(T^*_{K_0}\bR^3)$ intersects $0_{\bR^3}$ cleanly along $K_1$.

In the following three subsections, we present relations between the knots $K_0,K_1\subset \bR^3$ in terms of some knot invariants.

\subsubsection{Surjective homomorphism between knot groups}

Fix orientations of $K_0$ and $K_1$.
They are equipped with the Seifert framing.
Then, for $i=0,1$, we have two elements $l_i,m_i \in \pi_1(\bR^3\setminus K_i)$ representing the longitude and the meridian respectively.

\begin{theorem}[Corollary \ref{cor-peripheral-preserve}]
There exists a surjective group homomorphism
\[\tilde{k} \colon \pi_1(\bR^3\setminus K_0) \to \pi_1(\bR^3 \setminus K_1)\]
such that by changing the orientation of $K_1$ if necessary, 
$\tilde{k}(l_0)=l_1$ and $\tilde{k}(m_0)=m_1$.
\end{theorem}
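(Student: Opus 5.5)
The plan is to deduce the statement from Theorem \ref{thm-1} by composing $\tilde{h}$ with a homomorphism $\pi_1(M_{\varphi})\to\pi_1(\bR^3\setminus K_1)$ induced by a suitable map $M_{\varphi}\to \bR^3\setminus K_1$, and then to check what happens to longitude and meridian.

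\textbf{Step 1: the structure of $M_{\varphi}$.} Since $\varphi(T^*_{K_0}\bR^3)$ agrees with $T^*_{K_1}\bR^3$ near $0_{\bR^3}$ (Figure \ref{figure-surgery}), the surgered manifold $M_{\varphi}$ is obtained by gluing two pieces along a torus $T$:
\begin{itemize}
\item a copy of $\bR^3\setminus \nu(K_1)$, coming from the zero section minus a tubular neighborhood of $K_1$;
\item a piece $N$ diffeomorphic to $T^*_{K_0}\bR^3\setminus \varphi^{-1}(\nu)$, where $\nu$ is a neighborhood of $K_1$.
\end{itemize}
The piece $N$ has two boundary components: $T$ and the end $\sigma_0(\Lambda_{K_0})$. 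Under the gluing, the meridian of $K_1$ in the zero section is identified with the fiber circle of the conormal $\Lambda_{K_1}$, and the longitude with the direction along $K_1$.

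\textbf{Step 2: collapse $N$.} I would construct a map $p\colon M_{\varphi}\to \bR^3\setminus K_1$ that is the identity on the zero-section piece and sends $N$ into a collar of $\partial\nu(K_1)$. For this, I would use the identification of $N$, up to homotopy, with $T^2\times[0,1]$ or with a knot complement in $S^1\times \bR^2$ and then project onto $T$. The easier route is to extend the identity of $\bR^3\setminus\nu(K_1)$ over $N$ via a retraction of $N$ onto the torus $\partial\nu(K_1)$. Such a retraction is not automatic. I would first try to deduce it from Theorem \ref{thm-1}: the map $(\sigma_0)_*$ factors through the surjection $\tilde h$, which heavily constrains $\pi_1(N)$.

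\textbf{Step 3: define $\tilde k$ and track the peripheral elements.} Set $\tilde{k}=p_*\circ\tilde{h}$; it is surjective as soon as $p_*$ is. Since $\tilde h\circ(e_0)_*=(\sigma_0)_*$, we get
\[
\tilde k(l_0),\ \tilde k(m_0)\ \text{determined by}\ p_*\circ(\sigma_0)_*\ \text{on}\ \pi_1(\Lambda_{K_0})\cong\bZ^2 .
\]
So I must compute $p\circ\sigma_0\colon \Lambda_{K_0}\to \partial\nu(K_1)$ on $H_1$. Under $e_0$, the meridian $m_0$ corresponds to the fiber of $\Lambda_{K_0}$ and the Seifert longitude $l_0$ to a section.

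\begin{itemize}
\item \textbf{Fibers.} The Hamiltonian isotopy carries the fiber sphere at infinity to the conormal fiber of $K_1$. Hence the fiber goes to $m_1^{\pm1}$, and reorienting $K_1$ fixes the sign.
\item \textbf{Sections.} A section goes to $l_1 m_1^{a}$ for some integer $a$, because the induced map on $H_1$ of the torus must be unimodular for $\tilde k$ to be surjective.
\item \textbf{Framing.} Homologically, $l_0$ is trivial in $H_1(\bR^3\setminus K_0)$. Applying $\tilde k$, its image $l_1m_1^{a}$ is trivial in $H_1(\bR^3\setminus K_1)=\bZ\langle m_1\rangle$, which forces $a=0$. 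This is where the Seifert framing is used.
\end{itemize}

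\textbf{Main obstacle.} The hard step is Step 2: producing $p$, or more generally a surjection $\pi_1(M_{\varphi})\to\pi_1(\bR^3\setminus K_1)$ compatible with peripheral tori. An alternative route is via $M_{\varphi}$ alone. Here I would argue, using the JSJ decomposition of the compact core of $M_\varphi$, that $\pi_1(\bR^3\setminus K_1)$ is a retract of $\pi_1(M_{\varphi})$, with the peripheral subgroup of $\Lambda_{K_0}$ mapping onto the peripheral subgroup of $K_1$. I would attempt this by showing that $T$ is incompressible, that $N$ has the homotopy type needed, and then using the algebraic constraints coming from $\tilde h$.
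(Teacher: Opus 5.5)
Your architecture is the paper's. There, $\tilde k=\bar r_1\circ\tilde h$, where $\bar r_1\colon\pi_1(M_\varphi)\to\pi_1(\bR^3\setminus K_1)$ is the identity on the $\bR^3\setminus N_{K_1}$ piece and collapses the $L_\varphi$ piece onto $\Lambda_{K_1}$. However, the step you flag as the main obstacle is left unresolved, and the routes you suggest for it will not get you there. Extracting a retraction from Theorem~\ref{thm-1} does not work, and a JSJ argument that needs $T$ to be incompressible breaks down when $K_1$ is the unknot.

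The missing idea is elementary. By Proposition~\ref{prop-H1L}, $(\sigma_1)_*\colon H_1(\Lambda_{K_1};\bZ)\to H_1(L_\varphi;\bZ)$ is an isomorphism. Since $\pi_1(\Lambda_{K_1})\cong\bZ^2$ is abelian, the composite $r_1$ of the Hurewicz map $\pi_1(L_\varphi)\to H_1(L_\varphi)$ with $(\sigma_1)_*^{-1}$ and $H_1(\Lambda_{K_1})\cong\pi_1(\Lambda_{K_1})$ is a group-level retraction of $(\sigma_1)_*$. Seifert--van Kampen then gives $\bar r_1$: it is $(e_1)_*\circ r_1$ on $\pi_1(L_\varphi)$ and the isomorphism $\pi_1(\bR^3\setminus N_{K_1})\cong\pi_1(\bR^3\setminus K_1)$ on the other piece, so it is automatically surjective. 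No incompressibility or JSJ input is needed. If you want a space-level $p$, realize $r_1$ by a map into the torus, which is a $K(\bZ^2,1)$, and make it the identity on $T$ by homotopy extension.

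Your Step~3 also rests on invalid justifications. Surjectivity of $\tilde k$ does not force its restriction to $\pi_1(\Lambda_{K_0})$ to be unimodular. For a nontrivial knot $K$, the fold map $\pi_1(\bR^3\setminus(K\#K))\to\pi_1(\bR^3\setminus K)$, the identity on both factors of the amalgam over the meridian, is surjective but sends the longitude to $l^{2}$. Nor is ``the isotopy carries the fiber sphere at infinity to the conormal fiber of $K_1$'' an argument, since $\varphi$ is compactly supported and fixes the end $\sigma_0(\Lambda_{K_0})$.

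Both facts are read off from Proposition~\ref{prop-H1L}, which is Floer-theoretic input from \cite{O25}. It gives $r_1\circ(\sigma_0)_*\colon m_0\mapsto m_1^{a}$ and $l_0\mapsto l_1^{a}m_1^{\pm b}$ with the \emph{same} sign $a=\pm1$ in both. This also settles the sign of $l_1$, which a unimodularity argument would leave open. Your abelianization argument for killing the $m_1$-exponent is correct. Given the exact identity $\tilde h\circ(e_0)_*=(\sigma_0)_*$, it is a slicker substitute for Proposition~\ref{prop-b=0}. Reorienting $K_1$ then gives $a=1$.
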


There have been a number of studies on surjective homomorphisms between knot groups.
For example, see \cite{KSW, SW, KitanoSuzukiII, SW2, ORS, AgLiu, ALSS, BKN}.
See also Remark \ref{rem-epi-knot} and Remark \ref{rem-epi-peripheral}.

\begin{remark}
Our setup about clean Lagrangian intersections gives
a new geometric situation where an epimorphism between knot groups emerges.
\end{remark}

\subsubsection{On enhanced A-polynomials}

For a knot $K$, there is a polynomial $A_K \in \bZ [\lambda,\mu]$ associated to the character variety of $\SL_2(\bC)$-representations of $\pi_1(\bR^3\setminus K)$, so called the \textit{A-polynomial} of $K$.
It is a classical knot invariant introduced by \cite{CCGLS}.
Ni-Zang \cite{NiZ} considered a slight enhancement of A-polynomial, which we denote by $\tilde{A}_K$.
It is a polynomial in $\bZ[\lambda,\mu]$ defined by omitting curves from reducible $\SL_2(\bC)$-representations of $\pi_1(\bR^3\setminus K)$.

\begin{theorem}[Theorem \ref{thm-divide-A}]\label{thm-intro-tildeA}
$\tilde{A}_{K_0}$ is divisible by $\tilde{A}_{K_1}$.
\end{theorem}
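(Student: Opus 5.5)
We derive the statement from the peripheral-pair-preserving epimorphism $\tilde{k}\colon \pi_1(\bR^3\setminus K_0)\to \pi_1(\bR^3\setminus K_1)$ of Corollary \ref{cor-peripheral-preserve}, which sends $l_0\mapsto l_1$ and $m_0\mapsto m_1$ after possibly reorienting $K_1$. The argument is the standard one showing that A-polynomials divide along such epimorphisms.

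\textbf{Step 1: pulling back representations.} Write $G_i=\pi_1(\bR^3\setminus K_i)$. Precomposition with $\tilde{k}$ defines a morphism of representation varieties
\[
\tilde{k}^*\colon R(G_1)=\mathrm{Hom}(G_1,\SL_2(\bC))\to R(G_0).
\]
Since $\tilde{k}$ is surjective, $\tilde{k}^*$ is injective (a closed embedding). It also preserves irreducibility: the image $\rho\circ\tilde{k}(G_0)$ equals $\rho(G_1)$, so $\rho$ and $\rho\circ\tilde k$ have the same invariant lines. Consequently $\tilde{k}^*$ maps each component of $R(G_1)$ containing irreducibles into a component of $R(G_0)$ containing irreducibles. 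Passing to characters, it induces a morphism $X(G_1)\to X(G_0)$ that is injective on the irreducible locus.

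\textbf{Step 2: compatibility with the eigenvalue map.} Recall how $\tilde{A}_K$ is built. One restricts to the peripheral subgroup $\langle l,m\rangle$, upper-triangularizes, and records the eigenvalue pair $(\lambda,\mu)$. Then $\tilde{A}_K$ is the product of the defining polynomials of the one-dimensional Zariski closures of the images of the irreducible-containing components (with the Ni--Zhang normalization of multiplicities/reducedness). Because $\tilde{k}(l_0)=l_1$ and $\tilde{k}(m_0)=m_1$, the eigenvalue map satisfies $\mathrm{ev}_0\circ\tilde{k}^*=\mathrm{ev}_1$ on the upper-triangularized varieties. Hence for every component $C$ of $R(G_1)$ whose eigenvalue image is a curve $D$, the component of $R(G_0)$ containing $\tilde{k}^*(C)$ has eigenvalue image containing $D$. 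That image has dimension at most one, since these images are always of dimension $\le 1$ (Culler--Shalen / CCGLS). It is therefore exactly $D$, so the polynomial defining $D$ divides $\tilde{A}_{K_0}$.

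\textbf{Step 3: assembling the divisibility.} Distinct components of $X(G_1)$ land in distinct components or subvarieties of $X(G_0)$, because $\tilde{k}^*$ is injective. So each irreducible factor of $\tilde{A}_{K_1}$ arises from some component on the $K_0$ side. Multiplying these factors shows that $\tilde{A}_{K_1}$ divides $\tilde{A}_{K_0}$, once we check that the factors appear with at least the required multiplicity. If $\tilde{A}$ is taken square-free, only distinctness of curves matters. A possible orientation change of $K_1$ only replaces $(\lambda,\mu)$ by $(\lambda^{\pm1},\mu^{\pm1})$ symmetrically, under which $\tilde{A}$ is invariant up to units and monomials, so it does not affect the conclusion.

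\textbf{Main obstacle.} The delicate point is the bookkeeping in Step 3. The image $\tilde{k}^*(C)$ of an irreducible-containing component is contained in a component of $R(G_0)$ but need not be one. Two distinct components $C$, $C'$ of $R(G_1)$ with the same eigenvalue curve could also land in a single component of $R(G_0)$, which could cause a multiplicity loss if $\tilde{A}$ is defined with multiplicities. I would handle this by checking Ni--Zhang's precise definition, which I expect is square-free (a product over distinct curves). With that definition, divisibility reduces to the containment of the sets of curves established in Step 2.
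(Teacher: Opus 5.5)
Your argument is essentially the paper's proof. The paper also pulls back character-variety components along the epimorphism and uses surjectivity to keep non-reducible components off $X^{\mathrm{red}}_{K_0}$; it phrases this via the commutator description of reducible characters, which is equivalent to your invariant-line argument. It then uses peripheral compatibility to get containment of the boundary curves ($\dim\le 1$ by \cite[Lemma 2.1]{DG}), and concludes because $\tilde{A}$ is square-free by definition, so your multiplicity worry does not arise.

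The one difference is the peripheral input. You invoke the exact peripheral-preserving $\tilde{k}$ of Corollary \ref{cor-peripheral-preserve}. That is legitimate, since Section 5 does not depend on this theorem, but it rests on the heavier 3-manifold input of that section. The paper only needs $k$ together with Proposition \ref{prop-k-rep}, i.e.\ peripheral preservation up to simultaneous conjugacy in each representation, which is all that characters detect.
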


For any knot $K$, $A_K$ is the least common multiple of $\tilde{A}_K$ and $\lambda-1$, so this theorem implies that $A_{K_0}$ is divisible by $A_{K_1}$, which is already known by \cite[Proposition 4.2]{SW}.
For the proof of \cite[Proposition 4.2]{SW}, the surjectivity of a homomorphism between knot groups is not important, while the surjectivity of $k$ is essential for Theorem \ref{thm-intro-tildeA}.
We utilize the fact that the pullback by $k$ preserves the irreducibility of ($\SL_2(\bC)$-)representations.

\subsubsection{\texorpdfstring{On KCH representations and a relation to \cite[Theorem 1.1]{O23}}{On KCH representations and a relation to [Oka25a, Theorem 1.1]}}\label{subsubsec-intro-KCH}

Let $K$ be an oriented knot.
An $n$-dimensional representation  $\rho \colon \pi_1(\bR^3\setminus K) \to \GL_n(\bC)$ is called a \textit{KCH representation} if the image of the meridian is diagonalizable and has $1$ as an eigenvalue with multiplicity $n-1$.
It is known that a KCH representations induces an augmentation of the knot DGA of $K$ (or the Chekanov-Eliashberg DGA of $\Lambda_K$ with coefficients in $\bZ[H_1(\Lambda_K)]$) to $\bC$ \cite[Theorem 5.11]{Ng-intro}.
From the set of irreducible $n$-dimensional KCH representations, we will define a subset $\tilde{U}^n_K$ of $(\bC^*)^2$ in Section \ref{subsec-KCH-rep}.
The result by Cornwell \cite[Theorem 1.2]{cornwell-2} implies that
the augmentation variety $V_K$ of $K$, which was defined by Ng \cite[Definition 5.4]{Ng} and used in \cite{O23}, can be recovered by $V_K = (\bC^*\times \{1\}) \cup \bigcup_{n=0}^{\infty} \tilde{U}^n_K$.

\begin{theorem}[Theorem \ref{thm-KCH}]
For every $n\in \bZ_{\geq 0}$, $\tilde{U}^n_{K_1}$ is a subset of $ \tilde{U}^n_{K_0}$.
\end{theorem}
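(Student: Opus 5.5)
The plan is to pull back KCH representations along the peripheral-pair-preserving epimorphism $\tilde{k}\colon \pi_1(\bR^3\setminus K_0)\to\pi_1(\bR^3\setminus K_1)$ of Corollary \ref{cor-peripheral-preserve}, chosen so that $\tilde{k}(l_0)=l_1$ and $\tilde{k}(m_0)=m_1$ (after reorienting $K_1$ if needed). I expect $\tilde{U}^n_K$ to be defined as the set of pairs $(\lambda,\mu)\in(\bC^*)^2$ recording the eigenvalues of $\rho(l)$ and $\rho(m)$ on a common eigenvector, where $\rho$ ranges over irreducible $n$-dimensional KCH representations. Since $l$ and $m$ commute, $\rho(l)$ preserves the eigenspaces of $\rho(m)$. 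The natural datum is the eigenvalue $\mu\neq 1$ of $\rho(m)$ (or $\mu=1$ in degenerate cases) together with the eigenvalue $\lambda$ of $\rho(l)$ on the corresponding eigenline. Such a pair depends only on $(\rho(l),\rho(m))$.

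Given $(\lambda,\mu)\in\tilde{U}^n_{K_1}$, realized by an irreducible KCH representation $\rho_1\colon\pi_1(\bR^3\setminus K_1)\to\GL_n(\bC)$, set $\rho_0\coloneqq\rho_1\circ\tilde{k}$. The proof then has three checks.
\begin{enumerate}
\item $\rho_0$ is KCH. We have $\rho_0(m_0)=\rho_1(m_1)$, which is diagonalizable with eigenvalue $1$ of multiplicity $n-1$.
\item $\rho_0$ is irreducible. Because $\tilde{k}$ is surjective, $\rho_0$ and $\rho_1$ have the same image in $\GL_n(\bC)$, hence the same invariant subspaces.
\item $\rho_0$ has the same eigenvalue data. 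From $\rho_0(l_0)=\rho_1(l_1)$ and $\rho_0(m_0)=\rho_1(m_1)$, the pair extracted from $\rho_0$ is exactly $(\lambda,\mu)$.
\end{enumerate}
Together these give $(\lambda,\mu)\in\tilde{U}^n_{K_0}$.

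The main obstacle is orientation. Corollary \ref{cor-peripheral-preserve} only gives $\tilde{k}(m_0)=m_1$ and $\tilde{k}(l_0)=l_1$ after possibly reversing the orientation of $K_1$. Reversal replaces $(l_1,m_1)$ by conjugates of $(l_1^{-1},m_1^{-1})$, which would send $(\lambda,\mu)$ to $(\lambda^{-1},\mu^{-1})$. So I must show that $\tilde{U}^n_{K_1}$ does not depend on the orientation of $K_1$. I would first check this symmetry directly. Precomposing $\rho$ with the reversal automorphism inverts both peripheral elements, and $\rho(m)^{-1}$ is still diagonalizable with eigenvalue $1$ of multiplicity $n-1$, so KCH is preserved. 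The resulting change in the eigenvalue data must then be matched by the paper's convention. If the convention is invariant under $(\lambda,\mu)\mapsto(\lambda^{-1},\mu^{-1})$, or is stated for unoriented knots, the issue disappears.

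If not, I would instead use the dual (contragredient) representation $\rho^{\vee}=(\rho^{T})^{-1}$. It is again irreducible and KCH, and it inverts eigenvalues. Composing it with the reversal should send the data $(\lambda^{-1},\mu^{-1})$ back to $(\lambda,\mu)$. A smaller point is conjugation: $\tilde{k}(l_0)$ and $\tilde{k}(m_0)$ may equal $l_1$ and $m_1$ only up to a common conjugation depending on base points. This is harmless because eigenvalue data are conjugation invariant.
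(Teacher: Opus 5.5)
Your proof is correct. Its skeleton matches the paper's:
\begin{itemize}
\item pull an irreducible KCH representation back along a surjection;
\item get irreducibility from surjectivity;
\item dispose of the orientation of $K_1$ with the contragredient $g\mapsto {}^{t}\rho(g)^{-1}$, which is exactly the symmetry the paper notes just before the theorem.
\end{itemize}

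Where you differ is the input used to match peripheral data. You invoke the peripheral-pair-preserving $\tilde{k}$ of Corollary \ref{cor-peripheral-preserve}, which gives genuine group-level equalities $\tilde{k}(m_0)=m_1$ and $\tilde{k}(l_0)=l_1$. The paper instead uses the original epimorphism $k=\bar{r}_1\circ h$ together with Proposition \ref{prop-k-rep}. That proposition only asserts that, for each representation $\rho$, the pair $\bigl((\rho\circ k)(m_0),(\rho\circ k)(l_0)\bigr)$ is simultaneously conjugate to $\bigl(\rho(m_1),\rho(l_1)\bigr)$. Both the KCH condition and the pair $(\lambda_\rho,\mu_\rho)$ are invariant under simultaneous conjugation, so this representation-level statement already suffices. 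It follows directly from Proposition \ref{prop-GL_n-rep}, the output of the sheaf-theoretic argument.

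Your route is logically sound, since Section \ref{subsec-peripheral} does not depend on Theorem \ref{thm-KCH}, and it makes the peripheral check a tautology. However, it rests on the much deeper Theorem \ref{thm-ml-preserve}: conjugacy separability of $3$-manifold groups, the JSJ decomposition, and malnormality of peripheral subgroups. None of that is needed for this statement.

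A minor point: the paper's definition requires $\mu\neq 1$, so there is no ``degenerate'' case $\mu=1$ to consider.
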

In particular, we obtain a relation
\[V_{K_1}\subset V_{K_0},\]
which is a refinement of \cite[Theorem 1.1]{O23}.

\subsection{Idea of proof}\label{subsec-idea-proof}

As we will discuss in Section \ref{subsec-clean},
we may assume that $\varphi(T^*_{K_0}\bR^3)$ agrees with $T^*_{K_1}\bR^3$ in a neighborhood of $0_{\bR^3}$ as drawn in the left-hand side of Figure \ref{figure-surgery}.
By a symplectomorphism
\[F\colon \bR\times U^*\bR^3 \to T^*\bR^3 \setminus 0_{\bR^3} ,\ (r, (x;\xi)) \mapsto (x;e^r \xi),\]
we obtain an exact Lagrangian cobordism $L_{\varphi} \coloneqq F^{-1}(\varphi(T^*_{K_0}\bR^3)\setminus K_1)$ in $\bR\times U^*\bR^3$ whose positive end is $\Lambda_{K_0}$ and negative end is $\Lambda_{K_1}$.

The proof of Theorem \ref{thm-1} relies on microlocal sheaf theory \cite{KS90, Gui23}.
Fix a field $\bfk$.
For any $C^{\infty}$ manifold $X$, $\Sh(X)$ denotes the ($\infty$-)category of sheaves valued in the ($\infty$-)category of complexes of $\bfk$-vector spaces on $X$.
For any closed subset $A$ of $T^*X$, $\Sh_A(X)$ denotes the full subcategory consisting of sheaves whose microsupports are contained in $A$.
In addition, let $\Loc(X)$ denote the full subcategory of $\Sh(X)$ consisting of cohomologically locally constant sheaves.
For their precise definitions, see Section \ref{subsec-prelim}.

The idea of the proof is derived from the two preceding works:
\begin{itemize}
\item Li \cite{Li, Li2} showed that
given an exact Lagrangian cobordism between Legendrian submanifolds of the boundary of a Weinstein manifold,
we can associate a fully faithful functor between certain categories of microsheaves.
In the present case, we obtain a fully faithful functor associated to $L_{\varphi}$
\begin{align}\label{cob-functor} \Phi_{L_{\varphi}} \colon \Sh_{0_{\bR^3}\cup T^*_{K_1}\bR^3 }(\bR^3) \times_{\Loc(\Lambda_{K_1})} \Loc (L_{\varphi}) \hookrightarrow \Sh_{0_{\bR^3}\cup T^*_{K_0}\bR^3}(\bR^3) . 
\end{align}
\item
Shende \cite{Shende} proved that the Legendrian isotopy class of $\Lambda_K$ is a complete isotopy invariant of knots $K$.
To recover the fundamental group of $\bR^3\setminus K$
from the category $\Sh_{0_{\bR^3}\cup T^*_{K}\bR^3} (\bR^3)$,
a semiorthogonal decomposition 
\[\Sh_{0_{\bR^3}\cup T^*_{K}\bR^3} (\bR^3) = \la \Loc(K) , \Loc(\bR^3\setminus K) \ra\]
was used in \cite[Theorem 7]{Shende}.
For the precise definition of this decomposition, see \cite[Section 2]{Shende}. 
\end{itemize}

Combining the functor $\Phi_{L_{\varphi}}$ and the semiorthogonal decomposition, we aim to relate $\Loc(\bR^3\setminus K_0)$ with $\Loc(\bR^3\setminus K_1)$ and then extract a relation between the fundamental groups of $\bR^3\setminus K_0$ and $\bR^3\setminus K_1$. 
However, there is a technical difficulty due to the factor $\Loc(L_{\varphi})$ in the domain of $\Phi_{L_{\varphi}}$.
That is, in the category $\Sh_{0_{\bR^3}\cup T^*_{K_1}\bR^3}(\bR^3) \times_{\Loc(\Lambda_{K_1})} \Loc(L_{\varphi})$, we do not have a semiorthogonal decomposition analogous to $\Sh_{0_{\bR^3}\cup T^*_{K_0}\bR^3}(\bR^3)$.
(In this paper, we use the notion of the split Verdier sequence instead of the semiorthogonal decomposition for $\infty$-categories.)

\subsection{Comparison with an approach using Chekanov-Eliashberg DGAs}

In the present setting, the Chekanov-Eliashberg DGA
$(\mathcal{A}_*(\Lambda_{K_i}), \partial)$ of $\Lambda_{K_i}$ is defined with coefficients in $\bZ[H_1 (\Lambda_{K_i})]$.
Moreover, we can construct a DGA map over $\bZ[H_1(\Lambda_{K_0})]$
\[  \Psi_{L_{\varphi}} \colon \mathcal{A}_*(\Lambda_{K_0})  \to \mathcal{A}_*(\Lambda_{K_1}) \otimes_{\bZ[H_1(\Lambda_{K_1})]} \bZ[H_1(L_{\varphi})] \]
associated to the exact Lagrangian cobordism $L_{\varphi}$. See \cite{Ekholm, EHK} and \cite[Section 3]{O23} for more details.
(In fact, there is an isomorphism $\bZ[H_1(\Lambda_{K_i})]\to \bZ[H_1(L_{\varphi})]$ for $i=0,1$ induced by a natural embedding, so we may identify these rings.)
Fix a field $\bfk$ and let $\mathit{Aug}(\Lambda_{K_i};\bfk)$ be an $A_{\infty}$-category defined as in \cite[Section 3.2.1]{BC} whose objects are maps $\epsilon \colon \mathcal{A}_*(\Lambda_{K_i})\to \bfk$, called \textit{augmentations} of the DGA $(\mathcal{A}_*(\Lambda_{K_i}), \partial)$ to $\bfk$.
We remark that this $A_{\infty}$-category is non-unital, and distinguished from the $A_{\infty}$-category $\mathit{Aug}_+(\Lambda)$ with unit defined by \cite{NRSSZ15} for Legendrian links $\Lambda$ in $\bR^3$.
For any augmentation $\epsilon$ of $\mathcal{A}_*(\Lambda_{K_1})$, the composite map $\epsilon\circ \Psi_{L_{\varphi}}$ is an augmentation of $\mathcal{A}_*(\Lambda_{K_0})$.
We expect that this correspondence is extended to a functor between the categories
\[ \Psi_{L_{\varphi}}^*\colon \mathit{Aug}(\Lambda_{K_1};\bfk) \to \mathit{Aug}(\Lambda_{K_0};\bfk) ,\  \epsilon \mapsto \epsilon \circ \Psi_{L_{\varphi}} . \]
A similar functor is mentioned in \cite[Section 1.1]{Li} and compared with the functor (\ref{cob-functor}).
As explained in \cite[Section 1.1]{Li} and \cite[Section 1.1]{Li2}, Li's work was motivated by a relation between a microlocal sheaf category and an augmentation category based on the work of Ganatra--Pardon--Shende \cite{GPS24} and Legendrian surgery formula \cite{BEE12, EL23,AE22}.

The advantage of the functor (\ref{cob-functor}) is its fully faithfulness, from which the surjectivity of the group homomorphism in Theorem \ref{thm-1} is derived.
To the authors' best knowledge, we do not know whether $\Psi_{L_{\varphi}}^*$ is fully faithful.
Note that for the categories $\mathit{Aug}_+(\Lambda)$ of Legendrian knots $\Lambda$,
a counterpart of \cite[Theorem 1.1]{Li} is known. 
See Remark \ref{rem-Pan} below.

\begin{remark}\label{rem-Pan}
Let $\Lambda_+$ and $\Lambda_-$ be Legendrian knots in $\bR^3$ with a standard contact structure.
Let $\Sigma$ be an exact Lagrangian cobordisms from $\Lambda_-$ to $\Lambda_+$.
Under certain conditions on $\Sigma$, Pan \cite[Theorem 1.6]{Pan} constructed a functor from $\mathit{Aug}_+(\Lambda_-)$ to $\mathit{Aug}_+(\Lambda_+)$ associated to $\Sigma$
and showed its fully faithfulness.
\end{remark}

We can deduce some information about the knot type of $K$ from $(\mathcal{A}_*(\Lambda_{K}),\partial)$.
The approach in \cite{O23} was to use the augmentation variety $V_K$ defined from the set of augmentations of $(\mathcal{A}_*(\Lambda_{K}),\partial)$ to $\bC$.
As mentioned in Section \ref{subsubsec-intro-KCH}, $V_{K}$ is closely related to KCH representations of $\pi_1(\bR^3\setminus K)$.
Ekholm-Ng-Shende \cite{ENS} considered an enhancement of $(\mathcal{A}_*(\Lambda_{K}),\partial)$ and showed that its homology together with an additional structure recovers the group $\pi_1(\bR^3\setminus K)$.
This is analogous to the process in \cite{Shende} recovering $\pi_1(\bR^3\setminus K)$ from $\Sh_{0_{\bR^3}\cup T^*_K\bR^3}(\bR^3)$.

\subsection{Organization of paper}

In Section \ref{sec-geometry}, we give a geometric setting about clean Lagrangian intersections. We  introduce some geometric objects, such as the $3$-manifold $M_{\varphi}$, and study their basic properties.
In Section \ref{sec-sheaf}, we apply microlocal sheaf theory to this setting.
In Section \ref{subsec-cobordism-functor}, a cobordism functor from \cite{Li} is introduced.
From this functor, we construct in Section \ref{subsec-hom-h} a group homomorphism $h$ from $\pi_1(\bR^3\setminus K_0)$ to $\pi_1(M_{\varphi})$. 
In Section \ref{subsec-surj}, we prove the surjectivity of $h$ (Proposition \ref{prop-h-surj}), together with Proposition \ref{prop-GL_n-rep} about representations of $\pi_1(\bR^3\setminus K_0)$ and $\pi_1(M_{\varphi})$.
In Section \ref{sec-constraint}, we see that the results in Section \ref{subsec-surj} give   constraints on the knot types of clean Lagrangian intersections in terms of knot groups, A-polynomials and KCH-representations.
In particular, combining with \cite[Theorem 1.1]{O25}, Theorem \ref{thm-2} is proved.
In Section \ref{subsec-peripheral}, we upgrade Proposition \ref{prop-GL_n-rep} to Theorem \ref{thm-1} by using $3$-manifold theory.

\

\noindent
\textbf{Acknowledgments.}
We thank Wenyuan Li for helpful explanations of his results and for valuable discussions. 
We are grateful to Yuta Nozaki for explanations and discussions on low-dimensional topology. 
We also thank Yuichi Ike for helpful discussions. 
TA is supported by JSPS KAKENHI Grant Number JP24K16920 and JST, CREST Grant Number JPMJCR24Q1, Japan. 
YO is supported by JSPS KAKENHI Grant Number JP25KJ0270.

\section{Geometric setup}\label{sec-geometry}

\subsection{Notation}
Let $X$ be a $C^{\infty}$ manifold.
Consider the cotangent bundle $T^*X$ of $X$.
For any $x\in X$, we write $(x;\xi) \in T^*X$ to denote $\xi \in T^*_x X$.
Let $\pi_X\colon T^*X \to X$ be the bundle projection.
The total space $T^*X$ has a canonical Liouville $1$-form $\lambda_X\in \Omega^1(T^*X)$ defined by $(\lambda_X)_{(x;\xi)} = \xi \circ (d\pi_X)_{(x;\xi)}$.
We denote the zero section by $0_{X}$ and identify it with $X$.
The group $\bR_+$ acts on $T^*X$ by $r\cdot (x;\xi) = (x;r\xi)$ for every $r\in \bR_+$.
The cosphere bundle of $X$ is defined by $T^{*,\infty}X \coloneqq (T^*X \setminus 0_{X})/\bR_+$.

When a Riemannian metric on $X$ is fixed,
the unit cotangent bundle of $X$ is defined by $U^*X \coloneqq \{(x;\xi)\in T^*X \mid |\xi|=1 \}$. There is a natural diffeomorphism
\[ \psi^{\infty} \colon U^*X \to T^{*,\infty}X ,\ (x;\xi) \mapsto [(x;\xi)].\]
Let us define a $1$-form $\alpha_{X} \coloneqq \rest{ \lambda_X}{U^*X}$ on $U^*X$. Then, it is a contact $1$-form on $U^*X$ and $d \psi^{\infty}(\ker \alpha_X)$ gives a canonical contact structure on $T^{*,\infty}X$.

For any $C^{\infty}$ submanifold $K$ of $X$, its conormal bundle $T^*_K X$ is a Lagrangian submanifold of $T^*X$ defined by
\[ T^*_K X \coloneqq \{(x;\xi)\in T^* X \mid x\in K,\ \la \xi, v \ra= 0 \text{ for every }v\in T_x K \}. \]
We define the unit conormal bundle of $K$ by
\[\Lambda_K \coloneqq T^*_KX \cap U^*X. \]
Since $\rest{\alpha_X}{\Lambda_K} \in \Omega^1(\Lambda_K)$ vanishes, $\Lambda_K$ is a Legendrian submanifold of $(U^*X,\alpha_X)$.

Let $H\colon T^*X \times [0,1] \to \bR ,\ ((x;\xi),s) \mapsto H_s(x;\xi)$ be a $C^{\infty}$ Hamiltonian with compact support.
Then, a flow $(\varphi^H_s)_{s\in [0,1]}$ on $T^*X$ is generated by a time-dependent vector field $(X_s)_{s\in[0,1]}$ determined by $(d\lambda_X)(\cdot , X_s) = d H_s$.
We define
\begin{align*} &\Ham_c(T^* X) \\
&\coloneqq \{ \varphi = \varphi_1^H \mid H\colon T^*X \times [0,1] \to \bR \text{ is a }C^{\infty} \text{ Hamiltonian with compact support} \} . \end{align*}

\subsection{Clean Lagrangian intersection along knots}\label{subsec-clean}

As a $C^{\infty}$ manifold of dimension $3$,
we focus on $\bR^3$. It is equipped with a standard metric $\sum_{i=1}^3 dx_i\otimes dx_i$, where $(x_1,x_2,x_3)$ is the coordinate of $\bR^3$. This metric fixes an identification between $T^*\bR^3$ and $T \bR^3$.
For any $r>0$, let us denote
$D_{r}^*\bR^3 \coloneqq\{ (x;\xi) \in T^*\bR^3 \mid |\xi| < r \}$.

We recall the definition of clean intersection of submanifolds.
\begin{definition}\label{def-clean-int}
Let $L_0$ and $L_1$ be $C^{\infty}$ submanifolds of $T^*\bR^3$.
We say $L_0$ and $L_1$ have a \textit{clean intersection} along $L_0\cap L_1$ if the following hold:
\begin{itemize}
\item $L_0\cap L_1$ is a $C^{\infty}$ submanifold of both $L_0$ and $L_1$.
\item $T_p(L_0\cap L_1)= T_p L_0 \cap T_pL_1$ for every $p\in L_0\cap L_1$.
\end{itemize}
\end{definition}

Let us give a geometric setup for Question \ref{Q-1}.
Let $K_0$ and $K_1$ be knots in $\bR^3$.
For $\varphi \in \Ham_c(T^*\bR^3)$, we suppose that $\varphi(T^*_{K_0}\bR^3)$ and $0_{\bR^3}$ have a clean intersection along $K_1$ in $0_{\bR^3}\cong \bR^3$
(recall Figure \ref{figure-clean}).
By \cite[Lemma 2.3]{O23}, there exists $\varphi' \in \Ham_c(T^*\bR^3)$ supported in a neighborhood of $K_1$ in $T^*\bR^3$ such that $(\varphi'\circ \varphi) (T^*_{K_0}\bR^3) $ coincides with $T^*_{K_1}\bR^3 $ in a neighborhood of $0_{\bR^3}$.
By rewriting $\varphi'\circ \varphi$ as $\varphi$,
we may assume that for sufficiently small $\varepsilon>0$,
\[  \varphi(T^*_{K_0}\bR^3) \cap D_{\varepsilon}^*\bR^3 = T^*_{K_1}\bR^3 \cap D_{\varepsilon}^*\bR^3 . \]
See the left picture of Figure \ref{figure-surgery}.

We consider a diffeomorphism
\begin{align}\label{psi-symplectization}
\psi \colon \bR\times U^*\bR^3 \to T^*\bR^3 \setminus 0_{\bR^3} ,\ (r, (x;\xi)) \mapsto (x;e^r\xi).
\end{align}
Then, $\bR\times U^*\bR^3$ together with the $1$-form $\psi^*(\lambda_{\bR^3}) = e^r \alpha_{\bR^3}$ gives the symplectization of the contact manifold $(U^*\bR^3,\alpha_{\bR^3})$.
Consider an exact Lagrangian submanifold of $T^*\bR^3\setminus 0_{\bR^3}$
\[ L_{\varphi} \coloneqq \varphi(T^*_{K_0}\bR^3) \setminus K_1. \]
Then, $\psi^{-1}(L_{\varphi})$ is an exact Lagrangian submanifold of the symplectization of $U^*\bR^3$.

Moreover, as discussed in \cite[Section 2.2]{O23}, $\psi^{-1}(L_{\varphi})$ is an exact Lagrangian cobordism in $\bR\times U^*\bR^3$ whose positive end is $\Lambda_{K_0}$ and negative end is $\Lambda_{K_1}$.
Indeed, there exist $a_0, a_1 \in \bR$ with $a_0 > a_1$ such that $\psi^{-1}(L_{\varphi}) \cap ([a_1,a_0]\times U^*\bR^3)$ is compact and
\begin{align*}
\psi^{-1}(L_{\varphi}) \cap ([a_0,\infty) \times U^*\bR^3 ) & = [a_0,\infty) \times \Lambda_{K_0} , \\
\psi^{-1}(L_{\varphi}) \cap ( (-\infty,a_1] \times U^*\bR^3 ) & = (-\infty,a_1] \times \Lambda_{K_1}.
\end{align*}
In addition, if we take a function $f\colon \psi^{-1}(L_{\varphi}) \to \bR$ such that $df = \rest{(e^r \alpha_{\bR^3})}{\psi^{-1}(L_{\varphi})}$, then $df = 0$ on $(-\infty,a_1]\times \Lambda_{K_1}$. Since $\Lambda_{K_1}$ is connected, $f$ is constant on $(-\infty,a_1]\times \Lambda_{K_1}$.

Let us prepare several notations for $i\in \{0,1\}$:
\begin{itemize}
\item We have an embedding
\[ \sigma_i \colon \Lambda_{K_i} \to L_{\varphi} ,\ (x;\xi) \mapsto \psi ( a_i, (x;\xi) ) = (x;e^{a_i}\xi).  \]
$\sigma_i$ can also be considered as an embedding $\sigma_i\colon \Lambda_{K_i} \to T^*_{K_i} \bR^3\setminus 0_{\bR^3}$.
\item 
We take a tubular neighborhood of $K_i$ in $\bR^3$ by
\[ N_{K_i} \coloneqq \{ x+v \mid x\in K_i,\ v\in (T_xK)^{\perp} \text{ and } |v| < \varepsilon \} \]
for sufficiently small $\varepsilon >0$.
Via the identification $T^*\bR^3 \cong T\bR^3$ fixed by the standard metric, we define 
\begin{align}\label{emb-ei} 
e_i\colon \Lambda_{K_i} \to \partial N_{K_i} ,\ (x;\xi) \mapsto x+ \varepsilon \cdot \xi.
\end{align}
\item 
Fix a base point $y_i$ of $\Lambda_{K_i}$. 
We also fix an orientation of the knot $K_i$ and take its Seifert framing.
Then, we take $m_i,l_i \in \pi_1(\Lambda_{K_i},y_i)$
such that the elements
\[(e_i)_*(m_i), (e_i)_*(l_i) \in \pi_1(\partial N_{K_i} , e_i(y_i))\]
give the meridian and the longitude respectively of the framed knot $K_i$.
\end{itemize}

Let us also denote by $m_i,l_i\in H_1(\Lambda_{K_i};\bZ)$ the homology classes corresponding to $m_i,l_i \in \pi_1(\Lambda_{K_i},y_i)$.
It is easy to see that
$H_1(L_{\varphi};\bZ)$ is a free abelian group with a basis
\[\{(\sigma_1)_*(m_1),(\sigma_0)_*(l_0)\}.\]
The homology classes $(\sigma_0)_*(m_0)$ and $(\sigma_1)_*(l_1)$ can be described as follows.
For the proof, see \cite[Lemma 2.6 and the equation (4) in its proof]{O25}.
\begin{proposition}\label{prop-H1L}
There exist $a\in \{1,-1\}$ and $b\in \bZ$ such that
\[ \begin{array}{cc} (\sigma_0)_*(m_0)= a\cdot (\sigma_1)_*(m_1), & (\sigma_1)_*(l_1) = a\cdot (\sigma_0)_*(l_0) + b\cdot (\sigma_1)_*(m_1), \end{array}\]
in $H_1(L_{\varphi};\bZ)$.
\end{proposition}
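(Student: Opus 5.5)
The plan is to compute $H_1(L_{\varphi};\bZ)$ by comparing $L_{\varphi}$ with the conormal bundle $T^*_{K_0}\bR^3$. The key input is that $\varphi$ is a diffeomorphism of $T^*\bR^3$. Hence
\[
L_{\varphi}\cup K_1=\varphi(T^*_{K_0}\bR^3)\cong T^*_{K_0}\bR^3\cong S^1\times\bR^2 .
\]
Inside this solid open torus, $\varphi^{-1}(K_1)$ is a knot $J$, and $L_{\varphi}$ is diffeomorphic to $(S^1\times\bR^2)\setminus J$.

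\textbf{The end near $+\infty$.} Under this identification, $\sigma_0$ maps $\Lambda_{K_0}$ to the torus $\{|\xi|=e^{a_0}\}$. This torus bounds the compact piece $\{|\xi|\le e^{a_0}\}$, which is a solid torus $V\cong S^1\times D^2$ containing $J$ in its interior. On it, $m_0$ is the fiber circle of the unit conormal bundle, so it bounds the meridian disc of $V$. The class $l_0$ is a longitude of $V$.

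\textbf{The end near $-\infty$.} The map $\sigma_1$ lands on a small torus $\{|\xi|=e^{a_1}\}$ in $T^*_{K_1}\bR^3$. Near $0_{\bR^3}$, $\varphi(T^*_{K_0}\bR^3)$ coincides with $T^*_{K_1}\bR^3$, so this torus is the boundary of a tubular neighborhood of $K_1$ inside the $3$-manifold $\varphi(T^*_{K_0}\bR^3)$. Here $m_1$ is the fiber circle, i.e. the meridian of $J$. The class $l_1$ is some longitude of $J$, namely a pushoff of $K_1$ along the conormal direction.

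\textbf{Computing $H_1$.} The space $L_{\varphi}$ deformation retracts onto $V\setminus \nu(J)$, so $H_1(L_{\varphi})\cong H_1(V\setminus J)$.
\begin{itemize}
\item By Alexander duality in the solid torus (or Mayer--Vietoris with $V=(V\setminus\nu J)\cup\nu J$), $H_1(V\setminus J)\cong\bZ^2$.
\item It is generated by the meridian of $J$ and the longitude $l_0$ of $V$, which matches the stated basis $\{(\sigma_1)_*(m_1),(\sigma_0)_*(l_0)\}$.
\item The meridian disc of $V$ meets $J$ algebraically $w$ times, where $w$ is the winding number of $J$ in $V$. So $(\sigma_0)_*(m_0)=w\cdot(\sigma_1)_*(m_1)$.
\item Similarly, the core class $[J]=w\,[l_0]$ in $H_1(V)$. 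The longitude of $J$ is homologous in $V\setminus J$ to $w\,l_0+b\,m_1$ for some $b$, which gives $(\sigma_1)_*(l_1)=w(\sigma_0)_*(l_0)+b(\sigma_1)_*(m_1)$.
\end{itemize}
Signs are fixed by the orientation conventions.

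\textbf{The main obstacle: showing $w=\pm1$.} This is where exactness and the Hamiltonian hypothesis must enter, since smooth topology alone allows any winding number. I would argue via the primitive $f$ of the Liouville form on $\psi^{-1}(L_{\varphi})$. Because $\lambda$ is exact on all of $\varphi(T^*_{K_0}\bR^3)$, the Liouville form restricted to the cobordism vanishes on both ends (Legendrian ends), so the symplectic area argument alone is not enough. I would therefore first try a Floer-theoretic or homological input:
\begin{itemize}
\item The clean intersection $\varphi(T^*_{K_0}\bR^3)\cap 0_{\bR^3}=K_1$ computes Lagrangian Floer cohomology $HF(T^*_{K_0}\bR^3,0_{\bR^3})\cong H^*(K_0)$. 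By the Pozniak spectral sequence this is isomorphic to $H^*(K_1)$ with local coefficients.
\item A cleaner route is to use that, in $T^*_{K_0}\bR^3$, the knot $\varphi^{-1}(K_1)$ is the clean intersection with the exact Lagrangian $\varphi^{-1}(0_{\bR^3})$. Floer theory then forces $[J]$ to generate $H_1(S^1\times\bR^2)$, i.e. $J$ is homologous to $\pm$ the core.
\end{itemize}
Since the paper cites \cite[Lemma 2.6]{O25} for exactly this, I would import that degree computation, which gives $w=a\in\{\pm1\}$.
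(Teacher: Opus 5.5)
Your topological reduction is correct. Via $\varphi^{-1}$ you get $L_{\varphi}\cong (S^1\times\bR^2)\setminus J$ with $J=\varphi^{-1}(K_1)$, the classes $(\sigma_1)_*(m_1),(\sigma_0)_*(l_0)$ form a basis of $H_1(L_{\varphi};\bZ)$, and the $l_0$-coefficient of $(\sigma_1)_*(l_1)$ is the winding number $w$ of $J$. The paper does not reprove the proposition; it cites \cite[Lemma 2.6]{O25}, whose proof is Floer-theoretic, so deferring the symplectic input there is in line with it.

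\textbf{The sign gap.} The sentence ``Signs are fixed by the orientation conventions'' hides a real gap, because the proposition asserts the \emph{same} $a$ in both relations. Let $\mu_J$ be the positive meridian of $J$ in $T^*_{K_0}\bR^3$. Your first relation is then $(\sigma_0)_*(m_0)=w\,\mu_J$. Moreover $(\sigma_1)_*(m_1)=\pm\mu_J$, with the sign depending on whether the orientation of $\varphi(T^*_{K_0}\bR^3)$, transported by $\varphi$ from the far end, agrees near $K_1$ with the standard orientation of $T^*_{K_1}\bR^3$ (the one pulled back from $\bR^3$ by $(x;\xi)\mapsto x+\xi$, in which $m_1$ is the positive meridian of $K_1$).

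If the orientations disagree, you get $(\sigma_0)_*(m_0)=-w(\sigma_1)_*(m_1)$ but still $(\sigma_1)_*(l_1)=w(\sigma_0)_*(l_0)-b(\sigma_1)_*(m_1)$. Reorienting $K_1$ cannot repair this, since it flips $m_1$ and $l_1$ together. Topology alone does not exclude this case. Inside each $T^*_x\bR^3$, $x\in K_0$, rotate the conormal plane by angle $\pi$ through the covector direction dual to $T_xK_0$ as $|\xi|$ decreases, keeping one normal axis fixed. This gives a compactly supported smooth isotopy of $T^*_{K_0}\bR^3$ to a submanifold with the following properties:
\begin{itemize}
\item it coincides with $T^*_{K_0}\bR^3$ near $0_{\bR^3}$ and near infinity;
\item it meets $0_{\bR^3}$ cleanly along $K_1=K_0$, and $J$ is the core, so $w=1$;
\item its transported orientation is reversed near $K_1$.
\end{itemize}
As a result, $(\sigma_0)_*(m_0)=-(\sigma_1)_*(m_1)$ while $(\sigma_1)_*(l_1)=(\sigma_0)_*(l_0)+b(\sigma_1)_*(m_1)$.

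So the relative sign needs its own symplectic input, just as $|w|=1$ does. One way is to compare the $\bZ$-graded isomorphism $HF^*(\varphi(T^*_{K_0}\bR^3),0_{\bR^3})\cong H^*(K_0)$ with the clean-intersection computation $H^{*-k}(K_1)$. Here $k$ is the integer by which the transported grading differs near $K_1$ from the standard grading of $T^*_{K_1}\bR^3$. The comparison forces $k=0$, and the parity of $k$ is exactly the orientation discrepancy above. Alternatively, import the full statement the paper cites, not only $w=\pm1$.

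\textbf{The sketch for $w=\pm1$.} Your own Floer sketches do not yield $w=\pm1$. The untwisted comparison only gives $H^*(K_0)\cong H^*(K_1)$, which carries no information for circles. The missing idea is to twist by a rank-one local system on $T^*_{K_0}\bR^3$ with holonomy $t\in\bC^*$ around the core.
\begin{itemize}
\item The $K_0$ side gives $H^*(K_0;\bC_t)$, which vanishes exactly when $t\neq 1$.
\item The $K_1$ side gives $H^*(K_1;\bC_{t^{w}})$, up to a global sign twist that the case $t=1$ rules out.
\end{itemize}
Comparing the two for all $t$ forces $w=\pm1$.
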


This proposition shows that $(\sigma_i)_* \colon H_1(\Lambda_{K_i};\bZ) \to H_1(L_{\varphi};\bZ)$ is an isomorphism for $i=0,1$.
In addition, as homology classes in $H_1(\varphi (T^*_{K_0}\bR^3))$, we have
\[\begin{array}{ccc}(\sigma_1)_*(l_1)=[K_1] , & (\sigma_0)_*(l_0) = \varphi_*([K_0]), & (\sigma_1)_*(m_1)=0 ,\end{array}\]
and thus, the second equation of Proposition \ref{prop-H1L} implies that
\begin{align}\label{K1_K0_homologous} 
[K_1] = a\cdot  \varphi_*([K_0])
\end{align}
in $H_1(\varphi (T^*_{K_0}\bR^3))$ for $a=\pm 1$.

Lastly, let us introduce a $3$-manifold $M_{\varphi}$ which is used to state Theorem \ref{thm-1}.
We have two $3$-manifolds with boundary:
\[\begin{array}{cc} \bR^3 \setminus N_{K_1} , & L_{\varphi} \setminus \psi((-\infty,a_1)\times \Lambda_{K_1}). \end{array}\]
Gluing their boundaries by the diffeomorphism $\sigma_1\circ e_1^{-1} \colon \partial N_{K_1} \to \psi(\{a_1\} \times \Lambda_{K_1})$, we obtain a non-compact $3$-manifold without boundary
\begin{align}\label{glue-M_phi}
M_{\varphi} \coloneqq \left( \bR^3 \setminus N_{K_1} \right) \cup_{\sigma_1\circ e_1^{-1}} \left( L_{\varphi} \setminus \psi((-\infty,a_1)\times \Lambda_{K_1}) \right). 
\end{align}
Let us take $y'_1 \coloneqq \sigma_1(y_1) = e_1(y_1) \in M_{\varphi}$ as a base point.
We note that
\[ H_1(M_{\varphi};\bZ) \cong H_1(\bR^3 \setminus K_1;\bZ) \cong \bZ\]
since $(\sigma_1\circ e_1^{-1})_*\colon H_1(\partial N_{K_1};\bZ) \to H_1(L_{\varphi})$ is an isomorphism by Proposition \ref{prop-H1L}.

The next result will be used several times. Let $D^2\subset \bR^2$ be the unit disk centered at $0$.
\begin{theorem}[{\cite[Theorem 1.1]{Gab}}]\label{thm-Gabai}
Let $K$ be a knot in $S^1\times D^2$ such that $[K] = \pm [S^1\times \{0\}]$ in $H_1(S^1\times D^2;\bZ)$.
If $M$ is a $3$-manifold obtained from $S^1\times D^2$ by a non-trivial surgery on $K$, then one of the following holds:
\begin{enumerate}
\item $M=S^1\times D^2$. In this case, $K$ is isotopic to $S^1\times \{0\}$.
\item $M=M'\# W$, where $W$ is a closed $3$-manifold and $H_1(W;\bZ)$ is finite and non-trivial.
\item $M$ is irreducible and $\partial M$ is incompressible. 
\end{enumerate}
\end{theorem}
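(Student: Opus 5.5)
The statement is Gabai's theorem \cite[Theorem 1.1]{Gab}, which the paper quotes rather than proves; what follows is a sketch of how I would reconstruct Gabai's argument via sutured manifold theory. The hierarchy step in the third paragraph is essentially all of \cite{Gab}, so this is an outline of the strategy, not a verified proof.

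Write $V=S^1\times D^2$, let $N(K)$ be a tubular neighborhood of $K$, and set $X=V\setminus \mathrm{int}\,N(K)$. For a slope $\gamma$ on $\partial N(K)$, write $M=X(\gamma)$ for the corresponding filling, and let $\mu$ be the meridian slope, so that $X(\mu)=V$. Non-trivial surgery means $\gamma\neq\mu$.

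The first step is a homology computation. The hypothesis $[K]=\pm[S^1\times\{0\}]$ says $K$ has winding number $\pm1$. Hence a meridian disk $D$ of $V$ can be isotoped to meet $K$ algebraically once. Mayer--Vietoris then gives
\[ H_1(X(\gamma);\bZ)\cong \bZ\oplus \bZ/p,\qquad p=\Delta(\gamma,\mu). \]
Consequently, any closed summand $W$ split off by a reducing sphere has $H_1(W;\bZ)$ a subgroup of $\bZ/p$, hence finite. This is the origin of alternative 2; what remains is to show $W$ is not a homology sphere.

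Next, I would reduce to the case where $X$ is irreducible and $\partial V$ is incompressible in $X$.
\begin{itemize}
\item If $\partial V$ compresses in $X$, the compressing disk is a meridian disk of $V$ disjoint from $K$. This contradicts winding number $\pm1$.
\item If $X$ is reducible, then $K$ lies in a ball in $V$, so its winding number is $0$, again a contradiction.
\end{itemize}
Similarly, if $K$ is the core, then $X\cong T^2\times I$ and every filling is a solid torus. This is the situation in alternative 1.

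The core of the argument is a taut sutured manifold hierarchy for $X$. Choose the meridian disk $D$ to meet $K$ geometrically minimally, in $n$ points, and let $R=D\cap X$. This is a planar surface with one outer boundary component on $\partial V$ and $n$ punctures on $\partial N(K)$. Gabai's method is as follows:
\begin{itemize}
\item Place sutures $\partial R$ on $\partial V$, and build a taut sutured manifold hierarchy of $X$ starting from $R$.
\item Arrange that the hierarchy is disjoint from $\partial N(K)$, apart from the arcs forced by the punctures.
\item The theory of disk decompositions and the ``parametrized'' hierarchy shows that the hierarchy remains taut after filling along every slope $\gamma$ with at most one exception, and that the exception is $\mu$.
\end{itemize}
Once tautness persists, $X(\gamma)$ carries a taut foliation transverse to, or containing, the capped-off surface $\hat R$ as a leaf. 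Two consequences follow:
\begin{itemize}
\item $\hat R$, a disk with $n$ punctures filled, is Thurston-norm minimizing. Combined with Roussarie--Thurston, this shows that $\partial X(\gamma)$ is incompressible.
\item $X(\gamma)$ is irreducible unless it has a lens-space-like summand. The taut foliation rules out $S^2\times S^1$ summands and homotopy-sphere summands. Hence any summand $W$ has $H_1(W)$ non-trivial, and by the first step it is finite, which gives alternative 2.
\end{itemize}
When $n$ is minimal and $\geq 2$, this yields alternative 3.

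Finally, suppose $X(\gamma)$ is a solid torus for some $\gamma\neq\mu$. Then $\partial X(\gamma)$ compresses, so by the previous paragraph the hierarchy cannot remain taut for $\gamma$, unless $n=1$. If $n=1$, then $R$ is an annulus and $X\cong T^2\times I$, so $K$ is isotopic to the core. Alternatively, this case can be phrased through Gabai's classification of knots in solid tori with solid torus surgeries as $0$- or $1$-bridge braids, restricted to winding number $\pm1$.

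The main obstacle is the hierarchy step: showing that tautness survives all but one filling slope. This requires Gabai's full combinatorial machinery of sutured manifold decompositions, and I would simply follow \cite{Gab} there rather than attempt an independent route.
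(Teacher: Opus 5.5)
You have the right bottom line: the paper does not reprove this statement. It cites \cite{Gab}, whose first alternative only says that $K$ is a $0$- or $1$-bridge braid, and adds that such a braid with algebraic intersection $\pm1$ with a meridian disk must be the core. Your closing sentence is exactly that, and your check that $X$ is irreducible and $\partial V$ is incompressible in $X$ is correct. The reconstruction you build around it, however, contains false steps.

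\textbf{The homology computation.} Since the winding number is $\pm1$, the map $H_1(\partial N(K);\bZ)\to H_1(X;\bZ)\cong\bZ^2$ is an isomorphism. So every slope is primitive in $H_1(X;\bZ)$, and $H_1(X(\gamma);\bZ)\cong\bZ$ for \emph{every} $\gamma$. Already for $K$ the core, every filling of $T^2\times I$ is a solid torus, contradicting your $\bZ\oplus\bZ/\Delta(\gamma,\mu)$. Hence alternative 2 never occurs under this hypothesis: a closed summand would have to be a homology sphere. This is exactly how the paper disposes of alternative 2 in Lemma~\ref{lem-LO}; there is no torsion for it to originate from.

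\textbf{The $n=1$ step.} The connected sum of the core with a nontrivial local knot $J$ meets a meridian disk once. Yet its $X$ contains the exterior of $J$, cut off by an essential annulus, so $X$ is not $T^2\times I$. What forces the core is being a one-strand \emph{braid}, not having $n=1$.

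\textbf{The hierarchy paragraph.} The inner boundary circles of $R$ are meridians of $K$. So the capped-off surface $\hat R$ exists only in $X(\mu)=V$, where it is the meridian disk, i.e.\ a compressing disk for $\partial V$. For $\gamma\neq\mu$ there is no $\hat R$ whose norm-minimality could certify that $\partial X(\gamma)$ is incompressible. Nor is there any nonzero class in $H_2(X,\partial V)$, since the winding number is nonzero. So the ``all but one filling'' results for a norm-minimizing surface disjoint from the filled torus do not apply either.

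More fundamentally, ``tautness survives every slope except $\mu$'' cannot come from a general hierarchy argument. For winding number $p\ge 2$, the $(p,q)$-cable of the core (a $0$-bridge braid) already gives several exceptions. The cabling slope yields a solid torus $\#\,L(p,q)$. The two slopes at distance one from both the cabling slope and $\mu$ yield solid tori. Proving that $0$- and $1$-bridge braids are the only source of such exceptions is the substance of Gabai's theorem. Your outline neither reproduces that nor bypasses it.

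As a proof, then, your proposal reduces to citing \cite{Gab} plus the braid remark, which is what the paper does. The surrounding sketch should be dropped rather than relied upon.
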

We remark that in the case $M=S^1\times D^2$, \cite[Theorem 1.1]{Gab} states that $K$ is either a $0$ or $1$-bridge braid in $S^1\times D^2$ under a weaker assumption on $K$. (For the definition of a $k$-bridge braid for $k\in \{0,1\}$, see \cite[\S 1]{Gab}. It is obtained as the closure of a braid.)
Assuming $[K] = \pm [S^1\times \{0\}]$, the algebraic intersection number of $K$ and $\mathrm{pt}\times D^2$ is $\pm 1$, and the $0$ or $1$-bridge braid $K$ must be isotopic to $S^1\times \{0\}$.

The next property of the fundamental group of $M_{\varphi}$ is important for the discussion in Section \ref{subsec-hom-h}, especially Proposition \ref{prop-h-mon}.
Recall that a group $G$ is called \textit{left-orderable} if it has a total ordering $<$ such that $g<h$ implies $fg<fh$ for any $f,g,h\in G$.
\begin{lemma}\label{lem-LO}
The group $\pi_1(M_{\varphi},y'_1)$ is left-orderable.
\end{lemma}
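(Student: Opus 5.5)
The plan is to reduce Lemma \ref{lem-LO} to the theorem of Boyer--Rolfsen--Wiest: the fundamental group of a compact, orientable, irreducible $3$-manifold with $b_1>0$ is left-orderable. Since $H_1(M_{\varphi};\bZ)\cong\bZ$, the real work is to show that $M_{\varphi}$ is, up to removing a point and taking the interior, a compact irreducible $3$-manifold.

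\textbf{Step 1: a compact model for $M_{\varphi}$.} The end $\psi([a_0,\infty)\times\Lambda_{K_0})$ is a collar $T^2\times[a_0,\infty)$, so we truncate it. The end of $\bR^3\setminus N_{K_1}$ at infinity is $S^2\times[0,\infty)$, so we cap it off with a ball. This identifies $M_{\varphi}$ with the interior of a compact manifold $\overline{M}$ minus one interior point, and removing an interior point of a $3$-manifold does not change $\pi_1$. Next, $\varphi^{-1}$ identifies the truncated piece of $L_{\varphi}$ with the exterior $X$ of the knot $K'\coloneqq\varphi^{-1}(K_1)$ inside the solid torus $T^*_{K_0}\bR^3\cap \overline{D^*_R}\bR^3\cong S^1\times D^2$. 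By \eqref{K1_K0_homologous}, $[K']=\pm[S^1\times\{0\}]$. Hence
\[\overline{M}=X\cup_{T^2} E(K_1),\]
where $E(K_1)$ is the exterior of $K_1$ in $S^3$.

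\textbf{Step 2: irreducibility, in two cases.}

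If $K_1$ is the unknot, then $E(K_1)$ is a solid torus and $\overline{M}$ is a Dehn surgery on $K'$ in $S^1\times D^2$; it is non-trivial or trivial according to the meridian gluing. We apply Theorem \ref{thm-Gabai}. Case 1 gives a solid torus, which is irreducible. Case 2 would give $H_1(\overline{M})\supset H_1(W)$ with nontrivial finite $H_1(W)$, contradicting the fact that $H_1(\overline{M})\cong H_1(M_{\varphi})\cong\bZ$ is torsion-free. Case 3 gives irreducibility. The trivial surgery gives $S^1\times D^2$.

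If $K_1$ is non-trivial, then $E(K_1)$ is irreducible with incompressible boundary. The manifold $X$ is irreducible: a sphere in $X$ bounds a ball in $S^1\times D^2$, and if that ball contained $K'$, then $K'$ would be null-homologous, contradicting $[K']=\pm[S^1\times\{0\}]$. Also, $\partial N(K')$ is incompressible in $X$: a compressing disk would produce, after surgery of the torus, a sphere separating $K'$ into a ball, which is the same contradiction. Gluing two irreducible manifolds along a torus that is incompressible on both sides yields an irreducible manifold, by a standard innermost-circle argument. So $\overline{M}$ is irreducible.

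\textbf{Step 3: conclusion.} In both cases $\overline{M}$ is compact, orientable and irreducible with $b_1(\overline{M})=1>0$. Boyer--Rolfsen--Wiest then gives that $\pi_1(\overline{M})\cong\pi_1(M_{\varphi},y_1')$ is left-orderable.

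The main obstacle is Step 2, namely excluding prime summands with $b_1=0$ (a homology-sphere or finite-$H_1$ summand) for which the Boyer--Rolfsen--Wiest criterion would not apply directly. In the unknot case this is exactly what Gabai's theorem, combined with torsion-freeness of $H_1$, handles. In the non-trivial case the homological condition on $K'$ is what rules out essential spheres and compressing disks.
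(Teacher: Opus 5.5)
Your proposal is correct and follows essentially the same route as the paper: pass to a compact model, handle the unknot case by Gabai's theorem plus torsion-freeness of $H_1$, handle the knotted case by gluing two irreducible pieces along a torus incompressible on both sides, and finish with Boyer--Rolfsen--Wiest. The only differences are minor. You truncate the $\Lambda_{K_0}$-end explicitly, so the manifold is genuinely compact, and you derive irreducibility of $X$ and incompressibility of $\partial N(K')$ directly from $[K']\neq 0$. The paper instead views $X$ as the complement of a non-split link with linking number $\pm 1$, and gets $\pi_1$-injectivity of the torus from the $H_1$-isomorphism of Proposition \ref{prop-H1L}.
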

\begin{proof}
For convenience, we use a compactification $S^3 = \bR^3 \cup \{\infty\}$ of $\bR^3$ and consider a $3$-manifold
\[M'_{\varphi} \coloneqq \left( S^3 \setminus N_{K_1} \right) \cup_{\sigma_1\circ e_1^{-1}} \left( L_{\varphi} \setminus \psi((-\infty,a_1)\times \Lambda_{K_1}  \right). \]
The fundamental group of $M'_{\varphi}$ is isomorphic to $\pi_1(M_{\varphi},y'_1)$.

First, we observe the $3$-manifold $L_{\varphi}$.
By definition, $L_{\varphi}$ is homeomorphic to
\[T^*_{K_0}\bR^3 \setminus \varphi^{-1}(K_1)\]
via $\varphi$.
Through a trivialization $T^*_{K_0}\bR^3 \cong K_0\times \bR^2$, $\varphi^{-1}(K_1)$ is viewed as a knot in $K_0 \times \bR^2$ and it is homologous to $K_0 \times \{0\}$ up to sign by (\ref{K1_K0_homologous}).
Therefore, $L_{\varphi}$ is homeomorphic to the complement of a link $U \sqcup K'_1$ in $S^3$, where $U$ is the unknot and $K'_1$ is a knot whose linking number with $U$ is $\pm 1$ (in particular, nonzero).
This implies that $L_{\varphi}$ is an irreducible $3$-manifold.

Moreover, $(\sigma_1)_* \colon H_1(\Lambda_{K_1}) \to H_1(L_{\varphi})$ is an isomorphism by Proposition \ref{prop-H1L}, and thus the group homomorphism
\[ (\sigma_1)_*\colon \pi_1(\Lambda_{K_1}) \to \pi_1(L_{\varphi}) \]
is injective.
This means that the embedded torus $\sigma_1(\Lambda_{K_1})$ in $L_{\varphi}$ is an incompressible surface.

Next, we show that $M'_{\varphi}$ is an irreducible $3$-manifold.
The proof is divided into the following two cases:
\begin{itemize}
\item If $K_1$ is the unknot in $S^3$, then $S^3\setminus N_{K_1}$ is homeomorphic to a solid torus.
Therefore, $M'_{\varphi}$ is a $3$-manifold obtained from the (open) solid torus
\[ K_0\times \bR^2 \cong S^1 \times \{x\in \bR^2 \mid |x|<1\} \]
by a Dehn surgery along the knot $\varphi^{-1}(K_1)$.
Since $[\varphi^{-1}(K_1)] = \pm [K_0\times \{0\}]$ as we have seen above,
we can apply Theorem \ref{thm-Gabai}.
Since $H_1(M_{\varphi};\bZ) \cong \bZ$ and in particular has no torsion, $M'_{\varphi}$ is not a connected sum $ M'' \# W$, where $W$ is a closed $3$-manifold such that $H_1(W;\bZ)$ is finite and nontrivial. 
Therefore, it follows from Theorem \ref{thm-Gabai} that either (1) $M'_{\varphi}= S^1 \times \bR^2$ or (2) $M'_{\varphi}$ is irreducible  and $\sigma_0(\Lambda_{K_0})$ is incompressible.
In both cases, $M'_{\varphi}$ is irreducible.

\item If $K_1$ is knotted in $S^3$, then $S^3 \setminus K_1$ is an irreducible $3$-manifold and the embedded torus $\partial N_{K_1}$ is an incompressible surface.
Therefore, $M'_{\varphi}$ is separated into two irreducible $3$-manifolds
\[\begin{array}{cc}
S^3 \setminus N_{K_1}, & L_{\varphi} \setminus \psi((-\infty,a_1)\times \Lambda_{K_1} )\end{array}\]
along an embedded torus which is incompressible in both components.
From an argument as in \cite[page 11, (4)]{hatcher}, it follows that $M'_{\varphi}$ is also irreducible.
\end{itemize}

Now, we can apply a result in \cite{BRW}.
Since the orientable $3$-manifold $M'_{\varphi}$ is irreducible and there is a surjective group homomorphism $\pi_1(M'_{\varphi}) \to H_1(M'_{\varphi};\bZ) \cong \bZ$, it follows from \cite[Theorem 1.1 (1)]{BRW} that the group $\pi_1(M'_{\varphi})$ is left-orderable.
\end{proof}

\section{Application of a cobordism functor in sheaf theory}\label{sec-sheaf}

We fix the ground field $\bfk$ to be the field $\bC$ of complex numbers. 
However, all arguments prior to Proposition~\ref{prop-h-surj} are valid over any field. 

\subsection{Preliminaries}\label{subsec-prelim}

In this subsection, we review the necessary background on microlocal sheaf theory. 
Let $\Mod_\bfk$ be the $\infty$-category of complexes of $\bfk$-vector spaces, which is naturally equivalent to the derived $\infty$-category of $\bfk$ since $\bfk$ is a field.

We use the language of ($\Mod_\bfk$-enriched stable) $\infty$-categories in this paper. 
In order for the Kashiwara--Schapira stack defined below to satisfy the desired descent condition, a purely $1$-categorical treatment is insufficient, and one needs to work either with dg categories or with $\infty$-categories. 
In this paper, for objects $x,y$ of a $\Mod_\bfk$-enriched stable category, the notation $\Hom (x,y)$ will refer the corresponding object of $\Mod_\bfk$. 
For the theory of $\infty$-categoical enrichment, see Appendix C of \cite{HeyerMann} and the references therein.

\begin{remark}
Some of the results we cite are originally stated in terms of classical derived categories or dg categories.
Moreover, while \cite{KS90} works with classical bounded derived categories, we do not impose the corresponding boundedness conditions in this paper.

The treatment in terms of dg categories can be naturally reformulated in the language of $\infty$-categories. 
See \cite[Section~1.3.1]{LurieHA} and \cite{Cohn} for details. 
On the other hand, results formulated in the classical setting  may not automatically be interpreted into $\infty$-categorical results. 
At present, it is known from the results of \cite{robalo2018lemma} that if the coefficient category of sheaves is compactly generated, then almost all results of \cite{KS90} hold in the $\infty$-categorical setting without imposing boundedness conditions. 
\end{remark}

Let $X$ be a $C^{\infty}$ manifold and let $\Sh (X)$ denote the $\Mod_\bfk$-enriched stable category of sheaves of complexes of $\bfk$-vector spaces on $X$. 
For any $F \in \Sh(X)$, its microsupport $\CMS (F) $ is defined as in \cite[Definition 5.1.2]{KS90} to be the closure of the set
\[ \left\{ (x;\xi)\in T^*X\ \middle|
\begin{array}{l} \text{there exists a } C^{1} \text{ function } f \text{ on a neighborhood of }x \\
\text{such that } f(x)=0,\ (df)_x=\xi \text{ and }\left(\Gamma_{\{f\geq 0\}} F \right)_x\neq 0 \end{array}   \right\}. \]
It is a conical closed subset of $T^*X$.
For any closed subset $A$ of $T^*X$, 
let $\Sh_{A}(X)$ denote the full subcategory of $\Sh(X)$ consisting of $F\in \Sh(X)$ such that $\CMS (F) \subset A$.

Let $\Loc(X)$ denote the full subcategory of $\Sh(X)$ consisting of $F \in \Sh(X)$ whose cohomology sheaf $H^*(F)$ is locally constant on $X$.

We recall the definition of the Kashiwara--Schapira stack. 
Here, a stack means a sheaf valued in the ($\infty$-)category of the $\Mod_\bfk$-enriched stable categories and the exact functors. 
The underlying notion for this stack was investigated by Kashiwara--Schapira~\cite{KS90}. 
Here we follow the definition given by Nadler--Shende~\cite[Section 6]{NadlerShende20}. 

\begin{definition}\label{def-KS-stack}
The \textit{Kashiwara--Schapira stack} $\mush$ on $T^*X$
is the conic sheaf valued in the ($\infty$-)category of $\Mod_\bfk$-enriched stable categories associated to a conic presheaf $\mush^{\mathrm{pre}}$ defined by 
\[\mush^{\mathrm{pre}}(\Omega) \coloneqq \Sh (X) / \Sh_{T^*X \setminus \Omega}(X)\]
for each conical open subset $\Omega\subset T^*X$. 
For $F\in \mush(\Omega)$, the closed conical subset $\CMS (F)\cap \Omega$ of $\Omega $ is well-defined. 

Let $\wh{\Lambda}$ be a locally closed conical subset of $T^*X$.
A substack $\mush'_{\wh{\Lambda}}$ of $\mush$ is given by
\[\mush'_{\wh{\Lambda}}(\Omega)\coloneqq \{F\in \mush(\Omega) \mid \CMS (F)\cap \Omega\subset \wh{\Lambda}\cap \Omega \}.\]
We define the \textit{Kashiwara--Schapira stack} $\mush_{\wh{\Lambda}}$ on $\wh{\Lambda}$ as the inverse image $i_{\wh{\Lambda}}^*\mush'_{\wh{\Lambda}}$ of the substack $\mush'_{\wh{\Lambda}}$ by the inclusion $i_{\wh{\Lambda}}\colon\wh{\Lambda}\to T^*X$. 
\end{definition}

For an open subset $\wh{\Lambda}_0\subset \wh{\Lambda}$ and $F,G \in \mush_{\wh{\Lambda}}(\wh{\Lambda}_0)$, the assignment
\[
\wh{\Lambda}_0\supset U \mapsto \Hom_{\mush_{\wh{\Lambda}}(U)}(\rest{F}{U}, \rest{G}{U})
\]
satisfies a sheaf property and defines an object $\overline{\muhom}(F,G)$ of $\Sh (\wh{\Lambda}_0)$. 

For $F, G\in \Sh (X)$, $\muhom (F,G)\in \Sh (T^*X)$ is defined by Kashiwara--Schapira~\cite[Section 4.4]{KS90}.
The support of $\muhom (F,G)$ is contained in $\CMS (F)\cap \CMS (G)$. 
Two functors $\muhom$ and $\overline{\muhom}$ are closely related. 
Note that for $F\in \Sh (X)$ and an open subset $\wh{\Lambda}_0\subset \wh{\Lambda}$, if there is an open subset $\Omega_0\subset T^*X$ satisfying $\wh{\Lambda}_0 \subset\Omega_0$ and $\CMS (F)\cap \Omega_0\subset \wh{\Lambda}_0$, 
$F$ naturally defines an object $\frakm_{\wh{\Lambda}_0}(F)\in \mush_{\wh{\Lambda}}(\wh{\Lambda}_0)$. 

\begin{proposition}[{\cite[Theorem~6.1.2]{KS90}}]\label{prop-stack-hom}
    Let $F, G$ be objects of $\Sh (X)$ and $\wh{\Lambda}_0$ be an open subset of $\wh{\Lambda}$. Assume that there is an open subset $\Omega_0\subset T^*X$ satisfying  $\wh{\Lambda}_0 \subset\Omega_0$ and $(\CMS (F)\cup \CMS(G))\cap \Omega_0\subset \wh{\Lambda}_0$. Then there is a natural isomorphism 
    \[
    i_{\wh{\Lambda}_0}^*\muhom(F,G)\simeq \overline{\muhom}(\frakm_{\wh{\Lambda}_0}(F),\frakm_{\wh{\Lambda}_0}(G))
    \]
    where $i_{\wh{\Lambda}_0}\colon \wh{\Lambda}_0\to T^*X$ is the inclusion. 
\end{proposition}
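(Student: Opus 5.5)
The plan is to build a natural comparison map at the level of presheaves on conical open subsets of $\Omega_0$. Then I would show it is an isomorphism on stalks, and finally use the support condition to pass from $\Omega_0$ to $\wh{\Lambda}_0$.

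\textbf{Step 1: the comparison map.} For $F,G\in\Sh(X)$ there is the standard isomorphism $R\pi_{X*}\muhom(F,G)\simeq \mathcal{H}om(F,G)$. It yields a map
\[\Hom_{\Sh(X)}(F,G)\to \Gamma(T^*X,\muhom(F,G))\to \Gamma(\Omega,\muhom(F,G))\]
for every conical open $\Omega\subset\Omega_0$.

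If $H\in\Sh_{T^*X\setminus\Omega}(X)$, then $\muhom(F,H)$ and $\muhom(H,G)$ vanish on $\Omega$, since $\operatorname{supp}\muhom\subset \CMS\cap\CMS$. By the functoriality of $\muhom$ under composition, any morphism factoring through such an $H$ maps to zero. The map therefore descends to the Verdier quotient
\[\mush^{\mathrm{pre}}(\Omega)=\Sh(X)/\Sh_{T^*X\setminus\Omega}(X).\]
This gives a morphism of conic presheaves $\Omega\mapsto \Hom_{\mush^{\mathrm{pre}}(\Omega)}(F,G)\to\Gamma(\Omega,\muhom(F,G))$. The target is already a conic sheaf, so the map factors through the sheafification. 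This produces a natural map $\overline{\muhom}(F,G)\to \muhom(F,G)$ of sheaves on $\Omega_0$ for the full stack $\mush$.

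\textbf{Step 2: isomorphism on stalks.} Both sides are conic sheaves, so it suffices to check stalks at $p\in\Omega_0$ (for $p$ in the zero section this is the usual statement about $\mathcal{H}om$). The stalk of the left side is $\operatorname{colim}_{\Omega\ni p}\Hom_{\Sh(X)/\Sh_{T^*X\setminus\Omega}(X)}(F,G)$, i.e.\ $\Hom$ in the microlocal category at $p$.

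The key input is the microlocal cut-off lemma. For $p=(x_0;\xi_0)$ with $\xi_0\neq0$, choose a small closed convex proper cone $\gamma$ around $\xi_0$ and a small ball around $x_0$. Then $F$ is isomorphic in the localized category to a sheaf $F_\gamma$, obtained by the Fourier--Sato-type convolution with $\bfk_{\gamma^\circ}$, whose microsupport lies in $U\times\gamma$. For such cut-off objects the quotient morphism spaces compute $\Hom(F_\gamma,G)$ over a small open set, and the formula $\muhom(F,G)_p\simeq \operatorname{colim}\Hom(F_\gamma,G)$ of \cite[Ch.~4--6]{KS90} identifies this with the stalk of $\muhom$. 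I expect this to be the main obstacle. In the $\infty$-categorical, unbounded setting one must check that the cut-off lemma and the colimit formula still hold; here I would invoke \cite{robalo2018lemma}, since $\Mod_\bfk$ is compactly generated.

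\textbf{Step 3: restriction to $\wh{\Lambda}_0$.} By hypothesis, $\operatorname{supp}\muhom(F,G)\cap\Omega_0\subset(\CMS(F)\cup\CMS(G))\cap\Omega_0\subset\wh{\Lambda}_0$. On the stack side, $\frakm_{\wh{\Lambda}_0}(F)$ and $\frakm_{\wh{\Lambda}_0}(G)$ are the images of $F,G$ in $\mush'_{\wh{\Lambda}}$. Restricting $\mush'_{\wh{\Lambda}}$ to $\wh{\Lambda}$ via $i_{\wh{\Lambda}}^*$ computes sections over $U\subset\wh{\Lambda}_0$ as the colimit over conical opens $\Omega\subset\Omega_0$ containing $U$. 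On those opens, $\mush'_{\wh{\Lambda}}$ and $\mush$ have the same morphism spaces between $F$ and $G$, because $\mush'$ is a full substack. Hence $\overline{\muhom}(\frakm_{\wh{\Lambda}_0}(F),\frakm_{\wh{\Lambda}_0}(G))\simeq i_{\wh{\Lambda}_0}^{-1}$ of the sheaf from Step 1.

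Combining this with Step 2, and using that a sheaf supported in the closed subset $\wh{\Lambda}_0\cap\Omega_0\subset\Omega_0$ is determined by its inverse image there, gives the isomorphism $i_{\wh{\Lambda}_0}^*\muhom(F,G)\simeq\overline{\muhom}(\frakm_{\wh{\Lambda}_0}(F),\frakm_{\wh{\Lambda}_0}(G))$. Naturality in $F$ and $G$ is inherited from the construction in Step 1.
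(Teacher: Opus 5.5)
Your proposal is correct and follows essentially the paper's route: the paper cites \cite[Theorem~6.1.2]{KS90} for the stalkwise identification and upgrades it to a sheaf isomorphism by checking on stalks, together with the identification of the stalk category of the stack with $\mathrm{D}^b(X;p)$; your Steps 1--2 unpack exactly this via the cut-off lemma, though the stalk check also needs the justification the paper gives ($X$ a manifold, $\Mod_\bfk$ compactly generated), which you should state in the unbounded setting. In Step 3 the support argument is unnecessary, and $\wh{\Lambda}_0$ need not be closed in $\Omega_0$: since $i_{\wh{\Lambda}_0}$ factors through $\Omega_0$, the hypothesis on $\CMS(F)\cup\CMS(G)$ is only needed to make $\frakm_{\wh{\Lambda}_0}(F)$ and $\frakm_{\wh{\Lambda}_0}(G)$ well defined.
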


Theorem~6.1.2 in \cite{KS90} is a statement about the stalks. 
Proposition~\ref{prop-stack-hom} is deduced from the theorem in \cite{KS90} since an isomorphism between sheaves can be checked on stalks in the present situation, where the base space is a manifold and the coefficient category $\Mod_\bfk$ is compactly generated. 
For this part, it is also essential that the homotopy category of the stalk category of the Kashiwara--Schapira stack corresponds to the category $\mathrm{D}^b(X;p)$ treated in \cite{KS90}. 

Hereafter, we mainly consider the stable category $\mush_{\wh{\Lambda}}(\wh{\Lambda})$ together with
a natural functor
\[ \frakm_{\wh{\Lambda}} \colon \Sh_{\wh{\Lambda}} (X) \to \mush_{\wh{\Lambda}}(\wh{\Lambda}) ,\  F\mapsto \frakm_{\wh{\Lambda}}(F). \]

Consider the case where $\wh{\Lambda}$ is a conical smooth Lagrangian submanifold. 
For an open subset $\wh{\Lambda}_0\subset \wh{\Lambda}$, an object $\scrF \in \mush_{\widehat{\Lambda}}(\widehat{\Lambda}_0)$ is said to be \emph{simple} (along $\widehat{\Lambda}_0$) if the identity morphism $\bfk_{\widehat{\Lambda}_0}\to \overline{\muhom}(\scrF,\scrF)$ is an isomorphism.  
By \cite[Proposition X.2.4]{Gui23}, if there exists a simple global object $\scrF\in \mush_{\widehat{\Lambda}}(\widehat{\Lambda})$, then we have a functor
\[ \overline{\muhom}(\scrF,\cdot) \colon \mush_{\widehat{\Lambda}} (\widehat{\Lambda}) \to \Loc (\widehat{\Lambda}),\]
and moreover, it gives an equivalence of categories.

Let $L$ be an exact Lagrangian of $T^*X$, not necessary conical or smooth.
We suppose that $L$ is connected and has a  stratification as $L= \coprod_{i=1}^m L_i$ by a family of smooth Lagrangian submanifolds $\{L_i\}_{i=1,\dots ,m}$.
Moreover, assume that there exists a continuous function $f\colon L \to \bR$ such that $\rest{f}{L_i}$ is a $C^{\infty}$ function and $d( \rest{f}{L_i} ) = \rest{\lambda_X}{L_i}$ for every $i\in \{1,\dots ,m\}$.
Note that $f$ is unique up to constant if it exists since $L$ is connected.
For such $L$, we construct a conical Lagrangian  $\wh{L}$ of $T^*(X\times \bR)$ by
\begin{align}\label{lift-of-Lagrangian} \wh{L} \coloneqq \coprod_{i=1}^m \{ (x,-f(x;\xi) ; \tau \xi,\tau) \in T^*(X\times \bR) \mid \tau >0,\ (x;\xi) \in L_i \} .
\end{align}
Then, the Kashiwara--Schapira stack $\mush_{\wh{L}}$ on $\wh{L}$ is defined as in Definition \ref{def-KS-stack}.
For simplicity, let us denote $\mush_{\wh{L}}(\wh{L})$ and $\frakm_{\wh{L}}$ by $\mush_L(L)$ and $\frakm_L$, respectively.
\begin{remark} If $L$ is conical Lagrangian in $T^*X$, we take $f\colon L\to \bR$ to be the constant function to $0$.
Then, using the map $i\colon X \to X\times \bR ,\  x \mapsto (x,0)$, we have an equivalence $i^* \colon \Sh_{\wh{L}}(X\times \bR) \overset{\sim}{\to} \Sh_{L}(X)$. Identifying $\Sh_{\wh{L}}(X\times \bR)$ with $\Sh_L(X)$ by $i^*$, we consider $\frakm_L$ as a functor defined on $\Sh_L(X)$.
Moreover, if $L$ contains $0_{X}$, this functor \[\frakm_L\colon \Sh_L(X) \to \mush_L(L)\]
is an equivalence.
\end{remark}

\subsection{The conormal of knots}\label{subsec-conormal}
We focus on the case where $X=\bR^3$.
For a knot $K$ in $\bR^3$, let us denote
\begin{align*}
L_K  & \coloneqq 0_{\bR^3} \cup T^*_K\bR^3, \\
\wh{\Lambda}_K & \coloneqq T^*_K\bR^3 \setminus 0_{\bR^3} = \bR_+\cdot \Lambda_K.
\end{align*}
In the same way as (\ref{emb-ei}) for $K_i$, we define $ e \colon \Lambda_K \to \bR^3 \setminus K$ to be an embedding onto the boundary of a tubular neighborhood of $K$.

Any simple object $\scrF \in \mush_{\wh{\Lambda}_K}(\wh{\Lambda}_K)$ defines a functor
\[ \overline{\muhom}(\scrF,\cdot) \colon \mush_{\wh{\Lambda}_K}(\wh{\Lambda}_K) \to \Loc(\wh{\Lambda}_K) \simeq \Loc(\Lambda_K) . \]
The last equivalence is induced by the inclusion map $\Lambda_K\to \wh{\Lambda}_K$, which is a homotopy equivalence.
We also have the restriction functor $\operatorname{res} \colon \mush_{L_K}(L_K) \to \mush_{\wh{\Lambda}_K}( \wh{\Lambda}_K)$.

We set $\scrF_K  \coloneqq \frakm_{\wh{\Lambda}_K} (\bfk_K [-1]) \in \mush_{\wh{\Lambda}_K}(\wh{\Lambda}_K)$.
Then, the composition of the functors
\[\xymatrix@C=45pt{
\Sh_{L_K}(\bR^3) \ar[r]^-{\bfm_{L_K}} & \mush_{L_K} (L_K) \ar[r]^-{\operatorname{res}} & \mush_{\wh{\Lambda}_K}(\wh{\Lambda}_K) \ar[r]^-{\overline{\muhom}(\scrF_K,\cdot )} & \Loc(\Lambda_K)
} \]
coincides with $\rest{\mu_K(\cdot )}{\Lambda_K}[1]$, where $\mu_K\colon \Sh(\bR^3) \to \Sh(T^*_K\bR^3)$ is the microlocalization functor~\cite[Section~4.3.]{KS90}.
Indeed, for any $F\in \Sh_{L_K}(\bR^3)$,
\begin{align}\label{muhom-mu_K} 
\rest{ \overline{\muhom}( \scrF_K , \rest{ (\frakm_{L_K} (F)) }{\wh{\Lambda}_K})}{\Lambda_K} = \rest{ \muhom(\bfk_K[-1], F)}{\Lambda_K} = \rest{\mu_K(F)}{\Lambda_K}[1] . 
\end{align}
To compute the last term, we prepare the following proposition. 
Similar statements and arguments for the proofs appear in \cite[Sections~4--5]{Shende}. 

\begin{proposition}\label{prop-muK-conormal}
Let us denote inclusion maps by $K \overset{i}{\rightarrow} \bR^3 \overset{j}{\leftarrow} \bR^3\setminus K$. Then, there are two equivalences of functors:
\begin{align*}
\rest{ \mu_K(i_*(\cdot))}{\Lambda_K} & \simeq (\rest{\pi_{\bR^3}}{\Lambda_K})^* \colon \Loc(K)\to \Loc(\Lambda_K), \\
\rest{ \mu_K(j_!(\cdot)) }{\Lambda_K} & \simeq e^*(\cdot)[-1]\colon \Loc(\bR^3\setminus K)\to \Loc(\Lambda_K).
\end{align*}
\end{proposition}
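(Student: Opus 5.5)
Both equivalences can be checked locally along $K$ by reducing to a model computation, after which the local identifications are glued. Since $\Lambda_K \to K$ is a circle bundle and all functors involved are compatible with restriction to open subsets of $\bR^3$ (microlocalization commutes with restriction to opens), the plan is as follows. First, take a tubular neighborhood $N_K \cong K \times D^2$, using the exponential map coming from the standard metric. Then work on an open cover of $K$ by intervals $I$, over which $N_K|_I \cong I \times D^2$ and $K$ becomes $I \times \{0\}$. In this chart $T^*_K\bR^3|_I \cong T^*I \times T^*_0\bR^2$ restricted to the zero section of $T^*I$, and $\mu_K$ splits as the tensor product of the identity in the $I$-direction with $\mu_{\{0\}}$ on $\bR^2$. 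This is the non-characteristic product/Künneth compatibility of $\mu$ in \cite[Section 4.3]{KS90}. So the core task is to compute $\mu_{\{0\}}$ on $\bR^2$ for the two model sheaves.

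For the first equivalence, a local system on $K$ restricts on $I$ to a constant $V_I$. We have $\mu_{\{0\}}(\bfk_{\{0\}}) \simeq \bfk_{T^*_0\bR^2}$, which is the standard fact $\mu_M(\bfk_M) \simeq \bfk_{T^*_M X}$ for a closed submanifold $M$. So $\mu_K(i_*V)$ is locally $\pi^*V$ on $T^*_K\bR^3$. Globally there is a natural morphism $\pi^{-1}$ of adjunction type, namely
\[ \pi^{-1} i^{-1} i_* \to \mu_K(i_*(\cdot)). \]
We get it from the distinguished triangle defining $\mu_K$ restricted to the zero section, or more simply from $\mu_K(i_*G) \simeq \pi^{-1}G$, valid for any $G$ on $K$, via \cite[Proposition 4.3.x]{KS90}. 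This is natural in $G$, and restricting to $\Lambda_K$ gives the claim.

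For the second equivalence, again work locally. $j_!$ of a local system on $\bR^3 \setminus K$ restricted to $I \times (D^2\setminus 0)$ is the pullback of a local system $\mathcal{L}$ on the punctured disk. We then compute $\mu_0(j_!\mathcal{L})$ on $T^*_0\bR^2 \setminus 0$ by its stalk formula. At a covector $\xi \neq 0$ the stalk is $\varinjlim \Gamma_{\{\langle x,\xi\rangle \ge 0\}}(U; j_!\mathcal{L})$, the local cohomology with support in a closed half-disk. Using the triangle with the restriction to the open half-disk $\{\langle x,\xi\rangle<0\}$, this equals the cone of $R\Gamma(U;j_!\mathcal{L}) \to R\Gamma(\{\langle x,\xi\rangle<0\}\cap U;\mathcal{L})$ shifted by $[-1]$. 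Since $R\Gamma(U;j_!\mathcal{L})=0$ for $U$ a small disk (compact-support type vanishing: $j_!\mathcal{L}$ on a disk has zero cohomology by the triangle $j_!\to Rj_* \to i_*i^*Rj_*$ and its homotopy-invariance), the stalk is $\mathcal{L}$ at the point in direction $-\xi$ or $\xi$, shifted by $[-1]$. Checking sign conventions, the open half-disk is contractible, so this is the stalk of $\mathcal{L}$ at a single point near $0$ in a direction determined by $\xi$. With the identification $T^*\cong T$ used in $e$, the direction is $+\xi$ (up to the orientation convention that $e$ encodes). Hence locally $\mu_K(j_!\mathcal{L})|_{\Lambda_K} \simeq e^*\mathcal{L}[-1]$.

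The main obstacle is globalizing, i.e., making the local identifications natural and compatible with monodromy around $\Lambda_K$, including the circle fibre direction, and not merely stalkwise. To handle this I would construct a global morphism. Let $\Omega$ be the open set $\{(x,\xi): \langle x - \pi(x),\xi\rangle<0\}$-type region near $K$; more cleanly, use the map $\Lambda_K \times [0,\varepsilon) \to \bR^3$, $(x,\xi,t)\mapsto x + t\xi$, i.e., the real oriented blow-up of $K$. Realize $\mu_K(j_!\cdot)|_{\Lambda_K}$ as a pushforward–pullback through this blow-up, so that there is a natural map to $e^*(\cdot)[-1]$ induced by restriction. This map is then an isomorphism by the local stalk computation above. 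The same blow-up formalism gives naturality for the first equivalence.
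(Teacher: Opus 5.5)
\textbf{There is a genuine gap in the second equivalence, at the step you yourself flag as the main obstacle.}

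Your first equivalence is fine in substance. The cleanest global argument, which is the paper's, avoids your unspecified citation: $\mu_K(i_*G)$ is locally constant on $T^*_K\bR^3$ by the microsupport estimate [KS90, Theorem 6.4.1], it restricts to $G$ on $0_K$, and $T^*_K\bR^3$ retracts onto $0_K$.

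For the second equivalence, your half-disk computation, even combined with the product reduction over intervals $I\subset K$, only identifies stalks. It shows that at each $(x;\xi)\in\Lambda_K$ the stalk of $\rest{\mu_K(j_!F)}{\Lambda_K}$ is a stalk of $F$ near $x$, shifted by $[-1]$. Stalks do not determine a local system on $\Lambda_K\cong T^2$:

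\begin{itemize}
\item They say nothing about the monodromy along the meridian fibre of $\Lambda_K\to K$ or along the longitude.
\item Those monodromies are exactly what the paper uses afterwards (Lemma \ref{lem-rho-i}, Proposition \ref{prop-rho-J-Lambda}).
\item Even in your planar model, computing the stalk at each $\xi\in S^1$ separately does not show that $\rest{\mu_{\{0\}}(j_!\mathcal L)}{S^1}$ has the monodromy of $\mathcal L$ around the puncture.
\end{itemize}

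What is needed is a morphism of sheaves on $\Lambda_K$, natural in $F$. The sentence ``realize $\rest{\mu_K(j_!\cdot)}{\Lambda_K}$ as a pushforward--pullback through this blow-up'' asserts that morphism but does not construct it.

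\textbf{How the paper gets the global morphism.} It uses the GKS kernel of the normalized geodesic flow at time $-\varepsilon$ and the invariance of $\muhom$ under such kernels [KS90, Theorem 7.2.1]. This replaces $(\bfk_K, j_!F)$ by $(\bfk_{\overline{N}_K}, F_{\bR^3\setminus\overline{N}_K})$, whose $\muhom$ on $U^*\bR^3$ lives on a Legendrian projecting homeomorphically onto the smooth hypersurface $\partial\overline{N}_K$. Sato's triangle then identifies its pushforward with
\[
\cHom(\bfk_{\overline{N}_K},F_{\bR^3\setminus\overline{N}_K})\simeq\Gamma_{\partial\overline{N}_K}(F)\simeq F_{\partial\overline{N}_K}[-1].
\]
Every step is a global functor, so the monodromy comes for free.

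\textbf{How your blow-up idea could be made rigorous.} Use $\mu_K=(\nu_K(\cdot))^\wedge$.
\begin{itemize}
\item On a tubular neighborhood identified with the normal bundle $E$, the sheaf $j_!F$ is conic, so $\nu_K(j_!F)$ is $j_!F$ itself.
\item For a conic $G$ on $E$ with $G|_{0_K}\simeq 0$, push the triangle for $R\Gamma_{\{\langle v,\eta\rangle\ge 0\}}$ forward along $p_2$. The middle term vanishes, giving a natural isomorphism $G^\wedge|_{\dot{E}^*}\simeq Rp_{2*}\bigl(p_1^{-1}G|_{\{\langle v,\eta\rangle<0\}}\bigr)[-1]$.
\item Since the fibres are open half-planes, this is the pullback of $G$ along $\eta\mapsto -\eta$.
\end{itemize}

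\textbf{A note on direction.} This route, like your own half-disk computation, lands at $x-\varepsilon\xi$ rather than $x+\varepsilon\xi$. That is harmless, but not because of a convention. The fibrewise antipodal map of $\Lambda_K$ is homotopic to the identity by rotating in the oriented normal planes of $K$. Hence the two pullbacks are naturally equivalent on $\Loc(\bR^3\setminus K)$.
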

\begin{proof}
(1) For any $F\in \Loc(K)$, $\CMS(i_*F) \subset T^*_K\bR^3$.
Using \cite[Theorem~6.4.1]{KS90}, one can check that the microsupport of $\mu_K(i_* F)\in \Sh(T^*_K\bR^3)$ is contained in the zero section of $T^*(T^*_K\bR^3)$. 
Hence $\mu_K(i_* F)$ is locally constant on $T^*_K\bR^3$. 
Note also that $\rest{\mu_K(i_* F)}{0_K}\simeq F$ on $K\cong 0_K=T^*_K\bR^3\cap 0_{\bR^3}$ by \cite[Theorem~4.3.2 (iv)]{KS90}. 
Since the projection $\rest{\pi_{\bR^3}}{T^*_K\bR^3}\colon T^*_K\bR^3\to K$ is a homotopy equivalence, 
we obtain
\[\mu_K(i_* F)\simeq (\rest{\pi_{\bR^3}}{T^*_K\bR^3})^*(F),\]
from which the first assertion follows.

(2) For any $F\in \Loc(\bR^3\setminus K)$, $\CMS(j_!F)\setminus 0_{\bR^3} \subset T^*_K\bR^3\setminus 0_K$.
From a microlocal estimate using \cite[Theorem 6.4.1]{KS90} as above, it follows that $\rest{\mu_K(j_!F)}{T^*_K\bR^3\setminus 0_K}$ is locally constant on $T^*_K\bR^3\setminus 0_K$. 

Consider the normalized geodesic flow $\phi=(\phi_t)_{t\in \bR}$ on $T^*\bR^3\setminus 0_{\bR^3}$.
It is a homogeneous Hamiltonian isotopy, and we have the GKS kernel
$K^\phi\in \Sh(\bR^3\times \bR^3\times \bR)$ associated to $\phi$ \cite[Section 3]{GKS}.
Let us fix  $\varepsilon>0$ and put $K^\phi_{-\varepsilon} \coloneqq \rest{K^\phi}{\{t=-\varepsilon\}}\in \Sh(\bR^3\times \bR^3)$.
We note that $ K^\phi_{-\varepsilon} \simeq \bfk_{\{(x_1,x_2)\in \bR^3\times \bR^3\mid |x_1-x_2|\leq \varepsilon\}}$.
If $\varepsilon>0$ is sufficiently small, 
$N_K\coloneqq \{ x'\in \bR^3 \mid \min_{x\in K}|x-x'|< \varepsilon \}$
is a tubular neighborhood of $K$ and we can compute that
\[ \begin{array}{cc} K^\phi_{-\varepsilon}\circ\bfk_K\simeq \bfk_{\overline{N}_K} , &  K^\phi_{-\varepsilon}\circ (j_!F) \simeq F_{\bR^3\setminus \overline{N}_K}, \end{array}
\]
where $\overline{N}_K$ is the closure of $N_K$.
Moreover, by \cite[Theorem 7.2.1]{KS90}, 
\begin{align}\label{contact-transf} 
\begin{split}
(\phi_{-\varepsilon})_*\left( \rest{\muhom (\bfk_K,j_! F)}{U^*\bR^3}\right) & \simeq \rest{ \muhom \left( K^{\phi}_{-\varepsilon}\circ \bfk_K, K^{\phi}_{-\varepsilon}\circ (j_! F)\right)}{U^*\bR^3} \\
& \simeq  \rest{\muhom (\bfk_{\overline{N}_K}, F_{\bR^3\setminus \overline{N}_K})}{U^*\bR^3},
\end{split}
\end{align}
which is supported on $\phi_{\varepsilon}^{-1}(\Lambda_K)$.

By (a version of) Sato's distinguished
triangle \cite[(I.3.5)]{Gui23}, 
\[\xymatrix@C=12pt{
 \cHom(\bfk_{\overline{N}_K}, \bfk_{\bR^3})\otimes F_{\bR^3\setminus \overline{N}_K} \ar[r] & \cHom(\bfk_{\overline{N}_K}, F_{\bR^3\setminus \overline{N}_K}) \ar[r] & 
 \hat{\pi}_* \left( \rest{\muhom (\bfk_{\overline{N}_K}, F_{\bR^3\setminus \overline{N}_K})}{U^*\bR^3} \right) &
}\]
is a (co)fiber sequence in $\Sh (\bR^3)$, 
where $\hat{\pi} \coloneqq \rest{\pi_{\bR^3}}{U^*\bR^3}$.
We have
\[\cHom(\bfk_{\overline{N}_K}, \bfk_{\bR^3})\otimes F_{\bR^3\setminus \overline{N}_K} \simeq \bfk_{N_{K}} \otimes F_{\bR^3\setminus \overline{N}_K}  \simeq 0,\] so the second arrow in the sequence is an isomorphism.
Since $\pi'\coloneqq \rest{\hat{\pi}}{\phi_{\varepsilon}^{-1}(\Lambda_K)} \colon \phi_{\varepsilon}^{-1}(\Lambda_K) \to \partial \overline{N}_K$ is a homeomorphism, we obtain
\begin{align*}
 \rest{ \cHom(\bfk_{\overline{N}_K}, F_{\bR^3\setminus \overline{N}_K})  }{\partial \overline{N}_K} \simeq \pi'_*\left( \rest{\muhom (\bfk_{\overline{N}_K}, F_{\bR^3\setminus \overline{N}_K})}{\phi_{\varepsilon}^{-1}(\Lambda_K)}\right) .
\end{align*}
In addition,
\begin{align*} \cHom(\bfk_{\overline{N}_K}, F_{\bR^3\setminus \overline{N}_K}) & \simeq \cHom(\bfk_{\partial \overline{N}_K}, F_{\bR^3\setminus \overline{N}_K}) \\
& \simeq \cHom(\bfk_{\partial \overline{N}_K}, F) \\
& \simeq \Gamma_{\partial \overline{N}_K} (F) \simeq  F_{\partial \overline{N}_K}[-1].
\end{align*}
Here, the first and the second isomorphisms follow from $\cHom (\bfk_{N_K}, F_{\bR^3\setminus \overline{N}_K})\simeq 0$ and $\cHom(\bfk_{\partial \overline{N}_K}, F_{\overline{N}_K})\simeq 0$ for $F\in \Loc(\bR^3\setminus K)$, both of which are confirmed by local calculations.   
The last isomorphism follows from \cite[Corollary~5.4.11]{KS90}. Therefore,
\begin{align}\label{muhom-phiLambdaK}
i'^*F[-1] \simeq \pi'_*\left( \rest{\muhom (\bfk_{\overline{N}_K}, F_{\bR^3\setminus \overline{N}_K})}{\phi_{\varepsilon}^{-1}(\Lambda_K)}\right) 
\end{align}
for the inclusion map $i'\colon \partial \overline{N}_K\to \bR^3\setminus K$.

Now, we obtain
\begin{align*} 
\rest{\mu_K(j_! F)}{\Lambda_K} & \simeq \rest{\muhom(\bfk_K , j_!F)}{\Lambda_K} \\ 
& \simeq (\phi_{\varepsilon})_* \left( \rest{\muhom (\bfk_{\overline{N}_K}, F_{\bR^3\setminus \overline{N}_K})}{\phi_{\varepsilon}^{-1}(\Lambda_K)} \right) \\
& \simeq (\phi_{\varepsilon}\circ \pi'^{-1})_* \left( \pi'_*   \left( \rest{\muhom (\bfk_{\overline{N}_K}, F_{\bR^3\setminus \overline{N}_K})}{\phi_{\varepsilon}^{-1}(\Lambda_K)} \right) \right) \\
& \simeq ( \phi_{\varepsilon}\circ \pi'^{-1})_* ( i'^* F ) [-1].
\end{align*}
Here, the second and the fourth isomorphisms follow from (\ref{contact-transf}) and (\ref{muhom-phiLambdaK}) respectively.
Note that $\phi_{\varepsilon}\circ \pi'^{-1}\colon \partial \overline{N}_K\to \Lambda_K$ is a homeomorphism and the map $ i'\circ (\phi_{\varepsilon}\circ \pi'^{-1})^{-1}$ agrees with
\[ e\colon \Lambda_K\to  \bR^3\setminus K ,\  (x;\xi)\mapsto x+ \varepsilon \cdot \xi\]
(see (\ref{emb-ei})).
Hence, we have
$( \phi_{\varepsilon}\circ \pi'^{-1})_* (i'^* F ) = e^* F$.
This proves the second assertion.
\end{proof}

\subsection{A functor associated to an exact Lagrangian cobordism}\label{subsec-cobordism-functor}

We return to the geometric setup in Section \ref{subsec-clean}.
We will apply the sheaf theory introduced above and the results by \cite{Li} about exact Lagrangian cobordisms. 
The goal of this subsection is to give a commutative diagram (\ref{diagram-Phi-J}).

Let $K_0$ and $K_1$ be knots in $\bR^3$. For $\varphi \in \Ham_c(T^*\bR^3)$, suppose that $\varphi (T^*_{K_0}\bR^3)$ and $0_{\bR^3}$ intersect cleanly along $K_1\subset 0_{\bR^3} = \bR^3$.
We use the following notation for the inclusion maps:
\[\xymatrix{
\bR^3 \setminus K_0 \ar[r]^-{j_0} & \bR^3 & K_0 \ar[l]_-{i_0} , &
\bR^3 \setminus K_1 \ar[r]^-{j_1} & \bR^3 & K_1 \ar[l]_-{i_1}. 
}\]

The sequence of stable categories 
\begin{align}\label{split-Verdier}
\xymatrix{\Loc(K_0) \ar[r]^-{{i_0}_*} & \Sh_{L_{K_0}}(\bR^3 ) \ar[r]^-{j_0^*} & \Loc (\bR^3\setminus K_0)}
\end{align}
is a split Verdier sequence with adjunctions $i_0^*\dashv {i_0}_*\dashv {i_0}^!$ and ${j_0}_! \dashv j_0^* \dashv {j_0}_*$, that is, a cofiber sequence in the category of stable categories whose morphisms admit both left and right adjoints. 
See \cite[Appendix A]{CDHHLMNN2} for the general theory of split Verdier sequences. 
In particular, for any $F\in \Sh_{L_{K_0}}(\bR^3)$, the counit morphism of ${j_0}_! \dashv j_0^*$ and the unit morphism of  $i_0^*\dashv {i_0}_*$ give a (co)fiber sequence  
\begin{align}\label{semi-decomp}
\xymatrix{  {j_0}_!j_0^* F \ar[r] & F \ar[r] & {i_0}_*i_0^* F }
\end{align}
in $\Sh_{L_{K_0}}(\bR^3)$, such that $j_0^* F \in \Loc(\bR^3\setminus K_0)$ and $i_0^*F  \in \Loc(K_0)$.
If $F_x\simeq 0$ at some $x\in \bR^3\setminus K_0$, then $j_0^*F\simeq 0$ in $\Loc (\bR^3\setminus K_0)$ and it follows that $F\simeq {i_0}_*i_0^*F$ from (\ref{semi-decomp}).

For the exact Lagrangian submanifold $L_{\varphi}  = \varphi(T^*_{K_0}\bR^3) \setminus 0_{\bR^3}$, we have
\[L_{\varphi} \setminus D^*_r \bR^3 = \wh{\Lambda}_{K_0} \setminus D^*_r \bR^3\]
for sufficiently large $r>0$ and, as discussed in Section \ref{subsec-clean}, we may assume that
\[L_{\varphi} \cap D^*_{\varepsilon}\bR^3 = \wh{\Lambda}_{K_1} \cap D^*_{\varepsilon}\bR^3\]
for sufficiently small $\varepsilon>0$.
Let $\wh{\Lambda}'_{K_0}\coloneqq \wh{\Lambda}_K \setminus D^*_r \bR^3$ and $\wh{\Lambda}'_{K_1}\coloneqq \wh{\Lambda}_{K_1} \cap D^*_{\varepsilon}\bR^3$.
For $i=0,1$, the restriction to $\wh{\Lambda}'_{K_i} \subset \wh{\Lambda}_{K_i}$ gives an equivalence
\[ \mush_{\wh{\Lambda}_{K_i}}(\wh{\Lambda}_{K_i}) \overset{\sim}{\to} \mush_{\wh{\Lambda}'_{K_i}}(\wh{\Lambda}'_{K_i}).  \]

For a Weinstein manifold $X$ with contact type boundary $\partial^{\infty}X$,
Li~\cite{Li} constructed a functor associated to an exact Lagrangian cobordism in $\bR\times \partial^{\infty}X$ and observed its properties.
In the present situation, $T^*\bR^3$ is a Weinstein manifold with the Lagrangian skeleton $0_{\bR^3}$.
Via the map $\psi$ of (\ref{psi-symplectization}), $\psi^{-1}(L_{\varphi}) = \psi^{-1}( \varphi(T^*_{K_0}\bR^3 ) \setminus 0_{\bR^3} )$ is an exact Lagrangian cobordism in $\bR\times U^*\bR^3$.
Then, we have a functor described as follows.

\begin{theorem}[{\cite[Theorem~1.1]{Li}}]
There is a fully faithful functor associated to the exact Lagrangian cobordism $L_{\varphi}$
\[\Phi_{L_{\varphi}} \colon 
\mush_{0_{\bR^3}\cup \varphi(T^*_{K_0}\bR^3)}(0_{\bR^3}\cup \varphi(T^*_{K_0}\bR^3)) \hookrightarrow  \mush_{L_{K_0}}(L_{K_0}) 
. \]
\end{theorem}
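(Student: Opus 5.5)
This statement is the specialization of \cite[Theorem~1.1]{Li} to our setting, so my plan is to check its hypotheses and then recall how the functor is built, to make clear where full faithfulness comes from. The ambient space $T^*\bR^3$ is a Weinstein manifold whose Lagrangian skeleton is $0_{\bR^3}$, and its ideal boundary is $U^*\bR^3$. Via $\psi$ in (\ref{psi-symplectization}), the set $\psi^{-1}(L_{\varphi})$ is an exact Lagrangian cobordism that is cylindrical over $\Lambda_{K_0}$ on $[a_0,\infty)$ and over $\Lambda_{K_1}$ on $(-\infty,a_1]$. The primitive $f$ of $e^r\alpha_{\bR^3}$ on it is constant near the negative end, as shown in Section~\ref{subsec-clean}.

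Consequently, $0_{\bR^3}\cup\varphi(T^*_{K_0}\bR^3)$ is a connected, stratified exact Lagrangian with a continuous stratumwise primitive. The strata are $0_{\bR^3}\setminus K_1$, $K_1$, $L_{\varphi}$, and the zero-section piece; $f$ extends by a constant over $0_{\bR^3}$ because $L_{\varphi}$ agrees with $\wh{\Lambda}_{K_1}$ inside $D^*_\varepsilon\bR^3$. Hence the lift (\ref{lift-of-Lagrangian}) is defined, and both sides of the functor are Kashiwara--Schapira stacks in the sense of Definition~\ref{def-KS-stack}. These are exactly the inputs Li requires.

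To construct $\Phi_{L_{\varphi}}$, I would follow Li and use the Liouville (conic rescaling) flow. For $s\ge 0$, let $L^s$ be the image of $0_{\bR^3}\cup\varphi(T^*_{K_0}\bR^3)$ under $(x;\xi)\mapsto(x;e^{s}\xi)$, lifted to a conic Lagrangian $\wh{L^s}\subset T^*(\bR^3\times\bR)$ by (\ref{lift-of-Lagrangian}). These lifts form a family that is locally a homogeneous Hamiltonian isotopy away from the zero section. Guillermou--Kashiwara--Schapira kernels, together with the equivalence $\frakm$ when the zero section is included, identify each category of microsheaves along $L^s$ with the one for $s=0$.

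As $s\to\infty$, the compact part of $L_{\varphi}$ escapes to infinity, and $L^s$ converges in the Hausdorff sense to $0_{\bR^3}\cup T^*_{K_0}\bR^3=L_{K_0}$. This convergence is exactly the statement that the positive end is $\Lambda_{K_0}$. I would define $\Phi_{L_{\varphi}}$ as the nearby-cycle (specialization) functor from the family at $s\gg 0$ to sheaves with microsupport in $\wh{L_{K_0}}$. The key estimate is that the microsupport of the limit lies in the limit of the microsupports, by \cite[Theorem~6.4.1]{KS90}. This shows that the functor indeed lands in $\mush_{L_{K_0}}(L_{K_0})$.

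The main obstacle is full faithfulness, that is, showing that $\Hom$ is preserved under the specialization. By Proposition~\ref{prop-stack-hom}, $\Hom$ is computed as global sections of $\muhom$ on the lifted Lagrangian. I would argue that taking the nearby cycle commutes with $\Hom$ here because the family is gapped: the lifted family $\wh{L^s}$ never develops self-Reeb chords of length tending to $0$, hence there are no short Reeb chords between a lift and its small positive push-off. This rests on two facts. First, $f$ is constant on the negative end. Second, the cobordism is cylindrical outside a compact set, so the action differences are bounded below uniformly in $s$. Given this gap, I would use a doubling or push-off argument: compare $\Hom(F,G)$ with $\Hom(F,T_\delta G)$, where $T_\delta$ is a small positive Reeb push-off, and show that the continuation maps are isomorphisms uniformly in $s$. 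Passing to the limit then yields full faithfulness. If this direct route became too technical, I would simply cite \cite[Theorem~1.1]{Li} once the hypotheses above are verified.
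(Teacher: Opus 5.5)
Your hypothesis check is essentially right. Your fallback is all the paper does: verify that $0_{\bR^3}\cup\varphi(T^*_{K_0}\bR^3)$ has a continuous stratumwise primitive, hence the lift (\ref{lift-of-Lagrangian}), and then invoke Li. The paper imports the statement and does not reprove it. Two small points on the check: the natural decomposition is $0_{\bR^3}\sqcup L_{\varphi}$, since $K_1$ is not a Lagrangian stratum; and the essential input is that $K_1$ is connected, cf.\ Remark~\ref{rem-connected-intersection}. There is also a caveat the paper records and you miss. \cite[Theorem~1.1]{Li} is literally stated for categories of proper objects, and the version for the full categories used here is the one established in the course of Li's proof. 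So ``simply citing Theorem~1.1'' does not give the statement as written.

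The self-contained sketch, however, contains a genuine error. Under $(x;\xi)\mapsto(x;e^{s}\xi)$ with $s\to+\infty$, the compact part of $L_{\varphi}$ does escape to infinity. But $L_{\varphi}$ agrees with $\wh{\Lambda}_{K_1}$ inside $D^*_{\varepsilon}\bR^3$, so for $s$ large and any fixed $R$ one has $L^{s}\cap D^*_R\bR^3=0_{\bR^3}\cup(\wh{\Lambda}_{K_1}\cap D^*_R\bR^3)$. Your family therefore converges to $L_{K_1}$, the negative end, not to $L_{K_0}$. To see the positive end you must contract ($s\to-\infty$), i.e.\ use the negative Liouville flow, which is the flow the paper refers to in the proof of Lemma~\ref{lem-res-0U}. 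Then $e^{s}(L_{\varphi}\setminus D^*_r\bR^3)=\wh{\Lambda}_{K_0}\setminus D^*_{e^{s}r}\bR^3$ exhausts $\wh{\Lambda}_{K_0}$, while the rest of $L_{\varphi}$ collapses into $0_{\bR^3}$.

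Once the direction is corrected, your gappedness argument fails as stated. The primitive on $L^{s}$ is $e^{s}f$ (transported by the rescaling), so all action differences tend to $0$; they are not bounded below uniformly in $s$. The full faithfulness of the limiting (nearby-cycle) functor is exactly the nontrivial content of Li's theorem, and your sketch does not establish it. Present the sketch at most as heuristic motivation, and let the citation, with the properness caveat, carry the proof.
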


Strictly speaking, Theorem~1.1 of \cite{Li} is stated for the categories of proper objects. 
The statement used here is proved earlier in the course of the proof of that theorem. 

\begin{remark}\label{rem-connected-intersection}
Let us denote $L=0_{\bR^3}\cup \varphi(T^*_{K_0}\bR^3)$.
For the definition of the category $\mush_{L} (L)$, recall that we need to take a lift of $L$ to a conical Lagrangian $\wh{L}$ as in (\ref{lift-of-Lagrangian}).
We remark that if the clean intersection $\varphi(T^*_{K_0}\bR^3) \cap \bR^3$ is not connected,
a function $f\colon L\to \bR$ which is used to define $\wh{L}$ may not exist.
Under our assumption that $K_1=\varphi(T^*_{K_0}\bR^3) \cap \bR^3$ is a knot, we can take $f\colon L\to \bR$ such that $\rest{f}{0_{\bR^3}}$ is constant to $0$ and $d( \rest{f}{\varphi(T^*_{K_0}\bR^3)}) = \rest{\lambda_X}{ \varphi(T^*_{K_0}\bR^3)}$, and we obtain the lift $\wh{L}$ by (\ref{lift-of-Lagrangian}).
\end{remark}

Since $L_{K_0}\setminus D^*_r\bR^3 = \left( 0_{\bR^3}\cup \varphi(T^*_{K_0}\bR^3) \right) \setminus D^*_r\bR^3 = \wh{\Lambda}'_{K_0}$,  we have the restriction functors
\begin{align*}
\operatorname{res} & \colon \mush_{L_{K_0}}(L_{K_0}) \to \mush_{\wh{\Lambda}'_{K_0}} (\wh{\Lambda}'_{K_0})\simeq \mush_{\wh{\Lambda}_{K_0}}(\wh{\Lambda}_{K_0}), \\
\operatorname{res}' & \colon \mush_{0_{\bR^3}\cup \varphi(T^*_{K_0}\bR^3)} (0_{\bR^3}\cup \varphi(T^*_{K_0}\bR^3)) \to \mush_{\wh{\Lambda}'_{K_0}} (\wh{\Lambda}'_{K_0})\simeq \mush_{\wh{\Lambda}_{K_0}}(\wh{\Lambda}_{K_0}) .
\end{align*}

\begin{lemma}\label{lem-Phi-res}
$\operatorname{res} \circ \Phi_{L_{\varphi}} \simeq \operatorname{res}' $.
\end{lemma}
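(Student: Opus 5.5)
We will derive the statement from the way Li builds $\Phi_{L_{\varphi}}$, using that the construction does nothing near the positive end. Li's functor is a composite of three steps. First, a wrapping/nearby-cycle step: a positive (or negative) contact isotopy, i.e. a GKS kernel, pushes the Lagrangian $0_{\bR^3}\cup\varphi(T^*_{K_0}\bR^3)$ through the cobordism. Second, a Hamiltonian isotopy invariance step, which uses the lift $\wh{L}$ from (\ref{lift-of-Lagrangian}) and the primitive $f$ of Remark \ref{rem-connected-intersection}. Third, a restriction or gluing step identifying the result with $\mush_{L_{K_0}}(L_{K_0})$. The essential input is that all of these operations are supported in a compact region of the cotangent bundle, namely inside $D^*_r\bR^3$ after the conical identification. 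Outside that region, the cobordism $\psi^{-1}(L_\varphi)$ is the cylinder $[a_0,\infty)\times\Lambda_{K_0}$.

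\textbf{Step 1 (locality).} Choose $r$ so large that both the Hamiltonian $H$ generating $\varphi$ and the deformation used by Li are supported in $D^*_r\bR^3$. We then check that the functor is compatible with restriction to the open conic set $\Omega_r=T^*\bR^3\setminus \overline{D^*_r\bR^3}$, lifted to $T^*(\bR^3\times\bR)$. Concretely, for a GKS kernel $K^{\Phi}$ of a homogeneous isotopy that is the identity over $\Omega_r$, the microlocal composition $K^{\Phi}\circ(\cdot)$ induces the identity on $\mush(\Omega_r)$. This follows from \cite[Theorem 7.2.1]{KS90}, in the same way as (\ref{contact-transf}), together with the fact that $\CMS(K^\Phi)$ over $\Omega_r$ is the conormal of the diagonal graph.

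\textbf{Step 2 (primitive).} On $\wh{\Lambda}'_{K_0}$ the primitive $f$ is locally constant, since $\lambda$ vanishes on the conic part. We normalize $f$ so that its value there agrees with the value used for $L_{K_0}$, namely $0$. Because $L_\varphi$ is connected, the only freedom in $f$ is a global shift of the $\bR$-coordinate, and this shift can be absorbed. With this normalization, the two lifts $\wh{L}$ coincide over $\Omega_r$.

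\textbf{Step 3 (conclusion).} Steps 1 and 2 identify the natural transformation
\[\operatorname{res}\circ\Phi_{L_\varphi}\Rightarrow\operatorname{res}'\]
on $\mush_{\wh{\Lambda}'_{K_0}}(\wh{\Lambda}'_{K_0})$. It is an equivalence because it is one locally, and $\mush$ is a sheaf of categories. Finally, we compose with the equivalence $\mush_{\wh{\Lambda}_{K_0}}(\wh{\Lambda}_{K_0})\simeq\mush_{\wh{\Lambda}'_{K_0}}(\wh{\Lambda}'_{K_0})$ recorded before the theorem.

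\textbf{Expected obstacle.} The hard part is Step 1. Li's functor is defined as a composite involving doubling or stop-removal and an adjoint (e.g. a left adjoint of a restriction). We must verify that every intermediate functor commutes with restriction to the positive end, rather than only the final functor. We will first try to read this off from Li's construction: his functor is built as an inverse of a restriction functor from a category on the whole Lagrangian, and it is identified with the nearby-cycle functor along a family that is stationary near $\partial^\infty$. If that works, the lemma reduces to the statement that restriction functors compose.
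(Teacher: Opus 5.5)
Your Step 1 rests on a false description of Li's functor, and this is a genuine gap.

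\textbf{Why Step 1 fails.} As recalled in the proof of Lemma \ref{lem-res-0U}, $\Phi_{L_{\varphi}}$ comes from the contact flow on $T^{*,\infty}(\bR^3\times\bR)$ that lifts the \emph{negative Liouville flow} $(x;\xi)\mapsto(x;e^{-s}\xi)$, followed by a limit as $s\to\infty$. This deformation is not supported in any $D^*_r\bR^3$:
it moves points of $\Omega_r=T^*\bR^3\setminus\overline{D^*_r\bR^3}$ into $D^*_r\bR^3$, and it acts non-trivially on $\wh{\Lambda}_{K_0}$, preserving it only setwise. So there is no kernel that is the identity over $\Omega_r$, and your GKS argument has nothing to apply to. Your locality idea is exactly right for Lemma \ref{lem-res-0U}, because there $T^*U$ is invariant under the negative Liouville flow and misses the cobordism. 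The positive end is not of that kind.

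\textbf{What the lemma actually needs.} Two things are required. First, $\Phi_{L_{\varphi}}$ must split off a functor attached to the positive end. Second, the functor attached to the setwise-constant conic family $\wh{\Lambda}_{K_0}$, including its $s\to\infty$ limit, must be the identity. These are precisely the two results the paper imports from Li:
\begin{itemize}
\item Viewing $L_{\varphi}$ as $L_{\varphi}$ followed by the trivial cobordism $\wh{\Lambda}_{K_0}$, \cite[Theorem 3.6]{Li} gives $\Phi_{L_{\varphi}}\simeq\Phi_{\wh{\Lambda}_{K_0}}\circ(\Phi_{L_{\varphi}}\times\id)$.
\item \cite[Theorem 1.7]{Li} gives $\Phi_{\wh{\Lambda}_{K_0}}\simeq\id$.
\end{itemize}
A diagram chase through the fiber products over $\mush_{\wh{\Lambda}_{K_0}}(\wh{\Lambda}_{K_0})$ then finishes. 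Your ``expected obstacle'' paragraph is therefore the whole content of the lemma, left as a hope.

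\textbf{Step 2.} The primitive is pinned down by $f|_{0_{\bR^3}}=0$ (Remark \ref{rem-connected-intersection}), and $L$ is connected. So the constant value of $f$ on $\wh{\Lambda}'_{K_0}$ is determined; it is an action difference and need not vanish. For example, take $\varphi$ to be the time-one map of $\chi(|\xi|)\beta(x)$, where $\beta\equiv1$ on a large ball and $\chi$ is compactly supported and equal to a nonzero constant near $0$. Then $K_1=K_0$ and this value is $\pm\chi(0)$.
\begin{itemize}
\item A global shift cannot make it $0$ without moving the lift of the zero section.
\item Hence the lifts over $\Omega_r$ differ by a translation in the $\bR$-direction.
\item Since $e^{-s}L$ has primitive $e^{-s}f$, the family over the end is not literally stationary either.
\end{itemize}
This is repairable by translation invariance, but it is one more identification your argument does not account for.

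\textbf{Step 3.} The claim that the transformation is an equivalence ``because it is one locally'' presupposes a global natural transformation $\operatorname{res}\circ\Phi_{L_{\varphi}}\Rightarrow\operatorname{res}'$. Steps 1--2 never construct one.
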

\begin{proof}
Let us consider $L_{\varphi}$ as the composition of the following two Lagrangian cobordisms:
\begin{itemize} 
\item the Lagrangian cobordism $L_{\varphi}$ from $\Lambda_{K_1}$ to $\Lambda_{K_0}$,
\item the trivial cobordism $\wh{\Lambda}_{K_0}$ from $\Lambda_{K_0}$ to $\Lambda_{K_0}$.
\end{itemize}
Then, by \cite[Theorem 3.6]{Li}, we have
\[ \Phi_{L_{\varphi}} \simeq \Phi_{\wh{\Lambda}_{K_0}} \circ \left( \Phi_{L_{\varphi}}\times \id_{\mush_{\wh{\Lambda}_{K_0}}( \wh{\Lambda}_{K_0})}\right) .\]
From this result, we obtain the following commutative diagram (here we abbreviate $0_{\bR^3}\cup\varphi(T^*_{K_0}\bR^3)$ by $L$):
\[\xymatrix{
\mush_{L}(L) \ar[dd]_-{\Phi_{L_{\varphi}}} \ar[r]_-{\sim} \ar@/^20pt/[rr]^{\operatorname{res}'} &   \mush_{L}(L) \times_{\mush_{ \wh{\Lambda}_{K_0}}( \wh{\Lambda}_{K_0})} \mush_{ \wh{\Lambda}_{K_0}}(\wh{\Lambda}_{K_0}) \ar[d]_-{\Phi_{L_{\varphi}}\times \id} \ar[r]_-{\mathrm{pr}'} &  \mush_{ \wh{\Lambda}_{K_0}}(\wh{\Lambda}_{K_0}) \ar@{=}[d] \\
 & \mush_{L_{K_0}} (L_{K_0}) \times_{\mush_{ \wh{\Lambda}_{K_0}}( \wh{\Lambda}_{K_0})} \mush_{\wh{\Lambda}_{K_0}}(\wh{\Lambda}_{K_0}) \ar[r]_-{\mathrm{pr}} & \mush_{ \wh{\Lambda}_{K_0}}(\wh{\Lambda}_{K_0})  \\
\mush_{L_{K_0}} (L_{K_0}) & \mush_{L_{K_0}} (L_{K_0}) . \ar[u]_-{\vsim} \ar[l]_-{\Phi_{\wh{\Lambda}_{K_0}}} \ar@/_15pt/[ur]_-{\operatorname{res}} & 
}\]
Here, $\mathrm{pr}$ and $\mathrm{pr}'$ are the projections to the second factor, and the fiber products are defined by
\[\xymatrix@R=0pt{  \mush_{L} (L) \ar[r]^-{\mathrm{res}'} & \mush_{ \wh{\Lambda}_{K_0}}( \wh{\Lambda}_{K_0}) & \mush_{\wh{\Lambda}_{K_0}}(\wh{\Lambda}_{K_0}) \ar[l]_-{\id} , \\ \mush_{L_{K_0}} (L_{K_0}) \ar[r]^-{\mathrm{res}} & \mush_{ \wh{\Lambda}_{K_0}}( \wh{\Lambda}_{K_0}) & \mush_{\wh{\Lambda}_{K_0}}(\wh{\Lambda}_{K_0}) \ar[l]_-{\id} . } \]
Moreover, since $\wh{\Lambda}_{K_0}$ is the cobordism coming from the constant Legendrian isotopy of $\Lambda_{K_0}$,
we have by \cite[Theorem 1.7]{Li}
\[\Phi_{\wh{\Lambda}_{K_0}} \simeq \id_{\mush_{L_{K_0}} (L_{K_0}) }. \]
Then, the commutativity of the diagram implies that $\operatorname{res} \circ \Phi_{L_{\varphi}} \simeq \operatorname{res}' $.
\end{proof}

The next lemma can be verified
from the construction of the cobordism functor in \cite[Section 3.1]{Li}.
\begin{lemma}\label{lem-res-0U}
Let $U\subset \bR^3$ be an open subset such that $\varphi(T^*_{K_0}\bR^3) \cap \overline{T^*U} = \emptyset$.
Then, the following diagram commutes:
\begin{align}\label{diagram-0U}
\begin{split}
\xymatrix{  \mush_{0_{\bR^3}\cup \varphi(T^*_{K_0}\bR^3)}(0_{\bR^3}\cup \varphi(T^*_{K_0}\bR^3)) \ar[r]^-{\Phi_{L_{\varphi}}} \ar[d] & \mush_{L_{K_0}}(L_{K_0})\ar[d] \\
\mush_{0_U}(0_U) \ar[r]^-{\id} & \mush_{0_U}(0_U) . }
\end{split}
\end{align}
Here, the vertical functors are defined by the restrictions to $0_U \subset 0_{\bR^3}\cup \varphi(T^*_{K_0}\bR^3)$ and $0_U\subset L_{K_0}=0_{\bR^3} \cup T^*_{K_0}\bR^3$.

\end{lemma}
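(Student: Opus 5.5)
The plan is to unwind Li's construction of $\Phi_{L_{\varphi}}$ in \cite[Section 3.1]{Li} and check that it is the identity away from the region of $T^*\bR^3$ that the cobordism actually moves.

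Recall how the cobordism functor is built. One realizes the exact Lagrangian cobordism $\psi^{-1}(L_{\varphi})$ as the trace of a (non-conical) Hamiltonian deformation. Concretely, one uses the contact isotopy at infinity together with the Guillermou--Kashiwara--Schapira kernel, or equivalently the doubling/gluing description. The functor is then obtained as a composite of three steps:
\begin{itemize}
\item restriction and gluing of microsheaves along $\wh{L}$, for the lift $\wh{L}\subset T^*(\bR^3\times\bR)$ of $L=0_{\bR^3}\cup\varphi(T^*_{K_0}\bR^3)$ from (\ref{lift-of-Lagrangian}), whose primitive $f$ vanishes on $0_{\bR^3}$ by Remark \ref{rem-connected-intersection};
\item the action of a kernel associated to the Hamiltonian isotopy;
\item the nearby-cycle/restriction functor to the conical end.
\end{itemize}
Every ingredient is local in $T^*\bR^3$ and is functorial with respect to restriction to open conical subsets on which it is supported.

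The key step is a support statement. Choose the Hamiltonian $H$ realizing the cobordism, or the conical interpolation in Li's construction, to be supported in a neighborhood $W$ of $\varphi(T^*_{K_0}\bR^3)\cup T^*_{K_0}\bR^3$ that, after lifting, is disjoint from $T^*(U\times\bR)$ over the zero section part $0_U$. This is possible because $\varphi(T^*_{K_0}\bR^3)\cap\overline{T^*U}=\emptyset$. There is also no issue with $T^*_{K_0}\bR^3$ meeting $T^*U$: over $U$ both $L$ and $L_{K_0}$ reduce, near $0_U$, to the zero section with $f=0$.

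With this choice, the GKS kernel restricted over $U\times U$ is the constant kernel $\bfk_{\Delta_U}$, microlocally near $0_U$. Hence convolution with it commutes with restriction to $0_U$, and the restriction functors $\mush_L(L)\to\mush_{0_U}(0_U)$ and $\mush_{L_{K_0}}(L_{K_0})\to\mush_{0_U}(0_U)$ intertwine $\Phi_{L_{\varphi}}$ with the identity. This is because the Kashiwara--Schapira stack is a sheaf and its restriction maps commute with kernels that are the identity on the relevant open set. Naturality of the gluing (fiber product) description in \cite[Section 3.1]{Li} then gives commutativity of (\ref{diagram-0U}).

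The main obstacle is that Li's functor is not literally defined by a compactly supported kernel near $0_U$. It passes through wrapping or doubling at the contact boundary and a limit of the stopped region, so one must verify that these operations preserve the identification over $0_U$. I would handle this by noting that all wrappings are supported near $\Lambda_{K_0}$, $\Lambda_{K_1}$ and $\psi^{-1}(L_{\varphi})$, whose projections avoid $U$ except possibly over $K_0$. If $K_0$ meets $U$, I would shrink to conical neighborhoods of $0_U$ not containing $T^*_{K_0}\bR^3\setminus 0$, using that $\mush_{0_U}(0_U)$ only sees a germ near the zero section.
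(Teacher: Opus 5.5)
Your conclusion is the right one, but the step that carries your argument does not apply to the functor actually constructed in \cite[Section 3.1]{Li}. You assume $\Phi_{L_{\varphi}}$ is produced by a Hamiltonian isotopy or wrapping whose support you can place away from $T^*U$, so that its kernel is $\bfk_{\Delta}$ over $U$. There is no such freedom. The functor comes from the contact flow on $T^{*,\infty}(\bR^3\times\bR)$ lifting the negative Liouville flow $(x;\xi)\mapsto(x;e^{-t}\xi)$ of $T^*\bR^3$; on the front this is the rescaling $(x,s)\mapsto(x,e^{-t}s)$. This flow acts on sheaves microsupported along the movie of $\wh{L}$, and one then takes the limit $t\to\infty$, in which $0_{\bR^3}\cup\varphi(T^*_{K_0}\bR^3)$ degenerates to $L_{K_0}$. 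The flow is global and canonical, and it is not the identity on any neighbourhood of the lift of $0_U$. So its kernel is not $\bfk_{\Delta}$ over $U$, not even microlocally near $0_U$. Likewise, your fallback claim that all wrappings are supported near $\Lambda_{K_0}$, $\Lambda_{K_1}$ and $\psi^{-1}(L_{\varphi})$ is false.

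The missing idea is \emph{invariance} rather than \emph{support}. The negative Liouville flow preserves $T^*U$, and its lift preserves $T^{*,\infty}(U\times\bR)$ and the lift of $0_U$. Since $(0_{\bR^3}\cup\varphi(T^*_{K_0}\bR^3))\cap T^*U=0_U$, the movie over $U\times\bR$ is constant. Each step of Li's construction is compatible with restriction to this flow-invariant open subset. Hence restriction to $0_U$ intertwines $\Phi_{L_{\varphi}}$ with the cobordism functor of the empty cobordism in $T^*U$, namely $\Phi_{\emptyset}\simeq\id$; this is the paper's proof. Your cut-off picture might be rescued, but only after proving that replacing the lifted Liouville flow by one cut off away from the movie of $L_{\varphi}$ leaves $\Phi_{L_{\varphi}}$ unchanged, which you do not address.

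Finally, the case ``$K_0$ meets $U$'' cannot occur. Since $\varphi$ is compactly supported, $\varphi(T^*_{K_0}\bR^3)\supset T^*_{K_0}\bR^3\setminus D^*_r\bR^3$, which lies over all of $K_0$. Hence the hypothesis forces $K_0\cap\overline{U}=\emptyset$; this is also what makes $0_U$ open in $L_{K_0}$, so that the right vertical arrow is defined. Your suggested fix for that case would fail anyway, because a conic open set containing $0_x$ contains all fibres over a neighbourhood of $x$.
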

\begin{proof}
Referring to \cite[Section 3.1]{Li},
we can see that, in our setting, the functor $\Phi_{L_{\varphi}}$ is derived from a contact Hamiltonian flow on $T^{*,\infty}(\bR^3\times \bR)$ that is a lift of the negative Liouville flow of $T^*\bR^3$.
The open subset $T^*U$ is preserved by the negative Liouville flow, and its lift to $T^{*,\infty}(U\times \bR)$ induces the cobordism functor $\Phi_{L}\colon \mush_{0_U}(0_U) \to \mush_{0_U}(0_U)$ taking $L \subset T^*U\setminus 0_U$ as the empty set.
From the construction of the cobordism functor in \cite[Section 3.1]{Li},
the vertical restriction functors of (\ref{diagram-0U}) intertwines $\Phi_{L_{\varphi}}$ and $\Phi_{\emptyset}\simeq \id_{\mush_{0_U}(0_U)} \colon \mush_{0_U}(0_U) \to \mush_{0_U}(0_U)$, and the diagram commutes.
\end{proof}

Let us define a simple object $\scrF_{L_{\varphi}} \in \mush_{L_{\varphi}}(L_{\varphi})$ by
\[  \scrF_{L_{\varphi}} \coloneqq \rest{ \left( K^{\varphi}\circ (\mathfrak{m}_{T^*_{K_0}\bR^3}(\bfk_{K_0}[-1]) ) \right)}{L_{\varphi}}. \]
It is the image of $\mathfrak{m}_{T^*_{K_0}\bR^3}(\bfk_{K_0}[-1]) \in \mush_{T^*_{K_0}\bR^3}(T^*_{K_0}\bR^3) $ under the composite functor 
\[\xymatrix@C=35pt{
\mush_{T^*_{K_0}\bR^3}(T^*_{K_0}\bR^3) \ar[r]^-{K^{\varphi}\circ (\cdot) }  &\mush_{\varphi(T^*_{K_0}\bR^3)}(\varphi(T^*_{K_0}\bR^3)) \ar[r]^-{\rest{(\cdot)}{L_{\varphi}}} & \mush_{L_\varphi}(L_\varphi), 
}\]
where $K^{\varphi}\coloneqq \rest{K^H}{\{t=1\}}$ for the kernel $K^H$ associated to a Hamiltonian isotopy $(\varphi_H^t)_{t\in [0,1]}$ with compact support such that $\varphi_H^1=\varphi$~\cite{GKS}.
We may assume that $D^*_r\bR^3$ contains the support of $(\varphi^t_H)_{t\in [0,1]}$.
Consider the simple object $\mathscr{F}_{K_i}=\mathfrak{m}_{\wh{\Lambda}_{K_i}}(\bfk_{K_i}[-1])\in \mush_{\wh{\Lambda}_{K_i}}(\wh{\Lambda}_{K_i})$ as in Section \ref{subsec-conormal} for $i=0,1$.
Then, the restrictions of $\scrF_{L_{\varphi}}$ on $\wh{\Lambda}'_{K_0} = \wh{\Lambda}_{K_0}\setminus D^*_r\bR^3$ and $\wh{\Lambda}'_{K_1}$ satisfy
\begin{align}\label{property-of-FL}
\begin{array}{cc}
 \rest{\scrF_{L_\varphi}}{\wh{\Lambda}'_{K_0}} \simeq \rest{\scrF_{K_0}}{\wh{\Lambda}'_{K_0}}, & \rest{\scrF_{L_\varphi}}{\wh{\Lambda}'_{K_1}} \simeq \rest{\scrF_{K_1}}{\wh{\Lambda}'_{K_1}} \otimes \ell',
\end{array}\end{align}
for some rank $1$ local system $\ell'\in \Loc(\wh{\Lambda}'_{K_1})$.
Via the homotopy equivalence $\wh{\Lambda}'_{K_1} \simeq \Lambda_{K_1}$, we obtain from $\ell'$ a rank $1$ local system $\ell$ on $\Lambda_{K_1}$.
\begin{remark}\label{rem-ell}
Both $\rest{\scrF_{L_\varphi}}{\wh{\Lambda}'_{K_1}}$ and $\scrF_{K_1}$ are given by the restrictions of simple objects of $\mush_{T^*_{K_1}\bR^3} (T^*_{K_1}\bR^3)$.
Hence, $\ell = (\rest{\pi_{\bR^3}}{\Lambda_{K_1}})^* (\bar{\ell})$ for some local system $\bar{\ell}$ on $K_1$. 
\end{remark}

Using $\ell$ and the embedding $\sigma_1\colon \Lambda_{K_1} \hookrightarrow L_{\varphi}$, we have a functor
\[ \ell\otimes \sigma_1^*(\cdot)\colon  \Loc(L_{\varphi}) \to \Loc(\Lambda_{K_1}) ,\  F\mapsto \ell \otimes \sigma_1^*F. \]

\begin{lemma}
Let us consider the equivalences of categories
\begin{align}\label{three-equiv} \begin{cases} \frakm_{L_{K_1}} & \colon \Sh_{L_{K_1}} (\bR^3) \to \mush_{L_{K_1}}(L_{K_1}) , \\
\overline{\muhom}(\scrF_{K_1}, \cdot) & \colon \mush_{\wh{\Lambda}'_{K_1}}(\wh{\Lambda}'_{K_1})\simeq \mush_{\wh{\Lambda}_{K_1}}(\wh{\Lambda}_{K_1}) \to \Loc(\wh{\Lambda}_{K_1}) \simeq \Loc(\Lambda_{K_1}) ,\\
\overline{\muhom}(\scrF_{L_{\varphi}}, \cdot) & \colon \mush_{L_{\varphi}}(L_{\varphi}) \to \Loc(L_{\varphi}).
\end{cases}
\end{align}
Then, the following diagram commutes:
\[\xymatrix@C=40pt@R=30pt{
\mush_{L_{K_1}}(L_{K_1}) \ar[r] & \mush_{\wh{\Lambda}'_{K_1}}(\wh{\Lambda}'_{K_1})  \ar[d]^-{\overline{\muhom}(\scrF_{K_1}, \cdot)}_-{\vsim} & 
\mush_{L_{\varphi}}(L_{\varphi}) \ar[l] \ar[d]^-{\overline{\muhom}(\scrF_{L_{\varphi}}, \cdot)}_-{\vsim}
\\
\Sh_{L_{K_1}}(\bR^3) \ar[u]^-{\frakm_{L_{K_1}}}_-{\vsim} \ar[r]^-{\rest{\mu_{K_1}(\cdot)}{\Lambda_{K_1}}[1]} & \Loc(\Lambda_{K_1}) & \Loc(L_{\varphi}) \ar[l]_{ \ell \otimes \sigma_1^*(\cdot)} ,
}\]
where the upper horizontal arrows are the restriction functors.
\end{lemma}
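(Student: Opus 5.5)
The diagram consists of two squares, and I would prove their commutativity separately.

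\emph{Left square.} For $F\in\Sh_{L_{K_1}}(\bR^3)$, the path through the top row sends $F$ to $\overline{\muhom}(\scrF_{K_1},\operatorname{res}(\frakm_{L_{K_1}}(F)))$. The subset $\wh{\Lambda}'_{K_1}$ is an open subset of $\wh{\Lambda}_{K_1}$, and the restriction $\mush_{\wh{\Lambda}_{K_1}}(\wh{\Lambda}_{K_1})\to\mush_{\wh{\Lambda}'_{K_1}}(\wh{\Lambda}'_{K_1})$ is an equivalence compatible with $\overline{\muhom}(\scrF_{K_1},\cdot)$, since $\overline{\muhom}$ is a sheaf. We may therefore work on all of $\wh{\Lambda}_{K_1}$. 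Then Proposition~\ref{prop-stack-hom} applies with $\Omega_0=T^*\bR^3\setminus 0_{\bR^3}$: both $\bfk_{K_1}[-1]$ and $F$ have microsupport in $L_{K_1}$, which meets $\Omega_0$ only in $\wh{\Lambda}_{K_1}$. This identifies the top-row path with $\rest{\muhom(\bfk_{K_1}[-1],F)}{\wh{\Lambda}_{K_1}}$. Identity (\ref{muhom-mu_K}), after restricting to $\Lambda_{K_1}$ via the homotopy equivalence $\Lambda_{K_1}\simeq\wh{\Lambda}_{K_1}$, gives $\rest{\mu_{K_1}(F)}{\Lambda_{K_1}}[1]$. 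The remaining point is naturality in $F$. Both isomorphisms are natural, because they come from the natural isomorphism of Proposition~\ref{prop-stack-hom} and the standard identification $\muhom(\bfk_{K},\cdot)\simeq\mu_K(\cdot)\otimes\omega_{K}^{-1}$, where the orientation sheaf twist is trivial up to the shift for a knot in $\bR^3$. So the left square commutes.

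\emph{Right square.} The functor $\overline{\muhom}(\scrF_{L_\varphi},\cdot)$ is compatible with restriction to open subsets: for $\scrG\in\mush_{L_\varphi}(L_\varphi)$,
\[\rest{\overline{\muhom}(\scrF_{L_\varphi},\scrG)}{\wh{\Lambda}'_{K_1}}\simeq\overline{\muhom}\bigl(\rest{\scrF_{L_\varphi}}{\wh{\Lambda}'_{K_1}},\rest{\scrG}{\wh{\Lambda}'_{K_1}}\bigr).\]
This holds by the very definition of $\overline{\muhom}$ as the sheaf $U\mapsto\Hom(\rest{F}{U},\rest{G}{U})$. Next I use (\ref{property-of-FL}), namely $\rest{\scrF_{L_\varphi}}{\wh{\Lambda}'_{K_1}}\simeq\rest{\scrF_{K_1}}{\wh{\Lambda}'_{K_1}}\otimes\ell'$. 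Here $\ell'$ is a rank one local system, so it is invertible with inverse $\ell'^{\vee}$, and the stack is linear over local systems, so that $\overline{\muhom}(\scrF\otimes\ell',\scrG)\simeq\ell'^{\vee}\otimes\overline{\muhom}(\scrF,\scrG)$. Combining these, the two paths differ by the twist $\ell'^{\vee}$ on $\wh{\Lambda}'_{K_1}$. Finally, the inclusion $\wh{\Lambda}'_{K_1}\hookrightarrow L_\varphi$, precomposed with $\Lambda_{K_1}\simeq\wh{\Lambda}'_{K_1}$ (the image of $\sigma_1$ at level $a_1$), is exactly $\sigma_1$. So the restriction of local systems corresponds to $\sigma_1^*$.

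\emph{Main obstacle.} The main obstacle is the bookkeeping of the twist. The formula naturally produces $\ell^{\vee}\otimes\sigma_1^*$, whereas the statement has $\ell\otimes\sigma_1^*$. I would handle this by choosing the convention for $\ell$ appropriately: redefine $\ell$ as the dual of the local system coming from $\ell'$, which is harmless since any rank one local system works in (\ref{property-of-FL}) after dualizing. Alternatively, one can observe via Remark~\ref{rem-ell} that the statement only needs some rank one local system pulled back from $K_1$. I would also have to check that the equivalence $\mush_{\wh{\Lambda}_{K_1}}(\wh{\Lambda}_{K_1})\simeq\mush_{\wh{\Lambda}'_{K_1}}(\wh{\Lambda}'_{K_1})$ intertwines $\overline{\muhom}(\scrF_{K_1},\cdot)$ with its restricted version. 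This follows from conicity: $\wh{\Lambda}'_{K_1}\hookrightarrow\wh{\Lambda}_{K_1}$ is an $\bR_+$-equivariant homotopy equivalence, and objects of $\mush$ are $\bR_+$-invariant.
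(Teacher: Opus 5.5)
Your overall approach is sound, but you get the direction of the twist wrong in the last step, and the resulting ``fix'' is incorrect. There is no mismatch between $\ell$ and $\ell^{\vee}$. Identify $\ell'$ with $\ell$ via $\wh{\Lambda}'_{K_1}\simeq\Lambda_{K_1}$. Then your own ingredients give, for $\scrG\in\mush_{L_{\varphi}}(L_{\varphi})$,
\[ \sigma_1^*\overline{\muhom}(\scrF_{L_{\varphi}},\scrG)\simeq \overline{\muhom}\bigl(\scrF_{K_1}\otimes\ell',\rest{\scrG}{\wh{\Lambda}'_{K_1}}\bigr)\simeq \ell^{\vee}\otimes\overline{\muhom}\bigl(\scrF_{K_1},\rest{\scrG}{\wh{\Lambda}'_{K_1}}\bigr). \]
So the lower path is $\ell\otimes\sigma_1^*\overline{\muhom}(\scrF_{L_{\varphi}},\scrG)\simeq\ell\otimes\ell^{\vee}\otimes\overline{\muhom}(\scrF_{K_1},\rest{\scrG}{\wh{\Lambda}'_{K_1}})$. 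This is the upper path, with exactly the $\ell$ of (\ref{property-of-FL}). The dual sits on the $\overline{\muhom}(\scrF_{L_{\varphi}},\cdot)$ side and is cancelled by $\ell$, not by $\ell^{\vee}$.

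A quick test with $\scrG=\scrF_{L_{\varphi}}$ confirms this. The upper path gives $\overline{\muhom}(\scrF_{K_1},\scrF_{K_1}\otimes\ell')\simeq\ell$, and the lower path gives $\ell\otimes\sigma_1^*\bfk_{L_{\varphi}}=\ell$. With $\ell^{\vee}$ in place of $\ell$, the same test shows the diagram commutes only if $\ell\simeq\ell^{\vee}$. That is not available at this point: triviality of $\ell$ is Lemma \ref{lem-F_L-F_K}, whose proof goes through the functors (\ref{maps-for-fiber-prod}) built from the present lemma.

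So redefining $\ell$ as its dual is wrong, not merely unnecessary, since $\ell$ is already fixed by (\ref{property-of-FL}). The statement also concerns this specific $\ell$, not some rank one local system pulled back from $K_1$. Delete that paragraph and the argument is complete.

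Apart from this, your route is a legitimate variant of the paper's. The paper treats the left square by (\ref{muhom-mu_K}) alone. For the right square it checks the square of the \emph{inverse} equivalences $F\mapsto\scrF_{K_1}\otimes F$ and $G\mapsto\scrF_{L_{\varphi}}\otimes G$. It simply reads off $\rest{(\scrF_{L_{\varphi}}\otimes G)}{\wh{\Lambda}'_{K_1}}\simeq(\scrF_{K_1}\otimes\ell')\otimes\rest{G}{\wh{\Lambda}'_{K_1}}\simeq\scrF_{K_1}\otimes(\ell\otimes\sigma_1^*G)$. No dualization occurs there, so the direction of the twist is transparent, which is exactly where your computation slipped.

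Two smaller corrections:
\begin{itemize}
\item On the left square, no twist or shift enters: $\muhom(\bfk_K,\cdot)\simeq\mu_K(\cdot)$, which is what (\ref{muhom-mu_K}) records. A factor $\omega_K^{-1}$ would introduce a spurious shift.
\item $\wh{\Lambda}'_{K_1}=\wh{\Lambda}_{K_1}\cap D^*_{\varepsilon}\bR^3$ is not $\bR_+$-stable, so ``$\bR_+$-equivariant homotopy equivalence'' is not the right justification. The restriction equivalence holds because the stack on $\wh{\Lambda}_{K_1}$ is conic and $\wh{\Lambda}'_{K_1}$ meets each $\bR_+$-orbit in an interval.
\end{itemize}
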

\begin{proof}
The left square commutes by (\ref{muhom-mu_K}).
To see that the right square commutes, note that
\[\begin{array}{cc} \Loc(\Lambda_{K_1})\to \mush_{\Lambda_{K_1}}(\Lambda_{K_1}) ,\  F\mapsto \scrF_{K_1}\otimes F , & \Loc(L_{\varphi}) \to \mush_{L_{\varphi}}(L_{\varphi}) ,\  G\mapsto \scrF_{L_{\varphi}}\otimes G \end{array}\]
are the inverse of the vertical functors. Then, for any $G\in \Loc(L_{\varphi})$,
\[   \rest{(\mathscr{F}_{L_{\varphi}} \otimes G)}{\wh{\Lambda}'_{K_1}} = (\mathscr{F}_{K_1}\otimes \ell') \otimes \rest{G}{\wh{\Lambda}'_{K_1}}, \]
and it agrees with $\scrF_{K_1}\otimes (\ell \otimes \sigma_1^*G )$ via the natural equivalence $\mush_{\wh{\Lambda}'_{K_1}}(\wh{\Lambda}'_{K_1})\simeq \mush_{\Lambda_{K_1}}(\Lambda_{K_1})$.
This shows the commutativity of the right square.
\end{proof}

From this lemma, the three functors of (\ref{three-equiv}) induce an equivalence of categories
\begin{align}\label{equiv-Sh*L-mush}
\begin{split}
\mush_{0_{\bR^3}\cup \varphi (T^*_{K_0}\bR^3)}(0_{\bR^3}\cup \varphi (T^*_{K_0}\bR^3))
& \simeq \mush_{L_{K_1}}(L_{K_1}) \times_{\mush_{\wh{\Lambda}'_{K_1}}(\wh{\Lambda}'_{K_1})} \mush_{L_{\varphi}}(L_{\varphi}) \\
&\simto   \Sh_{L_{K_1}}(\bR^3) \times_{\Loc(\Lambda_{K_1})} \Loc (L_{\varphi}),
\end{split}
\end{align}
where the fiber product in the second line is defined from
\begin{align}\label{maps-for-fiber-prod} \xymatrix@C=50pt{ \Sh_{L_{K_1}} (\bR^3) \ar[r]^-{ \rest{\mu_{K_1}(\cdot)}{\Lambda_{K_1}}[1]} & \Loc(\Lambda_{K_1}) & \Loc(L_{\varphi}) \ar[l]_-{\ell \otimes \sigma_1^*(\cdot)}. }
\end{align}
Through (\ref{equiv-Sh*L-mush}) and $\frakm_{L_{K_0}}\colon \Sh_{L_{K_0}}(\bR^3) \simto \mush_{L_{K_0}}(L_{K_0})$,
the functor $\Phi_{L_{\varphi}}$ gives a fully faithful functor
\[ \boldsymbol{\Phi} \colon \Sh_{L_{K_1}}(\bR^3) \times_{\Loc(\Lambda_{K_1})} \Loc (L_{\varphi}) \hookrightarrow \Sh_{L_{K_0}}(\bR^3)  \]
which makes the following diagram commute:
\begin{align}\label{diagram-Phi-bPhi}
\begin{split}\xymatrix{
\Sh_{L_{K_1}}(\bR^3) \times_{\Loc(\Lambda_{K_1})} \Loc (L_{\varphi}) \ar@{^{(}->}[r]^-{\bPhi} & \Sh_{L_{K_0}}(\bR^3) \ar[d]_-{\frakm_{L_{K_0}}}^-{\vsim} \\
\mush_{0_{\bR^3}\cup \varphi (T^*_{K_0}\bR^3)}(0_{\bR^3}\cup \varphi (T^*_{K_0}\bR^3))  \ar@{^{(}->}[r]^-{\Phi_{L_{\varphi}}} \ar[u]^-{(\ref{equiv-Sh*L-mush})}_{\vsim} &  \mush_{L_{K_0}}(L_{K_0}).
}
\end{split}
\end{align}

A proof of the statement in the following remark is given in \cite[Appendix C.1.20]{HeyerMann}. 
It can also be seen from the explicit description of homotopy pullbacks of dg categories in \cite{BB13}, together with the correspondence between dg categories and $\Mod_{\bfk}$-enriched stable categories. 

\begin{remark}\label{rem-fiber-prod}
    We have the following description of the pullbacks of $\Mod_\bfk$-enriched stable categories. 
    For $\Mod_\bfk$-enriched stable categories $\cC_1, \cC_2, \cD$ and exact functors $F_1\colon \cC_1\to \cD, F_2\colon \cC_2\to \cD$, an object of $\cC_1\times_{\cD} \cC_2$ consists of a triple $(c_1,c_2,\alpha)$ where $c_i$ is an object of $\cC_i$ for each $i\in \{1,2\}$ and $\alpha\colon F_1(c_1)\simeq F_2(c_2)$ is an equivalence in $\cD$. 
    We often abbreviate $\alpha$. 
    A morphism from $(c_1,c_2,\alpha)$ to  $(c'_1,c'_2,\alpha')$ consists of a triple $(f_1,f_2,\beta)$ where $f_i\colon c_i\to c'_i$ is a morphism in $\cC_i$ for each $i\in \{1,2\}$ and 
    $\beta$ is a homotopy between $\alpha'\circ F_1(f_1)$ and $F_2(f_2)\circ \alpha$ in $\cD$.
    Equivalently, there is an equivalence
    \[\Hom_{\cC_1\times_{\cD} \cC_2}((c_1,c_2,\alpha), (c_1',c_2',\alpha'))\simeq \Hom_{\cC_1}(c_1,c_1')\times_{\Hom_{\cD}(F_1(c_1), F_2(c_2'))} \Hom_{\cC_2} (c_2, c_2') \]
    where the pullback in the right hand side is computed in the $\infty$-category $\Mod_\bfk$.
    Note that if either $c_2$ or $c_2'$ is a $0$-object, the projection \[
    \Hom_{\cC_1\times_{\cD} \cC_2}((c_1,c_2,\alpha), (c_1',c_2',\alpha'))\to\Hom_{\cC_1}(c_1,c'_1)\] is an equivalence. 
\end{remark}

\begin{lemma}\label{lem-bPhi-sigma-mu}
The following diagram commutes:
\begin{align}\label{diagram-Sh*L-Lambda_0}
\begin{split}
\xymatrix{
& \Loc(\Lambda_{K_0})  & \\
\Sh_{L_{K_1}}(\bR^3) \times_{\Loc(\Lambda_{K_1})} \Loc (L_{\varphi}) \ar[ur]^-{\sigma_0^*\circ \operatorname{pr}} \ar@{^{(}->}[rr]^-{\bPhi}  & &  \Sh_{L_{K_0}}(\bR^3) . \ar[ul]_-{ \rest{\mu_{K_0}(\cdot)}{\Lambda_{K_0}}[1] } 
} \end{split}
\end{align}
Here $\operatorname{pr} \colon \Sh_{L_{K_1}}(\bR^3) \times_{\Loc(\Lambda_{K_1})} \Loc(L_{\varphi}) \to \Loc(L_{\varphi}) ,\  (F,G) \mapsto G$ is the projection.
\end{lemma}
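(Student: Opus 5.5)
The plan is to deduce the commutativity of (\ref{diagram-Sh*L-Lambda_0}) from Lemma \ref{lem-Phi-res}. We unwind both composites through the equivalences (\ref{equiv-Sh*L-mush}) and $\frakm_{L_{K_0}}$, and then identify the resulting functors to $\Loc(\Lambda_{K_0})$ using the simple objects $\scrF_{K_0}$ and $\scrF_{L_{\varphi}}$.

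First, the right-hand functor. By (\ref{muhom-mu_K}) applied to $K=K_0$, the functor $\rest{\mu_{K_0}(\cdot)}{\Lambda_{K_0}}[1]$ coincides with
\[\overline{\muhom}(\scrF_{K_0},\cdot)\circ \operatorname{res}\circ \frakm_{L_{K_0}}.\]
Precomposing with $\bPhi$ and using the commutative square (\ref{diagram-Phi-bPhi}), we get $\frakm_{L_{K_0}}\circ\bPhi \simeq \Phi_{L_{\varphi}}\circ(\ref{equiv-Sh*L-mush})^{-1}$. Lemma \ref{lem-Phi-res} then gives $\operatorname{res}\circ\Phi_{L_{\varphi}}\simeq \operatorname{res}'$. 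So the composite along the bottom and right edges is
\[\overline{\muhom}(\scrF_{K_0},\cdot)\circ\operatorname{res}'\circ(\ref{equiv-Sh*L-mush})^{-1}.\]

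Second, $\operatorname{res}'$ factors through $\mush_{L_{\varphi}}(L_{\varphi})$. It is the restriction of $\mush_{0_{\bR^3}\cup\varphi(T^*_{K_0}\bR^3)}$ to the open piece $\wh{\Lambda}'_{K_0}\subset L_{\varphi}$, which is far from the zero section. Under (\ref{equiv-Sh*L-mush}), the projection to $\mush_{L_{\varphi}}(L_{\varphi})$ corresponds to $\operatorname{pr}$, via the inverse equivalence $G\mapsto \scrF_{L_{\varphi}}\otimes G$. Hence for $(F,G)$ in the fiber product,
\[\operatorname{res}'\bigl((\ref{equiv-Sh*L-mush})^{-1}(F,G)\bigr)\simeq \rest{(\scrF_{L_{\varphi}}\otimes G)}{\wh{\Lambda}'_{K_0}}.\]
By the first relation in (\ref{property-of-FL}), this equals $\rest{\scrF_{K_0}}{\wh{\Lambda}'_{K_0}}\otimes \rest{G}{\wh{\Lambda}'_{K_0}}$. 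Applying the functor $\overline{\muhom}(\scrF_{K_0},\cdot)$, and using simplicity of $\scrF_{K_0}$ (so that it inverts $\scrF_{K_0}\otimes(\cdot)$), we get $\rest{G}{\wh{\Lambda}'_{K_0}}$. Through the homotopy equivalence $\Lambda_{K_0}\simeq\wh{\Lambda}'_{K_0}$ given by $\sigma_0$ (whose image is the slice $\psi(\{a_0\}\times\Lambda_{K_0})$, lying inside $\wh{\Lambda}'_{K_0}$ after possibly enlarging $a_0$), this local system is $\sigma_0^*G$. This gives the identification $\sigma_0^*\circ\operatorname{pr}$.

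The main obstacle is bookkeeping of the identifications, not any new mathematics. One must check three things. First, $\overline{\muhom}(\scrF_{K_0},\cdot)$ on $\mush_{\wh{\Lambda}_{K_0}}(\wh{\Lambda}_{K_0})$ is compatible with its restriction to $\wh{\Lambda}'_{K_0}$. Second, the isomorphism in (\ref{property-of-FL}) on $\wh{\Lambda}'_{K_0}$ carries no twisting local system, unlike the $\ell'$ appearing at the negative end. This holds because the GKS kernel $K^{\varphi}$ is trivial outside $D^*_r\bR^3$, so $\scrF_{L_{\varphi}}$ literally agrees with $\frakm(\bfk_{K_0}[-1])$ there. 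Third, all these isomorphisms are natural in $(F,G)$, so that we obtain an equivalence of functors and not merely an objectwise one. I would handle naturality by writing every step as a composite of functors already known to be naturally equivalent: restriction functors, tensoring with a fixed object, and $\overline{\muhom}$ with a fixed simple object.
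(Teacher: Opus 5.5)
Your proposal is correct and follows essentially the same route as the paper. The paper also combines Lemma \ref{lem-Phi-res} with the square (\ref{diagram-Phi-bPhi}), identifies $\operatorname{res}'$ composed with (\ref{equiv-Sh*L-mush}) with $\scrF_{K_0}\otimes\sigma_0^*(\cdot)$ via (\ref{property-of-FL}), and identifies the right-hand functor via (\ref{muhom-mu_K}); the only cosmetic difference is that the paper uses the equivalence $\scrF_{K_0}\otimes(\cdot)$ where you use its inverse $\overline{\muhom}(\scrF_{K_0},\cdot)$.
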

\begin{proof}
Consider the following diagram
\[\xymatrix@C=40pt{
\Sh_{L_{K_1}}(\bR^3) \times_{\Loc(\Lambda_{K_1})} \Loc(L_{\varphi}) \ar[d]^-{(\ref{equiv-Sh*L-mush})}_-{\vsim} \ar[r]^-{\sigma_0^*\circ \operatorname{pr}} &  \Loc(\Lambda_{K_0}) \ar[d]^-{\scrF_{K_0}\otimes (\cdot)}_{\vsim} & \Sh_{L_{K_0}}(\bR^3) \ar[d]^-{\frakm_{L_{K_0}}}_-{\vsim}
\ar[l]_-{\rest{\mu_{K_0}(\cdot)}{\Lambda_{K_0}}[1]} \\
\mush_{0_{\bR^3}\cup \varphi (T^*_{K_0}\bR^3)}(0_{\bR^3}\cup \varphi (T^*_{K_0}\bR^3)) \ar[r]_-{\operatorname{res}'}  &  \mush_{\wh{\Lambda}_{K_0}}(\wh{\Lambda}_{K_0}) & \mush_{L_{K_0}}(L_{K_0}). \ar[l]^-{\operatorname{res}}
}\]
If this diagram commutes, then the commutativity of (\ref{diagram-Sh*L-Lambda_0}) follows immediately from Lemma \ref{lem-Phi-res} and the diagram (\ref{diagram-Phi-bPhi}).

From (\ref{property-of-FL}), $\rest{\scrF_{L_{\varphi}}}{\wh{\Lambda}'_{K_0}} \simeq \rest{\scrF_{K_0}}{\wh{\Lambda}'_{K_0}}$.
It follows that for any $(F,G) \in \Sh_{L_{K_1}}(\bR^3) \times_{\Loc(\Lambda_{K_1})} \Loc(L_{\varphi})$,
\[ (\operatorname{res}'\circ (\ref{equiv-Sh*L-mush})) (F,G) \simeq \rest{( \mathscr{F}_{L_{\varphi}} \otimes G) }{\wh{\Lambda}'_{K_0}} \simeq \scrF_{K_0}\otimes \sigma_0^*G \]
via the natural equivalences $\mush_{\wh{\Lambda}_{K_0}}(\wh{\Lambda}_{K_0}) \simeq \mush_{\wh{\Lambda}'_{K_0}}(\wh{\Lambda}'_{K_0})\simeq \mush_{\Lambda_{K_0}}(\Lambda_{K_0})$.
This shows that the left square commutes.
The right triangle commutes by (\ref{muhom-mu_K}).
\end{proof}

Let us fix $x_0 \in \bR^3$ outside a sufficiently large ball containing $ N_{K_0} \cup N_{K_1} \cup \pi_{\bR^3}(\varphi(T^*_{K_0}\bR^3))$.
Then, $x_0 \in \bR^3 \setminus K_0$.
For $i\in \{0,1\}$, we have a functor
\[   \Sh_{L_{K_i}}(\bR^3) \to \Sh (\{x_0\})  \]
which takes the stalk at $x_0$.
Then, we have a diagram
\begin{align}\label{diagram-Sh*L-stalk}
\begin{split}
\xymatrix{
 \Sh_{L_{K_1}}(\bR^3) \times_{\Loc(\Lambda_{K_1})} \Loc (L_{\varphi})   \ar@{^{(}->}[rr]^-{\bPhi } \ar[dr] & &  \Sh_{L_{K_0}}(\bR^3)  \ar[dl]  \\
& \Sh (\{x_0\}). &  
}\end{split}\end{align}
Lemma \ref{lem-res-0U} implies that this diagram commutes.

For any $\bfk$-vector space $V$, let $V_{\bR^3} \in \Sh_{L_{K_1}}(\bR^3)$ denote a sheaf on $\bR^3$ constant to $V$.
Recall the functors (\ref{maps-for-fiber-prod}) defining the fiber product $ \Sh_{L_{K_1}}(\bR^3)\times_{\Loc(\Lambda_{K_1})} \Loc(L_{\varphi})$. Since
\[ \rest{\mu_{K_1}(V_{\bR^3})}{\Lambda_{K_1}} [1] \simeq 0 \text{ in }\Loc(\Lambda_{K_1}), \]
we have an object $(V_{\bR^3},0)\in \Sh_{L_{K_1}}(\bR^3)\times_{\Loc(\Lambda_{K_1})} \Loc(L_{\varphi})$.
\begin{lemma}\label{lem-PhiL-constant}
$\bPhi(( V_{\bR^3}, 0))\simeq V_{\bR^3}$ 
\end{lemma}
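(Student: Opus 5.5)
Set $F\coloneqq \bPhi((V_{\bR^3},0))\in \Sh_{L_{K_0}}(\bR^3)$. The plan is to use the split Verdier sequence (\ref{split-Verdier}) for $K_0$ to show that $F$ has no part supported on $K_0$, and then to identify its part on $\bR^3\setminus K_0$ with the constant local system $V$.

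\emph{Step 1: microlocal vanishing along $\Lambda_{K_0}$.} Lemma \ref{lem-bPhi-sigma-mu} gives
\[ \rest{\mu_{K_0}(F)}{\Lambda_{K_0}}[1]\simeq \sigma_0^*(0)\simeq 0. \]
Hence $\CMS(F)\cap \wh{\Lambda}_{K_0}=\emptyset$, and so $\CMS(F)\subset 0_{\bR^3}$ except possibly along $0_{K_0}$. Since microsupports are closed and conic, we should get $\CMS(F)\subset 0_{\bR^3}$: the fiber $T^*_{K_0}\bR^3\cap T^*_x\bR^3$ minus zero lies in $\wh{\Lambda}_{K_0}$. Therefore $F\in\Loc(\bR^3)$. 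As an alternative that avoids this microsupport argument, use the cofiber sequence (\ref{semi-decomp}), ${j_0}_!j_0^*F\to F\to {i_0}_*i_0^*F$, and apply $\rest{\mu_{K_0}(\cdot)}{\Lambda_{K_0}}$. Proposition \ref{prop-muK-conormal} turns this into a fiber sequence $e_0^*j_0^*F[-1]\to 0\to p^*i_0^*F[-1]$, where $p=\rest{\pi_{\bR^3}}{\Lambda_{K_0}}$. This gives $p^*i_0^*F\simeq e_0^*j_0^*F[-1]$ up to shift. I expect the direct microsupport argument to be cleaner.

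\emph{Step 2: identify the local system.} The space $\bR^3$ is contractible, so $F\simeq W_{\bR^3}$, where $W$ is the stalk $F_{x_0}$. The commutative diagram (\ref{diagram-Sh*L-stalk}) shows that $F_{x_0}$ equals the stalk of $V_{\bR^3}$ at $x_0$, which is $V$. Hence $F\simeq V_{\bR^3}$. If one wants the equivalence to be natural in $V$, note that Lemma \ref{lem-res-0U} compares $\bPhi$ with the identity on a neighborhood $U$ of $x_0$. Since restriction $\Loc(\bR^3)\to\Loc(U)$ is an equivalence, this upgrades to $\bPhi((V_{\bR^3},0))\simeq V_{\bR^3}$ functorially.

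\emph{Main obstacle.} The delicate point is Step 1, namely passing from $\rest{\mu_{K_0}(F)}{\Lambda_{K_0}}\simeq 0$ to $\CMS(F)\cap \wh{\Lambda}_{K_0}=\emptyset$. For this I would use that the support of $\mu_{K_0}(F)$ is $\CMS(F)\cap T^*_{K_0}\bR^3$ (KS90, the microlocalization support property) together with $\bR_+$-conicity, which extends vanishing on $\Lambda_{K_0}$ to all of $\wh{\Lambda}_{K_0}$. Once that holds, $F$ is locally constant on $\bR^3$, and the rest of the argument is immediate.
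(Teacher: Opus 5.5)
Your overall route matches the paper's. You kill $\rest{\mu_{K_0}(F)}{\Lambda_{K_0}}$ with Lemma \ref{lem-bPhi-sigma-mu}, deduce $\CMS(F)\subset 0_{\bR^3}$, conclude that $F$ is constant on $\bR^3$, and read off the coefficient from the stalk at $x_0$ via (\ref{diagram-Sh*L-stalk}). Step 2 and the final closedness argument in Step 1 are fine.

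The problem is the justification you give for the step you flag as the main obstacle. There is no general equality $\operatorname{supp}\mu_M(F)=\CMS(F)\cap T^*_MX$. Only the inclusion $\subset$ holds, as a consequence of $\operatorname{supp}\muhom(G,F)\subset \CMS(G)\cap\CMS(F)$ with $G=\bfk_M$, and that is the wrong direction for you. For a counterexample, take $\bR^2$ with $M=\{y=0\}$ and $F=\bfk_{\{y\geq x^3\}}$. Testing with $f=y-x^3$ shows $(0,0;0,1)\in\CMS(F)\cap T^*_M\bR^2$. On the other hand, $\mu_M(F)_{(x,0;0,1)}\simeq (R\Gamma_{\{y\geq 0\}}F)_{(x,0)}=0$ for every $x$ near $0$, so $\mu_M(F)$ vanishes on a neighborhood of that covector. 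Hence vanishing of $\mu_{K_0}(F)$ along $\Lambda_{K_0}$, together with conicity, does not by itself remove $\wh{\Lambda}_{K_0}$ from $\CMS(F)$.

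The missing ingredient is the hypothesis $F\in \Sh_{L_{K_0}}(\bR^3)$, used microlocally. Near $\wh{\Lambda}_{K_0}$ the microsupport of $F$ lies in the smooth conic Lagrangian $\wh{\Lambda}_{K_0}$, along which $\scrF_{K_0}$ is simple. Therefore $\overline{\muhom}(\scrF_{K_0},\cdot)\colon \mush_{\wh{\Lambda}_{K_0}}(\wh{\Lambda}_{K_0})\to \Loc(\Lambda_{K_0})$ is an equivalence. Equivalently, near each point of $\wh{\Lambda}_{K_0}$ the sheaf $F$ is microlocally a constant sheaf on $K_0$ whose coefficient is detected by $\mu_{K_0}(F)$.

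By (\ref{muhom-mu_K}), this functor sends $\rest{\frakm_{L_{K_0}}(F)}{\wh{\Lambda}_{K_0}}$ to $\rest{\mu_{K_0}(F)}{\Lambda_{K_0}}[1]\simeq 0$. Hence $\rest{\frakm_{L_{K_0}}(F)}{\wh{\Lambda}_{K_0}}\simeq 0$, i.e.\ $\CMS(F)\cap \wh{\Lambda}_{K_0}=\emptyset$. This is exactly the paper's argument. With it in place of the general support claim, the rest of your proof goes through: local constancy by \cite[Proposition 5.4.5]{KS90}, then the stalk at $x_0$.
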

\begin{proof}
Let us abbreviate $F= \bPhi( (V_{\bR^3},0)) \in \Sh_{L_{K_0}}(\bR^3)$.
By Lemma \ref{lem-bPhi-sigma-mu},
\[  \rest{ \mu_{K_0}( F )}{\Lambda_{K_0}} [1]  \simeq \sigma_0^* (0) =0 \text{ in }\Loc(\Lambda_{K_0}).  
\]
Since the functor $\overline{\muhom}(\scrF_{K_0},\cdot)$ is an equivalence, it follows from (\ref{muhom-mu_K}) that
\[\rest{ \frakm_{L_{K_0}} ( F )}{\wh{\Lambda}_{K_0}} \simeq 0 \text{ in }\mush_{\wh{\Lambda}_{K_0}}(\wh{\Lambda}_{K_0})\]
for the microlocalization functor $\frakm_{L_{K_0}}$.
Hence $\CMS(F) \subset 0_{\bR^3}$ and $F\simeq V'_{\bR^3}$ for some chain complex $V'$ of $\bfk$-vector spaces by \cite[Proposition 5.4.5]{KS90}.
By the commutativity of (\ref{diagram-Sh*L-stalk}), $V'$ is isomorphic to $V$. 
\end{proof}

Next, consider the local system $\bar{\ell}$ on $K_1$ in Remark \ref{rem-ell} and a constant sheaf $\bfk_{L_{\varphi}} \in \Loc(L_{\varphi})$. By the functors (\ref{maps-for-fiber-prod}),
\[ \xymatrix@C=55pt{ {i_1}_*\bar{\ell} \ar@{|->}[r]^-{\rest{\mu_{K_1}(\cdot)}{\Lambda_{K_1}} [1]} &  \rest{\mu_{K_1}({i_1}_*\bar{\ell})}{\Lambda_{K_1}} [1] \simeq (\rest{\pi_{\bR^3}}{\Lambda_{K_1}})^*(\bar{\ell}) [1] = \ell [1] & \bfk_{L_{\varphi}}[1] \ar@{|->}[l]_-{\ell \otimes \sigma_1^*(\cdot)} . }\]
Here, we use Proposition \ref{prop-muK-conormal} for $\bar{\ell}\in \Loc(K_1)$.
Thus, we have an object $({i_1}_*\bar{\ell}, \bfk_{L_{\varphi}}[1]) \in \Sh_{L_{K_1}}(\bR^3)\times_{\Loc(\Lambda_{K_1})} \Loc(L_{\varphi})$.

\begin{lemma}\label{lem-F_L-F_K} 
$\ell$ is a trivial local system. 
That is, $\rest{\scrF_{L_\varphi}}{\wh{\Lambda}'_{K_1}} \simeq \scrF_{K_1}$. 
Moreover,
\[\bPhi((\bfk_{K_1}, \bfk_{L_\varphi}[1]))\simeq \bfk_{K_0}\] in $\Sh_{L_{K_0}}(\bR^3)$.
\end{lemma}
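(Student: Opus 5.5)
Consider the object $E \coloneqq ({i_1}_*\bar{\ell}, \bfk_{L_{\varphi}}[1])$ just constructed, and set $G \coloneqq \bPhi(E) \in \Sh_{L_{K_0}}(\bR^3)$. The plan is to identify $G$ using the split Verdier sequence (\ref{split-Verdier}), and then read off from $G$ that $\bar{\ell}$, hence $\ell$, is trivial.

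\emph{Step 1: $G$ is supported on $K_0$.} By the commutativity of (\ref{diagram-Sh*L-stalk}), the stalk $G_{x_0}$ equals the stalk of ${i_1}_*\bar{\ell}$ at $x_0$. This stalk is $0$ because $x_0 \notin K_1$. Since $j_0^*G \in \Loc(\bR^3\setminus K_0)$ and $\bR^3\setminus K_0$ is connected, we get $j_0^*G\simeq 0$. The cofiber sequence (\ref{semi-decomp}) then gives $G \simeq {i_0}_* A$ with $A \coloneqq i_0^*G \in \Loc(K_0)$.

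\emph{Step 2: identify $A$.} By Lemma \ref{lem-bPhi-sigma-mu} and Proposition \ref{prop-muK-conormal}(1),
\[ (\rest{\pi_{\bR^3}}{\Lambda_{K_0}})^* A\,[1] \simeq \rest{\mu_{K_0}(G)}{\Lambda_{K_0}}[1] \simeq \sigma_0^*(\bfk_{L_\varphi}[1]) = \bfk_{\Lambda_{K_0}}[1]. \]
The map $\rest{\pi_{\bR^3}}{\Lambda_{K_0}}\colon \Lambda_{K_0}\to K_0$ is a circle bundle with a section, so pullback along it is faithful on objects up to isomorphism. Hence $A$ is the constant rank one sheaf $\bfk_{K_0}$, and $G\simeq \bfk_{K_0}$. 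This already proves the displayed formula, once we know $\bar{\ell}$ is trivial, since then $E = (\bfk_{K_1},\bfk_{L_\varphi}[1])$.

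\emph{Step 3: $\bar{\ell}$ is trivial.} We use the full faithfulness of $\bPhi$ and compute endomorphisms of $E$ and $G$. On one side,
\[ \Hom(G,G) \simeq \Hom(\bfk_{K_0},\bfk_{K_0}) \simeq C^*(K_0;\bfk) \]
has $H^0 = \bfk$ and $H^1=\bfk$. On the other side, Remark \ref{rem-fiber-prod} gives
\[ \Hom(E,E) \simeq \Hom({i_1}_*\bar{\ell},{i_1}_*\bar{\ell}) \times_{\Hom(\ell[1],\ell[1])} \Hom(\bfk_{L_\varphi},\bfk_{L_\varphi}). \]
The first factor is $C^*(K_1;\operatorname{End}\bar{\ell})$, which is $C^*(S^1;\bfk)$ because $\bar{\ell}$ has rank one. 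The second factor is $C^*(L_\varphi;\bfk)$. The base is $C^*(\Lambda_{K_1};\operatorname{End}\ell)\simeq C^*(T^2;\bfk)$. All three are independent of $\bar{\ell}$, so this count gives no information.

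A better route is to use $\Hom$ against an auxiliary object. The candidate is $(\bfk_{K_1}, F')$ for a rank one local system $F'$ on $L_\varphi$ with $\sigma_1^*F' \simeq \ell^{-1}$. Such an $F'$ exists because $(\sigma_1)_*$ is an isomorphism on $H_1$ (Proposition \ref{prop-H1L}). For this object the gluing datum is $\ell\otimes\ell^{-1}\simeq \bfk$, which matches $\bfk_{K_1}[1]$ on the $\Sh$ side after the shift. Steps 1 and 2 then apply verbatim and give $\bPhi((\bfk_{K_1},F'[1])) \simeq {i_0}_*A'$ with $(\rest{\pi}{\Lambda_{K_0}})^*A' \simeq \sigma_0^*F'$.

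Now compare monodromies. A local system pulled back from $K_0$ is trivial along the meridian $m_0$. So $F'$ must be trivial on $(\sigma_0)_*(m_0) = \pm(\sigma_1)_*(m_1)$, using Proposition \ref{prop-H1L}. But $\sigma_1^*F' = \ell^{-1}$ is pulled back from $K_1$ by Remark \ref{rem-ell}, so it is already trivial along $m_1$. Hence this constraint also gives nothing.

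The \textbf{main obstacle} is therefore Step 3, and the information has to come from $l_1$. Let $\lambda$ be the monodromy of $\bar{\ell}$ along $K_1$. The constraint from Step 2 says $\sigma_0^*F'$ descends to $K_0$, with monodromy along $l_0$ equal to $\mu$, the monodromy of $F'$ on $(\sigma_0)_*(l_0)$. By Proposition \ref{prop-H1L}, $\ell^{-1}$ has monodromy $\mu^{a}$ along $l_1$, so $\lambda = \mu^{-a}$.

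To pin down $\lambda$, I would compute $\Hom$ between $(\bfk_{K_1},F'[1])$ and the constant object $(\bfk_{\bR^3},0)$ on both sides. On the fiber-product side this is computed by Remark \ref{rem-fiber-prod}. On the other side, $\bPhi((\bfk_{\bR^3},0)) \simeq \bfk_{\bR^3}$ by Lemma \ref{lem-PhiL-constant}. A nonzero $\Hom$ between a constant sheaf and a rank one local system on a circle forces that local system to be trivial. So
\[ \Hom(\bfk_{\bR^3},{i_0}_*A') \simeq C^*(K_0;A') \]
is nonzero iff $A'$ is trivial, and
\[ \Hom(\bfk_{\bR^3},{i_1}_*\bfk) \simeq C^*(K_1;\bfk) \]
is nonzero. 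Full faithfulness then forces $A'$, and hence $\mu$, to be trivial, which gives $\lambda=1$. If this bookkeeping fails, the fallback is to redo it with the original $E$, comparing $\Hom(\bfk_{\bR^3},G)$ with $C^*(K_1;\bar{\ell})$. That directly forces $\bar{\ell}$ to be trivial, since $G\simeq\bfk_{K_0}$ has nonzero $\Hom$ from $\bfk_{\bR^3}$.
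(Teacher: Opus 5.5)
Your proposal is correct and follows the paper's proof: Steps 1--2 match it, and the ``fallback'' at the end of Step 3 is exactly how the paper concludes, since full faithfulness and Lemma \ref{lem-PhiL-constant} give $\Hom(\bfk_{\bR^3},{i_1}_*\bar{\ell})\simeq\Hom((\bfk_{\bR^3},0),E)\simeq\Hom(\bfk_{\bR^3},\bfk_{K_0})\simeq C^*(K_0;\bfk)\neq 0$, which forces the rank-one $\bar{\ell}$ to be trivial. Your primary route via the twisted object $(\bfk_{K_1},F'[1])$ is also valid (the relation $\lambda=\mu^{-a}$ from Proposition \ref{prop-H1L} is right), but it is a detour that additionally uses the homology of $L_\varphi$, whereas testing $E$ directly needs only Step 2; the endomorphism count and the meridian comparison can be dropped.
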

\begin{proof}
Let us abbreviate $F= \bPhi ({i_1}_*\bar{\ell}, \bfk_{L_{\varphi}}[1])\in \Sh_{L_{K_0}}(\bR^3)$.
By the commutativity of (\ref{diagram-Sh*L-stalk}), the stalk of $F$ at $x_0\in \bR^3\setminus K_0$ vanishes.
This means that $j_0^* F \simeq 0$ in $\Loc(\bR^3\setminus K_0)$ and $F \simeq {i_0}_*i_0^*F$ by the (co)fiber sequence (\ref{semi-decomp}), where $i_0^*F\in \Loc(K_0)$.

By Lemma \ref{lem-bPhi-sigma-mu},
\[ \rest{ \mu_{K_0}(F)}{\Lambda_{K_0}}[1] \simeq \sigma_0^*\bfk_{L_{\varphi}}[1] =\bfk_{\Lambda_{K_0}}[1] \text{ in }\Loc(\Lambda_{K_0}).  \]
On the other hand, by Proposition \ref{prop-muK-conormal},
\[   \rest{ \mu_{K_0}({i_0}_*i_0^*F)}{\Lambda_{K_0}} \simeq (\rest{\pi_{\bR^3}}{\Lambda_{K_0}})^* (i_0^*F) .\]
It follows that $\bfk_{\Lambda_{K_0}} \simeq  (\rest{\pi_{\bR^3}}{\Lambda_{K_0}})^* (i_0^*F)$ in $\Loc(\Lambda_{K_0})$.
Taking a section $s\colon K_0\to \Lambda_{K_0}$ of $\rest{\pi_{\bR^3}}{\Lambda_{K_0}}$, we obtain
\[\bfk_{K_0} = s^*\bfk_{\Lambda_{K_0}} \simeq (\rest{\pi_{\bR^3}}{\Lambda_{K_0}}\circ s)^* (i_0^*F) = i_0^*F  . \]
Therefore, $i_0^*F$ is a sheaf on $K_0$ constant to $\bfk$, and $F\simeq {i_0}_*i_0^*F \simeq \bfk_{K_0}$.

By Lemma \ref{lem-PhiL-constant}, we have $\bPhi((\bfk_{\bR^3}, 0 ))\simeq \bfk_{\bR^3}$.
From the fully faithfulness of $\bPhi$,
\begin{align*}\Hom ((\bfk_{\bR^3}, 0 ), ({i_1}_*\bar{\ell}, \bfk_{L_\varphi}[1] )) & \simeq \Hom (\bPhi((\bfk_{\bR^3}, 0 )), \bPhi(({i_1}_*\bar{\ell}, \bfk_{L_\varphi}[1] ))) \\
& \simeq \Hom (\bfk_{\bR^3}, \bfk_{K_0}) \\
& \simeq H^*(K_0; \bfk ).
\end{align*}
On the other hand, by the definition of the fiber product $ \Sh_{L_{K_1}}(\bR^3)\times_{\Loc(\Lambda_{K_1})} \Loc(L_{\varphi})$ (see also Remark \ref{rem-fiber-prod}),
\[\Hom ((\bfk_{\bR^3}, 0 ), ({i_1}_*\bar{\ell}, \bfk_{L_\varphi} ))\simeq \Hom (\bfk_{\bR^3}, {i_1}_*\bar{\ell}).\]
The isomorphism $\Hom (\bfk_{\bR^3}, {i_1}_*\bar{\ell}) \simeq H^*(K_0; \bfk )$ implies that $\bar{\ell}$ is a trivial rank $1$ local system on $K_1$.
Hence, $\ell$ is also a trivial local system on $\Lambda_{K_1}$.
\end{proof}

From the construction of the $3$-manifold $M_{\varphi}$,
we naturally have an equivalence of categories
\[\Loc(M_{\varphi}) \cong \Loc(\bR^3\setminus K_1) \times_{\Loc(\Lambda_{K_1})} \Loc(L_{\varphi}),\]
where the fiber product is defined from the embeddings
$ \bR^3\setminus K_1 \overset{e_1}{\leftarrow} \Lambda_{K_1} \overset{\sigma_1}{\rightarrow}L_{\varphi}$.
The following diagram commutes by Proposition \ref{prop-muK-conormal}:
\[ 
\xymatrix@C=50pt{ 
\Loc(\bR^3\setminus K_1) \ar[r]^-{e_1^*} \ar@{^{(}->}[d]_-{{j_1}_!} & \Loc(\Lambda_{K_1}) \ar@{=}[d] & \Loc(L_{\varphi}) \ar[l]_-{\sigma_1^*} \ar@{=}[d]
\\ \Sh_{L_{K_1}} (\bR^3) \ar[r]^-{ \rest{\mu_{K_1}(\cdot)}{\Lambda_{K_1}}[1]} & \Loc(\Lambda_{K_1}) & \Loc(L_{\varphi}) \ar[l]_-{\sigma_1^*},
} \]
where the lower horizontal functors agrees with (\ref{maps-for-fiber-prod}) since $\ell$ is trivial by Lemma \ref{lem-F_L-F_K}.
Thus, the fully faithful functor ${j_1}_!$ and the identity functor on $\Loc(L_{\varphi})$ define a functor
\[J \colon  \Loc (M_{\varphi})  \to \Sh_{L_{K_1}}(\bR^3) \times_{\Loc(\Lambda_{K_1})} \Loc (L_{\varphi}).\]
By the explicit description of the pullbacks of stable categories mentioned in Remark \ref{rem-fiber-prod}, $J$ is fully faithful.

We combine the diagrams (\ref{diagram-Sh*L-Lambda_0}) and (\ref{diagram-Sh*L-stalk}), and the functors $J$ and $j_0^* \colon \Sh_{L_{K_0}}(\bR^3) \to \Loc(\bR^3\setminus K_0)$.
Then, we obtain the following commutative diagram:
\begin{align}\label{diagram-Phi-J}
\begin{split}
\xymatrix{
&  \Loc (\Lambda_{K_0}) &  & \\
  \Loc (M_{\varphi}) \ar@/^10pt/[ur]^-{\sigma_0^*} \ar@{^{(}->}[r]^-{J} \ar[dr] & \Sh_{L_{K_1}}(\bR^3) \times_{\Loc(\Lambda_{K_1})} \Loc (L_{\varphi})  \ar@{^{(}->}[rr]^-{\bPhi }  & &  \Sh_{L_{K_0}}(\bR^3) \ar@/_10pt/[ull]_-{\rest{\mu_{K_0}(\cdot)}{\Lambda_{K_0}}[1]} \ar[dll] \ar[d]^-{j_0^*} \\
&  \Sh (\{x_0\}) & & \Loc (\bR^3 \setminus K_0) , \ar[ll]
}
\end{split}
\end{align}
where all the three arrows heading to $\Sh(\{x_0\})$ are the functors which take the stalk at $x_0$.

Let us also consider objects in $\Loc(\varphi(T^*_{K_0}\bR^3))$. We have a natural equivalence
\[  \Loc (\varphi (T^*_{K_0}\bR^3))\simeq \Loc(K_1) \times_{\Loc(\Lambda_{K_1})}\Loc (L_{\varphi}), \]
where the fiber product is defined from $\rest{\pi_{\bR^3}}{\Lambda_{K_1}}\colon \Lambda_{K_1}\to K_1$ and $\sigma_1\colon \Lambda_{K_1} \to L_{\varphi}$.
Similar to $J$, we  have a fully faithful functor 
\[ I\colon\Loc (\varphi (T^*_{K_0}\bR^3))\simeq \Loc(K_1) \times_{\Loc(\Lambda_{K_1})}\Loc (L_{\varphi})\to  \Sh_{L_{K_1}}(\bR^3) \times_{\Loc(\Lambda_{K_1})} \Loc (L_{\varphi}) \]
induced by ${i_1}_*\colon  \Loc(K_1) \hookrightarrow \Sh_{L_{K_1}}(\bR^3)$ and the identity functor on $\Loc(L_{\varphi})$. 

\begin{lemma}\label{lem-Phi-I}
The essential image of the fully faithful functor $\bPhi \circ I\colon\Loc (\varphi (T^*_{K_0}\bR^3))\to \Sh_{L_{K_0}}(\bR^3)$ coincides with the essential image of ${i_0}_*\colon  \Loc(K_0) \to \Sh_{L_{K_0}}(\bR^3)$.
\end{lemma}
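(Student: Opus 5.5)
We show the two inclusions of essential images separately; the first uses the stalk at $x_0$ and the split Verdier sequence, the second uses full faithfulness of $\bPhi\circ I$.

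\emph{Image of $\bPhi\circ I$ lies in the image of ${i_0}_*$.}
Let $G\in \Loc(\varphi(T^*_{K_0}\bR^3))$, viewed as a pair $(E,G')$ with $E\in\Loc(K_1)$ and $G'\in\Loc(L_\varphi)$. Then $I(G)=({i_1}_*E,G')$, and its first component has zero stalk at $x_0$, since $x_0\notin K_1$. By the commutativity of (\ref{diagram-Sh*L-stalk}), $F\coloneqq \bPhi(I(G))$ also has zero stalk at $x_0$. Because $x_0\in \bR^3\setminus K_0$ and this set is connected, $j_0^*F\simeq 0$. The (co)fiber sequence (\ref{semi-decomp}) then gives $F\simeq {i_0}_*i_0^*F$ with $i_0^*F\in\Loc(K_0)$. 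This is exactly the argument already used in Lemma \ref{lem-F_L-F_K}.

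\emph{Image of ${i_0}_*$ lies in the image of $\bPhi\circ I$.}
Both functors $\bPhi\circ I$ and ${i_0}_*$ are fully faithful. Write $\mathcal{C}\subset \Loc(K_0)$ for the full subcategory of objects $E$ with ${i_0}_*E$ in the essential image of $\bPhi\circ I$. By the first inclusion, $\bPhi\circ I$ factors as ${i_0}_*\circ \Psi$ for a fully faithful functor
\[\Psi\colon \Loc(\varphi(T^*_{K_0}\bR^3))\to\Loc(K_0),\]
and $\mathcal{C}$ is the essential image of $\Psi$. The plan has three steps.

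\begin{enumerate}
\item $\bfk_{K_0}\in\mathcal{C}$. This is Lemma \ref{lem-F_L-F_K}: $\bPhi(I(\bfk_{\varphi(T^*_{K_0}\bR^3)}[1]))\simeq\bfk_{K_0}$, because $I$ sends $\bfk[1]$ to $(\bfk_{K_1},\bfk_{L_\varphi}[1])$. The shift is harmless.
\item $\mathcal{C}$ is closed under colimits. It is stable, and it is closed under arbitrary direct sums (filtered colimits), since every functor involved ($\bPhi$, $I$, ${i_0}_*$) is built from colimit-preserving functors. For $\bPhi$ this holds because Li's cobordism functor is given by GKS-type kernel convolutions and restrictions, which commute with colimits.
\item $\Loc(K_0)\simeq \mathrm{Mod}(\bfk[\pi_1 K_0])=\mathrm{Mod}(\bfk[t^{\pm1}])$ is generated under colimits by $\bfk_{K_0}$ together with the action of the monodromy. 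Adding $\bfk_{K_0}$ alone is not enough, so we instead generate by the local system $\mathcal{L}$ corresponding to the free module $\bfk[t^{\pm1}]$, that is, $p_*\bfk$ for the universal cover $p\colon\bR\to K_0$. Its preimage should be $p'_*\bfk$ for the $\bZ$-cover $p'$ of $\varphi(T^*_{K_0}\bR^3)\simeq S^1$. To identify $\Psi(p'_*\bfk)$ with $\mathcal{L}$, we use full faithfulness: $\End(\Psi(p'_*\bfk))\simeq\bfk[t^{\pm1}]$, and $\Hom(\Psi(\bfk),\Psi(p'_*\bfk))$ matches $\Hom_{\Loc(K_0)}(\bfk_{K_0},-)$.
\end{enumerate}

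A cleaner way to carry out step 3 is to compare both sides as module categories. Since $\Psi$ preserves colimits and $\Psi(\bfk)=\bfk_{K_0}$, the composite $\Hom(\bfk_{K_0},\Psi(-))$ agrees with $\Hom(\bfk,-)$, which is global sections on $S^1$. Moreover, $\Psi$ is a fully faithful colimit-preserving functor
\[\mathrm{Mod}(\bfk[t^{\pm1}])\to\mathrm{Mod}(\bfk[t^{\pm1}]).\]
Such a functor is given by tensoring with the bimodule $B=\Psi(\bfk[t^{\pm1}])$. Full faithfulness gives $\End(B)=\bfk[t^{\pm1}]$, and $B\otimes_{\bfk[t^{\pm1}]}\bfk\simeq\bfk$ (the trivial module). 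The goal is then to show that $B$ is an invertible bimodule, hence $\Psi$ is an equivalence.

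\textbf{Main obstacle.} Concluding invertibility of $B$ from $\End(B)\simeq\bfk[t^{\pm1}]$ and $B\otimes\bfk\simeq\bfk$ is the key step. The first attempt is to compute the underlying complex of $B$ through the fiber functor. The forgetful functor $\Hom(p_*\bfk,-)$ on $\Loc(K_0)$ is conservative. We need to express it through Lemma \ref{lem-bPhi-sigma-mu}: pulling back along $\Lambda_{K_0}\to K_0$ and then to a point of $\Lambda_{K_0}$ sends $\Psi(G)$ to the stalk of $\sigma_0^*G'$, which is the stalk of $G$ at a point. This shows that $\Psi$ commutes with the fiber functors up to natural isomorphism. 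By Barr--Beck, or by simply comparing monodromies of the free object, $\Psi$ is then a monodromy-preserving equivalence induced by the isomorphism $\pi_1(\varphi(T^*_{K_0}\bR^3))\cong\pi_1(K_0)$. Essential surjectivity follows, and the two essential images coincide.
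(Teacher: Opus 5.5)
Your first inclusion (the stalk of $\bPhi(I(G))$ at $x_0$ vanishes, so the sequence (\ref{semi-decomp}) gives $\bPhi(I(G))\simeq{i_0}_*i_0^*\bPhi(I(G))$) is exactly the paper's argument. The reverse inclusion, however, is not established by what you wrote.

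\textbf{Unsupported steps.} Steps~2--3 and the bimodule reformulation rest on facts that are neither proved in the paper nor needed:
\begin{itemize}
\item that $\bPhi$ preserves colimits, for which you only gesture at Li's construction;
\item that $p_*\bfk$ is the local system of the free module. In fact it is $p_!\bfk$; $p_*\bfk$ has stalks $\prod_{\bZ}\bfk$, and $\Hom(p_*\bfk,-)$ is not the fiber functor.
\end{itemize}

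\textbf{The real gap.} Your resolution of the ``main obstacle'' uses Lemma~\ref{lem-bPhi-sigma-mu} only at the level of stalks, and then appeals to Barr--Beck or to ``comparing monodromies''. Compatibility of your $\Psi=i_0^*\circ\bPhi\circ I$ with stalk functors does not by itself pin down the monodromy of $\Psi(G)$: pullback along an endomorphism of $\bZ$, twisted by a character, also commutes with stalks. Barr--Beck would further require an identification of monads that you have not produced. So the claim that $\Psi$ is induced by an isomorphism $\pi_1(\varphi(T^*_{K_0}\bR^3))\cong\pi_1(K_0)$ is asserted rather than derived. In particular, you never use the geometric input relating $\sigma_0$ to the projection onto $K_0$.

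\textbf{The missing step.} Lemma~\ref{lem-bPhi-sigma-mu} together with Proposition~\ref{prop-muK-conormal} gives an isomorphism of \emph{local systems} on $\Lambda_{K_0}$, not just of stalks:
\[
\pi_0^*\Psi(G)[1]\simeq\sigma_0^*(\rest{G}{L_{\varphi}}),\qquad \pi_0=\rest{\pi_{\bR^3}}{\Lambda_{K_0}}.
\]
Pulling back along a section $s\colon K_0\to\Lambda_{K_0}$ of $\pi_0$ gives $\Psi(G)\simeq s^*\sigma_0^*G[-1]$.

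Now take $F\in\Loc(K_0)$. Let $\pi'\colon\varphi(T^*_{K_0}\bR^3)\to K_0$ be $\varphi^{-1}$ followed by the projection $T^*_{K_0}\bR^3\to K_0$. Since $\varphi$ fixes $\sigma_0(\Lambda_{K_0})$ pointwise, $\pi'\circ\sigma_0=\pi_0$. For $G=\pi'^*F[1]$ this gives $\Psi(G)\simeq s^*\pi_0^*F=F$, that is, $\bPhi(I(\pi'^*F[1]))\simeq{i_0}_*F$.

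This objectwise argument is the paper's proof. Once you add it, the generator, colimit and monadicity discussion can be dropped entirely.
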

\begin{proof}
Take $F\in \Loc(K_0)$ arbitrarily.
Let $\pi'\colon \varphi(T^*_{K_0}\bR^3) \to K_0$ be the composition of $\rest{\varphi^{-1}}{\varphi(T^*_{K_0}\bR^3)}\colon \varphi(T^*_{K_0}\bR^3) \to T^*_{K_0}\bR^3$ and
$\rest{(\pi_{\bR^3})}{T^*_{K_0}\bR^3} \colon T^*_{K_0}\bR^3 \to K_0$.
Since $\varphi$ fixes every point in $\sigma_0(\Lambda_{K_0})$, $\pi'\circ \sigma_0 = \rest{\pi_{\bR^3}}{\Lambda_{K_0}}$.
Then, we obtain an object $\pi'^*F\in \Loc(\varphi ( T^*_{K_0}\bR^3) )$ such that $\sigma_0^* (\pi'^*F) = (\rest{\pi_{\bR^3}}{\Lambda_{K_0}})^*F $ in $\Loc(\Lambda_{K_0})$.

Consider $\bPhi ( I(\pi'^*F[1]))\in \Sh_{L_{K_0}}(\bR^3)$.
From the commutativity of (\ref{diagram-Sh*L-stalk}), its stalk at $x_0$ vanishes.
Thus, it follows from the (co)fiber sequence (\ref{semi-decomp}) that $\bPhi ( I(\pi'^*F[1])) \simeq {i_0}_*G$ for some $G\in \Loc(K_0)$.

We claim that $G\simeq F$.
Indeed, by Lemma \ref{lem-bPhi-sigma-mu} and Proposition \ref{prop-muK-conormal},
\[  (\sigma_0^*\circ \operatorname{pr}) (I(\pi'^*F[1])) \simeq \mu_{K_0}( {i_0}_*G )[1] \simeq (\rest{\pi_{\bR^3}}{\Lambda_{K_0}})^*G [1]. \]
In addition, from the definition of $I$,
\[(\sigma_0^*\circ \operatorname{pr}) (I(\pi'^*F[1])) = \sigma_0^* (\pi'^*F[1]) = (\rest{\pi_{\bR^3}}{\Lambda_{K_0}})^*F[1] .\]
Therefore, $(\rest{\pi_{\bR^3}}{\Lambda_{K_0}})^*F \simeq (\rest{\pi_{\bR^3}}{\Lambda_{K_0}})^*G$ in $\Loc(\Lambda_{K_0})$.
Choose a section $s\colon K_0\to \Lambda_{K_0}$ of $\rest{\pi_{\bR^3}}{\Lambda_{K_0}}$, then
\[ G\simeq s^*(\rest{\pi_{\bR^3}}{\Lambda_{K_0}})^*G\simeq s^* (\rest{\pi_{\bR^3}}{\Lambda_{K_0}})^*F \simeq F. \]
Thus, we obtain $\bPhi(I(\pi'^*F[1]))\simeq {i_0}_*F$.
\end{proof}

\subsection{\texorpdfstring{A homomorphism $h$ between fundamental groups}{A homomorphism h between fundamental groups}}\label{subsec-hom-h}

From now on, for the reason explained in Remark~\ref{rem-repcat} below,
we work with the homotopy categories of $\infty$-categories. 

\begin{notation}
    Let $\Vect_{\bfk}$ denote the $1$-category of $\bfk$-vector spaces, possibly infinite dimensional.
    
    For an $\infty$-category $\cC$, $\Homotopy (\cC)$ denotes its homotopy category. 
    If $\cC$ is a $\Mod_\bfk$-enriched stable $\infty$-category, then $\Homotopy (C)$ naturally admits a structure of a triangulated category enriched in $\Vect_{\bfk}$. 
    The (co)fiber sequences in $\cC$ correspond to the distinguished triangles in $\Homotopy (\cC)$. 
    For any objects $x,y$ of a $\Mod_\bfk$-enriched stable $\infty$-category $\cC$, we have
    \[ \Hom_{\Homotopy (\cC)}(x,y)\simeq H^0(\Hom_{\cC} (x,y)) \quad \text{in} \: \Vect_{\bfk}.\]
    In addition, an exact functor $\alpha \colon \cC \to \cD$ between stable $\infty$-categories induces an exact functor $\Homotopy (\alpha) \colon \Homotopy(\cC) \to \Homotopy (\cD)$ between the triangulated categories. 
    When no confusion is likely to arise, we will simply write $\alpha$ for $\Homotopy (\alpha)$. 

    Note also that, if $\cC\to \cD \to \cE$ is a split Verdier sequence, then the homotopy category $\Homotopy (\cD)$ admits a semiorthogonal decomposition $\Homotopy (\cD)=\langle \Homotopy (\cE),\Homotopy (\cC)\rangle $ in the classical sense where $\Homotopy (\cE)$ is regarded as a subcategory of $\Homotopy (\cD)$ via the left adjoint of the projection functor $\cD\to \cE$.
\end{notation}

For a manifold $X$, let $\loc (X)$ denote the $1$-category of locally constant sheaves of $\bfk$-vector spaces on $X$.
It is equivalent to the homotopy category of the full subcategory of $\Loc(X)$ consisting of $F \in \Loc(X)$ such that $H^i(F)=0$ for any $i\neq 0$.

We note that $\loc(M_{\varphi})$ is equivalent to the homotopy category of the full subcategory of $\Loc(M_{\varphi})$
consisting of $F\in \Loc(M_{\varphi})$ such that its stalk $F_{x_0}\in \Sh(\{x_0\})$ at $x_0$ is a complex of $\bfk$-vector spaces concentrated in the $0$-th degree, and similar for $\loc(\bR^3\setminus K_0)$.
Then, from the diagram (\ref{diagram-Phi-J}), we have a commutative diagram
\begin{align}\label{diagram-Phi-stalk} 
\begin{split}
\xymatrix{ \loc(M_{\varphi}) \ar[rr]^-{ j_0^* \circ \bPhi \circ J } \ar[rd] & & \loc(\bR^3 \setminus K_0) \ar[ld] \\ & \Vect_{\bfk} , }
\end{split}
\end{align}
where the downward arrows are functors defined by taking the stalk at $x_0$.

For any group $G$, let $\Rep_{\bfk} (G)$ denote the abelian category of representations of $G$ on $\bfk$-vector spaces, possibly infinite dimensional.
We denote the forgetful functor by
\[F_G\colon \Rep_{\bfk}(G) \to \Vect_{\bfk}. \]
In addition, let $\Rep^{\fin}_{\bfk}(G)$ denote the full subcategory of $\Rep_{\bfk}(G)$ consisting of $\rho \in \Rep_{\bfk}(G)$ such that $F_G(\rho)$ has finite dimension.
For any $V\in \Vect_{\bfk}$, $\GL(V)$ denote the group of linear automorphisms of $V$.

In this section, let us abbreviate
\[\begin{array}{cc} G_0 \coloneqq \pi_1(\bR^3 \setminus K_0 ,x_0) , & G_1 \coloneqq \pi_1(M_{\varphi},x_0). \end{array}\]
Considering the monodromy of locally constant sheaves, we  have natural equivalences of abelian categories
\[\begin{array}{cc}
\mon^0_{x_0}  \colon \loc(\bR^3 \setminus K_0) \to \Rep_{\bfk}(G_0) , & 
\mon^1_{x_0}  \colon \loc (M_{\varphi}) \to \Rep_{\bfk}(G_1).
\end{array}\]
More precisely,
for any locally constant sheaf $\ell \in \loc(\bR^3\setminus K_0)$ and any $g\in G_0$, $\mon^0_{x_0}(\ell) (g) \in \GL (\ell_{x_0})$ is the monodromy for $\ell$ along a loop based at $x_0$ representing $g$, and $\mon^1_{x_0}$ is defined in a similar way. 
On the other hand, by taking the universal cover $\widetilde{M}_0$ of $\bR^3\setminus K_0$, the inverse functor of $\mon^0_{x_0}$ is defined by
\[ (\mon^0_{x_0})^{-1} \colon \Rep_{\bfk} (G_0) \to \loc(\bR^3\setminus K_0) ,\  \rho \mapsto \ell^0_{\rho} \coloneqq \widetilde{M}_0 \times_{\rho} V, \]
where $V= F_{G_0}(\rho)$ and $\widetilde{M}_0 \times_{\rho} V$ is the locally constant sheaf associated to $\rho$.
Similarly, we define
\[ (\mon^1_{x_0})^{-1} \colon \Rep_{\bfk} (G_1) \to  \loc(M_{\varphi}) ,\ \rho \mapsto \ell^1_{\rho} \coloneqq \widetilde{M}_{\varphi}\times_{\rho}V,\]
where $\widetilde{M}_{\varphi}$ is the universal cover of $M_{\varphi}$ and $V=F_{G_1}(\rho)$.

Let us abbreviate $ j_0^* \circ \bPhi \circ J $ by $\Psi$. Then, from (\ref{diagram-Phi-stalk}), we have a commutative diagram
\[\xymatrix{ \Rep_{\bfk}(G_1) \ar[rr]^-{ \mon^0_{x_0} \circ \Psi \circ (\mon^1_{x_0})^{-1}} \ar[rd]_{F_{G_1}} & & \Rep_{\bfk}(G_0) \ar[ld]^-{F_{G_0}} \\ & \Vect_{\bfk} . }
\]

\begin{remark}\label{rem-repcat}
    The representation category $\Rep_{\bfk}(G)$ also admits an $\infty$-categorical definition for any group $G$.  
    From the viewpoint of the $\infty$-category theory, it is natural to define a representation category $\Rep_{\bfk}^{\infty}(G)$ to be the functor category $\Fun (\mathrm{B}G, \Mod _\bfk)$. 
    We may consider the $\Mod_\bfk$-enriched full subcategory $\Rep_{\bfk}^{\infty}(G)^{\heartsuit}$ consisting of those functors whose values are complexes concentrated in degree $0$. 
    The homotopy category of $\Rep_{\bfk}^{\infty}(G)^{\heartsuit}$ is equivalent to the $1$-category $\Rep_{\bfk}(G)$ that we use. 
    However, the functors $\mon_{x_0}^0, \mon_{x_1}^1$ do not lift to enriched $\infty$-functors from $\Loc (\bR^3\setminus K_i)$ to $\Rep_{\bfk}^{\infty}(G_i)$ for $i\in \{0,1\}$. 
    On the other hand, each of the functors $(\mon_{x_0}^0)^{-1}, (\mon_{x_1}^1)^{-1}$ can be defined as enriched $\infty$-functor via a natural morphism $\bR^3\setminus K_i\to \mathrm{B}G_i$ and a natural identification $\Loc (\bR^3\setminus K_i)\simeq \Fun (\bR^3\setminus K_i, \Mod _\bfk)$ for each $i\in \{0,1\}$.  
    These $\infty$-functors induce equivalences of the homotopy categories, but they are not equivalences of the $\Mod_{\bfk}$-enriched $\infty$-categories. 
\end{remark}

\begin{proposition}\label{prop-h-mon}
There exist a group homomorphism
\[ h \colon G_0 \to G_1\]
and a $1$-dimensional representation $\kappa \colon G_0 \to \bfk^*$ of $G_0$
such that $\mon^0_{x_0} \circ \Psi \circ (\mon^1_{x_0})^{-1}$ agrees with 
\[ h^*(\cdot)\otimes \kappa\colon \Rep_{\bfk}(G_1) \to  \Rep_{\bfk} (G_0) ,\  \rho \mapsto (h^*\rho)\otimes \kappa. \]
\end{proposition}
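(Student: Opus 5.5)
The plan is to exploit that $T\coloneqq \mon^0_{x_0}\circ \Psi \circ (\mon^1_{x_0})^{-1}$ commutes with the forgetful functors, and to test it on the regular representation of $G_1$. Using the natural isomorphism $F_{G_0}\circ T\simeq F_{G_1}$ from the diagram preceding the statement, I would first replace $T$ by an isomorphic functor. The new $T$ sends a representation $(V,\rho)$ to a $G_0$-representation $(V,T(\rho))$ on the \emph{same} underlying vector space. It sends a $G_1$-equivariant map $f\colon V\to W$ to the same linear map $f$, which is therefore $G_0$-equivariant for the new actions. So $T$ amounts to a family of actions $T(\rho)\colon G_0\to \GL(V)$ such that every $G_1$-equivariant linear map intertwines $T(\rho)$ and $T(\rho')$.

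Next take the left regular representation $R=\bfk[G_1]$. This is allowed because $\Rep_{\bfk}(G_1)$ contains infinite-dimensional representations. The $G_1$-equivariant endomorphisms of $R$ are exactly the right multiplications $r_a\colon x\mapsto xa$ for $a\in \bfk[G_1]$. For each $g\in G_0$, the automorphism $T(R)(g)$ commutes with all $r_a$. Hence it equals left multiplication by $u_g\coloneqq T(R)(g)(1)\in \bfk[G_1]$. Since $T(R)$ is a group action, $u_{gg'}=u_gu_{g'}$ and $u_e=1$, so each $u_g$ is a unit of the group ring $\bfk[G_1]$.

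The key input, and the main obstacle, is to show that these units are trivial. By Lemma \ref{lem-LO}, $G_1$ is left-orderable. Left-orderable groups have the unique product property, and for such groups the Kaplansky unit conjecture holds: every unit of $\bfk[G_1]$ has the form $c\,\gamma$ with $c\in \bfk^*$ and $\gamma\in G_1$. The proof is the standard argument comparing the order-maximal and order-minimal terms of $u$ and $u^{-1}$. So $u_g=\kappa(g)\,h(g)$ with $\kappa(g)\in\bfk^*$ and $h(g)\in G_1$, and this decomposition is unique. Multiplicativity of $u$ then gives that $h\colon G_0\to G_1$ and $\kappa\colon G_0\to \bfk^*$ are homomorphisms; here $\bfk^*$ is abelian and central, so the scalar and group parts split.

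Finally, extend from $R$ to an arbitrary $(V,\rho)$. For $v\in V$, the map $f_v\colon R\to V$, $x\mapsto \rho(x)v$, is $G_1$-equivariant, so by naturality
\[
T(\rho)(g)v = T(\rho)(g) f_v(1) = f_v(u_g) = \kappa(g)\,\rho(h(g))\,v .
\]
Thus $T(\rho)=h^*\rho\otimes\kappa$. On morphisms both functors act as the identity on underlying linear maps, so $T\simeq h^*(\cdot)\otimes\kappa$ as functors.

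The only delicate bookkeeping is the initial rigidification: transporting the natural isomorphism with the forgetful functors so that $T$ is literally the identity on underlying spaces and maps. The substantive step is the triviality of units, which is where Lemma \ref{lem-LO} is used.
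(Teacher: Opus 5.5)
Your proof is correct and follows essentially the same route as the paper. The paper cites Tannaka reconstruction $\bfk[G]\cong \End(F_G)$ to turn the functor into a unital ring map $\bfk[G_0]\to\bfk[G_1]$, then uses Lemma \ref{lem-LO} and the trivial-units property of group algebras of left-orderable groups to split $\hat{h}(g)=\kappa(g)\,h(g)$; your evaluation on the regular representation $\bfk[G_1]$ is simply an explicit proof of the piece of Tannaka reconstruction that this needs.
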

\begin{proof}
We use the following fact for a general group $G$, called Tannaka reconstruction:
Let $\End(F_G)$ denote the set of natural transformations from $F_{G}$ to itself.
Its element is written as $(\theta_{\rho})_{\rho \in \Rep_{\bfk} (G)}$, where $\theta_{\rho} \colon F_G(\rho) \to F_G(\rho)$ is a $\bfk$-linear map.
Then, it has a natural structure of $\bfk[G]$-algebra with unit $(\id_{F_G(\rho)})_{\rho \in \Rep_{\bfk}(G)}$, and the $\bfk$-algebra map
\[ \bfk[G] \to \End(F_G) ,\  g \mapsto (\rho(g))_{\rho \in \Rep_{\bfk}(G)}\]
is an isomorphism.

Let us abbreviate 
$\Psi' = \mon^0_{x_0} \circ \Psi \circ (\mon^1_{x_0})^{-1}$.
Since $F_{G_0}\circ \Psi' \simeq F_{G_1}$, $\Psi'$ induces a unital ring homomorphism
\[ \hat{h} \colon \End(F_{G_0}) \to \End (F_{G_1}) ,\  (\theta_{\rho_0})_{\rho_0 \in \Rep_{\bfk} (G_0)} \mapsto (\theta_{\Psi'(\rho_1)})_{\rho_1 \in \Rep_{\bfk} (G_1) }. \]
Using the above isomorphism $\End(F_{G_i}) \cong \bfk [G_i]$ for $i=0,1$, we obtain a ring homomorphism
$ \hat{h} \colon \bfk[G_0] \to \bfk[G_1]$.
Since $\Rep_{\bfk}(G_i)$ can be viewed to the category of left $\bfk[G_i]$-modules,
$\hat{h}$ induces a functor
\[ \hat{h}^*\colon \Rep_{\bfk}(G_1) \to \Rep_{\bfk} (G_0) \]
and this agrees with $\Psi'$.
To see this, note that for $g\in G_0$, $\hat{h}(g)\in \bfk[G_1]$ corresponds to $(\Psi'(\rho_1)(g))_{\rho_1\in \Rep_{\bfk}(G_1)}\in \End(F_{G_1})$. 

Let $\bfk[G_i]^{\times}$ denote the group of units of $\bfk[G_i]$ for $i=0,1$.
It contains $G_i$ as a subgroup and $\hat{h}$ is restricted to a group homomorphism
\[ \hat{h} \colon G_0 \to \bfk[G_1]^{\times}. \]
Since $G_1$ is left-orderable by Lemma \ref{lem-LO}, $\bfk[G_1]$ has only trivial units, that is,
\[\bfk[G_1]^{\times} = \{k \cdot g' \mid k\in \bfk^*,\ g' \in G_1 \}.\]
For the proof, see \cite[Proposition 6]{RZ}, for example.
For every $g\in G_0$, we can uniquely determine $\kappa(g) \in \bfk^*$ and $h(g)\in G_1$ such that $\hat{h} (g) = \kappa(g) \cdot h(g)$.
Then, both $h\colon G_0\to G_1$ and $\kappa \colon G_0 \to \bfk^*$ are group homomorphisms.
For any $\rho \in \Rep_{\bfk}(G_1)$, $g\in G_0$ and $v\in F_{G_1}(\rho)$,
\[ ((\hat{h}^*\rho)(g)) (v)  = \kappa (g) \cdot ((h^*\rho)(g))(v). \]
Therefore, $\Psi'(\rho)=\hat{h}^*\rho = (h^*\rho) \otimes \kappa$.
\end{proof}

\begin{lemma}
The representation $\kappa \colon G_0 \to \bfk^*$ of Proposition \ref{prop-h-mon} is trivial.
\end{lemma}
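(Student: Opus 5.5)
To show that $\kappa$ is trivial, I would apply the functor $\Psi' = \mon^0_{x_0}\circ\Psi\circ(\mon^1_{x_0})^{-1}$ to the trivial one-dimensional representation $\mathbf{1}_{G_1}$ of $G_1$. By Proposition \ref{prop-h-mon},
\[\Psi'(\mathbf{1}_{G_1}) = (h^*\mathbf{1}_{G_1})\otimes\kappa = \kappa.\]
So it suffices to show that $\Psi'(\mathbf{1}_{G_1})$ is the trivial representation of $G_0$. Equivalently, $\Psi(\bfk_{M_{\varphi}})$ should be the constant sheaf $\bfk_{\bR^3\setminus K_0}$.

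The first step is to identify $J(\bfk_{M_{\varphi}})$. By construction of $J$, it equals $({j_1}_!\bfk_{\bR^3\setminus K_1}, \bfk_{L_{\varphi}})$, together with the gluing isomorphism
\[\rest{\mu_{K_1}({j_1}_!\bfk)}{\Lambda_{K_1}}[1]\simeq e_1^*\bfk\simeq \sigma_1^*\bfk_{L_{\varphi}}.\]
The second step uses the fiber sequence ${j_1}_!\bfk_{\bR^3\setminus K_1}\to\bfk_{\bR^3}\to{i_1}_*\bfk_{K_1}$ in $\Sh_{L_{K_1}}(\bR^3)$. I would lift it to a fiber sequence in the fiber product:
\[J(\bfk_{M_{\varphi}})\to(\bfk_{\bR^3},0)\to({i_1}_*\bfk_{K_1},\bfk_{L_{\varphi}}[1]).\]
Fiber sequences in a pullback of stable categories are computed componentwise, and the second components $\bfk_{L_\varphi}\to 0\to\bfk_{L_\varphi}[1]$ form the rotated trivial sequence. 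One must still check that the connecting map on $\Loc(L_{\varphi})$ matches under the gluing data, so that the middle object really is $(\bfk_{\bR^3},0)$. Alternatively, I would simply take the fiber of some morphism $(\bfk_{\bR^3},0)\to(\bfk_{K_1},\bfk_{L_\varphi}[1])$ whose first component is the restriction map, and identify that fiber with $J(\bfk_{M_\varphi})$.

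The third step applies the exact functor $\bPhi$. Lemma \ref{lem-PhiL-constant} and Lemma \ref{lem-F_L-F_K} give
\[\bPhi(J(\bfk_{M_\varphi}))\simeq\mathrm{fib}(\bfk_{\bR^3}\to\bfk_{K_0})\]
for some morphism $\bfk_{\bR^3}\to\bfk_{K_0}$. Then I apply $j_0^*$. Since $j_0^*\bfk_{K_0}=0$, this yields $j_0^*\bPhi J(\bfk_{M_\varphi})\simeq j_0^*\bfk_{\bR^3}=\bfk_{\bR^3\setminus K_0}$, which is the trivial representation. Notably, this does not require identifying the morphism $\bfk_{\bR^3}\to\bfk_{K_0}$. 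Hence $\kappa=\Psi'(\mathbf{1}_{G_1})$ is trivial.

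The main obstacle is the second step: producing the morphism in the fiber product and identifying its fiber with $J(\bfk_{M_\varphi})$, given the compatibility data described in Remark \ref{rem-fiber-prod}.

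If that proves awkward, I would use a cruder route. For $\rho$ the trivial representation, $\Psi(\bfk_{M_\varphi})$ is a rank-one local system $\ell^0_\kappa$. Restricting to $\Lambda_{K_0}$ via $e_0^*$, and using Proposition \ref{prop-muK-conormal} together with Lemma \ref{lem-bPhi-sigma-mu}, I would compare it with $\sigma_0^*\bfk_{M_\varphi}=\bfk$. Then I would use the fact that $H_1(\bR^3\setminus K_0)$ is generated by the meridian, which lies in the image of $e_0$. This route needs only an identification of $\mu_{K_0}$ on $j_{0!}j_0^*$ of an object, which follows from the decomposition (\ref{semi-decomp}) after taking cones. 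It shows $\kappa\circ(e_0)_*$ is trivial, and hence $\kappa$ is trivial, since $\kappa$ factors through $H_1(\bR^3\setminus K_0)$, generated by the meridian, which is in the image of $e_0$.
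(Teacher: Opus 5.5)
Your main argument is essentially the paper's proof. Like the paper, you identify $J(\ell^1_{\rho_{\mathrm{triv}}})$ with $({j_1}_!\bfk_{\bR^3\setminus K_1},\bfk_{L_\varphi})$, use the triangle $({j_1}_!\bfk_{\bR^3\setminus K_1},\bfk_{L_\varphi})\to(\bfk_{\bR^3},0)\to({i_1}_*\bfk_{K_1},\bfk_{L_\varphi}[1])$, apply $\bPhi$ together with Lemmas \ref{lem-PhiL-constant} and \ref{lem-F_L-F_K}, and conclude $\ell^0_\kappa\simeq j_0^*F\simeq\bfk_{\bR^3\setminus K_0}$. The step-two compatibility you flag, which the paper simply asserts, is harmless: by Remark \ref{rem-fiber-prod} a morphism out of $(\bfk_{\bR^3},0)$ is determined by its first component, and any object $({j_1}_!\bfk_{\bR^3\setminus K_1},\bfk_{L_\varphi},\beta)$ is $J$ of a rank-one local system that is trivial on both pieces, hence isomorphic to $J(\bfk_{M_\varphi})$, because $\pi_1(M_\varphi)$ is generated by the images of $\pi_1(\bR^3\setminus N_{K_1})$ and $\pi_1(L_\varphi)$.
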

\begin{proof}
By the functors (\ref{maps-for-fiber-prod}), 
\[ \xymatrix@C=55pt{ {j_1}_!\bfk_{\bR^3\setminus K_1} \ar@{|->}[r]^-{\rest{\mu_{K_1}(\cdot)}{\Lambda_{K_1}} [1]} &  \rest{\mu_{K_1}({j_1}_!\bfk_{\bR^3\setminus K_1})}{\Lambda_{K_1}} [1] \simeq e_1^*\bfk_{\bR^3\setminus K_1}    & \bfk_{L_{\varphi}} \ar@{|->}[l]_-{\sigma_1^*} . }\]
Here, we use Proposition \ref{prop-muK-conormal} for $\bfk_{\bR^3\setminus K_1}\in \Loc(\bR^3\setminus K_1)$.
Thus, we have an object $({j_1}_!\bfk_{\bR^3\setminus K_1}, \bfk_{L_\varphi}) \in  \Sh_{L_{K_1}}(\bR^3) \times_{\Loc(\Lambda_{K_1})} \Loc (L_{\varphi})$.
Moreover, involving the objects considered in Lemma \ref{lem-PhiL-constant} and Lemma \ref{lem-F_L-F_K}, we have a distinguished triangle
\[\xymatrix{
({j_1}_!\bfk_{\bR^3\setminus K_1}, \bfk_{L_\varphi}) \ar[r] &  ( \bfk_{\bR^3}, 0) \ar[r] & ({i_1}_*\bfk_{K_1}, \bfk_{L_\varphi}[1]) \ar[r]^-{+1} & 
}\]
in $\Homotopy (\Sh_{L_{K_1}}(\bR^3) \times_{\Loc(\Lambda_{K_1})} \Loc (L_{\varphi}))$.
Let us abbreviate $F= \bPhi(({j_1}_!\bfk_{\bR^3\setminus K_1}, \bfk_{L_\varphi})) \in \Homotopy (\Sh_{L_{K_0}}(\bR^3))$. Then, Lemma \ref{lem-PhiL-constant} and Lemma \ref{lem-F_L-F_K} show that there is a distinguished triangle in $\Homotopy (\Sh_{L_{K_0}}(\bR^3))$
\[ \xymatrix{ F \ar[r] & \bfk_{\bR^3} \ar[r] & \bfk_{K_0} \ar[r]^-{+1}& .  }\]
In particular, $j_0^*F\simeq \bfk_{\bR^3\setminus K_0}$ in $\Homotopy (\Loc(\bR^3\setminus K_0))$.

Let $\rho_{\mathrm{triv}} \colon G_1\to \bfk^*$ denote the $1$-dimensional trivial representation.
Since $h^*(\rho_{\mathrm{triv}})\in \Rep_{\bfk}(G_0)$ is trivial, Proposition \ref{prop-h-mon} shows that
\[  \Psi(\ell^1_{\rho_{\mathrm{triv}}}) \simeq \ell^0_{\kappa} . \]
In addition, by the definition of the functor $J$, $J(\ell^1_{\rho_{\mathrm{triv}}}) \simeq ({j_1}_!\bfk_{\bR^3\setminus K_1}, \bfk_{L_\varphi}) $.
Therefore,
\[j_0^* F = (j_0^* \circ \bPhi \circ J)(\ell^1_{\rho_{\mathrm{triv}}}) \simeq \ell^0_{\kappa},\]
and thus $\ell^0_{\kappa}$ is a trivial rank $1$ local system on $\bR^3\setminus K_0$.
Hence $\kappa$ is a trivial representation of $G_0$.
\end{proof}

By this lemma, $\mon^0_{x_0} \circ \Psi \circ (\mon^1_{x_0})^{-1}$ agrees with the pullback functor \[h^* \colon \Rep_{\bfk}(G_1) \to \Rep_{\bfk}(G_0)\]
for the group homomorphism $h$ of Proposition \ref{prop-h-mon}.
It means that for any $\rho \in \Rep_{\bfk}(G_1)$,
\begin{align}\label{isom-Psi-ell1}  \Psi(\ell^1_{\rho}) \simeq \ell^0_{h^*\rho} . \end{align}

\begin{lemma}\label{lem-dist-tirangle}
Fix any $\rho \in \Rep_{\bfk} (G_1)$.
Then, a sheaf $F \coloneqq (\bPhi \circ J) (\ell^1_{\rho})\in \Homotopy (\Sh_{L_{K_0}}(\bR^3))$ satisfies
\[j_0^* F \simeq \ell^0_{ h^*\rho} \]
in $\loc(\bR^3\setminus K_0)$. 
Moreover, there is a distinguished triangle in $\Homotopy (\Loc(\Lambda_{K_0}))$ 
\begin{align}\label{dist-tri-ell_rho}
\xymatrix{  e_0^*\ell^0_{h^*\rho} \ar[r] & \sigma_0^*\ell^1_{\rho} \ar[r] & \pi_0^*i_0^* F [1] \ar[r]^-{+1} &  ,  
}\end{align}
where $\pi_0 \coloneqq \rest{\pi_{\bR^3}}{\Lambda_{K_0}} \colon \Lambda_{K_0} \to K_0$.
\end{lemma}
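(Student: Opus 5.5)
The first assertion is just a restatement of (\ref{isom-Psi-ell1}). Since $\Psi = j_0^*\circ\bPhi\circ J$, we get $j_0^*F = \Psi(\ell^1_\rho)\simeq \ell^0_{h^*\rho}$ in $\loc(\bR^3\setminus K_0)$, where we identify the locally constant sheaves concentrated in degree $0$ with objects of $\loc$. The triangle will come from applying the functor $\rest{\mu_{K_0}(\cdot)}{\Lambda_{K_0}}[1]$ to the (co)fiber sequence (\ref{semi-decomp}) for $F$ and then computing each term with earlier results.

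\textbf{Step 1.} Since $F\in \Sh_{L_{K_0}}(\bR^3)$, the split Verdier sequence (\ref{split-Verdier}) gives a distinguished triangle
\[ {j_0}_!j_0^*F \to F \to {i_0}_*i_0^*F \xrightarrow{+1} \]
in $\Homotopy(\Sh_{L_{K_0}}(\bR^3))$, with $j_0^*F\in\Loc(\bR^3\setminus K_0)$ and $i_0^*F\in \Loc(K_0)$.

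\textbf{Step 2.} Apply the exact functor $\rest{\mu_{K_0}(\cdot)}{\Lambda_{K_0}}[1]\colon \Sh_{L_{K_0}}(\bR^3)\to\Loc(\Lambda_{K_0})$. The terms become:
\begin{itemize}
\item First term: by Proposition \ref{prop-muK-conormal} (second equivalence), it is $e_0^*j_0^*F[-1][1]=e_0^*j_0^*F\simeq e_0^*\ell^0_{h^*\rho}$, using Step 0.
\item Middle term: by the commutativity of (\ref{diagram-Phi-J}), i.e.\ Lemma \ref{lem-bPhi-sigma-mu} composed with $J$, it is $\rest{\mu_{K_0}((\bPhi\circ J)(\ell^1_\rho))}{\Lambda_{K_0}}[1]\simeq \sigma_0^*\operatorname{pr}(J(\ell^1_\rho))=\sigma_0^*\ell^1_\rho$. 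This uses that $J$ acts as the identity on the $\Loc(L_\varphi)$ component and that $\ell^1_\rho$ restricted to $L_\varphi$ is pulled back along $\sigma_0$ viewed in $M_\varphi$.
\item Third term: by the first equivalence of Proposition \ref{prop-muK-conormal}, it is $\pi_0^*i_0^*F[1]$.
\end{itemize}
Since exact functors send (co)fiber sequences to (co)fiber sequences, passing to homotopy categories gives the distinguished triangle (\ref{dist-tri-ell_rho}).

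\textbf{Main obstacle.} The only delicate point is bookkeeping rather than mathematics. We must check that the equivalences of functors in Proposition \ref{prop-muK-conormal} and the commutativity of (\ref{diagram-Phi-J}) hold as natural equivalences. Then the images of the arrows of (\ref{semi-decomp}) assemble into a triangle whose vertices are identified as claimed. We only need the vertices up to isomorphism in the homotopy category, and the triangle exists before any identification is made. So it suffices to replace each vertex by an isomorphic object, transporting the morphisms along these isomorphisms.

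We also need the shifts to match: $e^*(\cdot)[-1]$ composed with $[1]$ gives no shift, and $\pi_0^*$ composed with $[1]$ gives $[1]$. Finally, we identify $\ell^0_{h^*\rho}$ in $\loc$ with the object $j_0^*F$ of $\Homotopy(\Loc(\bR^3\setminus K_0))$; this holds because the stalk of $F$ at $x_0$ is concentrated in degree $0$ by (\ref{diagram-Phi-stalk}).
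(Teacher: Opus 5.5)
Your proposal is correct and follows the paper's proof essentially step by step. The first assertion is (\ref{isom-Psi-ell1}); the paper re-derives it via the stalk identification in (\ref{diagram-Phi-stalk}) and the monodromy computation $\mon^0_{x_0}(j_0^*F)=h^*\rho$. The triangle is obtained exactly as you describe: apply $\rest{\mu_{K_0}(\cdot)}{\Lambda_{K_0}}[1]$ to (\ref{semi-decomp}), then identify the three vertices via Proposition \ref{prop-muK-conormal} and the diagram (\ref{diagram-Phi-J}).
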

\begin{proof}
For the sheaf $F\in \Homotopy (\Sh_{L_{K_0}}(\bR^3))$ defined as in the assertion,
\[j_0^*F = j_0^*\circ \bPhi \circ J (\ell^1_{\rho}) = \Psi( \ell^1_{\rho})\]
and it follows from the diagram (\ref{diagram-Phi-stalk}) that the stalk $(j_0^*F)_{x_0}$ at $x_0$ is isomorphic to $(\ell^1_{\rho})_{x_0} = F_{G_1}(\rho)$ in $\Vect_{\bfk}$.
The monodromy for $j_0^*F\in \loc(\bR^3\setminus K_0)$ along loops based at $x_0$ is given by
\[ \mon^0_{x_0}(j_0^*F) = (\mon^0_{x_0}\circ \Psi \circ (\mon^1_{x_0})^{-1}) (\rho) = h^*\rho  \in \Rep_{\bfk}(G_0). \]
Here, recall that $G_0 = \pi_1(\bR^3\setminus K_0,x_0)$.
Therefore, $j_0^*F \simeq \ell^0_{h^*\rho}$ in $\loc(\bR^3\setminus K_0)$.

Next, we consider the distinguished triangle in $\Homotopy (\Sh_{L_{K_0}}(\bR^3))$ induced by (\ref{semi-decomp})
\[ \xymatrix{
{j_0}_!j_0^* F \ar[r] & F \ar[r] & {i_0}_*i_0^* F \ar[r]^-{+1} & .
}\]
Note that ${j_0}_!j_0^* F\simeq {j_0}_! \ell^0_{h^*\rho}$.
To this distinguished triangle, let us apply the functor $\rest{\mu_K(\cdot)}{\Lambda_{K_0}}[1] \colon \Homotopy (\Sh_{L_{K_0}}(\bR^3)) \to \Homotopy (\Loc(\Lambda_{K_0}))$.
Proposition \ref{prop-muK-conormal} shows that
\[\begin{array}{cc}
\rest{\mu_K( {j_0}_! \ell^0_{h^*\rho})}{\Lambda_{K_0}}[1] = e_0^*\ell^0_{h^*\rho} , &
\rest{\mu_K( {i_0}_*i_0^*F )}{\Lambda_{K_0}}[1] = \pi_0^*i_0^*F[1] .
\end{array}\]
In addition, from the diagram (\ref{diagram-Phi-J}), we have
\[\rest{\mu_{K_0}(F)}{\Lambda_{K_0}} [1]=  \rest{(\mu_{K_0}\circ\bPhi \circ J)(\ell^1_{\rho})}{\Lambda_{K_0}}[1] \simeq \sigma_0^* \ell^1_{\rho}.\]
Therefore, we obtain a distinguished triangle (\ref{dist-tri-ell_rho}) in $\Homotopy (\Loc (\Lambda_{K_0}))$.
\end{proof}

For the base point $y_0$ of $\Lambda_{K_0}$ fixed in Section \ref{subsec-clean},
we choose paths
\begin{align}\label{paths-connecting-basept}
\begin{array}{cc} \gamma_0 \colon [0,1] \to \bR^3\setminus K_0, & \gamma_1 \colon [0,1] \to M_{\varphi}, \end{array}
\end{align}
such that $\gamma_0(0)=\gamma_1(0)=x_0$, $\gamma_0(1)=e_0(y_0)$ and $\gamma_1(1)=\sigma_0(y_0)$.
They define group homomorphisms
\begin{align*}
(e_0)_*' \colon & \pi_1(\Lambda_{K_0} , y_0) \to G_0 = \pi_1(\bR^3 \setminus K_0,x_0) ,\  [\gamma] \mapsto [\gamma_0 \cdot (e_0\circ \gamma) \cdot \gamma_0^{-1}], \\
(\sigma_0)_*' \colon & \pi_1(\Lambda_{K_0} , y_0) \to G_1 = \pi_1(M_{\varphi},x_0) ,\  [\gamma] \mapsto [\gamma_1 \cdot (\sigma_0\circ \gamma) \cdot \gamma_1^{-1}] .
\end{align*}
Let us set $m'_0 \coloneqq (e_0)_*'(m_0)\in G_0$ and $l'_0\coloneqq (\sigma_0)_*'(l_0)\in G_1$.

\begin{lemma}\label{lem-rho-i}
Let $\rho \in \Rep^{\fin}_{\bfk} (G_1)$.
If $(h^*\rho)(m'_0)\in \GL (F_{G_1}(\rho))$ does not have $1$ as an eigenvalue,
then
\[i_0^* (\bPhi \circ J(\ell^1_{\rho})) \simeq 0 \]
in $\Homotopy (\Loc(K_0))$.
\end{lemma}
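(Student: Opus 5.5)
We will use the distinguished triangle (\ref{dist-tri-ell_rho}) of Lemma \ref{lem-dist-tirangle} and look at the monodromy along the meridian loop $m_0$ of $\Lambda_{K_0}$. Put $F \coloneqq \bPhi\circ J(\ell^1_\rho)$ and $E\coloneqq i_0^*F\in \Loc(K_0)$. The idea is to compare the three terms of (\ref{dist-tri-ell_rho}) after restricting them to a meridian circle $C=\pi_0^{-1}(x)\subset\Lambda_{K_0}$ for a point $x\in K_0$ (through $y_0$, say). On this circle the loop $m_0$ generates $\pi_1$, and the three terms behave as follows.

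First, $\pi_0^*E[1]$ restricted to $C$ is constant, since $\pi_0\circ(C\hookrightarrow\Lambda_{K_0})$ is constant. Its monodromy around $m_0$ is therefore trivial on every cohomology sheaf.

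Second, $\sigma_0^*\ell^1_\rho$ restricted to $C$ has monodromy $\rho((\sigma_0)_*'(m_0))$. By Proposition \ref{prop-H1L}, $(\sigma_0)_*(m_0)=\pm(\sigma_1)_*(m_1)$ in $H_1(L_\varphi)$, and $\sigma_1(m_1)$ bounds a disk in $M_\varphi$: it is glued to the meridian of $\partial N_{K_1}$. At the level of $\pi_1$ this only says that $(\sigma_0)_*(m_0)$ is conjugate to a loop in $\pi_1(L_\varphi)$ that is homologically a meridian, which is weaker. So we do not compute this monodromy directly. Instead we exploit that the first map $e_0^*\ell^0_{h^*\rho}\to \sigma_0^*\ell^1_\rho$ of (\ref{dist-tri-ell_rho}) lives in degree $0$, with both terms concentrated in degree $0$.

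Third, $e_0^*\ell^0_{h^*\rho}$ on $C$ has monodromy $(h^*\rho)(m'_0)$, up to conjugation by the chosen paths.

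The key step is to take the long exact sequence of cohomology sheaves of (\ref{dist-tri-ell_rho}) on $C$. The first two terms sit in degree $0$, and $\pi_0^*E[1]$ has cohomology $H^{k+1}(E)$ in degree $k$. Hence $H^{k}(E)=0$ for $k\neq 0,1$, and there is an exact sequence of local systems on $C$
\[0\to \pi_0^*H^0(E) \to e_0^*\ell^0_{h^*\rho}\to \sigma_0^*\ell^1_\rho\to \pi_0^*H^1(E)\to 0 .\]
Here $\pi_0^*H^0(E)$ is a sub-local system of $e_0^*\ell^0_{h^*\rho}$ on $C$ with trivial monodromy, so its stalk lies in the fixed space of $(h^*\rho)(m'_0)$. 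That fixed space is $0$ because $1$ is not an eigenvalue, and therefore $H^0(E)=0$.

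For $H^1(E)$, a rank count is the natural first attempt. Since $\dim F_{G_1}(\rho)<\infty$ and the two middle stalks both equal $F_{G_1}(\rho)$ (by (\ref{diagram-Phi-stalk}), transported along paths), exactness gives $\operatorname{rank}H^0(E)=\operatorname{rank}H^1(E)$. Hence $H^1(E)=0$ as well, so $E\simeq 0$ in $\Homotopy(\Loc(K_0))$.

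The main obstacle is justifying that the two middle terms have equal rank, i.e. that the stalks of $e_0^*\ell^0_{h^*\rho}$ and $\sigma_0^*\ell^1_\rho$ have the same dimension. This follows from $\Psi(\ell^1_\rho)\simeq \ell^0_{h^*\rho}$ with fiber $F_{G_1}(\rho)$ and from connectivity of $M_\varphi$ and $\bR^3\setminus K_0$. Finiteness of dimension is exactly where $\rho\in\Rep^{\fin}_\bfk(G_1)$ is used. Care is also needed to check that restricting to a single meridian circle is legitimate and suffices. It is legitimate because $E$ is locally constant on the connected $K_0$, so vanishing of its stalk at one point $x$ forces $E\simeq 0$.
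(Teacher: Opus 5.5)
Your proposal is correct and is essentially the paper's argument. The paper also takes the long exact sequence of cohomology sheaves of (\ref{dist-tri-ell_rho}) and uses that the monodromy of $\pi_0^*H^0(i_0^*F)$ along $m_0$ is trivial while $(h^*\rho)(m'_0)$ has no nonzero fixed vector; it then uses the equal finite ranks of $e_0^*\ell^0_{h^*\rho}$ and $\sigma_0^*\ell^1_{\rho}$ to show that the map between them is an isomorphism, which is equivalent to your conclusion $H^0(i_0^*F)=H^1(i_0^*F)=0$.

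One small correction to an aside you never use: $\sigma_1(m_1)$ does not bound a disk in $M_{\varphi}$. It is identified with the meridian of $K_1$ in $\bR^3\setminus N_{K_1}$ and represents a generator of $H_1(M_{\varphi};\bZ)\cong\bZ$; it bounds a disk only in $\varphi(T^*_{K_0}\bR^3)$.
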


\begin{proof}
For any $\rho \in \Rep_{\bfk}^{\fin}(G_1)$ satisfying the condition in the assertion, we set
\[ F \coloneqq (\bPhi \circ J)(\ell^1_{\rho}) \in \Homotopy (\Sh_{L_{K_0}}(\bR^3)).\]
By Lemma \ref{lem-dist-tirangle},
we have a distinguished triangle (\ref{dist-tri-ell_rho}) in $\Homotopy (\Loc(\Lambda_{K_0}))$
\[
\xymatrix{  e_0^*\ell^0_{h^*\rho} \ar[r] & \sigma_0^*\ell^1_{\rho} \ar[r] & \pi_0^*i_0^* F [1] \ar[r]^-{+1} &  . 
}\]
If we take a section $s\colon K_0\to \Lambda_{K_0}$, then
\[s^* (\pi_0^*i_0^*F ) = i_0^*F = i_0^* (\bPhi \circ J(\ell^1_{\rho})).\]
Therefore, it suffices to show that the morphism
$e_0^*\ell^0_{h^*\rho} \to \sigma_0^* \ell^1_{\rho}$ in the above distinguished triangle  is a quasi-isomorphism, which implies that $\pi_0^*i_0^*F\simeq 0$.

As complexes of sheaves, both $\ell^0_{h^*\rho}$ and $\ell^1_{\rho}$ are concentrated in degree $0$.
By taking the cohomolgy, we obtain an exact sequence in $\loc(\Lambda_{K_0})$
\[  \xymatrix{
\ell'\coloneqq H^{0}(\pi_0^*i_0^*F) \ar[r]^-{f} & e_0^*\ell^0_{h^*\rho} \ar[r]^-{g} & \sigma_0^* \ell^1_{\rho} \ar[r] & H^1(\pi_0^*i_0^*F).
} \]
To prove that $g$ is an isomorphism,
let us observe the monodromy along $m_0\in \pi_1(\Lambda_{K_0},y_0)$
for the locally constant sheaves in this exact sequence.
\begin{itemize}
\item For $\ell'$, the monodromy along $m_0$ is given by the identity map on $\ell'_{y_0}$.
Indeed, $\ell' = \pi_0^* (H^0(i_0^*F))$ (since $\pi_0^*$ is an exact functor) for $i_0^*F\in \Homotopy (\Sh(K_0))$,
and $(\pi_0)_*(m_0)\in \pi_1(K_0,\pi_0(y_0))$ is represented by a constant loop in $K_0$.
\item For $e_0^* \ell^0_{h^*\rho}$, let $U_{m_0} \in \GL ( (\ell^0_{h^*\rho})_{e_0(y_0)} )$ be the linear automorphism given by the monodromy along $m_0$.
Via the parallel transport for $\ell^0_{h^*\rho}$ along $\gamma_0^{-1}$ from $e_0(y_0)$ to $x_0$,
$U_{m_0}$ coincides with the linear automorphism $ (h^*\rho)( m'_0 ) \in \GL( F_{G_1}(\rho) )$.
Here, note that $(\ell^0_{h^*\rho})_{x_0} \cong (\ell^1_{\rho})_{x_0} = F_{G_1}(\rho)$
\end{itemize}
The morphism $f\colon \ell' \to e_0^* \ell^0_{h^*\rho}$ induces a linear map $f_{y_0} \colon \ell'_{y_0} \to (\ell^0_{h^*\rho})_{e_0(y_0)}$.
From the above observation, $\Image f_{y_0}$ is contained in a linear subspace
\[  \{ v\in  (\ell^0_{h^*\rho})_{e_0(y_0)} \mid U_{m_0}(v) =v \}. \]
Via the parallel transport for $\ell^0_{h^*\rho}$ along $\gamma_0^{-1}$, this is isomorphic to
\[\{v \in F_{G_1}(\rho) \mid ((h^*\rho)(m'_0)) (v) = v \},\]
which is the zero vector space from the assumption on $\rho$. Therefore, $\Image f_{y_0}=0$.

The morphism $g\colon e_0^*\ell^0_{h^*\rho}\to \sigma_0^*\ell^1_{\rho}$ induces a linear map between the stalks
\[g_{y_0} \colon (\ell^0_{h^*\rho})_{e_0(y_0)} \to (\ell^1_{\rho})_{\sigma_0(y_0)}.\]
Since $\Ker g_{y_0} = \Image f_{y_0}=0$,  $g_{y_0}$ is injective.
Moreover, since both of the stalks have the same finite dimension as $F_{G_1}(\rho)$, $g_{y_0}$ must be an isomorphism.
As $e_0^*\ell^0_{h^*\rho}$ and $\sigma_0^*\ell^1_{\rho}$ are locally constant sheaves, we can conclude that $g$ is an isomorphism.
\end{proof}

Using the semiorthogonal decomposition of $\Homotopy (\Sh_{L_{K_0}}(\bR^3))$ induced by the split Verdier sequence (\ref{split-Verdier}), we obtain the following result from Lemma \ref{lem-rho-i}.

\begin{proposition}\label{prop-rho-J-Lambda}
Let $\rho \in \Rep^{\fin}_{\bfk} (G_1)$. If $(h^*\rho)(m'_0) \in \GL (F_{G_1}(\rho))$ does not have $1$ as an eigenvalue,
then
\[(\bPhi \circ J)(\ell^1_{\rho}) \simeq {j_0}_!\ell^0_{h^*\rho} \]
in $\Homotopy (\Sh_{L_{K_0}}(\bR^3))$.
Moreover, the two representations of $\pi_1(\Lambda_{K_0},y_0)$ 
\[ (h^*\rho) \circ (e_0)_*' ,\ \rho \circ (\sigma_0)_*' \colon \pi_1(\Lambda_{K_0},y_0) \to \GL(F_{G_1}(\rho)) \]
are equivalent in $\Rep_{\bfk}^{\fin}(\pi_1(\Lambda_{K_0},y_0))$.
\end{proposition}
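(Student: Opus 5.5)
Set $F \coloneqq (\bPhi\circ J)(\ell^1_{\rho})$. By Lemma \ref{lem-rho-i}, the hypothesis on $(h^*\rho)(m'_0)$ gives $i_0^*F\simeq 0$. I would feed this into the distinguished triangle
\[ {j_0}_!j_0^*F \to F \to {i_0}_*i_0^*F \xrightarrow{+1} \]
coming from (\ref{semi-decomp}). This is the semiorthogonal decomposition of $\Homotopy(\Sh_{L_{K_0}}(\bR^3))$ induced by the split Verdier sequence (\ref{split-Verdier}).

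Since the third term vanishes, the counit ${j_0}_!j_0^*F\to F$ is an isomorphism. Lemma \ref{lem-dist-tirangle} identifies $j_0^*F\simeq \ell^0_{h^*\rho}$ in $\loc(\bR^3\setminus K_0)$, so
\[ F\simeq {j_0}_!\ell^0_{h^*\rho} \]
in $\Homotopy(\Sh_{L_{K_0}}(\bR^3))$. This proves the first assertion.

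For the second assertion, I would use the triangle (\ref{dist-tri-ell_rho}). Its third term is $\pi_0^*i_0^*F[1]\simeq 0$, so the first arrow
\[ e_0^*\ell^0_{h^*\rho}\to \sigma_0^*\ell^1_{\rho} \]
is an isomorphism in $\Homotopy(\Loc(\Lambda_{K_0}))$. This isomorphism was already produced in the proof of Lemma \ref{lem-rho-i}, as the map $g$. Both sheaves are concentrated in degree $0$, so it is an isomorphism of locally constant sheaves of finite rank on $\Lambda_{K_0}$.

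The remaining step is to translate this into monodromy representations at $y_0$. Taking monodromy at $y_0$ of an isomorphism of local systems gives equivalent representations of $\pi_1(\Lambda_{K_0},y_0)$; the only thing to check is which representations they are.
\begin{itemize}
\item The monodromy of $e_0^*\ell^0_{h^*\rho}$ at $y_0$ is the monodromy of $\ell^0_{h^*\rho}$ along $e_0\circ\gamma$, based at $e_0(y_0)$. Conjugating by parallel transport along $\gamma_0$ identifies the stalk at $e_0(y_0)$ with the stalk at $x_0$, which is $F_{G_1}(\rho)$. Under this identification the representation becomes $(h^*\rho)\circ(e_0)'_*$.
\item Similarly, parallel transport along $\gamma_1$ identifies the monodromy of $\sigma_0^*\ell^1_\rho$ at $y_0$ with $\rho\circ(\sigma_0)'_*$.
\end{itemize}
The two parallel-transport maps are linear isomorphisms of $\bfk$-vector spaces of the same finite dimension. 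Composing them with $g_{y_0}$ gives an intertwiner, so $(h^*\rho)\circ(e_0)'_*$ and $\rho\circ(\sigma_0)'_*$ are equivalent in $\Rep^{\fin}_{\bfk}(\pi_1(\Lambda_{K_0},y_0))$.

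The main point needing care is this last bookkeeping. One must check that the identification $(\ell^0_{h^*\rho})_{x_0}\cong(\ell^1_\rho)_{x_0}=F_{G_1}(\rho)$ is compatible with the chosen paths. However, equivalence of representations does not require the intertwiner to be the identity on $F_{G_1}(\rho)$, so any choice of paths suffices. Beyond that, everything follows from Lemmas \ref{lem-dist-tirangle} and \ref{lem-rho-i}.
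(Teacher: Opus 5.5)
Your proposal is correct and follows the paper's proof essentially step for step. Lemma \ref{lem-rho-i} kills ${i_0}_*i_0^*F$ in the triangle induced by (\ref{semi-decomp}), giving $F\simeq {j_0}_!j_0^*F\simeq {j_0}_!\ell^0_{h^*\rho}$. The equivalence of representations then comes from comparing monodromies of the isomorphism $e_0^*\ell^0_{h^*\rho}\simeq\sigma_0^*\ell^1_{\rho}$ from the proof of Lemma \ref{lem-rho-i}, after parallel transport along $\gamma_0$ and $\gamma_1$.
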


\begin{proof}
From Lemma \ref{lem-rho-i}, we have ${i_0}_*i_0^* (\bPhi\circ J)(\ell^1_{\rho}) \simeq 0$.
From the distinguished triangle induced by (\ref{semi-decomp}), we obtain
\[ (\bPhi \circ J)(\ell^1_{\rho}) \simeq {j_0}_!j_0^*( \bPhi \circ J(\ell^1_{\rho}) ) = {j_0}_! \Psi (\ell^1_{\rho}) \simeq {j_0}_! \ell^0_{h^*\rho} \]
in $\Homotopy (\Sh_{L_{K_0}}(\bR^3))$.
Here, the last isomorphism follows from (\ref{isom-Psi-ell1}).

Via the isomorphism $(\ell^0_{h^*\rho})_{e_0(y_0)}\cong F_{G_0}(h^*\rho) = F_{G_1}(\rho)$ obtained by the parallel transport along $\gamma_0$, the monodromy for $e_0^*\ell^0_{h^*\rho}$  along any $[\gamma] \in \pi_1(\Lambda_{K_0},y_0)$ is given by
\[(h^*\rho) ([\gamma_0\cdot (e_0\circ \gamma)\cdot \gamma_0^{-1}]) = (h^*\rho) ( (e_0)'_*([\gamma]) ) . \] 
Likewise, via the isomorphism $(\ell^1_{\rho})_{\sigma_0(y_0)} \cong F_{G_1}(\rho)$ obtained by the parallel transport along $\gamma_1$,  the monodromy for $\sigma_0^*\ell^1_{\rho}$ along $[\gamma] \in \pi_1(\Lambda_{K_0},y_0)$ is given by
\[ \rho ([\gamma_1\cdot (\sigma_0\circ \gamma)\cdot \gamma_1^{-1}]) = \rho ( (\sigma_0)'_*([\gamma]) ) .  \]
As we have seen in the proof of Lemma \ref{lem-rho-i}, there is an isomorphism $ e_0^*\ell^0_{h^*\rho} \simeq \sigma_0^*\ell^1_{\rho}$ in $\loc (\Lambda_{K_0})$.
Comparing their monodromy along any $[\gamma]\in \pi_1(\Lambda_{K_0},y_0)$, we obtain
\[(h^*\rho) \circ (e_0)_*' \simeq \rho \circ (\sigma_0)_*'\]
in $\Rep^{\fin}_{\bfk} (\pi_1(\Lambda_{K_0},y_0))$.
\end{proof}

\subsection{\texorpdfstring{Surjectivity of $h$}{Surjectivity of h}}\label{subsec-surj}

Consider the Hurewicz homomorphisms
\begin{align*}
p_0& \colon G_0 = \pi_1(\bR^3\setminus K_0,x_0) \to H_1(\bR^3\setminus K_0;\bZ)\cong \bZ , \\
p_1 &\colon G_1 = \pi_1(M_{\varphi},x_0) \to H_1(M_{\varphi};\bZ)\cong \bZ, 
\end{align*}
such that
$p_0$ maps $m'_0 = (e_0)'_*(m_0)\in G_0$ to $1\in \bZ$ and
$p_1$ maps $(\sigma_0)'_*(m_0)\in G_1$ to $1\in \bZ$.

\begin{lemma}\label{lem-phm-nonzero}
$(p_1\circ h)( m'_0) \neq 0$.
\end{lemma}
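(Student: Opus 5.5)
We argue by contradiction: suppose $(p_1\circ h)(m'_0)=0$. The plan is to exploit one-dimensional representations of $G_1$ factoring through $p_1$. For $t\in\bfk^*=\bC^*$, let $\rho_t\colon G_1\to \bC^*$ be the character $g\mapsto t^{p_1(g)}$. Under our assumption,
\[(h^*\rho_t)(m'_0)=t^{(p_1\circ h)(m'_0)}=1,\]
so we cannot apply Proposition \ref{prop-rho-J-Lambda} directly. Instead we aim to show that $h^*\rho_t$ is trivial on the meridian but must be compared with $\rho_t\circ(\sigma_0)'_*$ on $\Lambda_{K_0}$. By construction $\rho_t((\sigma_0)'_*(m_0))=t^{p_1((\sigma_0)'_*(m_0))}=t$, which differs from $1$ for $t\neq 1$.

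First, we produce a contradiction through the distinguished triangle (\ref{dist-tri-ell_rho}) of Lemma \ref{lem-dist-tirangle}, applied to $\rho=\rho_t$ with $t\ne 1$. The three terms are as follows.
\begin{itemize}
\item $e_0^*\ell^0_{h^*\rho_t}$ is a rank one local system whose monodromy along $m_0$ is $(h^*\rho_t)(m'_0)=1$.
\item $\sigma_0^*\ell^1_{\rho_t}$ is a rank one local system with monodromy $t\neq 1$ along $m_0$.
\item $\pi_0^*i_0^*F[1]$ has all cohomology sheaves with trivial $m_0$-monodromy, since they are pulled back from $K_0$.
\end{itemize}

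Next, we take the long exact sequence of cohomology sheaves, exactly as in the proof of Lemma \ref{lem-rho-i}:
\[
0\to H^{-1}(\pi_0^*i_0^*F[1]) \to e_0^*\ell^0_{h^*\rho_t}\xrightarrow{g} \sigma_0^*\ell^1_{\rho_t}\to H^0(\pi_0^*i_0^*F[1])\to 0,
\]
and the terms in all other degrees must vanish. Consider the stalk at $y_0$ as a module over $\bC[\bZ]$, where $\bZ$ is generated by $m_0$. Any morphism from a module on which $m_0$ acts by $1$ to one on which it acts by $t\ne1$ is zero. Hence $g=0$. This gives $\sigma_0^*\ell^1_{\rho_t}\cong H^0(\pi_0^*i_0^*F[1])$, which has trivial $m_0$-monodromy. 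That contradicts $t\neq 1$.

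The only point needing care is that the monodromy along $m_0$ of each cohomology sheaf of $\pi_0^*i_0^*F$ is the identity. This holds because $(\pi_0)_*(m_0)$ is trivial in $\pi_1(K_0)$, which is the same observation already used in Lemma \ref{lem-rho-i}. We also need $p_1((\sigma_0)'_*(m_0))=1$; this is the normalization of $p_1$, which is legitimate because $(\sigma_0)_*(m_0)=\pm(\sigma_1)_*(m_1)$ generates $H_1(M_\varphi)$ by Proposition \ref{prop-H1L}. I expect no serious obstacle. The main subtlety is checking that the triangle of Lemma \ref{lem-dist-tirangle} is valid for all $\rho$, without the eigenvalue hypothesis, and indeed it is stated for any $\rho$.
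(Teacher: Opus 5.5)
Your argument is correct, and it takes a different and shorter route than the paper.

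\textbf{The paper's route.} The paper uses the infinite-dimensional representation on $V=\bfk[t,t^{-1}]$ through $p_1$. Under the assumption, $h^*\rho$ is trivial, so ${j_0}_!j_0^*F\simeq V_{\bR^3\setminus K_0}$. Full faithfulness of $\bPhi$, together with Lemmas \ref{lem-PhiL-constant}, \ref{lem-F_L-F_K} and \ref{lem-Phi-I}, then lets it lift both $V_{\bR^3}\to V_{K_0}$ and $F\to {i_0}_*i_0^*F$ to the fiber-product category. There it identifies their fibers and reads off $V_{\bR^3\setminus K_1}\simeq \ell^1_{\rho}|_{\bR^3\setminus K_1}$, which contradicts the nontriviality of $\rho$ on the meridian of $K_1$.

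\textbf{Your route.} You detect the contradiction on $\Lambda_{K_0}$ instead. You use only the triangle (\ref{dist-tri-ell_rho}) and an $m_0$-eigenvalue comparison with a rank one $\rho_t$. This is the mechanism of Lemma \ref{lem-rho-i} run in reverse, and it needs no further lifting through $\bPhi$.

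\textbf{What each buys.} Your version actually proves more. Take $t$ not a root of unity and let $c\coloneqq (p_1\circ h)(m'_0)$ be arbitrary. The map $\sigma_0^*\ell^1_{\rho_t}\to H^0(\pi_0^*i_0^*F[1])$ must vanish, because $m_0$ acts by $t\neq 1$ on the source and trivially on the target. So $g$ is an isomorphism of rank one local systems, hence $t^c=t$, i.e.\ $c=1$. This also settles the integer $c$ in the proof of Proposition \ref{prop-GL_n-rep}.

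The only thing the paper's choice buys is field-independence; it notes that everything before Proposition \ref{prop-h-surj} works over any $\bfk$. You need some $t\in\bfk^*\setminus\{1\}$, which is harmless for $\bfk=\bC$. Your argument also runs with $V=\bfk[t^{\pm 1}]$: its $m_0$-invariants vanish, so again $g=0$.
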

\begin{proof}
Assume that $(p_1\circ h)( m'_0) = 0$. 
Let $V \coloneqq \bfk[t,t^{-1}]$ and define a representation $\rho \colon G_1 \to \GL(V) \in \Rep_{\bfk} (G_1)$ such that $\rho(g)(\xi) = t^{p_1(g)}\cdot \xi$ for every $\xi \in V=\bfk[t,t^{-1}]$.
(Note that since $\rho$ is infinite dimensional, we cannot apply Proposition \ref{prop-rho-J-Lambda}.)

Let $F\coloneqq \bPhi(J(\ell^1_{\rho} ))\in \Homotopy (\Sh_{L_{K_0}}(\bR^3))$.
By (\ref{isom-Psi-ell1}), $j_0^* F\simeq \ell^0_{h^*\rho}$.
Since $h^*\rho \colon G_0\to \GL(V)$ is a trivial representation of $G_0$ from our assumption, we have
\begin{align}\label{eqn-jF-V}
{j_0}_! j_0^* F\simeq V_{\bR^3\setminus K_0}.
\end{align}

Consider distinguished triangles in $\Homotopy (\Sh_{L_{K_0}}(\bR^3))$ from (\ref{semi-decomp})
\[
\xymatrix@R=10pt{  V_{\bR^3\setminus K_0}\ar[r] &  V_{\bR^3}\ar[r]^-{f} & V_{K_0}\ar[r]^-{+1} &  , \\ 
 {j_0}_! j_0^* F\ar[r] &  F \ar[r]^-{g} &   {i_0}_*i_0^* F\ar[r]^-{+1} & . 
}
\]
Replacing $\bfk$ with $V$ in Lemma \ref{lem-F_L-F_K}, one can show that $ \bPhi(V_{K_1},V_{L_{\varphi}}[1])\simeq V_{K_0}$. Moreover, $ \bPhi(V_{\bR^3},0)\simeq V_{\bR^3}$ by Lemma \ref{lem-PhiL-constant}.
Since $\bPhi$ is fully faithful, there exists $f'\in \Hom ((V_{\bR^3},0), (V_{K_1},V_{L_{\varphi}}[1]))$ such that $\bPhi(f')=f$ and
\begin{align}\label{Cone-f} \bPhi( \Cone (f') ) \simeq \Cone (\bPhi(f')) \simeq V_{\bR^3\setminus K_0} [1].  
\end{align}
Let us denote $\Cone(f')[-1] = (W,\ell)\in \Homotopy (\Sh_{L_{K_1}}(\bR^3)\times_{\Loc(\Lambda_{K_1})}\Loc(L_{\varphi}))$.
Then, from the distinguished triangle
\[ \xymatrix{ (W,\ell) \ar[r] & (V_{\bR^3},0) \ar[r]^-{f'} &  (V_{K_1},V_{L_{\varphi}}[1]) \ar[r]^-{+1} & }  \]
and $j_1^* V_{K_1}\simeq 0$, we have
\[ j_1^*W\simeq j_1^*V_{\bR^3} = V_{\bR^3\setminus K_0}  \text{ in }\Homotopy (\Loc(\bR^3\setminus K_1)).\]

Next, by Lemma \ref{lem-Phi-I}, for the object $i_0^*F \in \Homotopy (\Loc(K_0))$, there exists an object $(G,\ell' )\in \Homotopy (\Loc(K_1)\times_{\Loc(\Lambda_{K_1})} \Loc(L_{\varphi})) $ such that
\[\bPhi ( I((G,\ell')) ) \simeq {i_0}_*i_0^*F \text{ in }\Homotopy (\Sh_{L_{K_0}}(\bR^3)) . \]
Recall that $F= \bPhi (J(\ell^1_{\rho}))$ by definition.
Since $\bPhi$ is fully faithful, there exists $g'\in \Hom ( J(\ell^1_{\rho}), I((G,\ell')) )$ such that $\bPhi (g') =g$ and
\begin{align}\label{Cone-g}  \bPhi (\Cone (g')) \simeq \Cone (\bPhi(g'))  \simeq {j_0}_!j_0^* F [1] . 
\end{align}
Let us denote $\Cone(g')[-1] = (W',\ell'')\in \Homotopy (\Sh_{L_{K_1}}(\bR^3)\times_{\Loc(\Lambda_{K_1})}\Loc(L_{\varphi}))$.
Note that the $\Homotopy (\Sh_{L_{K_1}}(\bR^3))$-components of $J(\ell^1_{\rho})$ and $I((G,\ell'))$ are ${j_1}_!(\rest{\ell^1_{\rho}}{\bR^3\setminus K_1})$ and ${i_1}_*G$ respectively. (See the definitions of $J$ and $I$. We identify $\bR^3\setminus K_1\cong \bR^3\setminus \overline{N}_{K_1}\subset M_{\varphi}$.)
Then, from the distinguished triangle
\[ \xymatrix{ (W',\ell'') \ar[r] & J(\ell^1_{\rho}) \ar[r]^-{g'} &  I((G,\ell')) \ar[r]^-{+1} & }  \]
and $j_1^* ({i_1}_*G) \simeq 0$, we have
\[ j_1^*W' \simeq j_1^* ({j_1}_!(\rest{\ell^1_{\rho}}{\bR^3\setminus K_1})) \simeq \rest{\ell^1_{\rho}}{\bR^3\setminus K_1}  \text{ in } \Homotopy (\Loc(\bR^3\setminus K_1)).\]

Since $V_{\bR^3\setminus K_0}\simeq {j_0}_!j_0^* F$ by (\ref{eqn-jF-V}), it follows from (\ref{Cone-f}) and (\ref{Cone-g}) that
\[ \bPhi (\Cone (f')) \simeq \bPhi (\Cone (g')).\]
From the fully faithfulness of $\bPhi$, $\Cone (f') \simeq \Cone (g')$.
Using the above notations, $(W,\ell)\simeq (W',\ell'')$, and we obtain $j_1^*W\simeq j_1^*W'$ in $\Homotopy (\Loc(\bR^3\setminus K_1))$.
However, this means that $V_{\bR^3\setminus K_0}\simeq \rest{\ell^1_{\rho}}{\bR^3\setminus K_1}$ in $\Homotopy (\Loc(\bR^3\setminus K_1))$, which contradicts to the non-triviality of the representation $\rho\circ j'_*\colon \pi_1(\bR^3\setminus K_1,x_0)\to \GL(V)$, where $j'\colon \bR^3\setminus K_1 \cong \bR^3\setminus \overline{N}_{K_1}\to M_{\varphi}$ is the inclusion map.
\end{proof}

We note that the functor $h^*\colon \Rep_{\bfk}(G_1) \to \Rep_{\bfk}(G_0),\  \rho \mapsto \rho\circ h$ has a left adjoint
\[  h_* \colon  \Rep_{\bfk}(G_0) \to \Rep_{\bfk}(G_1)  \]
such that for any representation $\rho \colon G_0\to \GL(V)$ of $G_0$, $(h_*\rho)(g) \in \GL ( \bfk[G_1] \otimes_{\bfk[G_0]} V)$ is determined by
\[(h_*\rho)(g) (\xi \otimes v)= (g\cdot \xi)\otimes v\]
for any $g\in G_1$, $\xi \in \bfk[G_1]$ and $v\in V$.
Here, $\bfk[G_1]$ is regarded as a right $\bfk[G_0]$-module by the ring homomorphism $\bfk[G_0]\to \bfk[G_1]$ induced by $h$.
In particular, the dimension of $h_*\rho$ is
\[ \dim_{\bfk}(h_*\rho) = \dim_{\bfk} ( \bfk[G_1]\otimes_{\bfk[G_0]} V ) = \#(G_1/ h(G_0)  ) \cdot  \dim V . \]

\begin{proposition}\label{prop-h-surj}
The group homomorphism $h \colon \pi_1(\bR^3\setminus K_0,x_0) \to \pi_1(M_{\varphi},x_0)$ is surjective.
\end{proposition}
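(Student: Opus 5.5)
The approach is to compare $G_1$-invariants with $h(G_0)$-invariants through the full faithfulness of $\bPhi\circ J$, after twisting by a generic character so that Proposition \ref{prop-rho-J-Lambda} applies. Set $H\coloneqq h(G_0)\subset G_1$ and let $d\coloneqq (p_1\circ h)(m'_0)$, which is nonzero by Lemma \ref{lem-phm-nonzero}. For $t\in\bfk^*=\bC^*$ let $\chi_t\colon G_1\to\bfk^*$ be the character $g\mapsto t^{p_1(g)}$.

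\emph{Step 1: the main identity.} Let $\rho\in\Rep^{\fin}_{\bfk}(G_1)$. Then $(h^*(\rho\otimes\chi_t))(m'_0)=t^d\cdot\rho(h(m'_0))$. Since $d\neq0$, we can pick $t$ (for instance transcendental, or simply avoiding finitely many values) such that neither this matrix nor $t^d$ has eigenvalue $1$. Proposition \ref{prop-rho-J-Lambda} then gives $(\bPhi\circ J)(\ell^1_{\rho\otimes\chi_t})\simeq {j_0}_!\ell^0_{h^*(\rho\otimes\chi_t)}$ and $(\bPhi\circ J)(\ell^1_{\chi_t})\simeq {j_0}_!\ell^0_{h^*\chi_t}$. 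Now $\bPhi\circ J$ and ${j_0}_!$ are fully faithful, and $\Loc$ of a $K(\pi,1)$ computes group cohomology. Taking $H^0$ of Hom complexes therefore yields
\[ \Hom_{G_1}(\chi_t,\rho\otimes\chi_t)\cong\Hom_{G_0}(h^*\chi_t,h^*(\rho\otimes\chi_t)), \]
that is, $\rho^{G_1}=\rho^{H}$ inside $F_{G_1}(\rho)$. One inclusion is obvious, so this identity is really a dimension equality, and it holds for every finite-dimensional $\rho$. Here one has to check that the isomorphism of Hom spaces is compatible with the identification of stalks at $x_0$. This follows from the commutative diagram (\ref{diagram-Phi-stalk}).

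\emph{Step 2: finite index implies surjectivity.} Suppose $[G_1:H]<\infty$. Take $\rho=\bfk[G_1/H]$, which is finite-dimensional. Then $\dim\rho^{G_1}=1$, while $\dim\rho^H$ equals the number of double cosets $H\backslash G_1/H$, which is at least $2$ unless $H=G_1$. Step 1 then forces $H=G_1$.

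\emph{Step 3: finite index, the main obstacle.} I would try to run Step 1 directly with the infinite-dimensional permutation representation $\rho=\bfk[G_1/H]$, twisted by $\chi_t$ with $t$ transcendental. Finite $\langle h(m'_0)\rangle$-orbits on $G_1/H$ contribute eigenvalues of the form $t^d\zeta$ with $\zeta$ a root of unity, and infinite orbits contribute no eigenvectors. Hence $t^d\rho(h(m'_0))$ has no nonzero fixed vector, and the argument of Lemma \ref{lem-rho-i} gives injectivity of $g\colon e_0^*\ell^0_{h^*\rho}\to\sigma_0^*\ell^1_\rho$. The difficulty is that we lose the dimension count that upgraded injectivity to an isomorphism. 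To recover it, I would prove the cokernel vanishes. The long exact sequence shows $\operatorname{coker} g\subset H^1(\pi_0^*i_0^*F)$, which has trivial $m_0$-monodromy, whereas the $m_0$-monodromy of $\sigma_0^*\ell^1_\rho$ is $t\cdot$(a permutation), since $p_1((\sigma_0)'_*(m_0))=1$. I would then need this operator to have no trivial quotient. On infinite orbits, multiplication by $t x-1$ on $\bfk[x^{\pm1}]$ is not surjective, so this needs more care. My fallback is to test the argument using $h_*$ and the formula $\dim h_*\rho=\#(G_1/H)\dim\rho$: applying Step 1 to $\rho=h_*(h^*\chi_t)\otimes\chi_t^{-1}$, with the adjunction $h_*\dashv h^*$, should show that $\dim h_*(\cdot)$ is finite, hence $[G_1:H]<\infty$, and Step 2 finishes the proof.
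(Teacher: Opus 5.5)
Your Steps~1 and~2 are correct. The gap is Step~3, the claim that $H=h(G_0)$ has finite index, and this is the heart of the proposition.

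\emph{Why Step~3 does not go through.} The first route stalls where you say it does. For $\chi_t\otimes\bfk[G_1/H]$ with finitely supported functions, the argument of Lemma~\ref{lem-rho-i} gives injectivity of $g$. However, the $m_0$-monodromy of $\sigma_0^*\ell^1_\rho$ is $t$ times a permutation. On an infinite orbit of $(\sigma_0)'_*(m_0)$, subtracting the identity acts like multiplication by $tx-1$ on $\bfk[x^{\pm1}]$, which is not surjective. So nothing you have rules out a cokernel with trivial $m_0$-monodromy.

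The fallback is circular. Step~1 rests on Proposition~\ref{prop-rho-J-Lambda} applied to the \emph{target} $\rho\otimes\chi_t$, and that requires $\rho$ to be finite-dimensional, because of the dimension count in Lemma~\ref{lem-rho-i}. But $h_*h^*\chi_t\otimes\chi_t^{-1}$ has dimension $[G_1:H]$. It is finite-dimensional exactly when the conclusion you are trying to reach already holds.

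\emph{The missing idea.} Only the \emph{source} of the Hom needs to be written as ${j_0}_!(\cdot)$.
\begin{itemize}
\item Take $\rho_1$ finite-dimensional with $(h^*\rho_1)(m'_0)$ having no eigenvalue~$1$. Then $(\bPhi\circ J)(\ell^1_{\rho_1})\simeq {j_0}_!j_0^*(\bPhi\circ J)(\ell^1_{\rho_1})$.
\item By the adjunction ${j_0}_!\dashv j_0^*$, homs out of this object are computed after applying $j_0^*$ to the target.
\item The identification $j_0^*(\bPhi\circ J)(\ell^1_{\rho_2})\simeq\ell^0_{h^*\rho_2}$ of (\ref{isom-Psi-ell1}) holds for \emph{every} $\rho_2\in\Rep_{\bfk}(G_1)$, since Proposition~\ref{prop-h-mon} concerns all of $\Rep_{\bfk}$.
\item Together with full faithfulness of $\bPhi\circ J$, this gives $\Hom_{G_1}(\rho_1,\rho_2)\cong\Hom_{G_0}(h^*\rho_1,h^*\rho_2)$ with no finiteness condition on $\rho_2$.
\end{itemize}

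This is exactly the paper's argument, and it is your fallback made non-circular. Take $\rho_1=\rho$ a generic character and $\rho_2=h_*h^*\rho$. The isomorphism is precomposition with the counit, so $\id_{h_*h^*\rho}$ factors through the one-dimensional $\rho$. Hence $\#(G_1/h(G_0))\le 1$.

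You can also finish inside your own framework without checking naturality. Take $\rho_1=\chi_t$ and $\rho_2=\chi_t\otimes\bfk^{(G_1/H)}$, using finitely supported functions.
\begin{itemize}
\item If $[G_1:H]=\infty$, the left side is $(\bfk^{(G_1/H)})^{G_1}=0$. The right side is $(\bfk^{(G_1/H)})^{H}$, which contains $\delta_H\neq0$. This is a contradiction.
\item If $[G_1:H]<\infty$, your Step~2 applies.
\end{itemize}
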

\begin{proof}
    Consider any representation $\rho_1 \in \Rep^{\fin}_{\bfk} (G_1)$ such that $(h^*\rho_1)(m'_0)$ does not have $1$ as an eigenvalue.
    By Lemma \ref{lem-rho-i} and the distinguished triangle induced by (\ref{semi-decomp}), the counit morphism $ {j_0}_!j_0^* (\bPhi \circ J(\ell^1_{\rho_1}) ) \to \bPhi \circ J(\ell^1_{\rho_1})$ is an isomorphism.
    Moreover, using the fully faithful functor $\bPhi\circ J$, we obtain the following isomorphisms
    for any representation $\rho_2 \in \Rep_{\bfk} (G_1)$ which is not necessarily finite dimensional:
    \begin{align*}
        \Hom (\rho_1,\rho_2) 
        & \simeq \Hom (\bPhi \circ J(\ell^1_{\rho_1}), \bPhi \circ J(\ell^1_{\rho_2})) \\
        & \simeq \Hom ( {j_0}_!j_0^* \bPhi \circ J(\ell^1_{\rho_1}), \bPhi \circ J(\ell^1_{\rho_2})) \\
        & \simeq \Hom ( j_0^* \bPhi \circ J(\ell^1_{\rho_1}), j_0^*\bPhi\circ J(\ell^1_{\rho_2})) \\
        & \simeq \Hom (h^*\rho_1, h^*\rho_2) \\
        & \simeq \Hom (h_*h^*\rho_1, \rho_2). 
    \end{align*}
    For the fourth isomorphism, recall that $ j_0^* \bPhi\circ J(\ell^1_{\rho}) = \Psi (\ell^1_{\rho}) \simeq \ell^1_{h^*\rho}$.
    This isomorphism $\Hom (\rho_1,\rho_2) \overset{\sim}{\to} \Hom (h_* h^*\rho_1, \rho_2)$ coincides with the morphism obtained by pre-composition of the counit morphism $h_* h^*\rho_1\to \rho_1$.

    Take a $1$-dimensional representation $\rho\colon G_1\to \bfk^*=\bC^*$ with $\rho(m_1)^k\neq 1$ for every $k\in\bZ\setminus \{0\}$. 
    By Lemma \ref{lem-phm-nonzero}, this $\rho$ satisfies the assumption for $\rho_1$. 
    By taking $(\rho, h_*h^*\rho )$ as $(\rho_1, \rho_2)$ above, we obtain 
    \[\Hom (\rho, h_*h^*\rho)\simeq \Hom (h_*h^*\rho,h_*h^*\rho).\] 
    Let us focus on the identity map $\id_{h_*h^*\rho} \colon h_*h^*\rho\to h_*h^*\rho$. This factors as
    \[h_*h^*\rho\to \rho \to h_*h^*\rho\]
    by the surjectivity of $\Hom (\rho, h_*h^*\rho)\to \Hom (h_*h^*\rho,h_*h^*\rho)$.
    In particular, the morphism $\rho \to h_*h^*\rho$ gives a linear surjective map between the representation spaces.
    Since $\rho$ and $h^*\rho$ are $1$-dimensional,
    \[  \# (G_1/ h(G_0)) = \# (G_1/ h (G_0) )\cdot \dim_{\bC} (h^*\rho) =\dim_{\bC} (h_*h^*\rho)  \leq \dim_{\bC}(\rho) =1 .\]
    This shows that $h$ is surjective. 
\end{proof}

In the proof of the above proposition, we used the existence of an element $\xi \in \bfk^*=\bC^*$ such that $\xi^k \neq 1$ for any $k\in \bZ\setminus \{0\}$.
Recalling $\bfk=\bC$, we can strengthen the latter assertion of Proposition \ref{prop-rho-J-Lambda}.
\begin{proposition}\label{prop-GL_n-rep}
For any $\rho \in \Rep^{\fin}_{\bC} (G_1)$,
$(h^*\rho) \circ (e_0)'_*$ is equivalent to $\rho \circ (\sigma_0)_*'$ in $\Rep^{\fin}_{\bC}(\pi_1(\Lambda_{K_0},y_0))$.
\end{proposition}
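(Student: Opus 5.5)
The idea is to reduce to Proposition \ref{prop-rho-J-Lambda} by twisting $\rho$ with a generic character. Fix $\rho\in\Rep^{\fin}_{\bC}(G_1)$ of dimension $n$. For $\xi\in\bC^*$, let $\chi_\xi\colon G_1\to\bC^*$ be the character $g\mapsto \xi^{p_1(g)}$, and put $\rho_\xi\coloneqq \rho\otimes\chi_\xi$. Then $(h^*\rho_\xi)(m'_0)=\xi^{d}\,(h^*\rho)(m'_0)$, where $d\coloneqq (p_1\circ h)(m'_0)\neq 0$ by Lemma \ref{lem-phm-nonzero}. The eigenvalues of $(h^*\rho)(m'_0)$ are finitely many nonzero numbers $\lambda_1,\dots,\lambda_n$. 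The condition that $1$ is an eigenvalue of $(h^*\rho_\xi)(m'_0)$ is $\xi^d\lambda_k=1$ for some $k$, which excludes only finitely many $\xi$. So for all but finitely many $\xi\in\bC^*$, $\rho_\xi$ satisfies the hypothesis of Proposition \ref{prop-rho-J-Lambda}. For those $\xi$ we get
\[(h^*\rho_\xi)\circ(e_0)'_*\simeq \rho_\xi\circ(\sigma_0)'_*\]
as representations of $\pi_1(\Lambda_{K_0},y_0)\cong\bZ^2$.

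Next we unwind the twist. The left side equals $\bigl((h^*\rho)\circ(e_0)'_*\bigr)\otimes(\chi_\xi\circ h\circ(e_0)'_*)$, and the right side equals $\bigl(\rho\circ(\sigma_0)'_*\bigr)\otimes(\chi_\xi\circ(\sigma_0)'_*)$. I need the two characters $\chi_\xi\circ h\circ(e_0)'_*$ and $\chi_\xi\circ(\sigma_0)'_*$ of $\pi_1(\Lambda_{K_0})$ to agree. The clean way is to apply Proposition \ref{prop-rho-J-Lambda} to the one-dimensional representation $\chi_\xi$ itself: for $\xi$ with $\xi^{kd}\neq 1$ for all $k\neq 0$, the hypothesis holds (this is the case used in the proof of Proposition \ref{prop-h-surj}), and two one-dimensional representations are equivalent only if they are equal. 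Tensoring with the inverse character then yields
\[(h^*\rho)\circ(e_0)'_*\simeq\rho\circ(\sigma_0)'_*\]
for every such $\xi$, provided $\xi$ also avoids the finite exceptional set for $\rho$. Such $\xi$ exist in $\bC^*$; for instance, take $\xi$ not a root of unity and outside a finite set.

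The main obstacle is keeping the bookkeeping honest. The equivalence in Proposition \ref{prop-rho-J-Lambda} is an abstract isomorphism of representations, not one compatible with fixed identifications of the underlying spaces. Tensoring both sides by the same character $\chi_\xi^{-1}\circ(\sigma_0)'_*$ preserves isomorphism, and the tensor of an intertwiner with the identity is again an intertwiner, so there is no real issue. Uncountability or algebraic closedness of $\bC$ is needed only to find a suitable $\xi$, which is why the statement is phrased over $\bC$.
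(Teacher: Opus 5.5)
Your proof is correct. Twisting by $g\mapsto \xi^{p_1(g)}$ so that Proposition \ref{prop-rho-J-Lambda} applies is the same device as Step 2 of the paper's proof. The difference is how the twist is removed.

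The paper proceeds in three steps:
\begin{itemize}
\item It first restricts to $\SL_n(\bC)$-representations and untwists by comparing determinants. This gives $(\xi^{q_0(g)})^n=(\xi^{q_1(g)})^n$, hence $q_0=q_1$, where $q_0=p_1\circ h\circ(e_0)'_*$ and $q_1=p_1\circ(\sigma_0)'_*$.
\item It needs a separate Step 1 for one-dimensional representations. There it uses $c=(p_1\circ h)(m'_0)\in\{\pm1\}$, which comes from the surjectivity of $h$ (Proposition \ref{prop-h-surj}).
\item It needs a Step 3 writing a general $\GL_n$-representation as $\zeta^{p_1}$ times an $\SL_n$-representation, where $\zeta^n=\nu(1)$.
\end{itemize}

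You instead apply Proposition \ref{prop-rho-J-Lambda} to the twisting character $\chi_\xi$ itself. Since $\xi$ has infinite order, this gives $q_0=q_1$ directly, so every $\rho$ is handled in one stroke. Your route is shorter. It needs only $d\neq 0$ from Lemma \ref{lem-phm-nonzero} rather than the surjectivity of $h$, and it needs no $n$-th roots.

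Your closing remark is slightly off. Neither uncountability nor algebraic closedness of $\bC$ plays a role in your argument: any $\xi$ of infinite multiplicative order outside a finite set suffices. It is the paper's Step 3, extracting $\zeta$, that uses algebraic closedness.
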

\begin{proof}
The homomorphism $p_1\circ h \colon G_0\to \bZ$ factors through $\bar{h} \colon \bZ \to \bZ$ so that the square in the following diagram commutes (i.e. $p_1\circ h = \bar{h} \circ p_0$):
\[\xymatrix{
\pi_1(\Lambda_{K_0}) \ar[r]^-{(e_0)'_*} \ar@/^18pt/[rr]^{(\sigma_0)'_*} & 
G_0 \ar[d]_-{p_0} \ar[r]^-{h} & G_1 \ar[d]_-{p_1} \ar[r]^-{\rho} & \GL_n(\bC) \\
 & \bZ \ar[r]^-{\bar{h}} & \bZ . &
}\]
Since $p_1\circ h$ is surjective, $\bar{h}$ is also surjective.
Let $ c \coloneqq \bar{h} (1) \in \{1, -1\}$.
In addition, recall that $p_0 \colon G_0  \to  \bZ$
maps $m'_0 = (e_0)'_*(m_0) \in G_0$ to $1\in \bZ$.

We want to show that for any $n\in\bZ_{\geq 1}$ and any representation $\rho \colon G_1 \to \GL_n(\bC)$,
\begin{align}\label{eq-h-rho}
(h^*\rho)\circ (e_0)'_*\simeq \rho \circ (\sigma_0)_*' \text{ in } \Rep^{\fin}_{\bC}(\pi_1(\Lambda_{K_0},y_0)). 
\end{align}
The proof is divided into three steps.

(Step.1) We prove (\ref{eq-h-rho}) for any $1$-dimensional representation $\rho \colon G_1 \to \bC^*$.
In this case, $\rho$ factors through $p_1$ and there exists a group homomorphism $\bar{\rho} \colon \bZ \to \bC^*$ such that
$\rho = \bar{\rho}\circ p_1$.
If $\bar{\rho}(1)=1$, then $\rho(g)=1$ for any $g\in G_1$, so (\ref{eq-h-rho}) is obvious.
If $\bar{\rho}(1) \neq 1$, then
\[(h^*\rho)(m'_0) = (\bar{\rho} \circ p_1\circ h ) (m'_0) = (\bar{\rho} \circ \bar{h} \circ p_0)(m'_0)= \bar{\rho} (c) \neq 1 \]
since $c\in \{\pm 1\}$.
Hence, $\rho$ satisfies the condition of Proposition \ref{prop-rho-J-Lambda}, and (\ref{eq-h-rho}) follows from Proposition \ref{prop-rho-J-Lambda}.

(Step.2) We prove (\ref{eq-h-rho}) for any $\SL_n(\bC)$-representation $\rho \colon G_1 \to \SL_n(\bC)$.
In this case, choose $\xi \in \bC^*$ satisfying the two conditions:
\begin{enumerate}
\item $\xi^{-c}$ is not an eigenvalue of the matrix $(h^*\rho) (m'_0) \in \SL_n(\bC)$.
\item $\xi^k \neq 1$ for every $k\in \bZ\setminus \{0\}$.
\end{enumerate}
Then, we define
\[ \tilde{\rho} \colon G_1 \to \GL_n(\bC) ,\  g \mapsto \xi^{p_1(g)}\cdot \rho(g). \]
This is a finite dimensional representation of $G_1$ and the matrix
\[(h^*\tilde{\rho})(m'_0) = \xi^{p_1\circ h(m'_0)} \cdot (h^*\rho)(m'_0) = \xi^{c} \cdot (h^*\rho)(m'_0) \]
does not have $1$ as an eigenvalue by the first condition on $\xi$.
Therefore, by Proposition \ref{prop-rho-J-Lambda},
\[  (h^*\tilde{\rho})\circ (e_0)'_* \simeq \tilde{\rho} \circ (\sigma_0)'_* \]
in $\Rep^{\fin}_{\bC}(\pi_1(\Lambda_{K_0},y_0))$.
These two representations are written as
\begin{align*}
 (h^*\tilde{\rho})\circ (e_0)'_* & \colon  g \mapsto \xi^{q_0 (g)} \cdot \left( (h^*\rho)\circ (e_0)'_*\right) (g) , \\
 \tilde{\rho} \circ (\sigma_0)'_*& \colon  g\mapsto \xi^{q_1(g)} \cdot \left( \rho \circ (\sigma_0)'_*\right) (g),
\end{align*}
where $q_0,q_1 \colon \pi_1(\Lambda_{K_0},y_0) \to \bZ$ are given by $q_0\coloneqq p_1\circ h \circ (e_0)'_*$ and $q_1 \coloneqq p_1\circ (\sigma_0)'_*$.
The determinant of $(h^*\tilde{\rho})\circ (e_0)'_*(g)$ and $\tilde{\rho} \circ (\sigma_0)'_*(g)$ are the same, so we obtain
\[ (\xi^{q_0(g)})^n = (\xi^{q_1(g)})^n \]
for every $g\in \pi_1(\Lambda_{K_0},y_0)$.
From the second condition on $\xi$, it follows that $q_0 = q_1$.
Hence, $(h^*\rho)\circ (e_0)'_* \simeq \rho \circ (\sigma_0)'_* $ in $\Rep^{\fin}_{\bC} (\pi_1(\Lambda_{K_0},y_0))$.

(Step.3)
We arbitrarily take a representation $\rho \colon G_1 \to \GL_n(\bC)$.
The $1$-dimensional representation $G_1 \to \bC^* ,\  g \mapsto \det \rho(g)$ factors through $p_1$
and there exists a group homomorphism $\nu \colon \bZ \to \bC^*$ such that $\nu\circ p_1 (g) = \det (\rho(g))$ for every $g\in G_1$.
We choose $\zeta \in \bC^*$ such that $\zeta^n = \nu(1)$ and consider two representations
\[\begin{array}{cc} 
\rho_1  \colon G_1 \to \bC^* ,\  g \mapsto \zeta^{p_1(g)} , &
\rho_2  \colon G_1 \to \SL_n(\bC) ,\ g\mapsto \zeta^{-p_1(g)} \cdot \rho(g).
\end{array}\]
Here, note that $\det \rho(g) = (\nu(1) )^{p_1(g)} = \zeta^{n\cdot p_1(g)}$.
For any $g\in G_1$, $\rho(g) = \rho_1(g) \cdot \rho_2(g)$ and moreover, from the above two steps,
\[\begin{array}{cc} (h^*\rho_1)\circ (e_0)_*'= \rho_1 \circ (\sigma_0)_*',&  (h^*\rho_2)\circ (e_0)_*' \simeq \rho_2 \circ (\sigma_0)_*' , \end{array}\]
in $\Rep^{\fin}_{\bC}(\pi_1(\Lambda_{K_0},y_0))$.
This shows that (\ref{eq-h-rho}) holds for $\rho$.
\end{proof}

Using Proposition \ref{prop-GL_n-rep} for a $1$-dimensional representation, we can show the following.

\begin{proposition}\label{prop-b=0}
The integer $b$ of Proposition \ref{prop-H1L} is equal to $0$.
\end{proposition}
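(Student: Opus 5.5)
We take a $1$-dimensional representation of $G_1$, apply Proposition \ref{prop-GL_n-rep} to it, and evaluate both sides on the longitude $l_0$. This will show that $(\sigma_0)_*(l_0)$ is null-homologous in $M_{\varphi}$. We then compare with Proposition \ref{prop-H1L}, read in $H_1(M_{\varphi};\bZ)$.

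\textbf{Step 1 (choice of representation).} Fix $\xi\in\bC^*$ with $\xi^k\neq 1$ for every $k\in\bZ\setminus\{0\}$. Define $\rho\colon G_1\to \bC^*$ by $\rho(g)=\xi^{p_1(g)}$. Since $\rho$ is abelian, the conjugating paths $\gamma_0,\gamma_1$ in $(e_0)'_*$ and $(\sigma_0)'_*$ play no role. Equivalence of $1$-dimensional representations is equality of characters. So Proposition \ref{prop-GL_n-rep} gives
\[ \xi^{(p_1\circ h\circ (e_0)'_*)(\gamma)} = \xi^{(p_1\circ(\sigma_0)'_*)(\gamma)} \quad\text{for all } \gamma\in\pi_1(\Lambda_{K_0},y_0), \]
and hence $p_1\circ h\circ (e_0)'_* = p_1\circ (\sigma_0)'_*$.

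\textbf{Step 2 (evaluate on $l_0$).} As in the proof of Proposition \ref{prop-GL_n-rep}, write $p_1\circ h=\bar h\circ p_0$. The element $(e_0)_*(l_0)$ is the Seifert longitude of $K_0$. It bounds a Seifert surface in $\bR^3\setminus K_0$, so $p_0((e_0)'_*(l_0))=0$. Therefore $p_1((\sigma_0)'_*(l_0))=0$, i.e.\ the class of $\sigma_0(l_0)$ vanishes in $H_1(M_{\varphi};\bZ)\cong\bZ$.

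\textbf{Step 3 (compute in $H_1(M_{\varphi})$).} Push the second identity of Proposition \ref{prop-H1L} forward along the inclusion $L_{\varphi}\setminus\psi((-\infty,a_1)\times\Lambda_{K_1})\hookrightarrow M_{\varphi}$. This gives
\[ a\,[\sigma_0(l_0)] = [\sigma_1(l_1)] - b\,[\sigma_1(m_1)] \quad\text{in } H_1(M_{\varphi};\bZ). \]
By the gluing (\ref{glue-M_phi}), $\sigma_1(l_1)$ and $\sigma_1(m_1)$ are identified with $e_1(l_1)$ and $e_1(m_1)$ in $\bR^3\setminus N_{K_1}$.
\begin{itemize}
\item $e_1(l_1)$ is the Seifert longitude of $K_1$, so it is null-homologous in $\bR^3\setminus K_1$ and hence in $M_{\varphi}$.
\item $e_1(m_1)$ is the meridian. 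The class $[\sigma_1(m_1)]$ generates $H_1(M_{\varphi};\bZ)\cong\bZ$. This follows from the Mayer--Vietoris argument behind $H_1(M_{\varphi})\cong H_1(\bR^3\setminus K_1)$, which uses that $(\sigma_1\circ e_1^{-1})_*$ is an isomorphism by Proposition \ref{prop-H1L}.
\end{itemize}
Combining with Step 2 gives $0=-b\,[\sigma_1(m_1)]$ in $\bZ$, so $b=0$.

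The main point to check carefully is the identification in Step 3. One must verify that $[\sigma_1(m_1)]$ is a generator, and not torsion or zero, in $H_1(M_{\varphi})$. One must also verify that the Seifert longitude of $K_1$ dies there, which holds because it already dies in $\bR^3\setminus N_{K_1}$. The rest is formal from Proposition \ref{prop-GL_n-rep}.
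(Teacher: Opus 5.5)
Your proposal is correct and follows essentially the same route as the paper. The paper also applies Proposition \ref{prop-GL_n-rep} to the character $g\mapsto \xi^{p_1(g)}$ (it takes $\xi=2$), uses that the Seifert longitude of $K_0$ is null-homologous to get $p_0((e_0)'_*(l_0))=0$, and reads off $p_1((\sigma_0)'_*(l_0))=\pm b$ from Proposition \ref{prop-H1L} pushed into $H_1(M_{\varphi};\bZ)$. Your Mayer--Vietoris check that $[\sigma_1(m_1)]$ generates $H_1(M_{\varphi};\bZ)$ is the fact the paper uses implicitly through its normalization of $p_1$.
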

\begin{proof}
For $i\in \{0,1\}$, note that $(e_i)_*(l_0) =0 $ in $ H_1(\bR^3\setminus K_i;\bZ)$ since this homology class comes from the longitude of $K_i$ with respect to the Seifert framing.
Then, we have
\[\begin{array}{cc} p_0( (e_0)'_*(l_0))=0 ,& p_1((\sigma_0)'_*(l_0))=b .\end{array}\]
The second equation can be checked by
\[ (\sigma_0)_*(l_0) = a\cdot (\sigma_1)_*(l_1) + b \cdot (\sigma_0)_*(m_0) \text{ in }H_1(L_{\varphi};\bZ)  \]
from Proposition \ref{prop-H1L} and $(\sigma_1)_*(l_1) =(e_1)_* (l_1)=0$ in $H_1(M_{\varphi};\bZ)\cong H_1(\bR^3\setminus K_1;\bZ)$.

Consider a specific $1$-dimensional representation
$\rho \colon G_1 \to \bC^* ,\  g \mapsto 2^{p_1(g)}$.
Using the homomorphism $\bar{h}\colon \bZ \to \bZ$ in the proof of Proposition \ref{prop-GL_n-rep},
\begin{align*} ((h^* \rho) \circ (e_0)_*') (l_0) &= 2^{(\bar{h}\circ p_0 \circ (e_0)_*' )(l_0)}= 2^{\bar{h}(0)}=1, \\
(\rho \circ (\sigma_0)_*') (l_0) &= 2^b.
\end{align*}
By Proposition \ref{prop-GL_n-rep}, $(h^*\rho)\circ (e_0)'_*= \rho\circ (\sigma_0)'_*$.
Therefore, $1=2^b$ and thus, $b=0$.
\end{proof}

\begin{remark}
$a\in \{\pm 1\}$ and $b \in \bZ$ are from Proposition \ref{prop-H1L}, and this proposition is proved by using Floer cohomology group for Lagrangian submanifolds in $T^*\bR^3$. See \cite[Appendix A]{O25}.
It would be interesting if one can show that $b=0$ by only using Floer cohomology or more elementary tools.
\end{remark}

\section{\texorpdfstring{Constraints on the knot types of $(K_0,K_1)$}{Constraints on the knot types of (K0,K1)}}\label{sec-constraint}

We continue to consider the setup in Section \ref{subsec-cobordism-functor}.
Knots $K_0$ and $K_1$ in $\bR^3$ are oriented and equipped with the Seifert framing.
For $\varphi \in \Ham_c(T^*\bR^3)$, we suppose that $\varphi (T^*_{K_0}\bR^3)$ and $0_{\bR^3}$ intersect cleanly along $K_1$.

\subsection{Epimorphism between knot groups}\label{subsec-epi-knots}

Let us define a homomorphism $r_1\colon \pi_1(L_{\varphi},\sigma_1(y_1))\to \pi_1(\Lambda_{K_1},y_1)$ by the composition
\[ \xymatrix{ \pi_1(L_{\varphi},\sigma_1(y_1)) \ar[r] & H_1(L_{\varphi};\bZ) \ar[r]^-{(\sigma_1)_*^{-1}} & H_1(\Lambda_{K_1};\bZ) \cong \pi_1(\Lambda_{K_1},y_1). }\]
Here, the first map and the last isomorphism are the Hurewicz homomorphisms, and the middle map is the inverse of $(\sigma_1)_*$, which is an isomorphism on the first homology group by Proposition \ref{prop-H1L}.
Then, $r_1$ is the retraction of the homomorphism
$(\sigma_1)_* \colon \pi_1( \Lambda_{K_1} ,y_1) \to \pi_1(L_{\varphi},\sigma_1(y_1))$ induced by the embedding $\sigma_1 \colon \Lambda_{K_1} \to L_{\varphi}$, that is, $r_1\circ (\sigma_1)_*= \id_{\pi_1( \Lambda_{K_1} ,y_1) }$.

Let us consider the composite map
\begin{align}\label{hom-r1-sigma0}
\xymatrix{ \pi_1(\Lambda_{K_0},y_0) \ar[r]^-{(\sigma_0)_*} &\pi_1(L_{\varphi},\sigma_0(y_0)) \cong \pi_1(L_{\varphi},\sigma_1(y_1))  \ar[r]^-{r_1} & \pi_1(\Lambda_{K_1},y_1) ,}
\end{align}
where the middle isomorphism is determined by choosing a path in $L_{\varphi}$ from $\sigma_0(y_0)$ to $\sigma_1(y_1)$.
Using $a\in\{\pm 1\}$ and $b\in \bZ$ of Proposition \ref{prop-H1L}, (\ref{hom-r1-sigma0}) maps  
$ m_0$ to $( m_1)^a$ and $l_0$ to $(l_1)^a ( m_1 )^b$.
By Proposition \ref{prop-b=0}, we know that $b=0$.
Hereafter, by changing the orientation of $K_1$ if necessary, we assume that $a=1$.
Therefore, (\ref{hom-r1-sigma0}) is given by
\begin{align}\label{r1-sigma0-rel}
(\ref{hom-r1-sigma0}) \colon \pi_1(\Lambda_{K_0},y_0) \to \pi_1(\Lambda_{K_1},y_1) ,\  m_0 \mapsto  m_1,\ l_0 \mapsto  l_1.
\end{align} 

From the construction (\ref{glue-M_phi}) of the manifold $M_{\varphi}$ and the Seifert--van Kampen theorem,
there exist a unique group homomorphism
\[\bar{r}_1 \colon \pi_1(M_{\varphi} , y'_1) \to \pi_1 (\bR^3\setminus K_1 , e_1(y_1)) \]
such that the following diagram commutes:
\begin{align}\label{diagram-van-kampen} 
\begin{split}
\xymatrix{
\pi_1(L_{\varphi}, \sigma_1(y_1)) \ar[r]^-{i} \ar@/^18pt/[rr]^{(e_1)_* \circ r_1} & \pi_1(M_{\varphi} , y'_1) \ar@{-->}[r]^-{\bar{r}_1}  & \pi_1 (\bR^3\setminus K_1 , e_1(y_1))\\
    \pi_1(\Lambda_{K_1}, y_1) \ar[r]^-{(e_1)_*} \ar[u]^-{(\sigma_1)_*}  & \pi_1(\bR^3\setminus N_{K_1}, e_1(y_1)) .  \ar[u]_-{i'} \ar[ur]_-{i''}^-{\cong} & 
}\end{split}
\end{align}
Here, the homomorphisms $i$, $i'$ and $i''$ are induced by the inclusion maps.
We remark that $\bar{r}_1$ is a surjection.

We choose a path from $e_0(y_0)$ to $x_0$ in $\bR^3\setminus K_0$ and a path from $y'_1$ to $x_0$ in $M_{\varphi}$ to fix isomorphisms
\[\begin{array}{cc}
\pi_1(\bR^3\setminus K_0, e_0(y_0)) \cong \pi_1(\bR^3\setminus K_0, x_0) = G_0 , &  \pi_1(M_{\varphi},y'_1) \cong \pi_1(M_{\varphi},x_0) = G_1 .
\end{array}\]
Then, we obtain a surjective group homomorphism
\[k\colon \pi_1(\bR^3\setminus K_0, e_0(y_0)) \to \pi_1(\bR^3 \setminus K_1 ,e_1(y_1)) \]
by the composition
\begin{align}\label{hom-r_1-h}
\xymatrix{
\pi_1(\bR^3\setminus K_0, e_0(y_0)) \cong G_0 \ar@{->>}[r]^-{h} & G_1 \cong \pi_1(M_{\varphi},y'_1) \ar@{->>}[r]^-{\bar{r}_1} & \pi_1(\bR^3 \setminus K_1 ,e_1(y_1)) .
}\end{align}

\begin{remark}\label{rem-epi-knot}
Given knots $K$ and $K'$, let us write $K \geq K'$ if there exists a surjective group homomorphism $\pi_1(\bR^3\setminus K) \to \pi_1(\bR^3\setminus K')$.
(In this section, following the preceding studies, let us simply call it an \textit{epimorphism} between knot groups.)
Then, for the knots $K_0$ and $K_1$ in this section, we have $K_0 \geq K_1$.
There have been a number of studies on epimorphisms between knot groups.
For example, the following results are known.
\begin{itemize}
\item Let $\Delta_K\in \bZ[t]$ denote the Alexander polynomial of a knot $K$.
It is well-known that $\Delta_{K}$ is divisible by $\Delta_{K'}$ if $K\geq K'$.
\item For any knot $K$, $\pi_1(\bR^3\setminus K)$ maps onto at most finitely many knot groups (up to isomorphisms) \cite[Corollary 1.2]{AgLiu}.
\item When $K$ and $K'$ are $2$-bridge knots, \cite[Theorem 1.1]{ORS} gave a sufficient condition to be $K \geq K'$, and a necessary and sufficient condition was given by \cite[Theorem 8.1]{ALSS}.
\end{itemize}
We will discuss in Remark \ref{rem-epi-peripheral} some refinements of the condition $K \geq K'$.
\end{remark}

The epimorphism $k$ has the following property.

\begin{proposition}\label{prop-k-rep}
For $j\in \{0,1\}$, let us abbreviate the meridian $(e_j)_*(m_j)$ and the longitude $(e_j)_*(l_j)$ of $K_j$ by $m_j$ and $l_j$ respectively.
Then, for any representation
\[\rho \colon  \pi_1(\bR^3\setminus K_1, e_1(y_1)) \to \GL_n(\bC),\] there exists $g \in \GL_n(\bC)$ such that
\[\begin{array}{cc} (\rho \circ k) (m_0) = g\cdot \rho(m_1) \cdot g^{-1}, & (\rho\circ k) (l_0) = g\cdot \rho(l_1)\cdot g^{-1}.
\end{array}\]
\end{proposition}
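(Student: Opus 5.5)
Given $\rho\colon \pi_1(\bR^3\setminus K_1,e_1(y_1))\to \GL_n(\bC)$, I would pull it back along $\bar{r}_1$ to the representation $\rho' \coloneqq \rho\circ \bar{r}_1$ of $G_1\cong \pi_1(M_{\varphi},y'_1)$ and apply Proposition \ref{prop-GL_n-rep} to $\rho'$. Since $k = \bar{r}_1\circ h$ (after the fixed base-point identifications), we have $h^*\rho' = \rho\circ k$. Proposition \ref{prop-GL_n-rep} then says that $(\rho\circ k)\circ (e_0)'_*$ and $\rho\circ \bar{r}_1\circ (\sigma_0)'_*$ are equivalent as representations of $\pi_1(\Lambda_{K_0},y_0)$. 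That is, there is a single $g\in \GL_n(\bC)$ conjugating one to the other simultaneously on $m_0$ and on $l_0$.

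The remaining step is to compute $\bar{r}_1\circ(\sigma_0)_*$ on $m_0$ and $l_0$. By the diagram (\ref{diagram-van-kampen}), $\bar{r}_1\circ i = (e_1)_*\circ r_1$ on $\pi_1(L_{\varphi})$. Hence $\bar{r}_1\circ (\sigma_0)_* = (e_1)_*\circ r_1\circ(\sigma_0)_*$, up to conjugation coming from the choice of base-point paths. By (\ref{r1-sigma0-rel}), which uses $b=0$ from Proposition \ref{prop-b=0} together with the orientation normalization $a=1$, this composite sends $m_0\mapsto (e_1)_*(m_1)=m_1$ and $l_0\mapsto (e_1)_*(l_1)=l_1$.

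The step needing care is the bookkeeping of base points. The maps $(e_0)'_*$ and $(\sigma_0)'_*$ are defined using the paths $\gamma_0,\gamma_1$, whereas $k$ and $\bar r_1$ use other chosen paths, and the middle isomorphism in (\ref{hom-r1-sigma0}) uses yet another path. Changing any of these paths alters the relevant homomorphisms only by an inner automorphism of the target group. After applying $\rho$, such an inner automorphism becomes conjugation by a single fixed matrix, applied uniformly to both $m_0$ and $l_0$. I would therefore absorb all these conjugations into $g$. The key point is that one common element conjugates the whole peripheral subgroup, and this is exactly what Proposition \ref{prop-GL_n-rep} provides, since it gives an equivalence of representations of all of $\pi_1(\Lambda_{K_0},y_0)$. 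Composing these conjugations yields $(\rho\circ k)(m_0)=g\rho(m_1)g^{-1}$ and $(\rho\circ k)(l_0)=g\rho(l_1)g^{-1}$.
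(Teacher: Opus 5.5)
Your proposal is correct and follows the paper's proof. The paper likewise applies Proposition~\ref{prop-GL_n-rep} to $\rho\circ\bar{r}_1$, uses $\bar{r}_1\circ h=k$ and the van Kampen identity $(e_1)_*\circ r_1\circ(\sigma_0)_*=\bar{r}_1\circ(\sigma_0)_*$, and then invokes (\ref{r1-sigma0-rel}) to get $m_0\mapsto m_1$ and $l_0\mapsto l_1$. Your base-point bookkeeping spells out what the paper dismisses by remarking that representations are only considered up to equivalence.
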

\begin{proof}
It suffices to study representations of the fundamental groups up to equivalence, so we do not need to care about the choice of base point.
For any representation $\rho \colon  \pi_1(\bR^3\setminus K_1) \to \GL_n(\bC)$, we consider the following diagram:
\[\xymatrix{
\pi_1(\Lambda_{K_0}) \ar[d]_-{(e_0)_*} \ar[rr]^-{   r_1\circ (\sigma_0)_* } \ar[rd]^-{(\sigma_0)_*} & & \pi_1(\Lambda_{K_1}) \ar[d]_-{(e_1)_*}  & \\
\pi_1(\bR^3\setminus K_0) \ar@/_18pt/[rr]^-{k}  \ar[r]^-{h} & \pi_1(M_{\varphi}) \ar[r]^-{\bar{r}_1} & \pi_1(\bR^3\setminus K_1) \ar[r]^-{\rho} & \GL_n(\bC)
}\]
From the definitions of $\bar{r}_1$ and $k$, we have
\[\begin{array}{cc} (e_1)_*\circ (r_1\circ (\sigma_0)_*) = \bar{r}_1 \circ (\sigma_0)_* , & \bar{r}_1\circ h =k. \end{array}\]
We apply Proposition \ref{prop-GL_n-rep} to the finite dimensional representation $\rho \circ \bar{r}_1$ of $\pi_1(M_{\varphi})$, then
\[  (\rho\circ \bar{r}_1)\circ h\circ (e_0)_* \simeq (\rho \circ \bar{r}_1) \circ (\sigma_0)_* \]
in $\Rep^{\fin}_{\bC} (\pi_1(\Lambda_{K_0}))$.
Therefore,
\[\rho \circ k \circ (e_0)_* \simeq \rho \circ (e_1)_* \circ (r_1\circ (\sigma_0)_*) . \]
in $\Rep^{\fin}_{\bC} (\pi_1(\Lambda_{K_0}))$.
The map $ r_1\circ (\sigma_0)_* $ agrees with (\ref{hom-r1-sigma0}), so it maps $m_0$ to $m_1$ and $l_0$ to $l_1$.
Take $g\in \GL_n(\bC)$ such that $\rho \circ k \circ (e_0)_* (x) = g\cdot (\rho \circ (e_1)_* \circ (r_1\circ (\sigma_0)_*)(x))\cdot g^{-1}$ for every $x\in \pi_1(\Lambda_{K_0})$.
Then the assertion of the proposition holds.
\end{proof}

Consider $T^*_{K_0}\bR^3$ as a $3$-manifold diffeomorphic to $S^1\times \bR^2$.
In this section,
we say that $\varphi^{-1}(K_1)$ is a \textit{trivial knot} in $T^*_{K_0}\bR^3$ if it is isotopic to $K_0 = T^*_{K_0}\bR^3 \cap 0_{\bR^3}$ in $T^*_{K_0}\bR^3$.
In such case, $\psi^{-1}(L_{\varphi})$ is diffeomorphic to $\bR\times \Lambda_{K_0}$ and it is an exact Lagrangian concordance in $\bR \times U^*\bR^3$ from $\Lambda_{K_1}$ to $\Lambda_{K_0}$.

\begin{theorem}\label{thm-unknot}
If $K_0$ is the unknot in $\bR^3$, then $K_1$ is also the unknot in $\bR^3$.
Moreover, $\varphi^{-1}(K_1)$ is a trivial knot in $T^*_{K_0}\bR^3$.
\end{theorem}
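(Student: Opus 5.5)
The plan is to use the surjection $h\colon \pi_1(\bR^3\setminus K_0)\to \pi_1(M_{\varphi})$ of Proposition \ref{prop-h-surj}, together with the surjection $k=\bar r_1\circ h$ of (\ref{hom-r_1-h}). If $K_0$ is the unknot, then $\pi_1(\bR^3\setminus K_0)\cong\bZ$. Hence $\pi_1(\bR^3\setminus K_1)$, being a quotient of $\bZ$ with $H_1\cong\bZ$, is infinite cyclic. By Dehn's lemma (or the loop theorem / Papakyriakopoulos: a knot group is $\bZ$ iff the knot is trivial), $K_1$ is the unknot. This proves the first claim.

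For the second claim, note that $\pi_1(M_{\varphi})$ is likewise a quotient of $\bZ$ with $H_1(M_{\varphi};\bZ)\cong\bZ$, so $\pi_1(M_{\varphi})\cong \bZ$. Since $K_1$ is the unknot, $S^3\setminus N_{K_1}$ is a solid torus. As in the first bullet of the proof of Lemma \ref{lem-LO}, the compactification $M'_{\varphi}$ is then obtained from the solid torus $T^*_{K_0}\bR^3\cong S^1\times D^2$ by a Dehn surgery along $\varphi^{-1}(K_1)$, and $[\varphi^{-1}(K_1)]=\pm[K_0\times\{0\}]$ by (\ref{K1_K0_homologous}). So we are in the setting of Theorem \ref{thm-Gabai}.

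We rule out cases (2) and (3) of Theorem \ref{thm-Gabai}:
\begin{itemize}
\item[(2)] This is excluded because $H_1(M'_{\varphi})\cong\bZ$ has no torsion.
\item[(3)] Here $M'_{\varphi}$ would be irreducible with incompressible boundary torus, so $\pi_1(\partial M'_{\varphi})\cong\bZ^2$ would inject into $\pi_1(M'_{\varphi})\cong\bZ$, which is impossible.
\end{itemize}
We must also handle the possibility that the surgery is trivial, which Theorem \ref{thm-Gabai} excludes. In that case $M'_{\varphi}$ is the solid torus and we need $K$ trivial in another way. I expect this to be the main obstacle. First I would check that the gluing slope is genuinely non-trivial: the meridian of $\varphi^{-1}(K_1)$ in $L_{\varphi}$ corresponds to $(\sigma_1)_*(m_1)$, and it is glued to $e_1(m_1)$, which is the meridian of the unknot $K_1$. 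In the complementary solid torus $S^3\setminus N_{K_1}$, however, the disk bounds the longitude $l_1$ of $K_1$, not $m_1$. So the filling slope is $l_1$, which differs from the meridian, and the surgery is non-trivial.

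Therefore case (1) holds: $M'_{\varphi}=S^1\times D^2$ and $\varphi^{-1}(K_1)$ is isotopic to $S^1\times\{0\}=K_0$ in $T^*_{K_0}\bR^3$. Hence $\varphi^{-1}(K_1)$ is a trivial knot in $T^*_{K_0}\bR^3$.
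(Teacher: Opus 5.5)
Your proposal is correct and follows the paper's proof essentially step by step. The epimorphism $k$ forces $\pi_1(\bR^3\setminus K_1)\cong\bZ$, hence $K_1$ is the unknot. The surjection $h$ then gives $\pi_1(M'_{\varphi})\cong\bZ$, which rules out an incompressible boundary torus, and Gabai's theorem yields that $\varphi^{-1}(K_1)$ is trivial in $T^*_{K_0}\bR^3$.

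Your explicit check that the filling slope is $\sigma_1(l_1)\neq\sigma_1(m_1)^{\pm1}$, so that the surgery is genuinely non-trivial, is correct. The paper leaves this detail implicit when invoking Theorem \ref{thm-Gabai}.
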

\begin{proof}
From the existence of an epimorphism (\ref{hom-r_1-h}), if $K_0$ is the unknot, then $\pi_1(\bR^3\setminus K_1)\cong \bZ$.
By the loop theorem, $K_1$ must be the unknot in $\bR^3$.

To prove the latter assertion,
let us use the $3$-manifold $M'_{\varphi}$ defined in the proof of Lemma \ref{lem-LO}.
As we have seen there, if $K_1$ is the unknot, then $M'_{\varphi}$ is a $3$-manifold obtained from $T^*_{K_0}\bR^3\cong K_0 \times \bR^2$ by a Dehn surgery along $\varphi^{-1}(K_1)$.
From the existence of a surjective homomorphism
$h\colon \bZ\cong \pi_1(\bR^3\setminus K_0)\to \pi_1(M'_{\varphi})$ and $H_1(M'_{\varphi};\bZ)\cong \bZ$, we have $\pi_1(M'_{\varphi}) \cong \bZ$.
This implies that the embedded torus $\sigma_0(\Lambda_{K_0})$ is not incompressible since $(\sigma_0)_* \colon \pi_1(\Lambda_{K_0}) \to \pi_1(M'_{\varphi})$ is not injective.
Therefore, it follows from Theorem \ref{thm-Gabai} that $M'_{\varphi} \cong S^1 \times \bR^2$ and moreover, $\varphi^{-1}(K)$ is isotopic to the trivial knot $K_0\times \{0\}$.
\end{proof}

\begin{theorem}
Suppose that $K_1$ has the same knot type as $K_0$ in $\bR^3$.
Then, $\varphi^{-1}(K_1)$ is a trivial knot in $T^*_{K_0}\bR^3$.
\end{theorem}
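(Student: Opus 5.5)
The plan is to reduce to the unknot case and otherwise show that $L_{\varphi}$ has abelian fundamental group, which forces $\varphi^{-1}(K_1)$ to be a core of the solid torus $T^*_{K_0}\bR^3$. If $K_0$ is the unknot, Theorem \ref{thm-unknot} already gives the claim. So assume $K_0$, and hence $K_1$, is knotted.

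\textbf{Step 1: $h$ and $\bar{r}_1$ are isomorphisms.} Consider the epimorphism $k=\bar{r}_1\circ h$ of (\ref{hom-r_1-h}). Since $K_1$ is isotopic to $K_0$, we may compose with an isomorphism $\pi_1(\bR^3\setminus K_1)\cong \pi_1(\bR^3\setminus K_0)$. This turns $k$ into a surjective endomorphism of the knot group $\pi_1(\bR^3\setminus K_0)$. Knot groups are finitely generated and residually finite (Hempel, via geometrization), hence Hopfian, so $k$ is an isomorphism. Consequently $h$ is injective. Since $h$ is surjective by Proposition \ref{prop-h-surj}, it is an isomorphism, and therefore $\bar{r}_1$ is an isomorphism as well.

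\textbf{Step 2: $\pi_1(L_{\varphi})$ is abelian.} By the commutative diagram (\ref{diagram-van-kampen}), $\bar{r}_1\circ i=(e_1)_*\circ r_1$. The map $r_1$ factors through $H_1(L_{\varphi};\bZ)\cong \bZ^2$, so $(\bar{r}_1\circ i)(\pi_1(L_{\varphi}))$ is abelian. Next I will show that $i\colon \pi_1(L_{\varphi}\setminus \psi((-\infty,a_1)\times\Lambda_{K_1}))\to \pi_1(M_{\varphi})$ is injective. Since $K_1$ is knotted, $\partial N_{K_1}$ is incompressible in $\bR^3\setminus N_{K_1}$. As shown in the proof of Lemma \ref{lem-LO}, $\sigma_1(\Lambda_{K_1})$ is incompressible in $L_{\varphi}$. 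Hence $\pi_1(M'_{\varphi})$ is an amalgamated free product along $\bZ^2$ in which both factors inject, so $i$ is injective. Combined with the injectivity of $\bar{r}_1$ from Step 1, $\pi_1(L_{\varphi})$ embeds into an abelian group. Thus $\pi_1(L_{\varphi})$ is abelian, hence equal to $H_1(L_{\varphi};\bZ)\cong\bZ^2$.

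\textbf{Step 3: topological conclusion.} Let $E$ be the exterior of a tubular neighborhood of $\varphi^{-1}(K_1)$ in the compactified solid torus $K_0\times D^2$; $L_{\varphi}$ deformation retracts onto it. From the proof of Lemma \ref{lem-LO}, $E$ is compact, irreducible, has two torus boundary components, and has $\pi_1(E)\cong\bZ^2$. By the classification of irreducible $3$-manifolds with $\bZ^2$ fundamental group and nonempty boundary (e.g.\ Hempel), $E\cong T^2\times[0,1]$. So the boundary torus of the neighborhood of $\varphi^{-1}(K_1)$ is parallel to $\partial(K_0\times D^2)$. Then $\varphi^{-1}(K_1)$ is the core of a solid torus isotopic to $K_0\times D^2$, i.e.\ a $(p,q)$-cable-type curve with $p=\pm1$ winding. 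Since $[\varphi^{-1}(K_1)]=\pm[K_0\times\{0\}]$ by (\ref{K1_K0_homologous}), this curve is isotopic to $K_0\times\{0\}$.

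Alternatively, once $E\cong T^2\times I$, one can view $M'_{\varphi}$ as a Dehn surgery on $K_0\times D^2$ along $\varphi^{-1}(K_1)$ whose result has compressible boundary. Theorem \ref{thm-Gabai} then applies exactly as in the proof of Theorem \ref{thm-unknot}.

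\textbf{Main obstacle.} I expect the main difficulty to lie in Step 2 together with the final identification in Step 3. Step 2 needs the injectivity of $i$ from the amalgam structure, which requires carefully transferring incompressibility from Lemma \ref{lem-LO}. Step 3 needs the characterization of $T^2\times I$ and of the knot as a core. The Hopfian argument in Step 1 is standard.
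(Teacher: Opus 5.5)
Your proof is correct. It differs from the paper only in the last step.

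Steps 1 and 2 are the paper's argument. Hopficity of knot groups makes $k$, and hence $h$ and $\bar r_1$, isomorphisms. The amalgamated-product structure of $\pi_1(M_\varphi)$ along the incompressible torus then makes $i$ injective, so $\pi_1(L_\varphi)\to H_1(L_\varphi;\bZ)$ is injective. The paper phrases this as ``$r_1$ is injective, hence $(\sigma_1)_*$ is an isomorphism.''

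For the final step, the paper takes an arbitrary non-trivial Dehn surgery $M$ on $\varphi^{-1}(K_1)$ in the solid torus. Since the boundary torus of the knot carries all of $\pi_1$ of the exterior, $\pi_1(M)\cong\bZ$. This excludes cases (2) and (3) of Theorem~\ref{thm-Gabai}, and case (1) gives that $\varphi^{-1}(K_1)$ is isotopic to the core. That route reuses the Gabai machinery already invoked in Lemma~\ref{lem-LO} and Theorem~\ref{thm-unknot}.

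You instead recognize the exterior $E$ directly as $T^2\times[0,1]$ via the classical product/$I$-bundle theorem. Here $(\sigma_1)_*$ being an isomorphism, which follows from Proposition~\ref{prop-H1L} once $\pi_1(L_\varphi)$ is abelian, gives the hypothesis in its cleanest form. You then finish elementarily: push the knot to a longitude of its own tubular neighbourhood, and then across the product region to a simple closed curve on the outer torus. By (\ref{K1_K0_homologous}) that curve is a $(\pm 1,q)$-curve, hence isotopic to $K_0\times\{0\}$. This avoids Gabai's sutured-manifold result at the cost of that short curve-on-torus argument. Both routes are valid.

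One slip is in your ``alternative.'' When $K_1$ is knotted, $M'_\varphi$ is \emph{not} a Dehn surgery on $K_0\times D^2$, because it contains $S^3\setminus N_{K_1}$, which is not a solid torus. That description only holds in the unknot case of Theorem~\ref{thm-unknot}. The Gabai route works only if you apply Theorem~\ref{thm-Gabai} to an arbitrary non-trivial Dehn filling of $E$ along the boundary torus of the neighbourhood of $\varphi^{-1}(K_1)$, as the paper does. Your main route does not depend on this.
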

\begin{proof}
By Theorem \ref{thm-unknot}, we may assume that $K_1$ is not the unknot in $\bR^3$.
By \cite[Theorem 1.1]{Hem}, it is known that for any knot $K$ in $\bR^3$, $\pi_1(\bR^3 \setminus K)$ is finitely generated and residually finite, and it implies that $\pi_1(\bR^3\setminus K)$ is a Hopfian group.
Therefore, if $K_0$ is isotopic to $K_1$, the epimorphism (\ref{hom-r_1-h}) from $\pi_1(\bR^3\setminus K_0)$ to $\pi_1(\bR^3\setminus K_1)$ must be an isomorphism.
From the construction of (\ref{hom-r_1-h}), the surjection $h$ is also an isomorphism, and thus $\bar{r}_1$ must be an isomorphism.

We refer to the diagram (\ref{diagram-van-kampen}).
Since $K_1$ is not the unknot from our assumption, $e_1(\Lambda_{K_1}) = \partial N_{K_1}$ is an incompressible surface in $\bR^3\setminus K_1$.
This means that $(e_1)_*$ is injective.
The map $(\sigma_1)_*$ is also injective since it has a retraction $r_1$.
From a general property of amalgamated product \cite[Chapter IV, Theorem 2.6]{LynSch}, both of the maps
\[ \begin{array}{cc}  i \colon  \pi_1(L_{\varphi})\to \pi_1(M_{\varphi}) , & i' \colon \pi_1(\bR^3\setminus N_{K_1})\to \pi_1(M_{\varphi}), \end{array}\]
in the diagram (\ref{diagram-van-kampen}) are injective.
Since $\bar{r}_1$ is an isomorphism, $(e_1)_*\circ r_1 =\bar{r}_1\circ i$ is also injective, which in turn implies that $r_1$ is injective.
Since $r_1$ is a retraction of $(\sigma_1)_*$, it follows that $(\sigma_1)_* \colon \pi_1(\Lambda_{K_1},y_1) \to \pi_1 (L_{\varphi},y_1)$ is an isomorphism, too.

Consider a $3$-manifold $M$ obtained from a non-trivial surgery in $T^*_{K_0}\bR^3$ along $\varphi^{-1}(K_1)$.
We note that
\[\varphi^{-1}\circ \sigma_1\colon \Lambda_{K_1} \to  T^*\bR^3\setminus \varphi^{-1}(K_1) = \varphi^{-1}(L_{\varphi})\]
is an embedding onto the boundary of a tubular neighborhood of $\varphi^{-1}(K_1)$, and it induces an isomorphism on the fundamental group.
This implies that $\pi_1(M) \cong \bZ$.
In particular, $H_1(M;\bZ)\cong \bZ$ is torsion free and $\sigma_0(\Lambda_{K_0})$ is not an incompressible surface.
We apply Theorem \ref{thm-Gabai}, then $M\cong \bR^2\times S^1$ and $\varphi^{-1}(K_1)$ is isotopic to the trivial knot in $T^*_{K_0}\bR^3$.
\end{proof}

Let $T_{(p,q)}$ denote the $(p,q)$-torus knot in $\bR^3$ for integers $p,q$ which are coprime.
In the following, when we say that a knot is the $(p,q)$-torus knot, we always assume that $1 < p < |q|$.

For an epimorphism from $\pi_1(\bR^3\setminus T_{(p,q)})$ to a knot group, the following is known.

\begin{proposition}[{\cite[Proposition 2.4]{SW2}}]\label{prop-epi-torus}
Let $K'$ be a knot in $\bR^3$ and $K$ be the $(p,q)$-torus knot.
If there exists an epimorphism $\pi_1(\bR^3\setminus K) \to \pi_1(\bR^3 \setminus K')$,
then $K'$ is either the unknot or the $(p',q')$-torus knot with $p' \mid p$ and $q'\mid q$, or $p' \mid q$ and $q'\mid p$. 
\end{proposition}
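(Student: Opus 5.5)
This statement is quoted from \cite[Proposition 2.4]{SW2}. To prove it independently, I would work directly with the structure of the torus knot group
\[ G \coloneqq \pi_1(\bR^3\setminus K) = \langle x,y \mid x^p = y^q\rangle .\]
Its center is infinite cyclic, generated by $z \coloneqq x^p=y^q$, and $G/\langle z\rangle \cong \bZ/p * \bZ/q$.

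Let $f\colon G\to \pi_1(\bR^3\setminus K')$ be an epimorphism and write $G' \coloneqq \pi_1(\bR^3\setminus K')$. First, $f(z)$ is central in $G'$, since $f$ is surjective. By Burde--Zieschang, a knot group with nontrivial center is a torus knot group. So there are two cases.

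\textbf{Case 1: $f(z)\neq 1$.} Then $G'$ has nontrivial center, so $K'$ is the unknot or a torus knot $T_{(p',q')}$. If $K'$ is the unknot we are done. Otherwise $Z(G')=\langle z'\rangle$ with $z'=x'^{p'}=y'^{q'}$, and $f(z)=z'^{k}$ for some $k\neq 0$. Passing to quotients by the centers, $f$ induces an epimorphism
\[ \bZ/p*\bZ/q \to G'/\langle z'\rangle \cong \bZ/p'*\bZ/q' .\]
This map might not be well-defined if $f^{-1}(Z(G'))$ is larger than $Z(G)$. However, the preimage of the center under an epimorphism is a normal subgroup, and its image in $\bZ/p*\bZ/q$ is a normal subgroup mapping onto... the analysis would be handled by the Kurosh subgroup theorem. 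An epimorphism of free products of finite cyclic groups sends the torsion elements $x,y$ to conjugates of elements of the factors. Since the factors $\bZ/p'$ and $\bZ/q'$ must both be hit to generate, orders divide: $p'\mid p,\ q'\mid q$ or $p'\mid q,\ q'\mid p$.

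\textbf{Case 2: $f(z)=1$.} Then $f$ factors through $\bZ/p*\bZ/q$, so $G'$ is generated by two elements of finite order dividing $p$ and $q$. But knot groups are torsion-free, since knot complements are aspherical. Hence $f(x)=f(y)=1$, so $G'$ is trivial, which is impossible. Therefore Case 2 cannot occur.

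The main obstacle is Case 1, where one must carefully handle the center and justify the divisibility via Kurosh. It is also necessary to check that the abelianization matches, so that $K'$ is a genuine torus knot with $1<p'<|q'|$ rather than a degenerate one; the degenerate case corresponds to the unknot.
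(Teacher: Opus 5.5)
Your proposal is correct and follows essentially the same route as the outline the paper gives right after this statement. Torsion-freeness of knot groups rules out $f(z)=1$; you get it from asphericity, the paper from local indicability. Burde--Zieschang then forces $K'$ to be the unknot or a torus knot, and the induced epimorphism $\bZ/p*\bZ/q\to\bZ/p'*\bZ/q'$ is analyzed through its torsion elements.

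Two clean-ups are needed.
\begin{itemize}
\item Your worry about well-definedness is unfounded. All you need is $f(z)\in Z(G')=\langle z'\rangle$, which you already have, for the map on central quotients to exist; the size of $f^{-1}(Z(G'))$ only affects injectivity.
\item The final divisibility should be justified by passing to the abelianization $\bZ/p'\times\bZ/q'$. Surjectivity there forces the images of $\bar x$ and $\bar y$ to be conjugate to generators of different factors. Hence the image of $\bar x$ has order $p'$ or $q'$, which divides $p$, and the image of $\bar y$ has the other order, which divides $q$.
\end{itemize}
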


\begin{remark}
This fact is deduced from a classical result which characterizes torus knots.
Let us outline the proof of \cite[Proposition 2.4]{SW2}.
For any knot $K$, let $Z_K$ denote the center of the group $\pi_1(\bR^3\setminus K)$.
\begin{enumerate}
\item Let $f \colon \pi_1(\bR^3\setminus T_{(p,q)}) \to \pi_1(\bR^3\setminus K')$ be an epimorphism.
Assume that $Z_{K'}=\{1\}$. Then, $f$ factors through an epimorphism
\[(\bZ/ p\bZ)* (\bZ/q\bZ) \cong  \pi_1(\bR^3\setminus T_{(p,q)}) / Z_{T_{(p,q)}} \to \pi_1(\bR^3\setminus K').\]
However, since $\pi_1(\bR^3\setminus K')$ is locally indicable \cite[Lemma 2]{HS}, there is no non-trivial element which has a finite order, so we obtain a contradiction. 
\item 
$Z_{K'}\neq \{1\}$ if and only if $K'$ is either the unknot or a torus knot \cite{BZ}.
See also \cite[Theorem 6.1]{BZH}.
\item Suppose that $K'$ is the $(p',q')$-torus knot. Then, the epimorphism $f \colon \pi_1(\bR^3\setminus T_{(p,q)}) \to \pi_1(\bR^3\setminus T_{(p',q')})$ induces a surjective homomorphism $(\bZ/ p\bZ)* (\bZ/q\bZ) \to (\bZ/ p'\bZ)* (\bZ/q'\bZ)$.
By an algebraic argument on torsion elements of $(\bZ/ p'\bZ)* (\bZ/q'\bZ)$, we can deduce that $p'\mid p$ and $q' \mid q$, or $p'\mid q$ and $q' \mid p$.
\end{enumerate}
\end{remark}

\begin{theorem}\label{thm-trefoil}
If $K_0$ is the $(2,q)$-torus knot for an odd number $q$, then $K_1$ is also the $(2,q)$-torus knot.
\end{theorem}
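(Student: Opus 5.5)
We combine the epimorphism $k$ of (\ref{hom-r_1-h}) with Proposition \ref{prop-epi-torus} and then rule out the unknot using \cite[Theorem 1.1]{O25}.

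First, the composite $k=\bar{r}_1\circ h$ is surjective, since $h$ is surjective by Proposition \ref{prop-h-surj} and $\bar{r}_1$ is surjective. Hence $K_0=T_{(2,q)}\geq K_1$ in the sense of Remark \ref{rem-epi-knot}. Applying Proposition \ref{prop-epi-torus} with $(p,q)=(2,q)$ and $q$ odd, we get two possibilities: $K_1$ is the unknot, or $K_1$ is the $(p',q')$-torus knot with $p'\mid 2$ and $q'\mid q$, or with $p'\mid q$ and $q'\mid 2$.

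Next we pin down $(p',q')$ in the torus knot case. By convention $1<p'<|q'|$. If $p'\mid q$ and $q'\mid 2$, then $|q'|\le 2$, which contradicts $1<p'<|q'|$. So $p'=2$ and $q'\mid q$, i.e.\ $K_1=T_{(2,q')}$ with $q'$ odd, $|q'|\ge 3$, and $q'\mid q$. To force $q'=q$, we compare Alexander polynomials. The epimorphism gives $\Delta_{T_{(2,q')}}\mid \Delta_{T_{(2,q)}}$, but this alone allows any odd divisor $q'$ of $q$, so it is insufficient. We use instead the meridian/longitude preservation of Proposition \ref{prop-k-rep}. For a torus knot $T_{(p,q)}$, the longitude satisfies $l=z\,m^{-pq}$, where $z$ generates the center. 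Since $k$ is surjective, it maps the center of the source into the center of the target. Applying this to $T_{(2,q)}$ and $T_{(2,q')}$ gives $k(z_0)=z_1^{s}$ for some $s\in\bZ$.

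Comparing abelianizations, $z_0\mapsto 2q$ and $z_1\mapsto 2q'$. The relations $k(l_0)=l_1$ and $k(m_0)=m_1$, valid up to conjugation, or directly from the representation statement of Proposition \ref{prop-k-rep} with a faithful-enough $\SL_2(\bC)$-representation, give $z_1^{s}m_1^{-2q}=z_1m_1^{-2q'}$ up to conjugacy. Since $m_1$ has infinite order modulo the center, we get $q=q'$. An alternative argument: $z_1^{s-1}=m_1^{2q-2q'}$ forces $m_1^{2(q-q')}$ to be central, and in $\bZ/2*\bZ/q'$ the image of $m_1$ is not torsion, so $q=q'$. Making this group-theoretic comparison rigorous, including correctly handling conjugation and orientation signs, is the main obstacle. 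If it becomes delicate, we fall back on Theorem \ref{thm-intro-tildeA} and compare the $\tilde A$-polynomials of $(2,q)$ and $(2,q')$ torus knots: the $\tilde A$-polynomial of $T_{(2,q)}$ is $1+\lambda\mu^{2q}$ up to units, and divisibility then forces $q'=q$.

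Finally, we exclude the unknot. If $K_1$ were the unknot while $K_0=T_{(2,q)}$ is knotted, this would contradict \cite[Theorem 1.1]{O25}, as explained in the remark following Theorem \ref{thm-2}. Therefore $K_1=T_{(2,q)}$.
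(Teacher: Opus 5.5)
Your proof is correct, provided you rely on your $\tilde A$ fallback; your primary center-based step is not rigorous as written (see the second paragraph).

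\textbf{Comparison with the paper.} Your reduction via Proposition~\ref{prop-epi-torus} and your exclusion of the unknot via \cite[Theorem 1.1]{O25} match the paper exactly. The difference is how you exclude $K_1=T_{(2,q')}$ with $q'\neq q$. The paper just cites \cite[Theorem 1.5]{O23}, an external obstruction from augmentation varieties of knot contact homology. Your fallback instead uses Theorem~\ref{thm-divide-A}. Since $\tilde A_{T_{(2,q')}}=1+\lambda\mu^{2q'}$ is irreducible of degree one in $\lambda$, it divides $\tilde A_{T_{(2,q)}}$ only if $q'=q$. This also excludes the mirror $q'=-q$, which $q'\mid q$ alone allows; mirror knot groups are isomorphic, so some peripheral information is genuinely needed. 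Your route is precisely Corollary~\ref{cor-torus} specialized to $p=2$. It buys independence from \cite{O23} and works for every $(p,q)$, at the cost of importing the torus-knot $\tilde A$ computation. The paper's route is a one-line citation.

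\textbf{The center-based argument.} Drop it or repair it. Proposition~\ref{prop-k-rep} gives simultaneous conjugacy of $\rho(k(m_0)),\rho(k(l_0))$ with $\rho(m_1),\rho(l_1)$ \emph{inside each representation}, not in the group. There is also no faithful $\SL_2(\bC)$-representation of a torus knot group: irreducible ones send the center to $\pm I$, and reducible ones have metabelian image. Finally, from a mere conjugacy $\bar m_1^{2q}\sim\bar m_1^{2q'}$ in $\bZ/2*\bZ/q'$, infinite order of $\bar m_1$ does not by itself give $q=q'$. Two clean repairs are available.
\begin{enumerate}
\item Use Corollary~\ref{cor-peripheral-preserve}. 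Then $\tilde k(z_0)=l_1m_1^{2q}$ is central, hence equals $z_1^s=l_1^s m_1^{2q's}$. Injectivity of $\pi_1(\partial N_{K_1})\cong\bZ^2$ gives $s=1$ and $q=q'$.
\item Take an irreducible $\rho\colon\pi_1(\bR^3\setminus K_1)\to\SL_2(\bC)$ whose meridian eigenvalue $\mu$ is not a root of unity. Such $\rho$ exist because $K_1$ is nontrivial, so the meridian eigenvalue varies along a non-reducible character curve. Since $k$ is onto, $\rho\circ k$ is irreducible, so $\rho(k(z_0))=\pm I$. Proposition~\ref{prop-k-rep} then yields $\rho(m_1)^{2(q-q')}=\pm I$, so $\mu^{2(q-q')}=\pm1$ and $q=q'$. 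This is just your $\tilde A$ argument in representation-theoretic form.
\end{enumerate}
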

\begin{proof}
Suppose that $K_0=T_{(2,q)}$. Then, Proposition \ref{prop-epi-torus} shows that $K_1 $ is $ T_{(2,q')}$ with $q'|q$ or the unknot.
By \cite[Theorem 1.5]{O23}, $K_1 \neq T_{(2,q')}$ for $q'\neq q$.
Moreover, by \cite[Theorem 1.1]{O25}, $K_1$ is not the unknot.
Therefore, $K_1 = T_{(2,q)}$.
\end{proof}

A more general case where $K_0$ is a $(p,q)$-torus knot will be discussed in Theorem \ref{cor-torus}.

Let us also consider the figure-eight knot $4_1$.
\begin{theorem}\label{thm-figure-8}
If $K_0$ is the figure-eight knot, $K_1$ is also the figure-eight knot.
\end{theorem}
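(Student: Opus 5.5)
The argument runs parallel to Theorem \ref{thm-trefoil}. First we use the epimorphism $k$ of (\ref{hom-r_1-h}) to cut the possible knot types of $K_1$ down to a short list. Then we remove the remaining non-figure-eight candidates with earlier results. Since $K_0=4_1$ gives an epimorphism $\pi_1(\bR^3\setminus 4_1)\to\pi_1(\bR^3\setminus K_1)$, the Alexander polynomial of $K_1$ must divide $\Delta_{4_1}=t^2-3t+1$ (Remark \ref{rem-epi-knot}). This polynomial is irreducible over $\bZ$, so $\Delta_{K_1}$ is either $1$ or $\Delta_{4_1}$ up to units.

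The first step is to classify knots that are epimorphic images of the figure-eight knot group. We plan to invoke the known classification of epimorphisms from the knot group of $4_1$ onto knot groups. For knots with at most $10$ crossings this is in \cite{KitanoSuzukiII}. For arbitrary targets we would use the results on epimorphisms from hyperbolic knot groups, for instance \cite{BKN}, or the $2$-bridge classification of \cite{ALSS} combined with a Gromov-norm argument. The argument goes as follows. A degree-nonzero map is unavailable in general, but the source $4_1$ has the smallest hyperbolic volume among knot complements. An epimorphism preserving the peripheral structure, as we have by Proposition \ref{prop-k-rep}, forces $K_1$ to be either the unknot or $4_1$. If this citation route fails, we would instead argue with $\SL_2(\bC)$ characters. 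By Theorem \ref{thm-intro-tildeA}, $\tilde A_{K_1}$ divides $\tilde A_{4_1}$, which is irreducible. So either $\tilde A_{K_1}$ is trivial, which means $K_1$ has no irreducible $\SL_2(\bC)$ representations, and by Kronheimer--Mrowka $K_1$ is the unknot; or $\tilde A_{K_1}=\tilde A_{4_1}$. In the latter case $K_1$ has a canonical component sharing the peripheral data of $4_1$, and the surjection $k$ pulls back the discrete faithful representation of $K_1$ to an irreducible representation on the single curve of $4_1$. We then compare volumes to conclude $K_1=4_1$.

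Once we have the dichotomy ``$K_1$ is the unknot or $4_1$'', the unknot case is excluded by \cite[Theorem 1.1]{O25}, exactly as in Theorem \ref{thm-trefoil}.

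We expect the main obstacle to be the first step: ruling out satellite and other hyperbolic $K_1$ that do not appear in finite tables. We would try first to cite the known statement that $4_1$ is minimal with respect to the epimorphism partial order among nontrivial knots. Failing that, we would use the peripheral-preserving property of Theorem \ref{thm-1} together with the A-polynomial divisibility of Theorem \ref{thm-intro-tildeA} to reduce to the volume comparison above.
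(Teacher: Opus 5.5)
Your overall plan is the paper's: use the epimorphism $k$ to get ``$K_1$ is the unknot or $4_1$'', then exclude the unknot by \cite[Theorem 1.1]{O25}. The paper settles the first step with exactly the citation you say you would try first. \cite[Theorem 2.1]{KitanoSuzukiII} proves that $4_1$ is minimal among all prime knots; it is not limited to knots with at most $10$ crossings, as you assume. Composite targets are handled by $4_1\geq K\#K'\Rightarrow 4_1\geq K,\ 4_1\geq K'$, and your observation $\Delta_{K_1}\in\{1,\Delta_{4_1}\}$ then also excludes sums of several copies of $4_1$.

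Because you doubted the scope of that citation, your substitute arguments are what carry the proof, and they have gaps.
\begin{enumerate}
\item Proposition \ref{prop-k-rep} does not make $k$ peripheral-preserving. It only says that, for each representation $\rho$, $\rho\circ k$ and $\rho$ agree on the peripheral elements up to conjugation in $\GL_n(\bC)$. The peripheral-preserving $\tilde k$ is Corollary \ref{cor-peripheral-preserve}, which needs the conjugacy-separability and JSJ arguments of Section \ref{subsec-peripheral}. By Remark \ref{rem-epi-peripheral} it does give a degree-one map, contrary to what you say.
\item Even with that degree-one map, the minimality of $\mathrm{vol}(4_1)$ only bounds the Gromov norm of $S^3\setminus N_{K_1}$. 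That bound says nothing about torus or graph knots, whose norm is $0$, or about cables of $4_1$, whose norm equals that of $4_1$.
\item In the $\tilde A$ route, ``the discrete faithful representation of $K_1$'' presupposes that $K_1$ is hyperbolic, which is part of what must be shown. Also, $\tilde A_{K_1}=1$ forces the unknot via \cite{DG}, not because $K_1$ lacks irreducible representations.
\end{enumerate}

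A short self-contained replacement for the citation goes through commutator subgroups. Since $4_1$ is fibered of genus one, the commutator subgroup of $\pi_1(\bR^3\setminus 4_1)$ is free of rank $2$. Since $k$ is onto, it maps this commutator subgroup onto that of $\pi_1(\bR^3\setminus K_1)$, which is therefore finitely generated. By Stallings' fibration theorem, $K_1$ is fibered, with commutator subgroup free of rank $2g(K_1)\le 2$. Hence $K_1$ is the unknot, a trefoil, or $4_1$. Finally, $\Delta_{3_1}=t^2-t+1$ does not divide $t^2-3t+1$, which rules out the trefoils.
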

\begin{proof}
Suppose that $K_0=4_1$.
By \cite[Theorem 2.1]{KitanoSuzukiII}, $4_1$ is minimal, that is, if there exists an epimorphism $\pi_1(\bR^3\setminus 4_1)\to \pi_1(\bR^3\setminus K')$ for a knot $K'$, then $K'$ is either $4_1$ or the unknot.
(Precisely, \cite[Theorem 2.1]{KitanoSuzukiII} states the minimality of $4_1$ among prime knots.
We note that $4_1 \geq K_1\#K_2$ implies $4_1\geq K_1$ and $4_1 \geq K_2$.)
By \cite[Theorem 1.1]{O25}, $K_1$ is not the unknot. Therefore, $K_1=4_1$.
\end{proof}

\subsection{\texorpdfstring{$A$-polynomials}{A-polynomials}}

We review the definition of an enhancement of $A$-polynomial from \cite[Section 2]{NiZ}.
See also \cite[Section 10.1]{BS23}.

We prepare several notations.
Let $G$ be a finitely generated group.
For any $\SL_2(\bC)$-representation $\rho$ of $G$, its \textit{character} is a map defined by
\[\chi_{\rho}\colon G \to \bC ,\  g \mapsto \tr (\rho(g)).\]
We denote the set of characters of $\SL_2(\bC)$-representations of $G$ by
\[ X(G) \coloneqq \{\chi_{\rho} \colon G \to \bC  \mid \rho \text{ is a }\SL_2(\bC)\text{-representation of }G \}. \]
It is identified with an affine algebraic set. See \cite[Section 1.4, Corollary 1.4.5]{CS}. $X(G)$ is called the \textit{character variety} of $G$.
When $G= \pi_1(M,x)$ for a connected manifold $M$ with a base point $x$, we denote $X(M) \coloneqq X(\pi_1(M,x))$.
\begin{remark}
For any $x_1,x_2\in M$, there is a canonical isomorphism $X(\pi_1(M,x_1)) \cong X(\pi_1(M,x_2))$ of algebraic sets.
Thus, when discussing $X(M)$ and its elements, we will omit writing the base point $x$ from $\pi_1(M,x)$.
\end{remark}

Let $K$ be an oriented knot in $\bR^3$ equipped with the Seifert framing.
We take a tubular neighborhood $N_K$ of $K$ and denote
\[M_K\coloneqq \bR^3\setminus N_K.\]
For $\Lambda_K$, 
let $e \colon \Lambda_K \to  M_K$ be an embedding onto $\partial N_K$ defined as (\ref{emb-ei}).
The generators $\{m,l\}$ of $\pi_1(\Lambda_K)$ is determined such that $e_*(m)$ and $e_*(l)$ are the meridian and the longitude respectively of the framed knot $K$.
The embedding $e \colon \Lambda_K \to M_K$ defines a regular map
\[e^* \colon X(M_K) \to X(\Lambda_K) ,\  \chi_{\rho} \mapsto \chi_{\rho \circ e_*} =\chi_{\rho} \circ e_* .\]

We also consider a map
\[ t \colon \bC^* \times \bC^* \to X(\Lambda_K)\]
such that for any $(\mu,\lambda)\in \bC^*\times \bC^*$, $t(\mu,\lambda) \colon \pi_1(\Lambda_K) \to \bC $ is the character of the representation
\[ \pi_1(\Lambda_K) \to \SL_2(\bC) ,\  m \mapsto \begin{pmatrix} \mu & 0 \\ 0 & \mu^{-1} \end{pmatrix},\ l \mapsto \begin{pmatrix} \lambda & 0 \\ 0 & \lambda^{-1} \end{pmatrix}.\]
Let $\scrX_K$ be the set of irreducible components $X$ of the algebraic set $X(M_K)$ such that $\dim_{\bC} \overline{e^*(X)}=1$.
We have an element $X_{K}^{\mathrm{red}}\in \scrX_K$, which we call the trivial component, consisting of characters of reducible representations of $\pi_1(M_K)$.
Using the commutator subgroup $[\pi_1(M_K),\pi_1(M_K)]$ of $\pi_1(M_K)$, 
\begin{align}\label{X-red-commutator}
X^{\mathrm{red}}_K = \{ \chi \in X(M_K) \mid \chi (c) =2 \text{ for every } c\in [\pi_1(M_K),\pi_1(M_K)] \}.
\end{align}
For the proof, see \cite[Corollary 1.2.2]{CS}.

Now, consider a subset of $\bC^*\times \bC^*$
\begin{align}\label{tilde-V-K}
\tilde{V}_K\coloneqq  t^{-1} \left( \bigcup_{X_K^{\mathrm{red}} \neq X \in \scrX_K} \overline{e^* (X)} \right) .
\end{align}
We define $\tilde{A}_K\in \bC[\mu,\lambda]$ as a polynomial with no repeated factors such that its zero locus agrees with the Zariski closure of $\tilde{V}_K$ in $\bC^2$.
It is determined up to multiplication by non-zero constants.

\begin{remark}
$\scrX_K= \{X_K^{\mathrm{red}}\}$ if and only if $K$ is the unknot \cite{DG}, and in this case, we define $\tilde{A}_K\coloneqq 1$.
For any knot $K$,
its A-polynomial $A_{K} \in \bZ[\mu,\lambda]$ was defined by \cite[Section 2.3]{CCGLS} as a polynomial with no repeated factors such that its zero locus agrees with the Zariski closure of $ t^{-1} \left( \bigcup_{X \in \scrX_K} \overline{e^* (X)} \right)$ in $\bC^2$.
Since $t^{-1}\left( \overline{e_*(X^{\mathrm{red}}_{K})}\right) = \{\lambda=1\}$, $A_K$ is the least common multiple of $\tilde{A}_K$ and $\lambda-1$. See \cite[Remark 10.1]{BS23}.
\end{remark}

\begin{remark}[Independence on the orientation of $K$]\label{rem-indep-ori-K}
Since $t(\mu,\lambda) = t(\mu^{-1},\lambda^{-1})$, the set $\tilde{V}_K$ is preserved by the map $(\bC^*)^2 \to (\bC^*)^2 ,\  (\mu,\lambda) \mapsto (\mu^{-1},\lambda^{-1})$.
On the other hand, there is a natural bijection $\tilde{V}_{K} \to \tilde{V}_{-K} ,\  (\mu,\lambda) \mapsto (\mu^{-1},\lambda^{-1})$, where
$-K$ is the knot $K$ with the opposite orientation.
Therefore, $\tilde{A}_{K} = \tilde{A}_{-K}$.
\end{remark}

We want to deduce a relation between $\tilde{A}_{K_0}$ and $\tilde{A}_{K_1}$ from Proposition \ref{prop-k-rep}.

For $i\in \{0,1\}$, we have
the embedding $e_i \colon \Lambda_{K_i} \to M_{K_i}$ and the generators $\{m_i,l_i\}$ of $\pi_1(\Lambda_{K_i})$.
The map (\ref{hom-r_1-h}) gives a surjective group homomorphism
\[k  \colon \pi_1(M_{K_0}) \to \pi_1(M_{K_1}).\]
This defines a map $k^* \colon X(M_{K_1}) \to X(M_{K_0}) ,\  \chi_{\rho} \mapsto \chi_{\rho\circ k}$.
The next lemma follows immediately from Proposition \ref{prop-k-rep}.

\begin{lemma}\label{lem-Ce=ek}
Let $C^* \colon X(\Lambda_{K_1}) \to X(\Lambda_{K_0}) ,\  \chi \mapsto \chi \circ C$
be the map defined from an isomorphism
\[ C \colon \pi_1(\Lambda_{K_0}) \to \pi_1(\Lambda_{K_1}) ,\  m_0 \mapsto m_1 , \ l_0 \mapsto  l_1 . \]
Then, the following diagram commutes:
\[\xymatrix{
X(\Lambda_{K_1}) \ar[r]^-{C^*} & X(\Lambda_{K_0}) \\
X(M_{K_1}) \ar[r]^-{k^*} \ar[u]_-{e_1^*} & X(M_{K_0}) \ar[u]_-{e_0^*}.
}\]
\end{lemma}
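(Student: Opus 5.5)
We will check the commutativity pointwise on characters. Since $X(M_{K_1})$ consists of characters $\chi_\rho$ of $\SL_2(\bC)$-representations $\rho\colon \pi_1(M_{K_1})\to \SL_2(\bC)$, it suffices to show that
\[ (e_0^*\circ k^*)(\chi_\rho) = (C^*\circ e_1^*)(\chi_\rho) \]
as functions $\pi_1(\Lambda_{K_0})\to \bC$ for every such $\rho$.

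Unwinding the definitions, the left-hand side is $\chi_{\rho\circ k}\circ (e_0)_* = \chi_{\rho\circ k\circ (e_0)_*}$. The right-hand side is $\chi_\rho\circ (e_1)_*\circ C = \chi_{\rho\circ (e_1)_*\circ C}$. The base points do not matter here, because characters are invariant under conjugation and changes of base-point paths only conjugate the representations.

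Now apply Proposition \ref{prop-k-rep} with $n=2$, regarding $\rho$ as a $\GL_2(\bC)$-representation. It gives $g\in \GL_2(\bC)$ with $(\rho\circ k)(m_0)=g\rho(m_1)g^{-1}$ and $(\rho\circ k)(l_0)=g\rho(l_1)g^{-1}$. Here $m_j$ and $l_j$ denote $(e_j)_*(m_j)$ and $(e_j)_*(l_j)$.

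The group $\pi_1(\Lambda_{K_0})\cong \bZ^2$ is generated by $m_0$ and $l_0$. Both $\rho\circ k\circ (e_0)_*$ and $g(\rho\circ (e_1)_*\circ C)g^{-1}$ are homomorphisms from this group. They agree on the generators, since $C(m_0)=m_1$ and $C(l_0)=l_1$, so they are equal. Hence the two representations are conjugate and their traces coincide, which gives the claimed equality of characters.

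The only point requiring care is the well-definedness of conjugation and base points. The proof of Proposition \ref{prop-k-rep} already established the equivalence $\rho\circ k\circ(e_0)_*\simeq \rho\circ(e_1)_*\circ C$ as representations of $\pi_1(\Lambda_{K_0})$, so this step is routine.
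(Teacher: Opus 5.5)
Your proof is correct and is exactly the paper's argument: the paper just says the lemma follows immediately from Proposition~\ref{prop-k-rep}, and you have written out that deduction. You apply the proposition with $n=2$, note that agreement on the generators $m_0, l_0$ of $\pi_1(\Lambda_{K_0})\cong\bZ^2$ makes $\rho\circ k\circ(e_0)_*$ conjugate to $\rho\circ(e_1)_*\circ C$, and then take traces.
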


The regular map $k^* \colon X(M_{K_1}) \to X(M_{K_0})$ has the following property.
\begin{lemma}\label{lem-X-hat}
For every $X \in \scrX_{K_1}$, there exists $\hat{X} \in \scrX_{K_0}$ such that $k^*(X)\subset \hat{X}$.
Moreover, $k^*(X)\subset X^{\mathrm{red}}_{K_0}$ if and only if $X=X^{\mathrm{red}}_{K_1}$.
\end{lemma}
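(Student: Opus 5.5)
Since $k\colon \pi_1(M_{K_0})\to\pi_1(M_{K_1})$ is a group homomorphism, the map $k^*\colon X(M_{K_1})\to X(M_{K_0})$ is a regular morphism of affine algebraic sets. For $X\in\scrX_{K_1}$ irreducible, the closure $\overline{k^*(X)}$ is irreducible. It is therefore contained in some irreducible component $\hat X$ of $X(M_{K_0})$. The content of the lemma is to choose $\hat X$ with $\dim\overline{e_0^*(\hat X)}=1$, and to prove the statement about reducible characters.

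\textbf{Dimension condition.} By Lemma \ref{lem-Ce=ek}, $e_0^*\circ k^*=C^*\circ e_1^*$, and $C^*$ is an isomorphism of algebraic sets because $C$ is a group isomorphism. Hence $\overline{e_0^*(k^*(X))}=C^*(\overline{e_1^*(X)})$ has dimension $1$. Consequently $\dim\overline{e_0^*(\hat X)}\ge 1$ for any component $\hat X\supset k^*(X)$.

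For the upper bound I would use the standard fact for knot exteriors: the image of any irreducible component of $X(M_K)$ in $X(\partial M_K)$ has dimension at most $1$. This is the "half lives, half dies" consequence of Thurston/Culler--Shalen (see \cite{CCGLS}), which is exactly what makes the A-polynomial a polynomial in two variables. So $\dim\overline{e_0^*(\hat X)}=1$ and $\hat X\in\scrX_{K_0}$. I expect this upper bound to be the main point requiring a citation rather than a computation.

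\textbf{Reducible characters.} I would use the characterization (\ref{X-red-commutator}). Since $k$ is surjective, it maps the commutator subgroup $[\pi_1(M_{K_0}),\pi_1(M_{K_0})]$ onto $[\pi_1(M_{K_1}),\pi_1(M_{K_1})]$.

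If $\chi\in X^{\mathrm{red}}_{K_1}$, then $\chi\circ k$ takes value $2$ on every commutator of $\pi_1(M_{K_0})$, because $k$ sends it to a commutator. Thus $k^*(X^{\mathrm{red}}_{K_1})\subset X^{\mathrm{red}}_{K_0}$.

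Conversely, suppose $k^*(X)\subset X^{\mathrm{red}}_{K_0}$ and take $\chi\in X$. For any $c\in[\pi_1(M_{K_1}),\pi_1(M_{K_1})]$, surjectivity onto the commutator subgroup gives $c=k(c')$ with $c'$ a commutator of $\pi_1(M_{K_0})$. Then $\chi(c)=(k^*\chi)(c')=2$, so $\chi\in X^{\mathrm{red}}_{K_1}$. Hence $X\subset X^{\mathrm{red}}_{K_1}$, and since both are irreducible components, $X=X^{\mathrm{red}}_{K_1}$.

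Here surjectivity of $k$ is essential, matching the remark in the introduction. Note also that $\hat X$ is then automatically non-trivial when $X$ is non-trivial: $k^*(X)\subset\hat X$, and $k^*(X)\not\subset X^{\mathrm{red}}_{K_0}$, so $\hat X\neq X^{\mathrm{red}}_{K_0}$.
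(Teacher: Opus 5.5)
Your proposal is correct and follows essentially the same route as the paper. You choose an irreducible component $\hat X \supset k^*(X)$, get $\dim \overline{e_0^*(\hat X)} \geq 1$ from Lemma \ref{lem-Ce=ek}, and get $\leq 1$ from the standard bound on images of components in the boundary character variety, which the paper takes from \cite[Lemma 2.1]{DG}. The reducible case is then settled by the commutator characterization (\ref{X-red-commutator}) together with the surjectivity of $k$, exactly as in the paper.
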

\begin{proof}
For any $X\in \scrX_{K_1}$, $k^*(X)$ is an irreducible subset of $X(M_{K_0})$.
Let $\hat{X}$ be an irreducible component of $X(M_{K_0})$ which contains $k^*(X)$.
By \cite[Lemma 2.1]{DG}, $\dim_{\bC} \overline{e_0^*(\hat{X})}$ is either $0$ or $1$.
Since $\dim_{\bC} \overline{C^*(e_1^*(X))}= \dim_{\bC} \overline{e_1^*(X)} =1$ and
\[C^*(e_1^*(X)) = e_0^*(k^*(X)) \subset e_0^*(\hat{X})\] by Lemma \ref{lem-Ce=ek}, it follows that $\dim_{\bC} \overline{e_0^*(\hat{X})}=1$ and thus $\hat{X}\in \scrX_{K_0}$. 

If we take $X= X^{\mathrm{red}}_{K_1}$, it is obvious that
\begin{align*} 
k^*(X^{\mathrm{red}}_{K_1}) & = \{ \chi_{\rho \circ k} \mid \rho \colon \pi_1(M_{K_1}) \to \SL_2(\bC) \text{ is a reducible representation}\} \\
& \subset X^{\mathrm{red}}_{K_0}.
\end{align*}
Conversely, suppose that $k^*(X) \subset X^{\mathrm{red}}_{K_0}$ for $X\in \scrX_{K_1}$.
Take any character $\chi\in X$.
Let us use the description (\ref{X-red-commutator}) for $X^{\mathrm{red}}_{K_0}$ and $X^{\mathrm{red}}_{K_1}$.
Then, $k^*(\chi) = \chi \circ k \in X^{\mathrm{red}}_{K_0}$ 
is constant to $2$ on the commutator subgroup $[\pi_1(M_{K_0}), \pi_1(M_{K_0})]$.
Since $k\colon \pi_1(M_{K_0}) \to \pi_1(M_{K_1})$ is a surjection, the image $k([\pi_1(M_{K_0}), \pi_1(M_{K_0})])$ is a normal subgroup of $\pi_1(M_{K_1})$ which contains $[\pi_1(M_{K_1}), \pi_1(M_{K_1})]$.
Therefore, $\chi$ must be constant to $2$ on $[\pi_1(M_{K_1}), \pi_1(M_{K_1})]$, and this means that $\chi_{} \in X^{\mathrm{red}}_{K_1}$. Therefore, $X=X^{\mathrm{red}}_{K_1}$.
\end{proof}
We remark that the surjectivity of $k$ is essential for the latter part of this proof.

Using the above two lemmata, we can deduce a relation between $\tilde{A}_{K_0}$ and $\tilde{A}_{K_1}$.
\begin{theorem}\label{thm-divide-A}
$\tilde{A}_{K_0}$ is divisible by $\tilde{A}_{K_1}$.
\end{theorem}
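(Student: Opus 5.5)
To prove Theorem \ref{thm-divide-A}, I would show the set-theoretic inclusion of zero loci $\overline{\tilde{V}_{K_1}} \subset \overline{\tilde{V}_{K_0}}$ in $\bC^2$. Since both $\tilde{A}_{K_0}$ and $\tilde{A}_{K_1}$ have no repeated factors, this inclusion gives divisibility by the Nullstellensatz: every irreducible factor of $\tilde{A}_{K_1}$ defines an irreducible curve contained in the zero locus of $\tilde{A}_{K_0}$, hence it divides $\tilde{A}_{K_0}$. If $K_1$ is the unknot, then $\tilde{A}_{K_1}=1$ and there is nothing to prove. By Remark \ref{rem-indep-ori-K}, we may also freely reverse the orientation of $K_1$, so the normalization $a=1$ used in constructing $k$ costs nothing.

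The key step is a component-by-component comparison. Fix $X \in \scrX_{K_1}$ with $X \neq X^{\mathrm{red}}_{K_1}$. By Lemma \ref{lem-X-hat}, there is $\hat{X} \in \scrX_{K_0}$ with $k^*(X) \subset \hat{X}$. Moreover $k^*(X)\not\subset X^{\mathrm{red}}_{K_0}$, so we may choose $\hat{X} \neq X^{\mathrm{red}}_{K_0}$. This needs a little care: an irreducible component containing $k^*(X)$ is chosen, and if the only candidate were $X^{\mathrm{red}}_{K_0}$ we would get $k^*(X)\subset X^{\mathrm{red}}_{K_0}$, a contradiction. So any component containing the irreducible set $k^*(X)$ works, and it is automatically non-reducible. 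Then Lemma \ref{lem-Ce=ek} gives
\[ C^*(e_1^*(X)) = e_0^*(k^*(X)) \subset e_0^*(\hat{X}), \]
hence $C^*(\overline{e_1^*(X)}) \subset \overline{e_0^*(\hat{X})}$, because $C^*$ is an isomorphism of algebraic sets and therefore continuous and closed in the Zariski topology.

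Next I would pass through $t$. Define the maps $t_i\colon(\bC^*)^2\to X(\Lambda_{K_i})$ using $m_i,l_i$. Since $C$ sends $m_0\mapsto m_1$ and $l_0\mapsto l_1$, we have $C^*\circ t_1 = t_0$. So if $(\mu,\lambda)\in t_1^{-1}(\overline{e_1^*(X)})$, then $t_0(\mu,\lambda)=C^*(t_1(\mu,\lambda))\in \overline{e_0^*(\hat{X})}$. Taking the union over all non-reducible $X$ gives $\tilde{V}_{K_1}\subset \tilde{V}_{K_0}$, and taking Zariski closures finishes the proof.

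The main obstacle is the bookkeeping in the choice of $\hat{X}$ in the second paragraph. The genuine content, namely the surjectivity of $k$ and the peripheral compatibility, is already packaged in Lemmas \ref{lem-Ce=ek} and \ref{lem-X-hat}. What remains is to check that the closures behave as claimed under $C^*$ and $t$; this holds because $C^*$ is an isomorphism, so closures correspond exactly.
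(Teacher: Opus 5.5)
Your proposal is correct and follows essentially the same route as the paper: Lemma~\ref{lem-X-hat} supplies a non-reducible component $\hat{X}\in\scrX_{K_0}$ containing $k^*(X)$, Lemma~\ref{lem-Ce=ek} gives $C^*(e_1^*(X))\subset e_0^*(\hat{X})$, and the compatibility of $C^*$ with $t$ yields $\tilde{V}_{K_1}\subset\tilde{V}_{K_0}$, hence divisibility. Your extra remarks make explicit what the paper leaves implicit: why $\hat{X}\neq X^{\mathrm{red}}_{K_0}$, why Zariski closures are preserved under the continuous map $C^*$, and the Nullstellensatz step using that the polynomials are squarefree.
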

\begin{proof}
For any $X \in \scrX_{K_1} \setminus \{X^{\mathrm{red}}_{K_1} \}$, we have $\hat{X} \in \scrX_{K_0}\setminus \{X^{\mathrm{red}}_{K_0} \}$ from Lemma \ref{lem-X-hat} such that $k^*(X) \subset \hat{X}$, and Lemma \ref{lem-Ce=ek} shows that
\[ C^* (e_1^*(X)) = (e_0^*\circ k^*)(X) \subset e_0^* (\hat{X}).\]
Therefore, $C^* \colon X(\Lambda_{K_1}) \to X(\Lambda_{K_0})$ is restricted to a map
\[ C^* \colon \bigcup_{X \in \scrX_{K_1} \setminus \{X^{\mathrm{red}}_{K_1} \}} \overline{ e_1^*(X) } \to \bigcup_{X \in \scrX_{K_0} \setminus \{X^{\mathrm{red}}_{K_0} \}} \overline{ e_0^*(X) } . \]
Obviously, $C^*$ commutes with the maps $t \colon \bC^*\times \bC^* \to X(\Lambda_{K_i})$ for $i=0,1$.
From the definition (\ref{tilde-V-K}) of $\tilde{V}_{K_i}$ for $i=0,1$, we obtain $\tilde{V}_{K_1} \subset \tilde{V}_{K_0}$ in $\bC^*\times \bC^*$.
This implies that the polynomial $\tilde{A}_{K_1}$ divides the polynomial $\tilde{A}_{K_0}$.
\end{proof}

Using the divisibility of (an enhanced version of) A-polynomials, we can deduce the following result.

\begin{corollary}\label{cor-torus}
If $K_0$ is the $(p,q)$-torus knot, then $K_1$ is either  the $(p,q)$-torus knot or the unknot.
\end{corollary}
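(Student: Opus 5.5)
Assume $K_0=T_{(p,q)}$ with $1<p<|q|$ and $K_1$ knotted. By Proposition \ref{prop-epi-torus} applied to the epimorphism $k$ of (\ref{hom-r_1-h}), $K_1$ is the unknot or a torus knot $T_{(p',q')}$ with $p'\mid p$ and $q'\mid q$, or $p'\mid q$ and $q'\mid p$. It remains to show $\{p',q'\}=\{p,q\}$ up to sign, so that $K_1$ is $T_{(p,q)}$ or its mirror, and then to rule out the mirror. The plan is to compare $\tilde{A}_{K_0}$ and $\tilde{A}_{K_1}$ via Theorem \ref{thm-divide-A}.

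\textbf{Step 1: compute $\tilde{V}$ for torus knots.} For $T_{(p,q)}$ I will use the standard description: every nonabelian $\SL_2(\bC)$-representation sends the central element $l m^{pq}$ to $\pm I$, and $m$ to a matrix whose eigenvalues are determined up to the finitely many choices coming from $a^p=\pm I$, $b^q=\pm I$. Since $l$ commutes with $m$, a nonabelian representation restricted to $\Lambda_K$ is diagonal with $\lambda\mu^{pq}=\pm1$. The peripheral image of each nontrivial component is then the curve $\lambda\mu^{pq}=\pm 1$, with sign determined by parity. So $\tilde{A}_{T_{(p,q)}}$ is a product of factors $(\lambda\mu^{pq}\pm1)$, or $(\lambda\mu^{pq}+1)(\lambda\mu^{-pq}+\dots)$ after symmetrization; in the normalization of \cite{CCGLS}, for $p=2$ it is $1+\lambda\mu^{2q}$. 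In every case the Newton polygon of the factor encodes the slope $pq$, and the mirror image gives slope $-pq$.

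\textbf{Step 2: compare slopes.} Theorem \ref{thm-divide-A} gives $\tilde{V}_{K_1}\subset\tilde{V}_{K_0}$. The irreducible curves in $\tilde{V}_{K_0}$ are of the form $\lambda\mu^{pq}=\text{const}$, and those in $\tilde{V}_{K_1}$ are of the form $\lambda\mu^{p'q'}=\text{const}$. Both are nonempty since both knots are nontrivial, using \cite{DG} or the explicit nonabelian representations. A curve $\lambda=c\mu^{-p'q'}$ can lie in a curve $\lambda=c'\mu^{-pq}$ only if $p'q'=pq$, because the sets are distinct one-dimensional subtori otherwise. Combined with the divisibility $p'\mid p,\ q'\mid q$ (or the swapped version), this forces $|p'|=p$ and $|q'|=|q|$ up to swap. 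The sign of $pq$ versus $p'q'$ distinguishes $T_{(p,q)}$ from its mirror, so equality of slopes including sign yields $K_1=T_{(p,q)}$. Remark \ref{rem-indep-ori-K} shows the orientation convention is irrelevant here.

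\textbf{Step 3: unknot case.} If $K_1$ is the unknot, that alternative is simply allowed by the statement, so nothing more is needed.

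\textbf{Main obstacle.} The hard part is Step 1: establishing carefully that for every nontrivial component the peripheral curve is exactly $\lambda\mu^{pq}=\pm1$ with the correct sign, which matters for distinguishing the mirror image. I would first use the central element $z=l m^{pq}$ (with the Seifert framing fixed by the paper's convention) and argue that irreducibility forces $\rho(z)=\pm I$. If the sign convention turns out subtle, a backup is to use Proposition \ref{prop-k-rep} directly: $k$ maps the center of $\pi_1(M_{K_0})$ into the center of $\pi_1(M_{K_1})$, since $k$ is surjective. Combined with $k(m_0)\sim m_1$ and $k(l_0)\sim l_1$, this gives $l_1m_1^{pq}$ central in $\pi_1(M_{K_1})$. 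Since the center is generated by $l_1m_1^{p'q'}$, and $m_1$ has infinite order modulo the center, it follows that $pq=p'q'$.
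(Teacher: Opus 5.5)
Your argument is correct and is essentially the paper's proof: Proposition~\ref{prop-epi-torus} applied to $k$, then Theorem~\ref{thm-divide-A} in the form $\tilde{V}_{K_1}\subset\tilde{V}_{K_0}$, combined with two torus-knot facts, which together force $pq=p'q'$. The facts are $\tilde{V}_{T_{(p,q)}}\subset\{\lambda\mu^{pq}=\pm 1\}$ and that $\tilde{V}_{T_{(p',q')}}$ contains a curve $\lambda\mu^{p'q'}=\pm 1$; the paper quotes these from \cite[Proposition 10.3]{BS23} and \cite{CCGLS} rather than deriving them from the center. Two slips do not affect the main line. First, it is $\rho(a),\rho(b)$, not $\rho(m)$, whose eigenvalues are finitely constrained: $\mu$ must vary along each nontrivial component, otherwise $\tilde{V}_{K_1}$ would be empty. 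Second, your backup via the center needs the simultaneous group-level identities $\tilde{k}(m_0)=m_1$, $\tilde{k}(l_0)=l_1$ of Corollary~\ref{cor-peripheral-preserve}, because separate conjugacies $k(m_0)\sim m_1$ and $k(l_0)\sim l_1$ do not make $l_1m_1^{pq}$ central.
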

\begin{proof}
Suppose that $K_0= T_{(p,q)}$ and $K_1$ is not the unknot.
Then, Proposition \ref{prop-epi-torus} shows that $K_1=T_{(p',q')}$ with $p'\mid p$ and $q' \mid q$, or $p'\mid q$ and $q' \mid p$.
Moreover, Theorem \ref{thm-divide-A} shows that $\tilde{A}_{T_{(p,q)}}$ is divided by $\tilde{A}_{T_{(p',q')}}$.
By \cite[Proposition 10.3]{BS23},
$\tilde{A}_{T_{(p,q)}}$ divides $\mu^{2pq}\lambda^{2} -1$, and by \cite[Proposition ]{CCGLS}, $\tilde{A}_{T_{(p',q')}}$ is divided by $\mu^{p'q'}\lambda +1$. In summary,
\[ (\mu^{p'q'}\lambda+1) |  \tilde{A}_{T_{(p',q')}} | \tilde{A}_{T_{(p,q)}} | (\mu^{2pq}\lambda^{2} -1 ). \]
Hence, $\mu^{2pq}\lambda^2-1$ is divided by $\mu^{p'q'}\lambda+1$, and this shows that $pq=p'q'$.
Since we know that $p'\mid p$ and $q' \mid q$, or $p'\mid q$ and $q' \mid p$, it follows that $T_{(p',q')} = T_{(p,q)}$.
\end{proof}

\begin{remark}
\cite[Theorem 10.14]{BS23} can be used to give an alternative proof of Theorem \ref{thm-trefoil} when $(p,q)=(2,3)$, without using \cite[Proposition 2.4]{SW2} and \cite[Theorem 1.5]{O23}.
For any polynomial $A = \sum_{i=0}^m\sum_{j=0}^{m'} a_{i,j} \lambda^i \mu^j \in \bC[\lambda,\mu]$, its Newton polygon $\mathcal{N}_A$ is defined as the convex hull in $\bR\times \bR$ of the set
$ \{ (i,j) \mid a_{i,j}\neq 0 \}$.
It is well-known that for $A,B \in \bC[\lambda,\mu]$,
\begin{align}\label{Minkowski-sum} 
\mathcal{N}_{AB} = \{ x+y \in \bR^2 \mid x\in \mathcal{N}_A,\ y\in \mathcal{N}_{B} \}.
\end{align}
For $r\in \bQ$, a non-trivial knot $K$ is called $r$\textit{-thin} if $\mathcal{N}_{\tilde{A}_K}$ is contained in a line segment of slope $r$ \cite[Definition 10.5]{BS23}.
By \cite[Example 10.6]{BS23}, $T_{(2,3)}$ is $6$-thin.
Suppose that $K_0=T_{(2,3)}$. Then, $K_1$ is a non-trivial knot by \cite[Theorem 1.1]{O25}.
Since $\tilde{A}_{K_1}$ divides $\tilde{A}_{T_{(2,3)}}$, it follows from (\ref{Minkowski-sum}) that $K_1$ is also $6$-thin.
Then, by \cite[Theorem 10.14]{BS23}, we can conclude that $K_1=T_{(2,3)}$.
\end{remark}

\subsection{KCH representations and augmentation variety}\label{subsec-KCH-rep}

Let $K$ be an oriented knot in $\bR^3$.

\begin{definition}[{\cite[Definition 3.2]{cornwell-1}}]
A representation $\rho \colon \pi_1(\bR^3\setminus K) \to \GL_n(\bC)$ for $n\in \bZ_{\geq 1}$ is called a \textit{KCH representation} if there exists  $\mu\in \bC^*\setminus \{1\}$ such that $\rho(m)$ is conjugate to the diagonal matrix
$\mathrm{diag}(\mu,1,\dots ,1)$.
\end{definition}

As explained in Remark \ref{rem-KCH} below, KCH representations are closely related to the knot contact homology of $K$.

For any KCH representation $\rho$, $\rho(m)$ commutes with $\rho(l)$.
Thus, if we take $\mu\in \bC^*\setminus \{1\}$ and $v\in \bC^n\setminus \{0\}$ such that $\rho(m)(v) = \mu \cdot v$, then $\rho(l)(v) = \lambda \cdot v$ for some $\lambda \in \bC^*$.
Let us write the pair $(\lambda, \mu)$ by $(\lambda_{\rho}, \mu_{\rho})$.
Let $\operatorname{KCH}^{\mathrm{irr}}_n(K)$ denote the set of $n$-dimensional irreducible KCH representations of $\pi_1(\bR^3\setminus K)$.
(For each $K$, there are only finitely many $n\in \bZ_{\geq 1}$ such that $\operatorname{KCH}^{\mathrm{irr}}_n(K)\neq \emptyset$. See \cite[Theorem 1]{cornwell-1}.)
For every $n\in \bZ_{\geq 1}$, we define
\begin{align}\label{KCH-Un} 
\tilde{U}^n_K\coloneqq \{ (x,y)\in (\bC^*)^2 \mid \text{there exists }\rho \in \operatorname{KCH}^{\mathrm{irr}}_n(K) \text{ such that }x=\lambda_{\rho} \text{ and }y=\mu_{\rho} \}. 
\end{align}
If we omit the the condition that $\rho$ is irreducible, it coincides with a subset $U^n_K \subset (\bC^*)^2$ considered in \cite[Section 3.2]{cornwell-1}.

For any $\rho \in \operatorname{KCH}^{\mathrm{irr}}_n(K)$, a representation $\rho' ,\  g\mapsto {^{t} (\rho(g))^{-1}}$ also belongs to $\operatorname{KCH}^{\mathrm{irr}}_n(K)$.
Therefore, $\tilde{U}^n_K$ is preserved by the map $(\bC^*)^2 \to (\bC^*)^2 \colon (x,y) \to (x^{-1}, y^{-1})$.
This implies that $\tilde{U}^n_K$ is independent of the orientation of $K$.

By using the epimorphism $k\colon \pi_1(\bR^3\setminus K_0) \to \pi_1(\bR^3\setminus K_1)$, we can deduce a relation between $\tilde{U}^n_{K_0}$ and $\tilde{U}^n_{K_1}$.

\begin{theorem}\label{thm-KCH}
For every $n\in \bZ_{\geq 1}$, $\tilde{U}^n_{K_1}\subset \tilde{U}^n_{K_0}$.
\end{theorem}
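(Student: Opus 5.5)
Given $(x,y)\in\tilde{U}^n_{K_1}$, choose $\rho\in\operatorname{KCH}^{\mathrm{irr}}_n(K_1)$ with $\lambda_\rho=x$ and $\mu_\rho=y$. We pull $\rho$ back along the epimorphism $k$ of (\ref{hom-r_1-h}) and show that $\rho\circ k\in\operatorname{KCH}^{\mathrm{irr}}_n(K_0)$ with $\lambda_{\rho\circ k}=x$ and $\mu_{\rho\circ k}=y$. Throughout we use the orientation convention of Section \ref{subsec-epi-knots}. This convention is harmless, since $\tilde{U}^n_{K_1}$ is independent of the orientation of $K_1$ by the remark preceding the theorem.

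First, irreducibility. Because $k$ is surjective, $\rho\circ k$ and $\rho$ have the same image in $\GL_n(\bC)$. Hence any subspace invariant under $\rho\circ k$ is invariant under $\rho$, so $\rho\circ k$ is irreducible. This is the only step that needs surjectivity, in the same way as in Lemma \ref{lem-X-hat}.

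Next, the KCH property and the eigenvalues, via Proposition \ref{prop-k-rep}. That proposition gives $g\in\GL_n(\bC)$ with $(\rho\circ k)(m_0)=g\rho(m_1)g^{-1}$ and $(\rho\circ k)(l_0)=g\rho(l_1)g^{-1}$. Since $\rho(m_1)$ is conjugate to $\mathrm{diag}(\mu,1,\dots,1)$ with $\mu\neq 1$, so is $(\rho\circ k)(m_0)$; thus $\rho\circ k$ is KCH. Now take $v$ with $\rho(m_1)v=\mu v$ and $\rho(l_1)v=\lambda v$. Then $gv$ is a $\mu$-eigenvector of $(\rho\circ k)(m_0)$, and $(\rho\circ k)(l_0)(gv)=\lambda\, gv$. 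The $\mu$-eigenspace is one-dimensional, so $(\lambda_{\rho\circ k},\mu_{\rho\circ k})=(\lambda_\rho,\mu_\rho)$ is well defined and equals $(x,y)$.

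The main subtlety is this simultaneous conjugation of the meridian and longitude by a single $g$; it is exactly what Proposition \ref{prop-k-rep} provides. It would fail for an arbitrary epimorphism, which need not send $l_0$ to a conjugate of $l_1$ by the same element that conjugates $m_0$ to $m_1$. Given that proposition, the argument is complete.
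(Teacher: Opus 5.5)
Your proof is correct and follows the paper's argument. Irreducibility of $\rho\circ k$ comes from the surjectivity of $k$, and the KCH condition with the same pair $(\lambda_\rho,\mu_\rho)$ comes from the simultaneous conjugation of $m_0$ and $l_0$ in Proposition \ref{prop-k-rep}; you additionally spell out the eigenvector bookkeeping and the orientation convention, which the paper leaves implicit.
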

\begin{proof}
Take any $\rho \in \operatorname{KCH}^{\mathrm{irr}}_n(K_1)$.
The representation $\rho \circ k \colon \pi_1(\bR^3\setminus K_0) \to \GL_n(\bC)$ is irreducible since $k$ is surjective.
Moreover, Proposition \ref{prop-k-rep} shows that $\rho\circ k$ satisfies the condition to be a KCH representation with
$(\lambda_{\rho\circ k},\mu_{\rho\circ k}) = (
\lambda_{\rho},\mu_{\rho})$. 
 This shows that $\tilde{U}^n_{K_1}\subset \tilde{U}^n_{K_0}$.
\end{proof}

\begin{remark}\label{rem-KCH}
Ng introduced in  \cite[Definition 5.4]{Ng} a subset $(\bC^*)^2$, called the \textit{augmentation variety} of $K$.
Here, let us refer to the convention of \cite[Definition 5.1]{Ng-intro} and denote it by $V_K\subset (\bC^*)^2$.
It is defined from the knot contact homology of $K$ with coefficients in $\bZ[\lambda^{\pm},\mu^{\pm}]$.
It follows from \cite[Theorem 1.2]{cornwell-2} that
$V_K = \{y =1 \} \cup \bigcup_{n=1}^{\infty} \tilde{U}^n_K $.
Theorem \ref{thm-KCH} shows that $V_{K_1} \subset V_{K_0}$, and this relation is a refinement of \cite[Theorem 1.1]{O23}.
\end{remark}

\section{Existence of peripheral-pair-preserving surjection}\label{subsec-peripheral}

We go back and continue the discussion in Section \ref{sec-sheaf}.

\noindent
\textbf{Convention.}
In this section, for $i\in \{0,1\}$, let us identify $\Lambda_{K_i}$ with $\partial N_{K_i}$ via the map $e_i$.
Then, $m_i,l_i\in \pi_1(\partial N_{K_i},y_0)$ denote the longitude and the meridian of $K_i$ respectively.
The map $\sigma_i$ is considered as an embedding $\sigma_i\colon \partial N_{K_i} \to  L_{\varphi}$.

\begin{notation}
In this section, we will discuss fundamental groups of compact $3$-manifolds with toroidal boundaries.
For this reason, instead of $\bR^3\setminus K_0$ and $M_{\varphi}$ introduced in Section \ref{subsec-clean}, 
we will often consider compact $3$-manifolds $S^3\setminus N_{K_0}$ and
\[M'_{\varphi} \coloneqq \left( S^3 \setminus N_{K_1} \right) \cup_{\sigma_1} \left( L_{\varphi} \cap \psi([a_1,a_0]\times U^*\bR^3 ) \right) .  \]
The boundary of $S^3\setminus N_{K_0}$ is $\partial N_{K_0}$, and the boundary of $M'_{\varphi}$ is $\sigma_0(\partial N_{K_0})=\psi(\{a_0\}\times \Lambda_{K_0})$.
As we have seen in the proof of Lemma \ref{lem-LO}, $M'_{\varphi}$ is irreducible.
We denote
\[\begin{array}{cc} \Gp\coloneqq \pi_1 (S^3\setminus N_{K_0}, y_0), &  \Gphip\coloneqq \pi_1 (M'_\varphi, \sigma_0(y_0)).\end{array}\]
Note that $\Gp$ and $\Gphip$ are naturally isomorphic to $\pi_1(\bR^3\setminus K_0, y_0)$ and $\pi_1(M_{\varphi},\sigma_0(y_0))$ respectively.
Moreover, we identify $G_0$ (resp. $G_1$) and $\Gp$ (resp. $\Gphip$) by the path $\gamma_0$ (resp. $\gamma_1$) in (\ref{paths-connecting-basept}) connecting the base points.
Then, the surjection $h$ is rewritten as
\[ h\colon \Gp  \twoheadrightarrow  \Gphip . \]
Lastly, let us denote
\[\begin{array}{ll}
\emphip \coloneqq (\sigma_0)_*(m_0), & \ellphip \coloneqq (\sigma_0)_*(l_0) \in \Gphip .
\end{array}\]
\end{notation}

Using the above notation, Proposition \ref{prop-GL_n-rep} about $h\colon \Gp \to G'_{\varphi}$ is restated as follows: For any representation $\rho \colon \Gphip\to \GL_n(\bC)$, there exists $A\in \GL_n(\bC)$ such that
\[ \begin{array}{cc}
A\cdot \rho (h(\emp)) \cdot A^{-1} = \rho(\emphip), & A\cdot \rho( h(\ellp)) \cdot A^{-1} = \rho(\ellphip) . \end{array} \]

The aim of this section is to prove the following theorem. 

\begin{theorem}\label{thm-ml-preserve}
    There exists an element $g\in \Gphip$ such that
    \[\begin{array}{cc}
    g\cdot h(\emp) \cdot g^{-1}=\emphip, & g \cdot h(\ellp) \cdot g^{-1}=\ellphip. \end{array}\] 
    In particular, there exists a surjective homomorphism $\tilde{h}\colon \Gp\to \Gphip$ with $\tilde{h}(\emp)=\emphip$ and $\tilde{h}(\ellp)=\ellphip$.
\end{theorem}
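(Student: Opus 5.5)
The plan is to turn the ``conjugate in every finite-dimensional representation'' statement (Proposition \ref{prop-GL_n-rep}) into genuine conjugacy in $\Gphip$. This uses two inputs: characters of finite groups separate conjugacy classes, and fundamental groups of compact orientable $3$-manifolds are conjugacy separable (Hamilton--Wilton--Zalesskii). We then need $3$-manifold topology (the JSJ decomposition of $M'_{\varphi}$) to pass from conjugacy of single elements to simultaneous conjugacy of the commuting pair $(h(\emp),h(\ellp))$.

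\textbf{Step 0 (compressible boundary).} If $\partial M'_{\varphi}$ is compressible, then $M'_{\varphi}$ is irreducible with compressible torus boundary, so it is a solid torus. Hence $\Gphip\cong\bZ$ is abelian. In an abelian group, the $1$-dimensional case of Proposition \ref{prop-GL_n-rep} with enough characters forces $h(\emp)=\emphip$ and $h(\ellp)=\ellphip$, and we take $g=1$. From now on we assume $\partial M'_{\varphi}$ is incompressible. Then $P\coloneqq(\sigma_0)_*\pi_1(\partial N_{K_0})\cong\bZ^2$ is a peripheral subgroup of $\Gphip$, and $M'_{\varphi}$ is Haken.

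\textbf{Step 1 (elementwise conjugacy).} Fix $(a,b)\in\bZ^2$. We show that $h(\emp^a\ellp^b)$ is conjugate to $\emphip^a\ellphip^b$ in $\Gphip$. For any finite quotient $q\colon\Gphip\to Q$ and any irreducible representation $\tau$ of $Q$, apply Proposition \ref{prop-GL_n-rep} to $\tau\circ q$. This shows that $\tau$ has equal characters on $q(h(\emp^a\ellp^b))$ and on $q(\emphip^a\ellphip^b)$. Since characters separate conjugacy classes of finite groups, the two images are conjugate in every finite quotient. Conjugacy separability of $\Gphip$ then gives conjugacy in $\Gphip$ itself.

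\textbf{Step 2 (simultaneous conjugacy).} After conjugating, we may assume $h(\emp)=\emphip$. Since $\ellp$ commutes with $\emp$, $h(\ellp)$ lies in the centralizer $Z(\emphip)$. We then use the standard description of centralizers in Haken manifold groups, via the JSJ decomposition of $M'_{\varphi}$. Let $N$ be the JSJ piece containing $\partial M'_{\varphi}$.

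Case 1: $N$ is hyperbolic, or $N$ is Seifert fibered with $\emphip$ not a power of the regular fiber. Then $Z(\emphip)=P$. Thus $h(\ellp)\in P$ and $h(\ellp)$ is conjugate to $\ellphip$. Since $P$ is malnormal, or at least conjugacy-injective in the relevant sense (two elements of a maximal peripheral $\bZ^2$ conjugate in $\Gphip$ are equal, by incompressibility plus the annulus theorem), we get $h(\ellp)=\ellphip$.

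Case 2: $N$ is Seifert fibered and $\emphip$ (a primitive class in $H_1$ mapping to a generator) is a fiber power. Then $Z(\emphip)$ is the full $\pi_1$ of a Seifert piece, or a subgroup of it. Here I would use Step 1 for all pairs $(a,b)$ simultaneously: conjugate $\ellphip$ into $Z(\emphip)$ by an element $c$, analyse the base-orbifold group where the fiber is central, and show that the residual freedom can be absorbed by conjugating with an element commuting with $\emphip$. For this I would use the fact that $h(\emp^a\ellp^b)$ is conjugate to $\emphip^a\ellphip^b$ for all $(a,b)$, together with the fact that the image of a boundary curve in the base orbifold group is a peripheral element of a Fuchsian group. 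The orbifold-group images of $h(\ellp)$ and $\ellphip$ are conjugate peripheral elements. Peripheral cyclic subgroups in Fuchsian groups are malnormal, so the conjugator normalizes the relevant cyclic subgroup and lifts to an element of $Z(\emphip)$.

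Finally, $\tilde{h}\coloneqq c_g\circ h$, where $c_g$ is conjugation by $g$, is surjective and has the required values on $\emp$ and $\ellp$.

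\textbf{Expected obstacle.} The main difficulty is Step 2 in the Seifert-fibered boundary-piece case, where centralizers are large and elementwise conjugacy does not immediately yield simultaneous conjugacy. I would first try to exclude the possibility that $\emphip$ is a fiber power using $H_1(M'_{\varphi})\cong\bZ$ and the fact that $\emphip$ maps to a generator of $H_1$. Failing that, I would carry out the orbifold argument sketched above.
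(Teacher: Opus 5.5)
Your Steps 0 and 1 and the hyperbolic part of Case 1 coincide with the paper's argument: the solid-torus case via Lemma~\ref{lem-solidtorus}, Lemma~\ref{lem-conj-single} via characters of finite quotients plus conjugacy separability, and malnormality of $P$.

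\textbf{The gap is your Case 2.} Here the boundary JSJ piece $N$ is Seifert fibered and $m_\varphi\in\langle c\rangle$ for the fiber class $c$. This case cannot be ruled out using $H_1(M'_\varphi)\cong\bZ$. For $\varphi=\mathrm{id}$ and $K_0$ composite, $M'_\varphi$ is the exterior of $K_0$, and its boundary JSJ piece is a composing space whose fiber is the meridian.

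Your orbifold sketch does not close this case, for two reasons.
\begin{enumerate}
\item You only know that $h(l_0)$ and $l_\varphi$ are conjugate in $G'_\varphi$. To say that their images in the base orbifold group are conjugate, you first need a conjugator lying in $\pi_1(N)$, and you do not establish this.
\item Malnormality of the peripheral cyclic subgroup $\langle \bar l_\varphi\rangle$ says nothing about an element $\bar g$ with $\bar g\,\overline{h(l_0)}\,\bar g^{-1}=\bar l_\varphi$. It would only apply if $\overline{h(l_0)}$ were already known to lie in $\langle\bar l_\varphi\rangle$, which is essentially what you are trying to prove.
\end{enumerate}
What would actually suffice along your route is two facts: the conjugator lies in $\pi_1(N)$, and $c$ is central in $\pi_1(N)$ (orientable base). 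Such a conjugator then automatically fixes $h(m_0)=c$. Neither fact is proved in your sketch.

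The same weakness affects your justification of ``conjugate elements of $P$ are equal'' in the Seifert part of Case 1. If $x,y\in P$ are conjugate by an element outside $P$, Proposition~\ref{prop-boundary-Seifert} only gives $x,y\in\langle c\rangle$ and $x=y^{\pm1}$, not $x=y$. This matters when $l_\varphi\in\langle c\rangle$.

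\textbf{The missing idea is not to normalize on $m_0$.} Since $c$ is primitive in $P\cong\bZ^2$, at most one $d'$ satisfies $m_\varphi l_\varphi^{d'}\in\langle c\rangle$.
\begin{enumerate}
\item Choose $d$ with $m_\varphi l_\varphi^{d}\notin\langle c\rangle$ and $m_\varphi l_\varphi^{d+1}\notin\langle c\rangle$.
\item Use your Step 1 to replace $h$ by a conjugate $h'$ with $h'(m_0l_0^d)=m_\varphi l_\varphi^d$.
\item The element $h'(m_0l_0^{d+1})$ commutes with $m_\varphi l_\varphi^d\in P\setminus\langle c\rangle$. So Proposition~\ref{prop-boundary-Seifert} puts it in $P$. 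The proposition applies because $\partial M'_\varphi$ is incompressible and $H_2(M'_\varphi;\bZ/2\bZ)=0$ excludes the $I$-bundle cases.
\item By Step 1 there is a conjugator carrying $h'(m_0l_0^{d+1})$ to $m_\varphi l_\varphi^{d+1}\notin\langle c\rangle$. The same proposition puts this conjugator in $P$, so $h'(m_0l_0^{d+1})=m_\varphi l_\varphi^{d+1}$.
\item Hence $h'(l_0)=l_\varphi$ and $h'(m_0)=m_\varphi$.
\end{enumerate}
This is the paper's Lemma~\ref{lem-Seifert-ml}. It handles both of your Seifert subcases at once and needs no orbifold analysis.
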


This is a consequence of Proposition \ref{prop-GL_n-rep} and results about the fundamental groups of $3$-manifolds.

The following statement is a direct consequence of Theorem \ref{thm-ml-preserve}.

\begin{corollary}\label{cor-peripheral-preserve}
    There exists a surjective group homomorphism
    \[\tilde{k} \colon \pi_1(\bR^3\setminus K_0) \to \pi_1(\bR^3 \setminus K_1)\]
    such that by changing the orientation of $K_1$ if necessary, $\tilde{k}(l_0)=l_1$ and $\tilde{k}(m_0)=m_1$.
\end{corollary}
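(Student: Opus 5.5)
The plan is to obtain $\tilde{k}$ by composing the surjection $\tilde{h}$ of Theorem \ref{thm-ml-preserve} with the surjection $\bar{r}_1$ of (\ref{diagram-van-kampen}), and then to correct base points by an inner automorphism.

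First, identify $\pi_1(\bR^3\setminus K_0)\cong \Gp$ and $\pi_1(M_\varphi)\cong \Gphip$ as in the Notation of this section. By Theorem \ref{thm-ml-preserve} there is a surjection $\tilde{h}\colon \Gp\to \Gphip$ with
\[\tilde{h}(\emp)=\emphip=(\sigma_0)_*(m_0), \qquad \tilde{h}(\ellp)=\ellphip=(\sigma_0)_*(l_0).\]
Next, choose a path $\delta$ in $L_\varphi$ from $\sigma_0(y_0)$ to $\sigma_1(y_1)=y'_1$. This is the same path used in (\ref{hom-r1-sigma0}). It gives an isomorphism $c_\delta\colon \pi_1(M_\varphi,\sigma_0(y_0))\to \pi_1(M_\varphi,y'_1)$ that is compatible with the corresponding isomorphism of $\pi_1(L_\varphi)$ under the map $i$. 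Define
\[\tilde{k}\coloneqq \bar{r}_1\circ c_\delta\circ \tilde{h}\colon \pi_1(\bR^3\setminus K_0)\to \pi_1(\bR^3\setminus K_1,e_1(y_1)).\]
This is surjective, since $\tilde{h}$ and $\bar{r}_1$ are surjective and $c_\delta$ is an isomorphism.

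To compute $\tilde{k}$ on the peripheral elements, use the commutativity $\bar{r}_1\circ i=(e_1)_*\circ r_1$ from (\ref{diagram-van-kampen}). This gives
\[\tilde{k}(m_0)=(e_1)_*\bigl(r_1(c_\delta(\sigma_0)_*(m_0))\bigr),\]
and similarly for $l_0$. The element $r_1(c_\delta(\sigma_0)_*(\cdot))$ is exactly the composite (\ref{hom-r1-sigma0}). By Proposition \ref{prop-H1L} and Proposition \ref{prop-b=0}, this composite sends $m_0\mapsto m_1^a$ and $l_0\mapsto l_1^a$. After reversing the orientation of $K_1$ if necessary, we have $a=1$, which is (\ref{r1-sigma0-rel}). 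Hence $\tilde{k}(m_0)=m_1$ and $\tilde{k}(l_0)=l_1$ as elements of $\pi_1(\bR^3\setminus K_1,e_1(y_1))$, where $m_1,l_1$ denote $(e_1)_*(m_1),(e_1)_*(l_1)$.

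The only delicate point I expect is base-point bookkeeping. I must check that the basepoint change in $M_\varphi$ along $\delta$ matches the one implicit in (\ref{hom-r1-sigma0}). Here it helps that $r_1$ factors through $H_1$, so the choice of path inside $L_\varphi$ does not affect $r_1$. Any residual discrepancy from a different choice of path (or from the identifications $G_1\cong\Gphip$) changes $\tilde{k}$ only by conjugation by a single element $x\in\pi_1(\bR^3\setminus K_1)$, applied simultaneously to both $m_1$ and $l_1$. Replacing $\tilde{k}$ by $x^{-1}\tilde{k}(\cdot)x$ removes this discrepancy and keeps surjectivity. Finally, transport everything to any chosen base points of the knot groups; since a single path is used for both $m_0$ and $l_0$, the conclusion $\tilde{k}(l_0)=l_1$, $\tilde{k}(m_0)=m_1$ persists.
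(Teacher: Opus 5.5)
Your proposal is correct and is essentially the paper's own proof. The paper likewise sets $\tilde{k}=\bar{r}_1\circ\tilde{h}$ and reads off $\tilde{k}(m_0)=m_1$, $\tilde{k}(l_0)=l_1$ from (\ref{r1-sigma0-rel}) through the van Kampen diagram. Your extra base-point bookkeeping (the path in $L_\varphi$, and the observation that $r_1$ factors through $H_1$ so the path choice is irrelevant) just spells out what the paper leaves implicit.
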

\begin{proof}
    Take a surjection $\tilde{h}\colon \Gp\to \Gphip$ in Theorem \ref{thm-ml-preserve} and define $\tilde{k}\coloneqq \bar{r}_1 \circ \tilde{h}$, where $\bar{r}_1 \colon \Gphip \to \pi_1 (\bR^3\setminus K_1) $ is defined in Subsection \ref{subsec-epi-knots}. The equations $\tilde{k}(l_0)=l_1$ and $\tilde{k}(m_0)=m_1$ are obtained from the relation (\ref{r1-sigma0-rel}).
\end{proof}

\begin{remark}\label{rem-epi-peripheral}
Let $K$ and $K'$ be oriented knots in $S^3$ equipped with the Seifert framing.
Let $G_K \coloneqq \pi_1(S^3\setminus K,y)$, where $y\in \partial N_{K}$, and let $l$ and $m$ be the longitude and the meridian respectively.
In the same way, we put $l',m'\in G_{K'}$ for the knot $K'$.
In Remark \ref{rem-epi-knot}, we discussed a condition: $K\geq K'$ if there is an epimorphism $G_K\to G_{K'}$.
We can consider its four refinements:
    \begin{enumerate}
        \item $K \ge_m K'$ if there is an epimorphism $k\colon G_{K}\to G_{K'} $ with $k(m)=m'$.
        \item $K \ge_p K'$ if there is an epimorphism $k\colon G_{K}\to G_{K'} $ with $k (\pi_1 (\partial N_K) )\subset \pi_1 (\partial N_{K'})$.
        \item $K \ge_{ml} K' $ if there is an epimorphism $k\colon G_{K}\to G_{K'} $ with $k(m)=m'$ and $k(l)=l'$.
        \item $K \ge_1 K'$ if there is a degree $1$ map $(S^3\setminus N_K, \partial N_K)\to (S^3\setminus N_{K'}, \partial N_{K'})$.
    \end{enumerate}
    The implications between these conditions are 
    \[K \ge_1 K' \Leftrightarrow K \ge_{ml} \pm K' \Rightarrow K \ge_p K' \Rightarrow K \ge_m \pm K' \Rightarrow K \ge K'.\]
    They can be checked as follows (the authors could not find in literature a summary of the relations of these conditions):
    \begin{itemize}
\item     $K \ge_{ml} \pm K' \Rightarrow K \ge_p K' $ and $K \ge_m \pm K'\Rightarrow K \ge K'$ are trivial. 
   \item $K \ge_{ml} \pm K' \Rightarrow K\geq_1K'$:
   We use the fact that $S^3\setminus N_{K}$ and $S^3\setminus N_{K'}$ are aspherical.
   If $K \ge_{ml} \pm K' $, the epimorphism $k$ associates a map $F$ which makes the following diagram commute up to homotopy:
    \[\xymatrix@R=15pt{ S^3\setminus N_{K} =   K(G_K,1) \ar[r]^-{F} & K (G_{K'},1) = S^3\setminus N_{K'} \\
    \partial N_{K} = K(\bZ^2,1) \ar@<6.5ex>[u] \ar[r]^-{\sim} & K(\bZ^2,1) = \partial N_{K'}. \ar@<-5.5ex>[u] }\]
    Here, the vertical arrows are the inclusion maps.
    We can arrange $F$ by a homotopy on a collar neighborhood of $\partial N_K$ so that the diagram strictly commutes.
    The degree of $F$ is $1$,
    since $F$ induces an isomorphism
    \[ H_3(S^3\setminus N_K, \partial N_K) \cong H_2(\partial N_K) \to H_2(\partial N_{K'})\cong H_3(S^3\setminus N_{K'}, \partial N_{K'})\]
    which preserves the fundamental classes. Therefore, $K\geq_1 K'$ holds.
    \item $K \ge_p K' \Rightarrow K\ge_m\pm K'$: If $K \ge_p K'$, changing the orientation of $K'$ if necessary $k(m)=m'(l')^a$ for some $a\in \bZ$. Let $\la\la m'(l')^{a} \ra\ra\subset G_{K'}$ be the smallest normal subgroup containing $m'(l')^{ a}$, and define $\la\la m \ra\ra\subset G_K$ similarly. Then, $k$ induces a surjection
    \[\{1\}= G_K/\la\la  m \ra \ra \to G_{K'}/\la \la m'(l')^{ a} \ra \ra,\]
    so $G_{K'}/\la \la m'(l')^{ a} \ra \ra$ is trivial.
    Now, the Property P, proved by \cite{KM}, shows that $a=0$.
    \item $K \ge_1 K' \Rightarrow K \ge_{ml} \pm K'$: If $K\geq_1 K'$, changing the orientation of $K'$ if necessary, the degree $1$ map induces an epimorphism $k\colon G_K\to G_{K'}$ such that $k(m)=m'(l')^a$ for some $a\in \bZ$ and $k(l)=l'$.
    By using the Property P as above, we can show that $a=0$.
    \end{itemize}
    Finally, let us discuss whether the opposite implications hold:
    \begin{itemize}
    \item If $K'$ is prime, $K \ge_m \pm K'$ implies $K \ge_p K'$ \cite[Corollary 2.3]{SW2}. 
    For this implication, the primeness condition of $K'$ is needed:
     For any pair of knots $K_1, K_2$ and the mirror $\overline{K_2}$ of $K_2$, there is an isomorphism $G_{K_1\#K_2}\to G_{K_1\# (-\overline{K_2})}$ preserving the meridian.
     In particular, $K_1\# K_2\ge_m K_1\# (-\overline{K_2})$ holds.
     However, if there is an epimorphism $k\colon G_{K_1\#K_2}\to G_{K_1\# (-\overline{K_2})}$ preserving the peripheral subgroups, then $k$ must be an isomorphism from the Hopfian property of knot groups, and we obtain that $K_1\# K_2 $ is isotopic to $ K_1\# (-\overline{K_2})$ up to mirror by \cite[Proposition 3.2]{SW}. This cannot happen when, for example, $K_1=K_2=3_1$.

    \item Infinitely many pairs of prime knots $(K, K')$ with $K \ge K'$ and $K\not\ge_m K'$ are constructed in \cite[Theorem 1.1]{ChaSuzuki}. 

    \item Some examples of the pairs of knots $(K, K')$ with $K \ge_p K'$ and $K\not\ge_{ml} K'$ are known by \cite[Proposition B.3.]{BKN}.
    \end{itemize}
\end{remark}

\subsection{\texorpdfstring{Preliminaries for $3$-manifold groups}{Preliminaries for 3-manifold groups}}

In this subsection, we recall some results on the fundamental groups of $3$-manifolds, which we will use to prove Theorem \ref{thm-ml-preserve}. 

The following Theorem \ref{thm-conj-sep} is a highly non-trivial result, which is obtained after the works on the fundamental groups of the hyperbolic 3-manifolds by Wise and Agol. 

\begin{definition}
    A group $G$ is said to be \textit{conjugacy separable}, if for every two elements $g_1, g_2\in G$ which are not conjugate in $G$, there exist a finite group $H$ and a homomorphism $p\colon G\to H$ such that $p(g_1)$ and $p(g_2)$ are not conjugate in $H$. 
\end{definition}

\begin{theorem}[{\cite[Theorem 1.3.]{HWZ13}}]\label{thm-conj-sep}
    The fundamental group $\pi_1(M)$ of any compact orientable 3-manifold $M$ is conjugacy separable.  
\end{theorem}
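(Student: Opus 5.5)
This is the conjugacy separability theorem of Hamilton--Wilton--Zalesskii, which we only cite. If I had to prove it, I would follow the geometric decomposition of $M$ and build conjugacy separability from the pieces upward. Conjugacy separability is equivalent to the following statement about the profinite completion: whenever $g_1,g_2\in\pi_1(M)$ are conjugate in $\widehat{\pi_1(M)}$, they are already conjugate in $\pi_1(M)$. I would argue entirely in this profinite language.

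\textbf{Step 1: reductions.} First I would reduce to the closed or toroidal-boundary case. Capping off sphere boundaries and doubling along higher-genus boundary components realises $\pi_1(M)$ as a retract of a suitable group, and retracts of conjugacy separable groups are conjugacy separable. Next I would use the Kneser--Milnor decomposition. Free products of conjugacy separable groups are conjugacy separable (Stebe, Romanovskii), so it suffices to treat prime $M$. For prime $M$ I would invoke geometrization and the JSJ decomposition along incompressible tori. Each JSJ piece is then either Seifert fibered or hyperbolic of finite volume.

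\textbf{Step 2: the geometric pieces.}
\begin{itemize}
\item For Seifert fibered pieces, I would use that their groups are central extensions of Fuchsian groups, and these are conjugacy separable (Martino).
\item For hyperbolic pieces, I would use that they are virtually compact special (Agol, Wise, and Przytycki--Wise in the cusped case).
\item Minasyan proved that virtually compact special groups are hereditarily conjugacy separable, and that their cyclic and peripheral subgroups are conjugacy distinguished.
\end{itemize}
For the gluing step, I also need the pieces to behave well with respect to the peripheral tori. Precisely, the profinite topology on each vertex group should induce the full profinite topology on its edge groups, which is the efficiency of the JSJ graph of groups (Wilton--Zalesskii). In addition, double cosets of peripheral subgroups should be separable.

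\textbf{Step 3: gluing along the JSJ graph of groups.} This is the step I expect to be the main obstacle. I would let $\widehat{\pi_1(M)}$ act on the profinite Bass--Serre tree of the efficient JSJ graph of groups. Suppose $g_1,g_2$ are conjugate in $\widehat{\pi_1(M)}$. I would split into cases according to their action on the tree.
\begin{itemize}
\item If they are hyperbolic, I would compare their axes and cyclically reduced forms to reduce to conjugacy of paths.
\item If they are elliptic, I would reduce to the vertex groups and use conjugacy distinguishedness of the edge groups there.
\end{itemize}
The subtle cases are elements fixing several vertices, namely elements lying in edge groups. These require controlling the centralisers, whose closures in $\widehat{\pi_1(M)}$ must be the closures of the discrete centralisers. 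This is exactly where graph manifolds need special care: Seifert fibre classes commute with large subgroups. My first attempt here would be to pass to a finite cover in which the graph structure becomes bipartite and the Seifert pieces become products, where the centralisers are explicit.
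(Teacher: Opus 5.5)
Like the paper, you treat this as a citation: the statement is quoted from \cite[Theorem~1.3]{HWZ13} without proof and used only as a black box in Lemma~\ref{lem-conj-single}, so citing it is all that is required here. Your supplementary outline is only a roadmap, and one ingredient in it is overstated. Minasyan's theorem gives hereditary conjugacy separability for compact special groups (virtual retracts of right-angled Artin groups), not for virtually compact special ones. So passing from a finite-index special subgroup to the full group of a closed or cusped hyperbolic piece is a genuine additional step that Hamilton--Wilton--Zalesskii have to supply: the word-hyperbolic virtually compact special case was only settled later by Minasyan--Zalesskii, and the cusped groups are not word-hyperbolic at all.
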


The remaining results we need are relatively basic facts. 

\begin{lemma}\label{lem-solidtorus}
   Let $N$ be a compact irreducible $3$-manifold with boundary. 
   Let $T$ be a connected component of $\partial N$ and assume that $T$ is homeomorphic to the torus $S^1\times S^1$. 
   If $\pi_1 (T)\to \pi_1 (N)$ is not injective, then $N$ is homeomorphic to $S^1\times D^2$. 
\end{lemma}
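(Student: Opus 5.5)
This is the classical consequence of Dehn's lemma and the loop theorem. The plan is to use non-injectivity to find a compressing disk for $T$, and then use irreducibility to identify $N$ as a solid torus.

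First, since $\pi_1(T)\to\pi_1(N)$ is not injective, the loop theorem (Papakyriakopoulos, in Stallings' form) gives a properly embedded disk $D\subset N$ with $\partial D\subset T$ and $\partial D$ essential in $T$. So $T$ is compressible, and $\partial D$ is an essential simple closed curve on the torus, hence non-separating in $T$.

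Next, we take a closed regular neighborhood $\nu(T\cup D)$ of $T\cup D$ in $N$. Its frontier consists of a single $2$-sphere $S$. To see this, note that compressing the torus $T$ along the non-separating curve $\partial D$ produces a sphere: surgering the boundary of a collar $T\times[0,1]$ along $D$ turns $T\times\{1\}$ into a sphere. This sphere $S$ lies in the interior of $N$ and is embedded. Since $N$ is irreducible, $S$ bounds a $3$-ball $B\subset N$. Two cases arise.

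In the first case, $B$ lies on the side of $S$ away from $T$. Then $N=\nu(T\cup D)\cup_S B$. Here $\nu(T\cup D)$ is $T\times I$ with a $2$-handle attached along an essential curve of $T\times\{1\}$, so gluing a $3$-ball to its sphere boundary yields $S^1\times D^2$, with boundary exactly $T$. In particular this case also shows $\partial N=T$.

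In the second case, $B$ contains $\nu(T\cup D)$. This is impossible, because $T\subset\partial N$ cannot lie inside a ball contained in the interior side of $S$. More precisely, $B$ would have to contain the boundary component $T$, whereas $B$ is a ball whose boundary is $S\subset\operatorname{int}N$ and whose interior lies in $N$; a ball has no other boundary components. Hence only the first case occurs, and $N\cong S^1\times D^2$.

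The only delicate point is the bookkeeping that the frontier of $\nu(T\cup D)$ is a single sphere and that gluing a ball to it gives the solid torus. Both are standard facts about compressing a torus along a non-separating curve. We would state them with a brief justification or cite Hatcher's notes on $3$-manifolds, where this lemma appears essentially verbatim.
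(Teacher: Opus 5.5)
Your proof is correct: it is the standard loop-theorem-plus-irreducibility argument. You compress $T$ along the disk $D$, note that the frontier of a regular neighborhood of $T\cup D$ is a $2$-sphere, and use irreducibility to cap that sphere off with a ball on the side away from $T$. The paper gives no argument of its own for this lemma and only cites \cite[Lemma 13]{HarpeWeber}, so what you have written is the standard proof of the cited fact.
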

For the proof, see \cite[Lemma 13]{HarpeWeber}, for example.

A subgroup $H$ of a group $G$ is said to be \textit{malnormal} if $H\cap (g H g^{-1})$ is trivial for any $g\in G\setminus H$.

\begin{proposition}[{\cite[Theorem 3.]{HarpeWeber}, \cite[Theorem 4.5.]{Friedl}}]\label{prop-boundary-hyperbolic}
    Let $N$ be a compact orientable irreducible $3$-manifold with toroidal boundary. 
    Let $T$ be a connected component of $\partial N$ and $N_0$ be the JSJ component of $N$ containing $T$. 
    If $N_0$ is hyperbolic, then $\pi_1(T)$ is a malnormal subgroup of $\pi_1 (N)$. 
\end{proposition}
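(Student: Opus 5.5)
The plan is to reduce malnormality in $\pi_1(N)$ to a statement about the single hyperbolic piece $N_0$. First I treat $\pi_1(N_0)$ using hyperbolic geometry. Then I pass to $\pi_1(N)$ through the Bass--Serre tree of the JSJ decomposition.

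\textbf{Step 1 (the hyperbolic piece).} The interior of $N_0$ carries a complete finite-volume hyperbolic metric, so $\pi_1(N_0)$ acts on $\mathbb{H}^3$ as a discrete, torsion-free group. Each boundary torus $T'$ of $N_0$ corresponds to a cusp. Its group $\pi_1(T')\cong\bZ^2$ is, up to conjugacy, the full stabilizer $\operatorname{Stab}(p)$ of a parabolic fixed point $p\in\partial\mathbb{H}^3$. A nontrivial element of $\pi_1(T')$ is parabolic, so it has $p$ as its unique fixed point.

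Now let $x\in\pi_1(T)\cap g\pi_1(T)g^{-1}$ be nontrivial, with $g\in\pi_1(N_0)$. Then $x$ fixes both $p$ and $g(p)$. Hence $g(p)=p$, so $g\in\operatorname{Stab}(p)=\pi_1(T)$. This proves that $\pi_1(T)$ is malnormal in $\pi_1(N_0)$.

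The same argument gives a second fact. If $T'$ is a boundary torus of $N_0$ other than $T$, the two cusps are distinct, so their parabolic points lie in different orbits. Therefore $\pi_1(T)\cap h\pi_1(T')h^{-1}=\{1\}$ for every $h\in\pi_1(N_0)$.

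\textbf{Step 2 (passing to $N$).} If $N=N_0$ we are done. Otherwise, the JSJ tori are incompressible, since $N$ is irreducible and these tori are essential. So $\pi_1(N)$ is the fundamental group of a graph of groups: the vertex groups are the $\pi_1$ of the JSJ pieces, the edge groups are $\bZ^2$, and all the maps between them are injective. Let $\mathcal{T}$ be the Bass--Serre tree and $v_0$ the vertex stabilized by $\pi_1(N_0)$; then $\pi_1(T)\subset\operatorname{Stab}(v_0)$.

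Take $g\notin\pi_1(T)$ and a nontrivial $x\in\pi_1(T)\cap g\pi_1(T)g^{-1}$, and split into two cases.
\begin{itemize}
\item If $gv_0=v_0$, then $g\in\pi_1(N_0)$, and Step 1 gives a contradiction.
\item If $gv_0\neq v_0$, then $x$ fixes both $v_0$ and $gv_0$, hence the whole geodesic between them. In particular $x$ fixes the first edge $e$ at $v_0$ on that geodesic. The stabilizer of $e$ is a conjugate $h\pi_1(T')h^{-1}$ with $h\in\pi_1(N_0)$, where $T'$ is a JSJ torus adjacent to $N_0$. Since $T$ is a component of $\partial N$, it is not a JSJ torus, so $T'\neq T$ as boundary tori of $N_0$. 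This holds even when both sides of a JSJ torus lie on $N_0$, since then each side is a distinct cusp. So the second fact in Step 1 forces $x=1$, a contradiction.
\end{itemize}

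\textbf{Main obstacle.} The delicate point is the bookkeeping in Step 2: identifying edge stabilizers at $v_0$ exactly with $\pi_1(N_0)$-conjugates of the peripheral subgroups of the other cusps. One must also confirm that Step 1 needs only that distinct cusps have parabolic points in distinct $\pi_1(N_0)$-orbits.
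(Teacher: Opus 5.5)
Your argument is correct. Step 1 shows that cusp subgroups of the hyperbolic piece are malnormal and that conjugates of distinct cusp subgroups meet trivially; combined with the fixed-geodesic argument in the Bass--Serre tree of the JSJ splitting, this gives the statement. The paper gives no proof of its own and instead cites de la Harpe--Weber and Friedl. Your route matches the sketch it attributes to Friedl, who analyzes reduced loops for $g$ and $gxg^{-1}$ in the JSJ graph of groups, which is the normal-form version of your tree argument.
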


From the JSJ decomposition $\{N_1,\dots ,N_r\}$ of a $3$-manifold $N$, a \textit{graph of groups} is defined as in \cite[Section 2]{Friedl} and \cite[Section 3.1]{Cerocchi}.
The underlying graph consists of the set of vertices $V=\{v_1,\dots ,v_r\}$ and the set of edges $E$ with a map $E\to V\times V,\  e\mapsto (s(e),t(e))$ such that for each JSJ-torus $S \subset \partial N_{k_1}\cap \partial N_{k_2}$, an edge $e_S\in E$ and its inverse $\bar{e}_S$ with $s(e_S) = v_{k_1}$ and $t(e_S)=v_{k_2}$ are assigned.
Additionally, we associate the group $\pi_1(N_k)$ to each vertex $v_k$ and the group $\pi_1(S)$ to $e_S$ and $\bar{e}_S$, together with homomorphisms $\pi_1(S) \to \pi_1(N_{k_i})$ for $i=1,2$ induced by the inclusion maps when $s(e_S)=v_{k_1}$ and $t(e_S)=v_{k_2}$.

Each element of $\pi_1(N)$ is presented by a `loop' in the graph of groups \cite[Section 2.2]{Friedl}.
For the proof of Proposition \ref{prop-boundary-hyperbolic}, Friedl studied loops for $g$ and $gxg^{-1}$, where $g\in \pi_1(N)\setminus \pi_1(T)$ and $x\in \pi_1(T)$.

Similar to Proposition \ref{prop-boundary-hyperbolic}, the following result for the case where $N_0$ is Seifert fibered holds.

\begin{proposition}[{\cite[Theorem 1.1.(i)]{Cerocchi}}]\label{prop-boundary-Seifert}
    Let $N$ be a compact orientable irreducible $3$-manifold with troidal boundary. 
    Assume that $N$ is not homeomorphic to $S^1\times D^2$ or a $[0,1]$-bundle over the torus or the Klein bottle.
    Let $T$ be a connected component of $\partial N$ and $N_0$ be the JSJ component of $N$ containing $T$. 
    Assume that the $N_0$ is Seifert-fibered. 
    Then the Seifert fibered structure of $N_0$ is unique (see Remark \ref{rem-Seifert-unique} below).
    
    Let $c\in \pi_1 (N_0)$ be the element representing the fiber of the Seifert fibration of $N_0$ (which is well-defined up to inversion $c\leftrightarrow c^{-1}$). 
    Let $g\in \pi_1(N)$. 
    \begin{enumerate}
        \item If $g\in \pi_1 (N_0)\setminus \pi_1 (T)$, then $\pi_1 (T)\cap \left( g \cdot \pi_1 (T) \cdot g^{-1} \right) =\langle c\rangle$.
        \item If $g\in \pi_1(N)\setminus \pi_1 (N_0)$, then $\pi_1 (T)\cap \left( g\cdot \pi_1 (T)\cdot g^{-1}\right)$ is trivial. 
    \end{enumerate}
\end{proposition}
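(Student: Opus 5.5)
The plan is to prove the two parts in the following order: first the uniqueness of the Seifert structure on $N_0$, then part (1) by working entirely inside $N_0$, and finally part (2) by working in the Bass--Serre tree of the JSJ graph of groups.

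\textbf{Uniqueness.} A Seifert fibered manifold with nonempty boundary has a unique Seifert fibration up to isotopy, except for $S^1\times D^2$, $T^2\times[0,1]$ and the twisted $[0,1]$-bundle over the Klein bottle. I then check that $N_0$ is none of these. If $N_0$ were a solid torus or $T^2\times[0,1]$, then either $N=N_0$, which is excluded by hypothesis, or the JSJ tori adjacent to $N_0$ would be compressible or parallel, contradicting the minimality of the JSJ collection. If $N_0$ were the twisted $I$-bundle over the Klein bottle, it would have the single boundary torus $T$, so $N=N_0$, again excluded. Hence the fiber class $c$ is well defined up to inversion.

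\textbf{Part (1).} I use the exact sequence $1\to\langle c\rangle\to\pi_1(N_0)\to\pi_1^{\mathrm{orb}}(B)\to 1$, where $B$ is the base orbifold and $\langle c\rangle$ is normal and infinite cyclic. Since $N$ is irreducible and $N_0\neq S^1\times D^2$, Lemma \ref{lem-solidtorus} together with incompressibility of the JSJ tori shows that $\pi_1(T)\to\pi_1(N_0)\to\pi_1(N)$ are injective. The subgroup $\pi_1(T)$ maps onto the cyclic subgroup $\langle\delta\rangle$ generated by the boundary curve of $B$ corresponding to $T$, with kernel $\langle c\rangle$.

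Because $\partial B\neq\emptyset$ and $B$ is neither a disk with at most one cone point nor an annulus (these are the excluded cases), $\pi_1^{\mathrm{orb}}(B)$ is a free product of cyclic groups in which $\langle\delta\rangle$ is a free factor or a maximal cyclic subgroup. I will verify that such a boundary subgroup is malnormal by a normal-form argument in the free product.

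Now take $g\in\pi_1(N_0)\setminus\pi_1(T)$. Its image $\bar g$ is not in $\langle\delta\rangle$, since otherwise $g\in\pi_1(T)\langle c\rangle=\pi_1(T)$. By malnormality, the image of $\pi_1(T)\cap g\pi_1(T)g^{-1}$ in $\pi_1^{\mathrm{orb}}(B)$ is trivial, so this intersection is contained in $\langle c\rangle$. Conversely, $c$ is central or normal in $\pi_1(N_0)$, so $\langle c\rangle$ lies in both $\pi_1(T)$ and $g\pi_1(T)g^{-1}$. This gives equality.

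\textbf{Part (2).} Let $\mathcal T$ be the Bass--Serre tree of the JSJ graph of groups, and let $v_0$ be the vertex with stabilizer $\pi_1(N_0)$. Take $x\in\pi_1(T)\cap g\pi_1(T)g^{-1}$ with $g\notin\pi_1(N_0)$. Then $x$ fixes both $v_0$ and $gv_0\neq v_0$, hence fixes the geodesic between them pointwise.

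In particular, $x$ lies in the stabilizer of the first edge, which is a conjugate $h\pi_1(S)h^{-1}$ with $h\in\pi_1(N_0)$ and $S\subset\partial N_0$ a JSJ torus. So inside $\pi_1(N_0)$, $x\in\pi_1(T)\cap h\pi_1(S)h^{-1}$. The argument of part (1), now applied to two boundary components, shows that this intersection is contained in $\langle c\rangle$. Thus $x=c^k$.

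Next I look at the adjacent vertex $v_1$, corresponding to the piece $N_1$ on the other side of $S$. There $x$ lies in the stabilizers of two distinct edges at $v_1$, since the path is a geodesic.
\begin{itemize}
\item If $N_1$ is hyperbolic, distinct peripheral subgroups of $\pi_1(N_1)$, or a peripheral subgroup and a non-peripheral conjugate of it, intersect trivially, in the spirit of Proposition \ref{prop-boundary-hyperbolic}. Hence $x=1$.
\item If $N_1$ is Seifert fibered, the same argument as in part (1) forces $x$ into $\langle c_1\rangle$. Then $c^k=c_1^l$ holds in $\pi_1(S)\cong\bZ^2$. The fibers of $N_0$ and $N_1$ do not match on $S$ by JSJ minimality, so $c$ and $c_1$ are linearly independent, and therefore $k=l=0$.
\end{itemize}

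\textbf{Main obstacle.} I expect the bookkeeping in part (2) to be the hardest step: treating self-glued pieces, where $v_1$ may be a translate of $v_0$, and the degenerate case in which the geodesic turns back at a vertex whose edge stabilizers coincide. There I would first try to reduce to the malnormality statements inside a single vertex group, choosing coset representatives carefully along the geodesic.
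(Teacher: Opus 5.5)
Your framework is the one the paper attributes to Cerocchi: let $x\in\pi_1(T)\cap g\pi_1(T)g^{-1}$ act on the Bass--Serre tree, observe that it fixes a geodesic pointwise, and read off constraints from the vertex groups along that geodesic. Your uniqueness paragraph and your reduction of part (1) to the base orbifold are fine, modulo a remark at the end. However, the two situations you postpone as ``bookkeeping'' are exactly where the argument as written fails.

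\textbf{The $Q$-vertex.} The main problem is a vertex whose piece is the twisted $[0,1]$-bundle $Q$ over the Klein bottle. $Q$ can be a JSJ piece adjacent to $N_0$, glued along $S$ so that the fiber of $N_0$ matches neither Seifert structure of $Q$. In that case your claim that at $v_1$ ``the same argument as in part (1) forces $x$ into $\langle c_1\rangle$'' is false. Indeed, $\pi_1(\partial Q)$ has index $2$ in $\pi_1(Q)$ and is therefore normal. So the stabilizers of the two distinct edges at $v_1$ are the same group $\pi_1(\partial Q)\cong\bZ^2$, and they impose no condition on $x$; moreover $c_1$ is not even well defined there. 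No reduction to malnormality inside that single vertex group can work.

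The missing idea is to combine the constraints from \emph{both} neighbours of the $Q$-vertex.
These neighbours are $w$ and $kw$ with $k\in\pi_1(Q)\setminus\pi_1(\partial Q)$; both are translates of the unique piece $N'$ adjacent to $Q$, and possibly $N'=N_0$. If $N'$ is hyperbolic, you get $x=1$ directly. Otherwise $x\in\langle\alpha\rangle\cap\langle k\alpha k^{-1}\rangle$, where $\alpha$ is the fiber slope of $N'$ on $\partial Q$. Write $\pi_1(Q)=\langle a,b\mid aba^{-1}=b^{-1}\rangle$ and $\pi_1(\partial Q)=\langle a^2,b\rangle$. Conjugation by $k$ then acts by $a^2\mapsto a^2$, $b\mapsto b^{-1}$. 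Its eigenlines $\langle a^2\rangle$ and $\langle b\rangle$ are precisely the fibers of the two Seifert structures of $Q$. By JSJ minimality $\alpha$ is neither of these, so the intersection is trivial.

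\textbf{The length-one case.} If $N_0$ is glued to itself along a JSJ torus, $gv_0$ can be adjacent to $v_0$, and then no vertex $v_1$ carries two geodesic edges. This is what the peripheral extension in the paper's sketch handles. Attach a leaf for each coset $g'\pi_1(T)$ to $g'v_0$, with stabilizer $g'\pi_1(T)g'^{-1}$. Then:
\begin{enumerate}
\item $x$ fixes the geodesic between the leaves $\pi_1(T)$ and $g\pi_1(T)$, so $v_0$ and $gv_0$ become interior vertices;
\item part (1) applied at $gv_0$ gives $x\in g\langle c\rangle g^{-1}$;
\item the fibers on the two sides of the self-glued torus do not match, so $x=1$.
\end{enumerate}
In this setup the argument becomes uniform. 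Every interior non-$Q$ vertex forces $x$ into its fiber subgroup, or to $1$ if the piece is hyperbolic. Two consecutive such vertices force $x=1$. $Q$-vertices are handled as above; they are never adjacent to a leaf or to each other.

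\textbf{Part (1).} Saying $\langle\delta\rangle$ is a ``free factor or maximal cyclic subgroup'' does not give malnormality once there is torsion. For example, $\langle uv\rangle$ is maximal cyclic and normal in $\langle u\rangle*\langle v\rangle\cong\bZ/2*\bZ/2$. Also, your list of excluded bases omits the M\"obius band and the disk with two cone points of order $2$. These are the two bases of $Q$, and malnormality genuinely fails there. What you need is that $B$ is a hyperbolic $2$-orbifold. Then two facts follow from the action on $\mathbb{H}^2$, because distinct lifts of boundary components have distinct stabilizers:
\begin{enumerate}
\item $\langle\delta_T\rangle$ is malnormal in $\pi_1^{\mathrm{orb}}(B)$;
\item $\langle\delta_T\rangle\cap\bar h\langle\delta_S\rangle\bar h^{-1}=1$ for every boundary component $S\neq T$.
\end{enumerate}
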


For the proof, Cerocchi used the Bass-Serre tree, on which $\pi_1(N)$ acts, associated to the peripheral extension of the graph of groups \cite[Section 3.1]{Cerocchi}.
Take $h\in \pi_1(T) \cap \left( g\cdot \pi_1(T)\cdot g^{-1}\right) $.
Proposition \ref{prop-boundary-Seifert} is proved by studying geodesics connecting two leaves (cosets) $1\cdot \pi_1(T), g\cdot \pi_1(T) \in \pi_1(N)/\pi_1(T)$ in a subtree fixed by the action of $h$.
See \cite[Proposition 3.6]{Cerocchi} for details.

\begin{remark}\label{rem-Seifert-unique}
Note that when the total space is orientable, a $[0,1]$-bundle over the torus is homeomorphic to $S^1\times S^1 \times [0,1]$, and a $[0,1]$-bundle over the Klein bottle is a compact $3$-manifold with a single boundary component.
From the definition of $N_0$, if $N$ is not homeomorphic to $S^1\times D^2$ or a $[0,1]$-bundle over the torus or the Klein bottle, then $N_0$ is homeomorphic to none of them.
Moreover, in this case, a Seifert fibered structure of $N_0$ (if it exists) is unique up to isotopy. See \cite[Theorem 2.3]{hatcher}.
Let us also note that $H_2(N;\bZ/2\bZ)\cong \bZ/2\bZ$ if $N$ is a $[0,1]$-bundle over the torus or the Klein bottle. 
\end{remark}

\subsection{Proof of Theorem \ref{thm-ml-preserve}}

\begin{lemma}\label{lem-conj-single}
    For any $f \in \pi_1 (\partial N_{K_0}, y_0 )$, 
    $h(f)$ is conjugate to $(\sigma_0)_*(f)$ in $\Gphip$. 
\end{lemma}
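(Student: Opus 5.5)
The plan is to combine Proposition \ref{prop-GL_n-rep} with conjugacy separability of $\Gphip$ (Theorem \ref{thm-conj-sep}). Fix $f\in\pi_1(\partial N_{K_0},y_0)$ and set $g_1\coloneqq h(f)$, $g_2\coloneqq(\sigma_0)_*(f)$ in $\Gphip$. We argue by contradiction: suppose $g_1$ and $g_2$ are not conjugate in $\Gphip$. The group $\Gphip=\pi_1(M'_\varphi)$ is the fundamental group of a compact orientable $3$-manifold, so Theorem \ref{thm-conj-sep} applies. It yields a finite group $H$ and a homomorphism $p\colon \Gphip\to H$ such that $p(g_1)$ and $p(g_2)$ are not conjugate in $H$.

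Next we pass to a finite dimensional complex representation that detects this non-conjugacy. Let $\rho_{\mathrm{reg}}\colon H\to \GL(\bC[H])$ be the regular representation, or more conveniently the direct sum $\bigoplus_\chi \rho_\chi$ of all irreducible complex representations of $H$. In a finite group, two elements are conjugate if and only if every irreducible character takes the same value on them, since the irreducible characters span the class functions. Hence there is an irreducible $\rho_\chi$ with $\chi(p(g_1))\neq\chi(p(g_2))$. Put $\rho\coloneqq\rho_\chi\circ p\colon \Gphip\to\GL_n(\bC)$.

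By the restated form of Proposition \ref{prop-GL_n-rep}, the representations $\rho\circ h\circ(\text{incl})$ and $\rho\circ(\sigma_0)_*$ of $\pi_1(\partial N_{K_0})$ are equivalent. In particular $\rho(h(f))$ and $\rho((\sigma_0)_*(f))$ are conjugate in $\GL_n(\bC)$, so they have equal traces. That is, $\chi(p(g_1))=\chi(p(g_2))$, which is a contradiction. Two bookkeeping points need checking: the identifications of base points via $\gamma_0,\gamma_1$ and the identification $\Lambda_{K_0}\cong\partial N_{K_0}$ through $e_0$. These only change things by inner automorphisms, so conjugacy classes are unaffected.

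The main obstacle is the deep input, Theorem \ref{thm-conj-sep}, which we simply invoke; everything else is character theory of finite groups. The one point to verify is that $M'_\varphi$ is compact and orientable. It is a compact submanifold of the gluing of oriented pieces, and $L_\varphi$ is orientable as a Lagrangian in an exact setting diffeomorphic to a link complement. Note that this lemma gives conjugacy elementwise, with the conjugating element depending on $f$. The harder task of finding a single $g$ that works for both $m_0$ and $l_0$ is deferred to the subsequent argument using malnormality (Propositions \ref{prop-boundary-hyperbolic} and \ref{prop-boundary-Seifert}).
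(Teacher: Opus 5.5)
Your proposal is correct and follows the paper's proof: Proposition \ref{prop-GL_n-rep} gives equal traces of $\rho(h(f))$ and $\rho((\sigma_0)_*(f))$ for every $\rho$; the irreducible characters of a finite quotient span the class functions, so the images are conjugate there; and conjugacy separability (Theorem \ref{thm-conj-sep}) then lifts this to conjugacy in $\Gphip$. The only difference is presentational: you argue by contradiction with one separating irreducible character, whereas the paper argues directly for every finite quotient. Your passing mention of the regular representation plays no role, since its character does not separate conjugacy classes.
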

\begin{proof}
    Fix any $f \in \pi_1 (\partial N_{K_0}, y_0 )$  and put $f_\varphi \coloneqq (\sigma_0)_*(f)\in \Gphip$.
    By Proposition \ref{prop-GL_n-rep}, for any representation $\rho \colon \Gphip \to \GL_n(\bC )$, 
    $\rho (h(f))$ is conjugate to $\rho (f_\varphi)$ in $\GL_n (\bC)$, 
    and hence $\tr (\rho (h(f)))=\tr (\rho (f_\varphi))$. 

    Choose a finite group $H$ and a morphism $p\colon G\to H$ arbitrarily. 
    Let $\mathrm{Irr}_H =\{ \rho_1,\dots \rho_k\}$ be the set of the finite dimensional irreducible complex representations of $H$. 
    For any $\rho_i\in \mathrm{Irr}_H$, we have $\tr (\rho_i \circ p (h(f)))=\tr (\rho_i \circ p (f_\varphi))$.  
    Since the character functions $\{ \tr (\rho_i(\mathchar`- )) \}_i$ form a basis for the vector space of the class functions on $H$, it follows that $p (h(f))$ and $p (f_\varphi)$ are conjugate in $H$. 
    
    Since $\Gphip$ is conjugacy separable by Theorem \ref{thm-conj-sep}, we can conclude that
    $h(f)$ and $f_\varphi$ are conjugate in $\Gphip$. 
\end{proof}

Lemma \ref{lem-conj-single} implies that for any $f \in \pi_1 (\partial N_{K_0}, y_0 )$, there is an element $ g_f\in \Gphip$ satisfying $g_f \cdot h(f) \cdot g_f^{-1}=(\sigma_0)_*(f)$.
For each pair $(c,d)\in \bZ^2$, consider the case $f=m_0^cl_0^d\in \pi_1 (\partial N_{K_0}, y_0 )$ and fix an element $g_{c,d}\in \Gphip$ satisfying $g_{c,d}\cdot  h(\emp^c\ellp^d) \cdot g_{c,d}^{-1}=\emphip^c\ellphip^d$.
Then, we define
\[
    h_{c,d}\colon \Gp\to \Gphip ,\  f \mapsto g_{c,d} \cdot  h(f) \cdot g_{c,d}^{-1}.
\]
By definition, $h_{c,d}$ is a surjection satisfying $h_{c,d} (\emp^c\ellp^d)=\emphip^c\ellphip^d$. 

The existence of $h_{1,0}$ gives a constraint on the topology of $M_\varphi$. (This observation is not needed for the proof of Theorem \ref{thm-ml-preserve}.)
\begin{lemma}
    There exists a knot $K_\varphi\subset \bR^3$ such that $M_\varphi$ is homeomorphic to $\bR^3\setminus K_\varphi$. 
    In particular, $\Gphip$ is isomorphic to a knot group. 
\end{lemma}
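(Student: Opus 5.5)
The plan is to Dehn fill the compact manifold $M'_{\varphi}$ along the slope $\sigma_0(m_0)$, use $h_{1,0}$ to show that the filled manifold is simply connected, and then invoke the Poincaré conjecture.

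First recall the shape of $M'_{\varphi}$. It is a compact, connected, orientable $3$-manifold. Its boundary is the single torus $\sigma_0(\partial N_{K_0})$. Let $\widehat{M}$ be the closed orientable $3$-manifold obtained by gluing a solid torus $V=S^1\times D^2$ to $M'_{\varphi}$ along this boundary, so that $\{\mathrm{pt}\}\times\partial D^2$ is sent to a curve representing $\emphip=(\sigma_0)_*(m_0)$.

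\textbf{Step 1: the filled manifold is simply connected.} By Seifert--van Kampen, $\pi_1(\widehat{M})\cong \Gphip/\la\la \emphip\ra\ra$. Use the surjection $h_{1,0}\colon \Gp\to\Gphip$, which satisfies $h_{1,0}(\emp)=\emphip$. It descends to a surjection
\[ \Gp/\la\la \emp\ra\ra \twoheadrightarrow \Gphip/\la\la \emphip\ra\ra . \]
A knot group is normally generated by its meridian, so the left-hand side is trivial. Hence $\pi_1(\widehat{M})=\{1\}$.

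\textbf{Step 2: identify $\widehat{M}$ and extract the knot.} By Perelman's resolution of the Poincaré conjecture, $\widehat{M}\cong S^3$. Let $K_\varphi\subset S^3$ be the core $S^1\times\{0\}$ of $V$. Then $M'_{\varphi}=\widehat{M}\setminus \mathrm{int}\,V$ is homeomorphic to $S^3\setminus N_{K_\varphi}$, and we may move $K_\varphi$ off the point $\infty$.

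\textbf{Step 3: pass from $M'_{\varphi}$ back to $M_{\varphi}$.} Comparing the constructions, $M_{\varphi}$ differs from $M'_{\varphi}$ in two ways.
\begin{itemize}
\item The closed boundary collar is replaced by the open positive end $\psi([a_0,\infty)\times\Lambda_{K_0})$.
\item The point $\infty\in S^3\setminus N_{K_1}$ is removed.
\end{itemize}
Hence $M_{\varphi}\cong \mathrm{int}(M'_{\varphi})\setminus\{\mathrm{pt}\}\cong (S^3\setminus K_\varphi)\setminus\{\mathrm{pt}\}\cong \bR^3\setminus K_\varphi$. Consequently $\Gphip\cong \pi_1(\bR^3\setminus K_\varphi)$ is a knot group.

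The only deep input is the Poincaré conjecture. The remaining point needing care is Step 3: one must check that the homeomorphism type is as claimed, in particular that the base point and the end structure match. This is routine, because removing a point from the interior of a connected manifold is well defined up to homeomorphism.
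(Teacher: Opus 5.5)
Your proposal is correct and follows essentially the same route as the paper: Dehn fill $M'_{\varphi}$ along $(\sigma_0)_*(m_0)$, use the surjection $h_{1,0}$ (which sends the meridian of $K_0$ to $(\sigma_0)_*(m_0)$) to see that the filled manifold is simply connected, apply the Poincar\'e conjecture, and take the core of the filling solid torus as $K_\varphi$. Your Step 3 spells out the passage from the compact $M'_{\varphi}$ back to $M_{\varphi}$ (attach the open end, delete $\infty$), which the paper leaves implicit.
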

\begin{proof}
    Firstly, note the following basic fact: 
    Let $N$ be a compact $3$-manifold and $T\subset \partial N$ be a troidal boundary component of $N$. 
    Choose a base point $y\in T$ and a simple closed curve $\gamma\colon [0,1]\to T$ satisfying $\gamma(0)=\gamma (1)=y$.
    Let $\gamma$ also denote the elements of $\pi_1(T,y)$ and $\pi_1(N,y)$ represented by the path $\gamma$. 
    Let $N_\gamma$ be the $3$-manifold $N\cup_f (S^1\times D^2)$ obtained by gluing $N$ and $S^1\times D^2$ by a homeomorphism $f\colon T\to \partial (S^1\times D^2)$ such that $f(\gamma)$ bounds a disk in $S^1\times D^2$. 
    By Seifert--van Kampen theorem, $\pi_1(N_\gamma ,y)$ is naturally isomorphic to $\pi_1 (N,y)/\langle\langle \gamma \rangle\rangle$, where $\langle\langle \gamma \rangle\rangle$ is the smallest normal subgroup of $\pi_1 (N,y)$ that contains $\gamma$. 

    The surjection $h_{1,0}\colon \Gp\to \Gphip$ induces a surjection $\Gp/\langle\langle \emp \rangle\rangle\to \Gphip/\langle\langle \emphip \rangle\rangle$. 
    By the above observation, $\Gp/\langle\langle \emp\rangle\rangle = \pi_1 ( \left(S^3\setminus N_{K_1}\right) \cup N_{K_1}) = \pi_1(S^3)$ is trivial, and
    thus $\Gphip/\langle\langle \emphip \rangle\rangle$ is also a trivial group. 
    Again by the above observation, $\Gphip/\langle\langle \emphip \rangle\rangle$ is isomorphic to the fundamental group of
    \[(M_\varphi')_{\emphip}\coloneqq M_\varphi'\cup_f (S^1\times D^2),\]
    where $f\colon \sigma_0(\Lambda_{K_0})\to \partial (S^1\times D^2)$ is a homeomorphism satisfying $f_*(\emphip)= 1 \in \pi_1 (S^1\times D^2)$. 
    The closed $3$-manifold $(M_\varphi')_{\emphip}$ is homeomorphic to $S^3$ by the affirmative resolution of the Poincar\'e conjecture. Through a homeomorphism $(M_\varphi')_{\emphip}\cong S^3$, $S^1\times D^2\subset (M_\varphi')_{\emphip}$ corresponds to a tubular neighborhood of some knot $K_\varphi$ in $S^3$. 
    Hence $M_\varphi$ is homeomorphic to the knot complement $\bR^3\setminus K_\varphi$.
\end{proof}

\begin{lemma}
    If $(\sigma_0)_*\colon \pi_1(\Lambda_{K_0},y_0)\to \Gphip$ is not injective, $M'_{\varphi}$ is homeomorphic to $S^1\times D^2$.
    Moreover, $h(\ellp)=\ellphip$ and $h(\emp)=\emphip$.
\end{lemma}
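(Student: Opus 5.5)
The plan is to derive the first assertion from Lemma \ref{lem-solidtorus}, and then to compute inside $\Gphip\cong\bZ$.

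\textbf{Step 1: the solid torus.} As shown in the proof of Lemma \ref{lem-LO}, $M'_{\varphi}$ is a compact irreducible $3$-manifold. Its boundary is the single torus $\sigma_0(\partial N_{K_0})$. If $(\sigma_0)_*\colon \pi_1(\Lambda_{K_0})\to \Gphip$ is not injective, then Lemma \ref{lem-solidtorus} applies with $N=M'_{\varphi}$ and $T=\sigma_0(\partial N_{K_0})$. Hence $M'_{\varphi}\cong S^1\times D^2$ and $\Gphip\cong\bZ$.

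\textbf{Step 2: reduce to $H_1$.} Since $\Gphip$ is abelian, conjugation acts trivially on it. So Lemma \ref{lem-conj-single}, which says $h(f)$ is conjugate to $(\sigma_0)_*(f)$, already gives $h(f)=(\sigma_0)_*(f)$ for every $f\in\pi_1(\partial N_{K_0})$. In particular this yields $h(\emp)=\emphip$ and $h(\ellp)=\ellphip$.

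\textbf{Cross-check via the Hurewicz map.} To avoid leaning only on the heavy conjugacy-separability input, I would also verify Step 2 directly. Because $\Gphip\cong\bZ$, the Hurewicz map $p_1$ is an isomorphism, so it suffices to compare images in $H_1(M_\varphi;\bZ)\cong\bZ$.
\begin{itemize}
\item For the meridian: $p_1(\emphip)=1$ by the normalization of $p_1$. On the other side, $p_1(h(\emp))=\bar h(p_0(m'_0))=c$. Here $c=\pm1$ and $\bar h$ are as in the proof of Proposition \ref{prop-GL_n-rep}.
\item To get $c=1$, apply Proposition \ref{prop-GL_n-rep} to the $1$-dimensional representation $g\mapsto 2^{p_1(g)}$. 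Evaluating on $m_0$ gives $2^{c}=2^{1}$, so $c=1$.
\item For the longitude: evaluating the same representation on $l_0$ gives $p_1(h(\ellp))=\bar h(0)=0$. By the proof of Proposition \ref{prop-b=0}, $p_1(\ellphip)=b=0$.
\end{itemize}
Since $p_1$ is injective on $\Gphip\cong\bZ$, these computations give $h(\emp)=\emphip$ and $h(\ellp)=\ellphip$.

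\textbf{Main obstacle.} The only delicate point is whether Lemma \ref{lem-solidtorus} applies, which needs $M'_{\varphi}$ to be compact and irreducible with toroidal boundary. Compactness holds because $L_\varphi\cap\psi([a_1,a_0]\times U^*\bR^3)$ is compact. Irreducibility was established in Lemma \ref{lem-LO}, since $M'_\varphi$ there differs from the present one only by an open collar.

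A side remark: once $M'_{\varphi}\cong S^1\times D^2$, the surjection $\bar r_1$ shows $\pi_1(\bR^3\setminus K_1)$ is cyclic, so $K_1$ is the unknot. This is consistent with the above but is not needed for the proof.
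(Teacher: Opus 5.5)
Your argument is the paper's proof: Lemma \ref{lem-solidtorus}, applied to the compact irreducible $M'_{\varphi}$, gives $M'_{\varphi}\cong S^1\times D^2$ and $\Gphip\cong\bZ$, and then abelianness combined with Lemma \ref{lem-conj-single} gives $h(\emp)=\emphip$ and $h(\ellp)=\ellphip$. Your Hurewicz cross-check is also correct. It shows that in this degenerate case the conclusion already follows from Propositions \ref{prop-GL_n-rep} and \ref{prop-b=0}, because the $1$-dimensional representation $g\mapsto 2^{p_1(g)}$ is faithful on $\Gphip\cong\bZ$, so conjugacy separability (Theorem \ref{thm-conj-sep}) is not needed here.
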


\begin{proof}
Suppose that  $(\sigma_0)_*$ is not injective.
By Lemma \ref{lem-solidtorus}, $M'_{\varphi} \cong S^1\times D^2$. Hence $\Gphip \cong\bZ$.
Since $\Gphip$ is abelian, it follows from Lemma \ref{lem-conj-single} that $h(\ellp)=\ellphip$ and $h(\emp)=\emphip$.
\end{proof}

From now on, we assume that $(\sigma_0)_*\colon \pi_1(\partial N_{K_0},y_0)\to \Gphip$ is injective. 
Denote its image by
\[H \coloneqq  (\sigma_0)_*(\pi_1(\partial N_{K_0},y_0))\subset \Gphip.\] 

Consider the JSJ decomposition of $M_\varphi'$ and let $M_0\subset M_\varphi'$ be the JSJ component containing $\partial M'_{\varphi} =\sigma_0 (\partial N_{K_0})$. 
Note that both $H\to \pi_1(M_0,\sigma_0(y_0))$ and $\pi_1(M_0,\sigma_0(y_0))\to \Gphip$ are injective. 
We divide into cases according to whether $M_0$ is hyperbolic or Seifert-fibered. 

Let us discuss the case where $M_0$ is hyperbolic.
Note that by definition, $h_{1,0}(\emp)=\emphip$.

\begin{lemma}\label{lem-hyp-ml}
    If $M_0$ is hyperbolic, $h_{1,0}(\ellp)=\ellphip$. 
\end{lemma}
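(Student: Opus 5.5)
The idea is to combine the two conjugations we have for free and use the malnormality of $H$ from Proposition~\ref{prop-boundary-hyperbolic}. We know $h_{1,0}(\emp)=\emphip$ by construction. By Lemma~\ref{lem-conj-single}, $h_{1,0}(\ellp)$ is conjugate to $\ellphip$, since $h_{1,0}$ differs from $h$ by an inner automorphism. Likewise $h_{1,0}(\emp\ellp)$ is conjugate to $\emphip\ellphip$. So there are $u,v\in\Gphip$ with
\[ h_{1,0}(\ellp)=u\,\ellphip\,u^{-1},\qquad \emphip\, h_{1,0}(\ellp)=h_{1,0}(\emp\ellp)=v\,\emphip\ellphip\,v^{-1}. \]
The goal is to force $u$ into $H$ (or into the centralizer of $\ellphip$ in $\Gphip$); then $h_{1,0}(\ellp)=\ellphip$ follows.

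First I use commutativity. Since $\emp$ and $\ellp$ commute in $\Gp$, the images $\emphip$ and $u\ellphip u^{-1}$ commute in $\Gphip$. Hence $u\ellphip u^{-1}$ lies in the centralizer of $\emphip$.

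Next I check that this centralizer is $H$ in the hyperbolic case. Since $M'_{\varphi}$ is irreducible with incompressible torus boundary and $M_0$ is hyperbolic, $H$ is malnormal by Proposition~\ref{prop-boundary-hyperbolic}. If $x$ commutes with $\emphip\neq 1$, then $\emphip\in H\cap xHx^{-1}$. Malnormality then gives $x\in H$. So $C(\emphip)\subset H$, and therefore $u\ellphip u^{-1}\in H$.

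Now $\ellphip\neq 1$, because $(\sigma_0)_*$ is injective. The element $u\ellphip u^{-1}$ is a nontrivial element of $H\cap uHu^{-1}$, so malnormality again gives $u\in H$. Since $H\cong\bZ^2$ is abelian, $u\ellphip u^{-1}=\ellphip$, and hence $h_{1,0}(\ellp)=\ellphip$.

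The step needing the most care is the claim that centralizers of nontrivial elements of $H$ lie in $H$. This is exactly the malnormality statement, so it should be immediate. The only thing to confirm is that the hypotheses of Proposition~\ref{prop-boundary-hyperbolic} hold for $N=M'_{\varphi}$ and $T=\sigma_0(\partial N_{K_0})$:
\begin{itemize}
\item $M'_{\varphi}$ is irreducible, which was shown in the proof of Lemma~\ref{lem-LO};
\item its boundary is toroidal;
\item $T$ lies in the hyperbolic JSJ piece $M_0$.
\end{itemize}
With this, the auxiliary element $v$ is not even needed.
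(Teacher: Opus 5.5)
Your argument is correct and is essentially the paper's proof: the commutation of $h_{1,0}(\ellp)$ with $\emphip$, together with the malnormality of $H$ from Proposition~\ref{prop-boundary-hyperbolic}, puts $h_{1,0}(\ellp)$ in $H$, and a second use of malnormality forces the element conjugating $h_{1,0}(\ellp)$ to $\ellphip$ (supplied by Lemma~\ref{lem-conj-single}) into the abelian group $H$. Your centralizer phrasing of the first step is only a repackaging of this, and, as you note yourself, the auxiliary element $v$ is unnecessary.
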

\begin{proof}
    By $h_{1,0}(\emp)=\emphip$, we have
    \[h_{1,0}(\ellp) \cdot \emphip \cdot  h_{1,0}(\ellp)^{-1}=h_{1,0}(\ellp\emp\ellp^{-1})=h_{1,0}(\emp)=\emphip .\]
    This means that $H\cap \left( h_{1,0}(\ellp) \cdot H \cdot h_{1,0}(\ellp)^{-1}\right) \neq \{1\}$.
    Since $H=(\sigma_0)_*(\pi_1(\partial N_{K_0},y_0))$ is a malnormal subgroup of $\Gphip$ by Proposition \ref{prop-boundary-hyperbolic},
    $h_{1,0}(\ellp)\in H$. 
    
    By the definition of $h_{1,0}$, $h_{1,0}(\ellp)$ is conjugate to $h(\ellp)$ in $\Gphip$. 
    In addition, by Lemma \ref{lem-conj-single}, $h(\ellp)$ is conjugate to $\ellphip$ in $\Gphip$. 
    Hence $h_{1,0}(\ellp)$ and $\ellphip$ are conjugate in $\Gphip$.
    Let us choose an element $g \in \Gphip$ satisfying $g \cdot h_{1,0}(\ellp) \cdot g^{-1} =\ellphip$. 
    Since $\ellphip\in H\cap g H g^{-1}$, again by the malnormality of $H$ in $\Gphip$, it follows that $g \in H$. 
    Since $H$ is abelian, $\ellphip=g \cdot h_{1,0}(\ellp) \cdot g^{-1}= h_{1,0}(\ellp)$. 
\end{proof}

It remains to discuss the case where $M_0$ is Seifert-fibered. 
Note that $M_\varphi'$ with the toroidal boundary $\sigma_0(\partial N_{K_0})$ satisfies the assumption of Proposition \ref{prop-boundary-Seifert} since $H\to \Gphip$ is injective and $H_2(M_\varphi';\bZ/2\bZ)=0$.
(Indeed, $H^1(M'_{\varphi}, \partial M'_{\varphi};\bZ/2\bZ)=0$ since the map $\bZ/2\bZ \cong H^1(M'_{\varphi};\bZ/2\bZ) \to H^1(\partial M'_{\varphi} ;\bZ/2\bZ) \cong (\bZ/2\bZ)^{\oplus 2}$ induced by the inclusion is injective.)

Let $c\in H\subset \pi_1(M_0,\sigma_0(y_0))$ be the element represented by the regular fiber of the Seifert fibration of $M_0$.
Since $c$ is a primitive element of $H \cong\bZ^2$, there is at most one $d'\in \bZ$ such that $\emphip\ellphip^{d'} \in \langle c \rangle$. 
Therefore, we can choose $d\in \bZ$ so that $\emphip\ellphip^d, \emphip\ellphip^{d+1} \notin \langle c \rangle $. 

\begin{lemma}\label{lem-Seifert-ml}
    $h_{1,d}(\emp)=\emphip$ and $h_{1,d}(\ellp)=\ellphip$.
\end{lemma}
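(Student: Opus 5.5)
The plan is to write $x \coloneqq h_{1,d}(\emp)$ and $y \coloneqq h_{1,d}(\ellp)$, show first that $x,y\in H$, and then pin them down using the two elements $\emphip\ellphip^{d}$ and $\emphip\ellphip^{d+1}$, neither of which lies in $\langle c\rangle$. Since $\emp$ and $\ellp$ commute, $x$ and $y$ commute. By construction,
\[x y^{d} = h_{1,d}(\emp\ellp^{d}) = \emphip\ellphip^{d} \in H\setminus\langle c\rangle .\]

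The key tool is the following claim. \emph{If $u\in H\setminus\langle c\rangle$, $g\in\Gphip$ and $gug^{-1}\in H$, then $g\in H$.} To prove it, suppose $g\notin H$, so that $gug^{-1}\in H\cap gHg^{-1}$, and apply Proposition~\ref{prop-boundary-Seifert}; its hypotheses were checked above.
\begin{itemize}
\item If $g\notin\pi_1(M_0)$, the intersection is trivial. Then $u=1$, which is absurd.
\item If $g\in\pi_1(M_0)\setminus H$, then $gug^{-1}\in\langle c\rangle$. The subgroup $\langle c\rangle$ is normal in $\pi_1(M_0)$, because the regular fiber generates a normal subgroup of the fundamental group of a Seifert fibered space. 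Hence $u\in g^{-1}\langle c\rangle g=\langle c\rangle$, a contradiction.
\end{itemize}

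\textbf{Step 1: $x,y\in H$.} Since $x$ commutes with $xy^d$, we have $x\,(\emphip\ellphip^d)\,x^{-1}=\emphip\ellphip^d\in H$. The claim with $u=\emphip\ellphip^d$ gives $x\in H$, and the same argument gives $y\in H$.

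\textbf{Step 2: identify $xy^{d+1}$.} The element $xy^{d+1}=h_{1,d}(\emp\ellp^{d+1})$ is conjugate to $h(\emp\ellp^{d+1})$, since $h_{1,d}$ is $h$ followed by conjugation by $g_{1,d}$. By Lemma~\ref{lem-conj-single}, $h(\emp\ellp^{d+1})$ is in turn conjugate to $\emphip\ellphip^{d+1}$. So there is $g\in\Gphip$ with
\[g\,\emphip\ellphip^{d+1}\,g^{-1}=xy^{d+1}\in H .\]
Because $\emphip\ellphip^{d+1}\notin\langle c\rangle$ by the choice of $d$, the claim gives $g\in H$. As $H$ is abelian, $xy^{d+1}=\emphip\ellphip^{d+1}$.

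\textbf{Step 3: conclude.} Dividing the identity of Step 2 by $xy^d=\emphip\ellphip^d$ inside the abelian group $H$ gives $y=\ellphip$. Substituting back gives $x=\emphip$.

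The main obstacle is the claim, namely controlling conjugators in the Seifert fibered case, where $H$ is not malnormal. This is exactly why $d$ is chosen so that both $\emphip\ellphip^{d}$ and $\emphip\ellphip^{d+1}$ avoid $\langle c\rangle$. The one fact to double-check is the normality of $\langle c\rangle$ in $\pi_1(M_0)$. This holds for the orientable Seifert fibered JSJ piece, which is not one of the excluded manifolds, so its regular fiber generates a normal cyclic subgroup.
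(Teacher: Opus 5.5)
Your proof is correct and follows essentially the paper's argument. You first use Proposition~\ref{prop-boundary-Seifert} with the fixed element $\emphip\ellphip^{d}$ to land in $H$, and then use Lemma~\ref{lem-conj-single} and Proposition~\ref{prop-boundary-Seifert} again with $\emphip\ellphip^{d+1}$ to force the conjugator into the abelian group $H$. The only real difference is that your claim relies on normality of $\langle c\rangle$ in $\pi_1(M_0)$, which is true but unnecessary: since $u\in H\cap g^{-1}Hg$, you can apply the proposition to $g^{-1}$ directly, which is exactly how the paper arranges its second step (conjugating $h_{1,d}(\emp\ellp^{d+1})$ onto $\emphip\ellphip^{d+1}$).
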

\begin{proof}
    By the definition of $h_{1,d}$, $h_{1,d}(\emp\ellp^d)=\emphip\ellphip^d$. 
    Since $h_{1,d}$ is a group homomorphism, it suffices to show $h_{1,d}(\emp\ellp^{d+1})=\emphip\ellphip^{d+1}$. 
    Since $H$ is abelian, 
    \begin{align*} h_{1,d}(\emp\ellp^{d+1}) \cdot \emphip\ellphip^d \cdot h_{1,d}(\emp\ellp^{d+1})^{-1} & =h_{1,d}(\emp\ellp^{d+1}\emp\ellp^d(\emp\ellp^{d+1})^{-1}) \\
    & =h_{1,d}(\emp\ellp^d) \\ &=\emphip\ellphip^d.
    \end{align*}
    Hence $\emphip\ellphip^d\in H\cap \left( h_{1,d}(\emp\ellp^{d+1}) \cdot H \cdot h_{1,d}(\emp\ellp^{d+1})^{-1}\right)$ and this implies 
    \[H\cap \left( h_{1,d}(\emp\ellp^{d+1}) \cdot H \cdot  h_{1,d}(\emp\ellp^{d+1})^{-1} \right) \not\subset \langle c \rangle  \]
    by the choice of $d$. 
    Hence, $h_{1,d}(\emp\ellp^{d+1})\in H$ by Proposition \ref{prop-boundary-Seifert}.  
    By the definition of $h_{1,d}$ and Lemma \ref{lem-conj-single}, $h_{1,d}(\emp\ellp^{d+1})$ and $\emphip\ellphip^{d+1}$ are conjugate in $\Gphip$. 
    Let us choose an element $g \in \Gphip$ satisfying $g \cdot h_{1,d}(\emp\ellp^{d+1}) \cdot g^{-1}=\emphip\ellphip^{d+1}$. 
    Then, $\emphip\ellphip^{d+1}\in H\cap gHg^{-1}$ holds. This implies $H\cap gHg^{-1} \not\subset \langle c \rangle$ again by the choice of $d$.  
    Hence $g$ is an element of $H$ by Proposition \ref{prop-boundary-Seifert}. 
    Since $H$ is abelian, $ h_{1,d}(\emp\ellp^{d+1})=g \cdot h_{1,d}(\emp\ellp^{d+1}) \cdot g^{-1}=\emphip\ellphip^{d+1}$.
\end{proof}

\begin{proof}[Proof of  Theorem \ref{thm-ml-preserve}]
The above three lemmata prove that Theorem \ref{thm-ml-preserve} holds if we take $g\in G'_{\varphi}$ as follows:
If $(\sigma_0)_*\colon \pi_1(\Lambda_{K_0})\to G'_{\varphi}$ is not injective, we take $g=1\in G'_{\varphi}\cong\bZ$.
If $(\sigma_0)_*$ is injective and $N_0$ is hyperbolic, we take $g=g_{1,0}$.
If $(\sigma_0)_*$ is injective and $N_0$ is Seifert fibered, we take $g=g_{1,d}$, where $d\in \bZ$ is chosen as above.
\end{proof}

\printbibliography

@article {GKS,
    AUTHOR = {Guillermou, St{\'e}phane and Kashiwara, Masaki and Schapira,
              Pierre},
     TITLE = {Sheaf quantization of {H}amiltonian isotopies and applications
              to nondisplaceability problems},
   JOURNAL = {Duke Math. J.},
  FJOURNAL = {Duke Mathematical Journal},
    VOLUME = {161},
      YEAR = {2012},
    NUMBER = {2},
     PAGES = {201--245},
      ISSN = {0012-7094},
     CODEN = {DUMJAO},
   MRCLASS = {53D35 (18F30)},
  MRNUMBER = {2876930},
MRREVIEWER = {Corrado Marastoni},
       DOI = {10.1215/00127094-1507367},
}

@book {KS90,
    AUTHOR = {Kashiwara, Masaki and Schapira, Pierre},
     TITLE = {Sheaves on manifolds},
    SERIES = {Grundlehren der Mathematischen Wissenschaften},
    VOLUME = {292},
 PUBLISHER = {Springer-Verlag, Berlin},
      YEAR = {1990},
     PAGES = {x+512},
      ISBN = {3-540-51861-4},
   MRCLASS = {58G07 (18F20 32C38 35A27)},
  MRNUMBER = {1299726 (95g:58222)},
}

@article {Gui23,
    AUTHOR = {Guillermou, St\'{e}phane},
     TITLE = {Sheaves and symplectic geometry of cotangent bundles},
   JOURNAL = {Ast\'{e}risque},
  FJOURNAL = {Ast\'{e}risque},
    NUMBER = {440},
      YEAR = {2023},
     PAGES = {x+274},
      ISSN = {0303-1179,2492-5926},
      ISBN = {978-2-85629-972-2},
   MRCLASS = {53D12 (18F20 35A27)},
  MRNUMBER = {4612528},
       DOI = {10.24033/ast.1199},
}

@article {NRSSZ15,
    AUTHOR = {Ng, Lenhard and Rutherford, Dan and Shende, Vivek and Sivek,
              Steven and Zaslow, Eric},
     TITLE = {Augmentations are sheaves},
   JOURNAL = {Geom. Topol.},
  FJOURNAL = {Geometry \& Topology},
    VOLUME = {24},
      YEAR = {2020},
    NUMBER = {5},
     PAGES = {2149--2286},
      ISSN = {1465-3060,1364-0380},
   MRCLASS = {53D42 (53D37)},
  MRNUMBER = {4194293},
MRREVIEWER = {Alexander\ Fel\cprime shtyn},
       DOI = {10.2140/gt.2020.24.2149},
       URL = {https://doi.org/10.2140/gt.2020.24.2149},
}

@article {robalo2018lemma,
    AUTHOR = {Robalo, Marco and Schapira, Pierre},
     TITLE = {A lemma for microlocal sheaf theory in the
              {$\infty$}-categorical setting},
   JOURNAL = {Publ. Res. Inst. Math. Sci.},
  FJOURNAL = {Publications of the Research Institute for Mathematical
              Sciences},
    VOLUME = {54},
      YEAR = {2018},
    NUMBER = {2},
     PAGES = {379--391},
      ISSN = {0034-5318,1663-4926},
   MRCLASS = {55U35 (18F99 32C38 35A27 55P42)},
  MRNUMBER = {3784874},
MRREVIEWER = {Birgit\ Richter},
       DOI = {10.4171/PRIMS/54-2-5},
       URL = {https://doi.org/10.4171/PRIMS/54-2-5},
}

@misc{LurieHA,
    title={Higher Algebra},
    author={Lurie, Jacob},
    URL = {https://www.math.ias.edu/~lurie/papers/HA.pdf},
    options={url=true}
}

@misc{NadlerShende20,
      title={Sheaf quantization in Weinstein symplectic manifolds}, 
      author={David Nadler and Vivek Shende},
      year={2020},
      eprint={2007.10154},
      archivePrefix={arXiv},
      primaryClass={math.SG},
}

@article {GPS24,
    AUTHOR = {Ganatra, Sheel and Pardon, John and Shende, Vivek},
     TITLE = {Microlocal {M}orse theory of wrapped {F}ukaya categories},
   JOURNAL = {Ann. of Math. (2)},
  FJOURNAL = {Annals of Mathematics. Second Series},
    VOLUME = {199},
      YEAR = {2024},
    NUMBER = {3},
     PAGES = {943--1042},
      ISSN = {0003-486X,1939-8980},
   MRCLASS = {53D37 (14J33 53D40)},
  MRNUMBER = {4740209},
       DOI = {10.4007/annals.2024.199.3.1},
}

@misc{O25,
      title={Topological constraints on clean Lagrangian intersections from $\mathbb{Q}$-valued augmentations}, 
      author={Yukihiro Okamoto},
      year={2025},
      eprint={2505.00330},
      archivePrefix={arXiv},
      primaryClass={math.SG},
      url={https://arxiv.org/abs/2505.00330}, 
}

@article {BRW,
    AUTHOR = {Boyer, Steven and Rolfsen, Dale and Wiest, Bert},
     TITLE = {Orderable 3-manifold groups},
   JOURNAL = {Ann. Inst. Fourier (Grenoble)},
  FJOURNAL = {Universit\'e{} de Grenoble. Annales de l'Institut Fourier},
    VOLUME = {55},
      YEAR = {2005},
    NUMBER = {1},
     PAGES = {243--288},
      ISSN = {0373-0956,1777-5310},
   MRCLASS = {57M05 (06F15 20F34 20F60 57M50)},
  MRNUMBER = {2141698},
MRREVIEWER = {Stephen\ P.\ Humphries},
       DOI = {10.5802/aif.2098},
       URL = {https://doi.org/10.5802/aif.2098},
}

@article {Li,
    AUTHOR = {Li, Wenyuan},
     TITLE = {Lagrangian cobordism functor in microlocal sheaf theory {I}},
   JOURNAL = {J. Topol.},
  FJOURNAL = {Journal of Topology},
    VOLUME = {16},
      YEAR = {2023},
    NUMBER = {3},
     PAGES = {1113--1166},
      ISSN = {1753-8416,1753-8424},
   MRCLASS = {53D42 (53D10 53D35)},
  MRNUMBER = {4638002},
MRREVIEWER = {Johan\ Asplund},
       DOI = {10.1112/topo.12310},
       URL = {https://doi.org/10.1112/topo.12310},
}

@article {Gab,
    AUTHOR = {Gabai, David},
     TITLE = {Surgery on knots in solid tori},
   JOURNAL = {Topology},
  FJOURNAL = {Topology. An International Journal of Mathematics},
    VOLUME = {28},
      YEAR = {1989},
    NUMBER = {1},
     PAGES = {1--6},
      ISSN = {0040-9383},
   MRCLASS = {57M25 (57R30)},
  MRNUMBER = {991095},
MRREVIEWER = {Cameron\ McA.\ Gordon},
       DOI = {10.1016/0040-9383(89)90028-1},
       URL = {https://doi.org/10.1016/0040-9383(89)90028-1},
}

@article {NiZ,
    AUTHOR = {Ni, Yi and Zhang, Xingru},
     TITLE = {Detection of knots and a cabling formula for
              {$A$}-polynomials},
   JOURNAL = {Algebr. Geom. Topol.},
  FJOURNAL = {Algebraic \& Geometric Topology},
    VOLUME = {17},
      YEAR = {2017},
    NUMBER = {1},
     PAGES = {65--109},
      ISSN = {1472-2747,1472-2739},
   MRCLASS = {57M25},
  MRNUMBER = {3604373},
MRREVIEWER = {Sangbum\ Cho},
       DOI = {10.2140/agt.2017.17.65},
       URL = {https://doi.org/10.2140/agt.2017.17.65},
}

@article {BS23,
    AUTHOR = {Baldwin, John A. and Sivek, Steven},
     TITLE = {Instantons and {L}-space surgeries},
   JOURNAL = {J. Eur. Math. Soc. (JEMS)},
  FJOURNAL = {Journal of the European Mathematical Society (JEMS)},
    VOLUME = {25},
      YEAR = {2023},
    NUMBER = {10},
     PAGES = {4033--4122},
      ISSN = {1435-9855,1435-9863},
   MRCLASS = {57R58 (57K10 57K31)},
  MRNUMBER = {4634689},
MRREVIEWER = {Nikolai\ N.\ Saveliev},
       DOI = {10.4171/jems/1280},
       URL = {https://doi.org/10.4171/jems/1280},
}

@misc{O23,
      title={On knot types of clean Lagrangian intersections in $T^*\mathbb{R}^3$}, 
      author={Yukihiro Okamoto},
      year={2025},
      eprint={2305.09912},
      archivePrefix={arXiv},
      primaryClass={math.SG},
      url={https://arxiv.org/abs/2305.09912}, 
}

@article {DG,
    AUTHOR = {Dunfield, Nathan M. and Garoufalidis, Stavros},
     TITLE = {Non-triviality of the {$A$}-polynomial for knots in {$S^3$}},
   JOURNAL = {Algebr. Geom. Topol.},
  FJOURNAL = {Algebraic \& Geometric Topology},
    VOLUME = {4},
      YEAR = {2004},
     PAGES = {1145--1153},
      ISSN = {1472-2747,1472-2739},
   MRCLASS = {57M25 (57M27 57M50)},
  MRNUMBER = {2113900},
MRREVIEWER = {Darren\ D.\ Long},
       DOI = {10.2140/agt.2004.4.1145},
       URL = {https://doi.org/10.2140/agt.2004.4.1145},
}

@article {CS,
    AUTHOR = {Culler, Marc and Shalen, Peter B.},
     TITLE = {Varieties of group representations and splittings of
              {$3$}-manifolds},
   JOURNAL = {Ann. of Math. (2)},
  FJOURNAL = {Annals of Mathematics. Second Series},
    VOLUME = {117},
      YEAR = {1983},
    NUMBER = {1},
     PAGES = {109--146},
      ISSN = {0003-486X},
   MRCLASS = {57N10},
  MRNUMBER = {683804},
MRREVIEWER = {G.\ Peter\ Scott},
       DOI = {10.2307/2006973},
       URL = {https://doi.org/10.2307/2006973},
}

@article {CCGLS,
    AUTHOR = {Cooper, D. and Culler, M. and Gillet, H. and Long, D. D. and
              Shalen, P. B.},
     TITLE = {Plane curves associated to character varieties of
              {$3$}-manifolds},
   JOURNAL = {Invent. Math.},
  FJOURNAL = {Inventiones Mathematicae},
    VOLUME = {118},
      YEAR = {1994},
    NUMBER = {1},
     PAGES = {47--84},
      ISSN = {0020-9910,1432-1297},
   MRCLASS = {57N10 (57M25)},
  MRNUMBER = {1288467},
MRREVIEWER = {Serge\ L.\ Tabachnikov},
       DOI = {10.1007/BF01231526},
       URL = {https://doi.org/10.1007/BF01231526},
}

@article {KSW,
    AUTHOR = {Kitano, Teruaki and Suzuki, Masaaki and Wada, Masaaki},
     TITLE = {Twisted {A}lexander polynomials and surjectivity of a group
              homomorphism},
   JOURNAL = {Algebr. Geom. Topol.},
  FJOURNAL = {Algebraic \& Geometric Topology},
    VOLUME = {5},
      YEAR = {2005},
     PAGES = {1315--1324},
      ISSN = {1472-2747,1472-2739},
   MRCLASS = {57M25 (57M05)},
  MRNUMBER = {2171811},
MRREVIEWER = {Mark\ Brittenham},
       DOI = {10.2140/agt.2005.5.1315},
       URL = {https://doi.org/10.2140/agt.2005.5.1315},
}

@article {AgLiu,
    AUTHOR = {Agol, Ian and Liu, Yi},
     TITLE = {Presentation length and {S}imon's conjecture},
   JOURNAL = {J. Amer. Math. Soc.},
  FJOURNAL = {Journal of the American Mathematical Society},
    VOLUME = {25},
      YEAR = {2012},
    NUMBER = {1},
     PAGES = {151--187},
      ISSN = {0894-0347,1088-6834},
   MRCLASS = {57M25 (57N10)},
  MRNUMBER = {2833481},
MRREVIEWER = {Bruno\ P.\ Zimmermann},
       DOI = {10.1090/S0894-0347-2011-00711-X},
       URL = {https://doi.org/10.1090/S0894-0347-2011-00711-X},
}

@incollection {ORS,
    AUTHOR = {Ohtsuki, Tomotada and Riley, Robert and Sakuma, Makoto},
     TITLE = {Epimorphisms between 2-bridge link groups},
 BOOKTITLE = {The {Z}ieschang {G}edenkschrift},
    SERIES = {Geom. Topol. Monogr.},
    VOLUME = {14},
     PAGES = {417--450},
 PUBLISHER = {Geom. Topol. Publ., Coventry},
      YEAR = {2008},
   MRCLASS = {57M25 (57M05 57M50)},
  MRNUMBER = {2484712},
MRREVIEWER = {Se-Goo\ Kim},
       DOI = {10.2140/gtm.2008.14.417},
       URL = {https://doi.org/10.2140/gtm.2008.14.417},
}

@article {ALSS,
    AUTHOR = {Aimi, Shunsuke and Lee, Donghi and Sakai, Shunsuke and Sakuma,
              Makoto},
     TITLE = {Classification of parabolic generating pairs of {K}leinian
              groups with two parabolic generators},
   JOURNAL = {Rend. Istit. Mat. Univ. Trieste},
  FJOURNAL = {Rendiconti dell'Istituto di Matematica dell'Universit\`a{} di
              Trieste. An International Journal of Mathematics},
    VOLUME = {52},
      YEAR = {2020},
     PAGES = {477--511},
      ISSN = {0049-4704,2464-8728},
   MRCLASS = {57M50 (20F36 57K10)},
  MRNUMBER = {4207648},
MRREVIEWER = {Jingyin\ Huang},
       DOI = {10.1007/s11004-019-09820-w},
       URL = {https://doi.org/10.1007/s11004-019-09820-w},
}

@article {SW,
    AUTHOR = {Silver, Daniel S. and Whitten, Wilbur},
     TITLE = {Knot group epimorphisms},
   JOURNAL = {J. Knot Theory Ramifications},
  FJOURNAL = {Journal of Knot Theory and its Ramifications},
    VOLUME = {15},
      YEAR = {2006},
    NUMBER = {2},
     PAGES = {153--166},
      ISSN = {0218-2165,1793-6527},
   MRCLASS = {57M25 (37B10)},
  MRNUMBER = {2207903},
MRREVIEWER = {John\ G.\ Ratcliffe},
       DOI = {10.1142/S0218216506004373},
       URL = {https://doi.org/10.1142/S0218216506004373},
}

@misc{SW2,
      title={Knot Group Epimorphisms, II}, 
      author={Daniel S. Silver and Wilbur Whitten},
      year={2008},
      eprint={0806.3223},
      archivePrefix={arXiv},
      primaryClass={math.GT},
      url={https://arxiv.org/abs/0806.3223}, 
}

@article {BZ,
    AUTHOR = {Burde, Gerhard and Zieschang, Heiner},
     TITLE = {Eine {K}ennzeichnung der {T}orusknoten},
   JOURNAL = {Math. Ann.},
  FJOURNAL = {Mathematische Annalen},
    VOLUME = {167},
      YEAR = {1966},
     PAGES = {169--176},
      ISSN = {0025-5831,1432-1807},
   MRCLASS = {55.20},
  MRNUMBER = {210113},
MRREVIEWER = {R.\ Bott},
       DOI = {10.1007/BF01362170},
       URL = {https://doi.org/10.1007/BF01362170},
}

@book {BZH,
    AUTHOR = {Burde, Gerhard and Zieschang, Heiner and Heusener, Michael},
     TITLE = {Knots},
    SERIES = {De Gruyter Studies in Mathematics},
    VOLUME = {5},
   EDITION = {extended},
 PUBLISHER = {De Gruyter, Berlin},
      YEAR = {2014},
     PAGES = {xiv+417},
      ISBN = {978-3-11-027074-7},
      note = {eISBN:978-3-11-027078-5},
   MRCLASS = {57-01 (57M25)},
  MRNUMBER = {3156509},
MRREVIEWER = {Swatee\ Naik},
}

@incollection {Hem,
    AUTHOR = {Hempel, John},
     TITLE = {Residual finiteness for {$3$}-manifolds},
 BOOKTITLE = {Combinatorial group theory and topology ({A}lta, {U}tah,
              1984)},
    SERIES = {Ann. of Math. Stud.},
    VOLUME = {111},
     PAGES = {379--396},
 PUBLISHER = {Princeton Univ. Press, Princeton, NJ},
      YEAR = {1987},
      ISBN = {0-691-08409-2},
      note = {pbk ISBN:0-691-08410-6}, 
   MRCLASS = {57M05 (20E26 20F34 57N10)},
  MRNUMBER = {895623},
}

@article {RZ,
    AUTHOR = {Rolfsen, Dale and Zhu, Jun},
     TITLE = {Braids, orderings and zero divisors},
   JOURNAL = {J. Knot Theory Ramifications},
  FJOURNAL = {Journal of Knot Theory and its Ramifications},
    VOLUME = {7},
      YEAR = {1998},
    NUMBER = {6},
     PAGES = {837--841},
      ISSN = {0218-2165,1793-6527},
   MRCLASS = {20F36 (16S34 57M25)},
  MRNUMBER = {1643939},
MRREVIEWER = {Vitaly\ A.\ Roman\cprime kov},
       DOI = {10.1142/S0218216598000425},
       URL = {https://doi.org/10.1142/S0218216598000425},
}

@article {HS,
    AUTHOR = {Howie, James and Short, Hamish},
     TITLE = {The band-sum problem},
   JOURNAL = {J. London Math. Soc. (2)},
  FJOURNAL = {Journal of the London Mathematical Society. Second Series},
    VOLUME = {31},
      YEAR = {1985},
    NUMBER = {3},
     PAGES = {571--576},
      ISSN = {0024-6107,1469-7750},
   MRCLASS = {57M25},
  MRNUMBER = {812788},
MRREVIEWER = {Martin\ Scharlemann},
       DOI = {10.1112/jlms/s2-31.3.571},
       URL = {https://doi.org/10.1112/jlms/s2-31.3.571},
}

@article {GP,
    AUTHOR = {Ganatra, Sheel and Pomerleano, Daniel},
     TITLE = {A log {PSS} morphism with applications to {L}agrangian
              embeddings},
   JOURNAL = {J. Topol.},
  FJOURNAL = {Journal of Topology},
    VOLUME = {14},
      YEAR = {2021},
    NUMBER = {1},
     PAGES = {291--368},
      ISSN = {1753-8416,1753-8424},
   MRCLASS = {53D40 (14N35 53D12)},
  MRNUMBER = {4235013},
MRREVIEWER = {Johan\ Asplund},
       DOI = {10.1112/topo.12183},
       URL = {https://doi.org/10.1112/topo.12183},
}

@article {AL,
    AUTHOR = {Asplund, Johan and Li, Yin},
     TITLE = {Persistence of unknottedness of clean {L}agrangian
              intersections},
   JOURNAL = {J. Topol.},
  FJOURNAL = {Journal of Topology},
    VOLUME = {18},
      YEAR = {2025},
    NUMBER = {4},
     PAGES = {Paper No. e70053},
      ISSN = {1753-8416,1753-8424},
   MRCLASS = {57R17 (53D37)},
  MRNUMBER = {5006905},
       DOI = {10.1112/topo.70053},
       URL = {https://doi.org/10.1112/topo.70053},
}

@article {SmW,
    AUTHOR = {Smith, Ivan and Wemyss, Michael},
     TITLE = {Double bubble plumbings and two-curve flops},
   JOURNAL = {Selecta Math. (N.S.)},
  FJOURNAL = {Selecta Mathematica. New Series},
    VOLUME = {29},
      YEAR = {2023},
    NUMBER = {2},
     PAGES = {Paper No. 29, 62},
      ISSN = {1022-1824,1420-9020},
   MRCLASS = {53D37 (14J30 14J33 16S38 53D45)},
  MRNUMBER = {4565163},
       DOI = {10.1007/s00029-023-00828-z},
       URL = {https://doi.org/10.1007/s00029-023-00828-z},
}

@article {Shende,
    AUTHOR = {Shende, Vivek},
     TITLE = {The conormal torus is a complete knot invariant},
   JOURNAL = {Forum Math. Pi},
  FJOURNAL = {Forum of Mathematics. Pi},
    VOLUME = {7},
      YEAR = {2019},
     PAGES = {e6, 16},
      ISSN = {2050-5086},
   MRCLASS = {57M25 (55N30 57M27)},
  MRNUMBER = {4010558},
MRREVIEWER = {Mohamed\ Elhamdadi},
       DOI = {10.1017/fmp.2019.1},
       URL = {https://doi.org/10.1017/fmp.2019.1},
}

@incollection {Ng-intro,
    AUTHOR = {Ng, Lenhard},
     TITLE = {A topological introduction to knot contact homology},
 BOOKTITLE = {Contact and symplectic topology},
    SERIES = {Bolyai Soc. Math. Stud.},
    VOLUME = {26},
     PAGES = {485--530},
 PUBLISHER = {J\'{a}nos Bolyai Math. Soc., Budapest},
      YEAR = {2014},
      ISBN = {978-3-319-02035-8},
      note = {eISBN:978-3-319-02036-5}, 
   MRCLASS = {57M25 (53D42)},
  MRNUMBER = {3220948},
MRREVIEWER = {Quach thi C\^{a}m V\^{a}n},
       DOI = {10.1007/978-3-319-02036-5\_10},
       URL = {https://doi.org/10.1007/978-3-319-02036-5_10},
}

@article {cornwell-1,
    AUTHOR = {Cornwell, Christopher R.},
     TITLE = {Knot contact homology and representations of knot groups},
   JOURNAL = {J. Topol.},
  FJOURNAL = {Journal of Topology},
    VOLUME = {7},
      YEAR = {2014},
    NUMBER = {4},
     PAGES = {1221--1242},
      ISSN = {1753-8416,1753-8424},
   MRCLASS = {57M27},
  MRNUMBER = {3286902},
MRREVIEWER = {Georgios\ Dimitroglou Rizell},
       DOI = {10.1112/jtopol/jtu016},
       URL = {https://doi.org/10.1112/jtopol/jtu016},
}

@article {cornwell-2,
    AUTHOR = {Cornwell, Christopher R.},
     TITLE = {K{CH} representations, augmentations, and {$A$}-polynomials},
   JOURNAL = {J. Symplectic Geom.},
  FJOURNAL = {The Journal of Symplectic Geometry},
    VOLUME = {15},
      YEAR = {2017},
    NUMBER = {4},
     PAGES = {983--1017},
      ISSN = {1527-5256,1540-2347},
   MRCLASS = {57M27 (57R17)},
  MRNUMBER = {3734607},
MRREVIEWER = {Alexander\ Fel\cprime shtyn},
       DOI = {10.4310/JSG.2017.v15.n4.a2},
       URL = {https://doi.org/10.4310/JSG.2017.v15.n4.a2},
}

@article {Ng,
    AUTHOR = {Ng, Lenhard},
     TITLE = {Framed knot contact homology},
   JOURNAL = {Duke Math. J.},
  FJOURNAL = {Duke Mathematical Journal},
    VOLUME = {141},
      YEAR = {2008},
    NUMBER = {2},
     PAGES = {365--406},
      ISSN = {0012-7094},
   MRCLASS = {53D40 (53D12 53D35 57M27)},
  MRNUMBER = {2376818},
MRREVIEWER = {Tobias Ekholm},
       DOI = {10.1215/S0012-7094-08-14125-0},
       URL = {https://doi.org/10.1215/S0012-7094-08-14125-0},
}

@book {LynSch,
    AUTHOR = {Lyndon, Roger C. and Schupp, Paul E.},
     TITLE = {Combinatorial group theory},
    SERIES = {Classics in Mathematics},
      NOTE = {Reprint of the 1977 edition},
 PUBLISHER = {Springer-Verlag, Berlin},
      YEAR = {2001},
     PAGES = {xiv+339},
      ISBN = {3-540-41158-5},
   MRCLASS = {20Fxx (20Exx 57M07)},
  MRNUMBER = {1812024},
       DOI = {10.1007/978-3-642-61896-3},
       URL = {https://doi.org/10.1007/978-3-642-61896-3},
}

@article {BC,
    AUTHOR = {Bourgeois, Fr\'ed\'eric and Chantraine, Baptiste},
     TITLE = {Bilinearized {L}egendrian contact homology and the
              augmentation category},
   JOURNAL = {J. Symplectic Geom.},
  FJOURNAL = {The Journal of Symplectic Geometry},
    VOLUME = {12},
      YEAR = {2014},
    NUMBER = {3},
     PAGES = {553--583},
      ISSN = {1527-5256,1540-2347},
   MRCLASS = {53D42},
  MRNUMBER = {3248668},
MRREVIEWER = {Daniel\ V.\ Mathews},
       DOI = {10.4310/jsg.2014.v12.n3.a5},
       URL = {https://doi.org/10.4310/jsg.2014.v12.n3.a5},
}

@article {Pan,
    AUTHOR = {Pan, Yu},
     TITLE = {The augmentation category map induced by exact {L}agrangian
              cobordisms},
   JOURNAL = {Algebr. Geom. Topol.},
  FJOURNAL = {Algebraic \& Geometric Topology},
    VOLUME = {17},
      YEAR = {2017},
    NUMBER = {3},
     PAGES = {1813--1870},
      ISSN = {1472-2747,1472-2739},
   MRCLASS = {53D42 (53D12 57M50 57R17)},
  MRNUMBER = {3677941},
MRREVIEWER = {Paolo\ Ghiggini},
       DOI = {10.2140/agt.2017.17.1813},
       URL = {https://doi.org/10.2140/agt.2017.17.1813},
}

@misc{hatcher,
  title={Notes on basic 3-manifold topology},
  author={Hatcher, Allen},
  year={2007},
  url={https://pi.math.cornell.edu/~hatcher/3M/3M.pdf},
  options={url=true}
}

@article {ENS,
    AUTHOR = {Ekholm, Tobias and Ng, Lenhard and Shende, Vivek},
     TITLE = {A complete knot invariant from contact homology},
   JOURNAL = {Invent. Math.},
  FJOURNAL = {Inventiones Mathematicae},
    VOLUME = {211},
      YEAR = {2018},
    NUMBER = {3},
     PAGES = {1149--1200},
      ISSN = {0020-9910,1432-1297},
   MRCLASS = {57M27 (53D42)},
  MRNUMBER = {3763406},
MRREVIEWER = {Janko\ Latschev},
       DOI = {10.1007/s00222-017-0761-1},
       URL = {https://doi.org/10.1007/s00222-017-0761-1},
}

@article {HWZ13,
    AUTHOR = {Hamilton, Emily and Wilton, Henry and Zalesskii, Pavel A.},
     TITLE = {Separability of double cosets and conjugacy classes in
              3-manifold groups},
   JOURNAL = {J. Lond. Math. Soc. (2)},
  FJOURNAL = {Journal of the London Mathematical Society. Second Series},
    VOLUME = {87},
      YEAR = {2013},
    NUMBER = {1},
     PAGES = {269--288},
      ISSN = {0024-6107,1469-7750},
   MRCLASS = {57N10 (20E26 57M05)},
  MRNUMBER = {3022716},
MRREVIEWER = {Darren\ D.\ Long},
       DOI = {10.1112/jlms/jds040},
       URL = {https://doi.org/10.1112/jlms/jds040},
}

@article {HarpeWeber,
    AUTHOR = {de la Harpe, Pierre and Weber, Claude},
     TITLE = {On malnormal peripheral subgroups of the fundamental group of
              a 3-manifold},
   JOURNAL = {Confluentes Math.},
  FJOURNAL = {Confluentes Mathematici},
    VOLUME = {6},
      YEAR = {2014},
    NUMBER = {1},
     PAGES = {41--64},
      ISSN = {1793-7434},
   MRCLASS = {57M25 (57N10)},
  MRNUMBER = {3266884},
MRREVIEWER = {Yi\ Liu},
       DOI = {10.5802/cml.12},
       URL = {https://doi-org.kyoto-u.idm.oclc.org/10.5802/cml.12},
}

@article{Friedl,
author="Friedl, Stefan",
title="Centralizers in 3-manifold groups",
journal="RIMS K{\^o}ky{\^u}roku",
ISSN="18802818",
publisher="Research Institute for Mathematical Sciences",
year="2011",
month="06",
number="1747",
pages="23-34",
URL="https://cir.nii.ac.jp/crid/1520290885011381248"
}

@misc{Cerocchi,
      title={On the peripheral subgroups of irreducible 3-manifold groups and acylindrical splittings}, 
      author={Filippo Cerocchi},
      year={2017},
      eprint={1705.06124},
      archivePrefix={arXiv},
      primaryClass={math.GT},
      url={https://arxiv.org/abs/1705.06124}, 
}

@article {ChaSuzuki,
    AUTHOR = {Cha, Jae Choon and Suzuki, Masaaki},
     TITLE = {Non-meridional epimorphisms of knot groups},
   JOURNAL = {Algebr. Geom. Topol.},
  FJOURNAL = {Algebraic \& Geometric Topology},
    VOLUME = {16},
      YEAR = {2016},
    NUMBER = {2},
     PAGES = {1135--1155},
      ISSN = {1472-2747,1472-2739},
   MRCLASS = {57M25 (20F34 20J05 57M05)},
  MRNUMBER = {3493417},
MRREVIEWER = {Tetsuya\ Ito},
       DOI = {10.2140/agt.2016.16.1135},
       URL = {https://doi-org.kyoto-u.idm.oclc.org/10.2140/agt.2016.16.1135},
}

@article {BKN, 
    title={On the genera of symmetric unions of knots}, 
    DOI={10.4153/S0008414X25101740}, 
    JOURNAL = {Canad. J. Math.},
    FJOURNAL = {Canadian Journal of Mathematics. Journal Canadien deMath\'ematiques}, 
    author={Boileau, Michel and Kitano, Teruaki and Nozaki, Yuta}, year={2025}, pages={1–26}}

@article {KitanoSuzukiII,
    AUTHOR = {Kitano, Teruaki and Suzuki, Masaaki},
     TITLE = {A partial order in the knot table. {II}},
   JOURNAL = {Acta Math. Sin. (Engl. Ser.)},
  FJOURNAL = {Acta Mathematica Sinica (English Series)},
    VOLUME = {24},
      YEAR = {2008},
    NUMBER = {11},
     PAGES = {1801--1816},
      ISSN = {1439-8516,1439-7617},
   MRCLASS = {57M25 (57M27)},
  MRNUMBER = {2453061},
MRREVIEWER = {Wilbur\ Whitten},
       DOI = {10.1007/s10114-008-6269-2},
       URL = {https://doi-org.kyoto-u.idm.oclc.org/10.1007/s10114-008-6269-2},
}

@article {KM,
    AUTHOR = {Kronheimer, P. B. and Mrowka, T. S.},
     TITLE = {Witten's conjecture and property {P}},
   JOURNAL = {Geom. Topol.},
  FJOURNAL = {Geometry and Topology},
    VOLUME = {8},
      YEAR = {2004},
     PAGES = {295--310},
      ISSN = {1465-3060,1364-0380},
   MRCLASS = {57M27 (57M25 57R17 57R57 57R58)},
  MRNUMBER = {2023280},
MRREVIEWER = {Jacob\ Andrew\ Rasmussen},
       DOI = {10.2140/gt.2004.8.295},
       URL = {https://doi.org/10.2140/gt.2004.8.295},
}

@misc{Cohn,
      title={Differential Graded Categories are k-linear Stable Infinity Categories}, 
      author={Lee Cohn},
      year={2016},
      eprint={1308.2587},
      archivePrefix={arXiv},
      primaryClass={math.AT},
      url={https://arxiv.org/abs/1308.2587}, 
}

@article {CDHHLMNN2,
    AUTHOR = {Baptiste Calmès and Emanuele Dotto and Yonatan Harpaz and Fabian Hebestreit and Markus Land and Kristian Moi and Denis Nardin and Thomas Nikolaus and Wolfgang Steimle},
     TITLE = {Hermitian {K}-theory for stable {$\infty$}-categories
              {II}: {C}obordism categories and additivity},
   JOURNAL = {Acta Math.},
  FJOURNAL = {Acta Mathematica},
    VOLUME = {235},
      YEAR = {2026},
    NUMBER = {2},
     PAGES = {149--400},
      ISSN = {0001-5962,1871-2509},
   MRCLASS = {19G38 (18N99 55U35)},
  MRNUMBER = {5009505},
       DOI = {10.4310/acta.2025.n235.n2.a1},
       URL = {https://doi-org.kyoto-u.idm.oclc.org/10.4310/acta.2025.n235.n2.a1},
}

@article {EHK,
    AUTHOR = {Ekholm, Tobias and Honda, Ko and K\'{a}lm\'{a}n, Tam\'{a}s},
     TITLE = {Legendrian knots and exact {L}agrangian cobordisms},
   JOURNAL = {J. Eur. Math. Soc. (JEMS)},
  FJOURNAL = {Journal of the European Mathematical Society (JEMS)},
    VOLUME = {18},
      YEAR = {2016},
    NUMBER = {11},
     PAGES = {2627--2689},
      ISSN = {1435-9855},
   MRCLASS = {53D42 (53D10 53D40 57M27 57R90)},
  MRNUMBER = {3562353},
MRREVIEWER = {Georgios Dimitroglou Rizell},
       DOI = {10.4171/JEMS/650},
       URL = {https://doi.org/10.4171/JEMS/650},
}

@article {Ekholm,
    AUTHOR = {Ekholm, Tobias},
     TITLE = {Rational symplectic field theory over {$\mathbb{Z}_2$} for exact
              {L}agrangian cobordisms},
   JOURNAL = {J. Eur. Math. Soc. (JEMS)},
  FJOURNAL = {Journal of the European Mathematical Society (JEMS)},
    VOLUME = {10},
      YEAR = {2008},
    NUMBER = {3},
     PAGES = {641--704},
      ISSN = {1435-9855},
   MRCLASS = {53D35 (53D40 57R17 57R58)},
  MRNUMBER = {2421157},
MRREVIEWER = {Sang Seon Kim},
       DOI = {10.4171/JEMS/126},
       URL = {https://doi.org/10.4171/JEMS/126},
}

@misc{HeyerMann,
      title={6-Functor Formalisms and Smooth Representations}, 
      author={Claudius Heyer and Lucas Mann},
      year={2024},
      eprint={2410.13038},
      archivePrefix={arXiv},
      primaryClass={math.CT},
      url={https://arxiv.org/abs/2410.13038}, 
}

@article {BB13,
    AUTHOR = {Ben-Bassat, Oren and Block, Jonathan},
     TITLE = {Milnor descent for cohesive dg-categories},
   JOURNAL = {J. K-Theory},
  FJOURNAL = {Journal of K-Theory. K-Theory and its Applications in Algebra,
              Geometry, Analysis \& Topology},
    VOLUME = {12},
      YEAR = {2013},
    NUMBER = {3},
     PAGES = {433--459},
      ISSN = {1865-2433,1865-5394},
   MRCLASS = {18D99 (46L87 58B34)},
  MRNUMBER = {3165183},
MRREVIEWER = {Pawel\ Sosna},
       DOI = {10.1017/is013007003jkt236},
       URL = {https://doi-org.kyoto-u.idm.oclc.org/10.1017/is013007003jkt236},
}

@article {Li2,
    AUTHOR = {Li, Wenyuan},
     TITLE = {Lagrangian cobordism functor in microlocal sheaf theory {II}},
   JOURNAL = {J. Symplectic Geom.},
  FJOURNAL = {The Journal of Symplectic Geometry},
    VOLUME = {23},
      YEAR = {2025},
    NUMBER = {3},
     PAGES = {599--672},
      ISSN = {1527-5256,1540-2347},
   MRCLASS = {57R17 (53D12)},
  MRNUMBER = {4931385},
       DOI = {10.4310/jsg.250710021820},
       URL = {https://doi-org.kyoto-u.idm.oclc.org/10.4310/jsg.250710021820},
}

@article {AE22,
    AUTHOR = {Asplund, Johan and Ekholm, Tobias},
     TITLE = {Chekanov-{E}liashberg dg-algebras for singular {L}egendrians},
   JOURNAL = {J. Symplectic Geom.},
  FJOURNAL = {The Journal of Symplectic Geometry},
    VOLUME = {20},
      YEAR = {2022},
    NUMBER = {3},
     PAGES = {509--559},
      ISSN = {1527-5256,1540-2347},
   MRCLASS = {53D12 (53D10 53D40)},
  MRNUMBER = {4563001},
MRREVIEWER = {Wojciech\ Domitrz},
       DOI = {10.4310/jsg.2022.v20.n3.a1},
       URL = {https://doi-org.kyoto-u.idm.oclc.org/10.4310/jsg.2022.v20.n3.a1},
}

@article {EL23,
    AUTHOR = {Ekholm, Tobias and Lekili, Yank\i},
     TITLE = {Duality between {L}agrangian and {L}egendrian invariants},
   JOURNAL = {Geom. Topol.},
  FJOURNAL = {Geometry \& Topology},
    VOLUME = {27},
      YEAR = {2023},
    NUMBER = {6},
     PAGES = {2049--2179},
      ISSN = {1465-3060,1364-0380},
   MRCLASS = {57R17},
  MRNUMBER = {4634745},
MRREVIEWER = {Alexander\ Fel\cprime shtyn},
       DOI = {10.2140/gt.2023.27.2049},
       URL = {https://doi-org.kyoto-u.idm.oclc.org/10.2140/gt.2023.27.2049},
}

@article {BEE12,
    AUTHOR = {Bourgeois, Fr\'ed\'eric and Ekholm, Tobias and Eliashberg,
              Yasha},
     TITLE = {Effect of {L}egendrian surgery},
      NOTE = {With an appendix by Sheel Ganatra and Maksim Maydanskiy},
   JOURNAL = {Geom. Topol.},
  FJOURNAL = {Geometry \& Topology},
    VOLUME = {16},
      YEAR = {2012},
    NUMBER = {1},
     PAGES = {301--389},
      ISSN = {1465-3060,1364-0380},
   MRCLASS = {53D05 (53D40 53D42)},
  MRNUMBER = {2916289},
MRREVIEWER = {Michael\ L.\ Hutchings},
       DOI = {10.2140/gt.2012.16.301},
       URL = {https://doi-org.kyoto-u.idm.oclc.org/10.2140/gt.2012.16.301},
}

\noindent Tomohiro Asano: 
Department of Mathematics, Kyoto University, \linebreak Kitashirakawa-Oiwake-Cho, Sakyo-ku, 606-8502, Kyoto, Japan.

\noindent \textit{E-mail address}: \texttt{tasano[at]math.kyoto-u.ac.jp}, \texttt{tomoh.asano[at]gmail.com}

\medskip

\noindent Yukihiro Okamoto:
Department of Mathematical Sciences,
Tokyo Metropolitan University,
Minami-osawa, Hachioji, Tokyo, 192-0397, Japan

\noindent
\textit{E-mail address}: 
\texttt{yukihiro[at]tmu.ac.jp}

\end{document}